\numberwithin{equation}{section} 
\def\CC{\mathbb{C}}
\def\TT{\boldsymbol{\Theta}}
\def\A{{\rm A}}
\def\B{{ B}}
\def\E{{ E}}
\def\F{{ F}}
\def\G{{ G}}
\def\H{{ H}}
\def\I{{ I}}
\def\J{{\rm J}}
\def\K{{ K}}
\def\L{{ L}}
\def\M{{ M}}
\def\N{{ N}}
\def\P{{ P}}
\def\Q{{ Q}}
\def\R{{ R}}
\def\T{{ T}}
\def\U{{\rm U}}
\def\V{{ V}}
\def\W{{\rm W}}
\def\Aa{\boldsymbol{\sf A}}
\def\Cc{\EuScript{C}}
\def\Ii{\mathscr{I}}
\def\Kk{\EuScript{K}}
\def\Ll{\EuScript{L}}
\def\Vv{{\rm V}}
\def\La{\Lambda}
\def\a{\alpha} 
\def\b{\beta}
\def\d{\delta}
\def\l{\lambda}
\def\n{\eta}
\def\p{\mathfrak{p}}
\def\r{{\textbf{\textsf{r}}}}
\def\s{\sigma}
\def\t{\theta}
\def\v{\upsilon}
\def\w{\varpi}
\def\x{\varkappa}
\def\kk{\boldsymbol{k}}
\def\ll{\mathrm{l}}
\def\ss{s}
\def\({\left(}
\def\){\right)}
\def\>{\geqslant}
\def\<{\leqslant}
\def\tdt{\times\cdots\times}
\def\Hom{\operatorname{Hom}}
\def\End{\operatorname{End}}
\def\Aut{\operatorname{Aut}}
\def\Mat{\operatorname{M}}
\def\GL{\operatorname{GL}}
\def\Gal{\operatorname{Gal}}
\def\tr{\operatorname{tr}}
\def\Ind{\operatorname{Ind}}
\def\ind{\operatorname{ind}}
\def\dim{\operatorname{dim}}
\def\diag{\operatorname{diag}}
\def\val{\operatorname{val}}
\def\st{\operatorname{st}}
\def\St{\operatorname{St}}
\def\BJ{{\bf J}}
\def\Ql{{\overline{\mathbb{Q}}_\ell}}
\def\Zl{{\overline{\mathbb{Z}}_\ell}}
\def\Fl{{\overline{\mathbb{F}}_{\ell}}}
\def\qlb{\Ql}
\def\zlb{\Zl}
\def\flb{\Fl}
\def\oo{\EuScript{O}}
\def\pp{\mathfrak{p}}
\def\bl{\boldsymbol{\lambda}}
\def\bd{\boldsymbol{\delta}}
\def\bk{\boldsymbol{\kappa}}
\def\bx{\boldsymbol{\xi}}
\def\tbk{\bk_{\t}}
\def\bt{\boldsymbol{\tau}}
\def\bx{\boldsymbol{\xi}}
\def\det{\mathrm{det}}
\def\rl{\r_\ell}
\def\1{\mathbf{1}}
\def\Nm{{\rm N}}
\def\EE{{\F}}
\def\FF{\F_0}
\def\nf{\nu_{0}}
\def\ke{\boldsymbol{k}}
\def\kf{\boldsymbol{k}_{0}}
\def\et{e}
\def\ll{\boldsymbol{l}}
\def\ZZ{\mathbb{Z}}
\def\BJT{\BJ_{{\rm t}}}
\def\blw{\bl_{{\rm w}}}
\def\btt{\bt_{{\rm t}}}
\def\pit{\pi_{{\rm t}}}
\def\rhot{\rho_{{\rm t}}}
\def\ic{\omega}
\def\aa{\mathfrak{a}}
\def\bb{\mathfrak{b}}
\def\Mat{\boldsymbol{{\sf M}}}
\def\Cc{\EuScript{C}}
\def\NF{\boldsymbol{\pi}}
\def\Vc{\W}
\title[Cuspidal $\flb$-representations of $\GL_n(\F)$ distinguished by a Galois involution]
{Cuspidal $\ell$-modular representations of $\GL_n(\F)$ distinguished by a Galois involution} 
\author{Robert Kurinczuk}
\address{School of Mathematics and Statistics, University of Sheffield, Sheffield, S3 7RH, United Kingdom}
\email{robkurinczuk@gmail.com}
\author{Nadir Matringe}
\address{Institute of Mathematical Sciences, NYU Shanghai, Shanghai, China}
\address{Institut de Math\'ematiques de Jussieu-Paris Rive Gauche, Universit\'e Paris Cit\'e, 75205, Paris, France}
\email{nrm6864@nyu.edu}
\email{matringe@imj-prg.fr}
\author{Vincent S\'echerre} 
\address{Laboratoire de Math\'emati\-ques de Versailles\\
UVSQ\\
CNRS\\
Universit\'e Paris-Saclay\\
78035, Versailles, France}
\email{vincent.secherre@uvsq.fr}
\begin{document}

\begin{abstract}
Let $\F/\F_0$ be a quadratic extension of non-Archimedean locally compact 
fields of~re\-sidual characteristic $p\neq2$ with Galois
automorphism~$\sigma$, and let~$R$ be an algebraically~closed~field~of 
characteristic~$\ell\notin\{0,p\}$.
We reduce the classification of 
$\GL_n(\F_0)$-distinguished cuspidal~$R$-represen\-tations of~$\GL_n(\F)$ to 
the level $0$ setting.
Moreover, under a parity condition, we give necessary~con\-ditions
for a~$\sigma$-selfdual cuspidal $R$-representation to be 
distinguished.
Finally, we classify the~distin\-guished 
cuspidal~$\Fl$-representations of~$\GL_n(\F)$ having a distinguished cuspidal 
lift to~$\Ql$.  
\end{abstract}

\maketitle

\tableofcontents

\section{Introduction}

\subsection{}

Let $\F/\F_0$ be a quadratic extension of non-Archimedean locally compact 
fields whose resi\-dual cha\-racteristic is a prime number $p$ different from $2$.
Let $\s$ be its non-trivial automorphism,
and $\G$ be the~gene\-ral linear group $\GL_n(\F)$ for some positive integer 
$n$.
It is a totally discon\-nec\-ted, locally compact group,
on which the involution $\s$ acts componentwise,
and~the group $\G^\s$ of its $\s$-fixed points is equal to $\GL_n(F_0)$.

Now fix an algebraically closed field $\R$ of characteristic different from 
$p$. 
A (smooth) represen\-ta\-tion $\pi$ of $\G$ on an $R$-vector space $V$
is said~to~be~\textit{distinguished} (by $\G^\s$) if $V$ carries a 
non-zero $\G^\s$-in\-variant linear form;
more generally,
if $\chi$ is a smooth character of $\G^\s$ with 
values~in~$\R^\times$,~the re\-pre\-sentation $\pi$ is 
said~to~be~$\chi$-\textit{distin\-guished} 
if $V$ carries a non-zero linear form ${\it\La}$~such that
\begin{equation*}
{\it\La}(\pi(h)v)=\chi(h){\it\La}(v),
\quad h\in\G^\s, \quad v\in\V.
\end{equation*}

\subsection{}

In the case where $R$ is the field of complex numbers,
distinguished irreducible representations~of $G$ have been extensively 
studied:
\begin{enumerate}
\item 
they are $\s$-selfdual,
that is,
the contragredient $\pi^\vee$ of a distingui\-shed irreducible represen\-ta\-tion
$\pi$ of $G$ is isomorphic to its $\s$-conjugate $\pi^\s$
(\cite{Flicker,Prasad90,Prasad01})
and their central character~is~tri\-vial~on $\F_0^\times$,
\item 
a $\s$-selfdual discrete series representation of $G$ 
is either distinguished, or $\x$-distin\-gui\-shed ($\x$
denotes the character of $F_0^\times$ whose kernel is the subgroup 
of $F/F_0$-norms), 
but not both:
this is the Dichotomy and Disjunction Theorem
(\cite{Kable,AnandKableTandon,AKMSS}),
\item 
distinguished generic irreducible representations of $G$ are classified
in terms of~their cuspi\-dal support
(\cite{AnandRajan,MatringeIMRN09,Matringe11}),
\item 
distinguished cuspidal representations of $G$ are characterized
in terms of~their~Galois~para\-meter (\cite{GanRag})
and in terms of type theory (see \cite{VSANT19} and below).
\end{enumerate}

\subsection{}

Distinguished irreducible representations of $G$ with coefficients in a field $R$ of
positive~cha\-rac\-teristic have been less well studied
(see \cite{AKMSS,VSANT19,KuMaAsai,CLL}).
As in the complex case,
they~are~$\s$-self\-dual, and their central character~is trivial on 
$\F_0^\times$.
For $\s$-selfdual \textit{supercuspidal} 
representations, that is, irreducible representations
which~do~not~oc\-cur~as sub\-quo\-tients of 
parabolically~indu\-ced representations from a proper Levi subgroup,
one~has a~{Dichotomy and Disjunction~Theorem}~(see \S\ref{montoriol}).
One also has a characterization of distinction
in terms of Galois parameters (\cite{CLL} Proposition 3.15)
and in terms~of types (\cite{VSANT19} 
Theorem 10.9).
But~there are explicit~exam\-ples of~$\s$-selfdual
non-supercuspidal~\textit{cuspidal}~re\-presentations 
that are neither distinguished nor $\x$-distinguished~(as
in \cite{VSANT19} Remark 2.18)
and of Steinberg re\-pre\-sen\-ta\-tions
that   are   both   distinguished   and   $\x$-distin\-guished   (\cite{CLL}
Remark~1.9).
Also, there is no known
classification~of~distinguished cuspidal~repre\-sentations~of $\GL_n(F)$
for an arbitrary $n\>3$ (see \cite{CLL}~for~$n=2$).

In this paper,
which can be considered as a sequel to \cite{VSANT19},
where all distinguished supercuspidal $R$-representations of $G$ have been
classified,
we investigate the classification~of distinguished~cus\-pi\-dal
$R$-representations of $G$ in terms of their supercuspidal support.
We:
\begin{itemize}
\item 
reduce this classification to that of distinguished cuspidal representations 
of level $0$, and~from there to finite group theory (see Section 
\ref{pasfaim}), 
\item
give a necessary condition of distinction for $\s$-selfdual cuspidal 
representations of $G$~that~sa\-tis\-fy a certain parity condition
(see Section \ref{Vsoddcase}), 
\item
classify the (distinguished, cuspidal) $\flb$-representations of $G$ having a 
distinguished cuspidal lift to $\qlb$,
where $\qlb$ is an algebraic closure of the field of $\ell$-adic numbers
with residue field $\flb$. 
\end{itemize}
Let us explain these results in more detail.

\subsection{}

Bushnell and Kutzko \cite{BK},
in work extended to the modular setting by Vign\'eras \cite{Vigbook},~have
given an explicit construction of a collection
of pairs~$(\BJ,\bl)$ called \emph{extended maximal simple types}
(which we will abbreviate to \textit{types} here), consisting
of a compact-mod-centre open subgroup~$\BJ$~of $G$ and an
irreducible~$R$-repre\-sen\-tation~$\bl$~of $\BJ$, such that the
representations $\ind_{\BJ}^{G}(\bl)$ 
are (irre\-du\-cible and) cuspidal,~and~such
that every cus\-pi\-dal $R$-representation of~$G$ appears in the collection
of~$\ind_{\BJ}^{G}(\bl)$.

We need the following invariants associated to
a cuspidal~$R$-representation of~$G$
fol\-lowing this explicit construction by compact induction
(see \S\ref{nom42} and \S\ref{peche}): 
\begin{enumerate}
\item
the \emph{endo-class} $\TT$: 
a fine refinement of the level 
introduced by Bushnell-Hen\-niart in \cite{BHLTL1}~and which 
applies equally well to the modular setting,
\item
the \emph{tame parameter field}~$\T$:
a tamely ramified
extension of $\F$ of degree dividing~$n$,~uni\-que\-ly determined up
to~$\F$-isomorphism by $\TT$,
\item
the \emph{relative degree} $m$: 
a positive integer 
such that $m[\T:\F]$ divides $n$,
uni\-quely~de\-ter\-mined by $\TT$ and $n$.
\end{enumerate}

Suppose further that~$\TT$ is~$\sigma$-selfdual
(which follows if for example the cuspidal representation itself 
is~$\sigma$-selfdual),
then there is a uniquely determined tamely ramified extension $T_0$
of $F_0$~con\-tained in $T$ such that $\T$ is isomorphic to
$\T_0\otimes_{\F_0} \F$.
The Galois group of $\T/\T_0$ canonically identifies with that of $\F/\F_0$, 
and the unique non-trivial automorphism of $\T/\T_0$ extending $\s$ 
will be denoted by $\s$ (see \S\ref{rappelstypesssd}). 
Our main theorem on reduction to the level $0$ setting is then
(see Theorem \ref{potimarrong}):

\begin{theo} 
\begin{enumerate}
\item
There is a natural bijection:
\begin{equation}
\label{douglas}
\pi\mapsto \pit
\end{equation} 
from the set of isomorphism classes of cuspidal representations 
of~$\G$~with~endo-class $\TT$ to the set of isomorphism classes of 
cus\-pi\-dal re\-presentations of level~$0$ of $\GL_m(\T)$.  
\item
The representation~$\pi$ is $\s$-selfdual if and only if~$\pit$ is~$\s$-selfdual.
\item
The representation~$\pi$ is~$\GL_n(F_0)$-dis\-tinguished if and only
if~$\pit$~is $\GL_m(\T_0)$-distinguished. 
\end{enumerate}
\end{theo}

The map \eqref{douglas} is also compatible with
supercuspidal support, see Proposition \ref{compapitrho} for a precise 
statement. 

\subsection{}

Let us briefly explain how the map \eqref{douglas} above is defined.
Let $(\BJ,\bl)$ be a type inducing~a cuspidal re\-pre\-sentation $\pi$ of $\G$ 
with $\s$-selfdual endo-class $\TT$,
tame parameter field $T$ and relative degree $m$.
Then:
\begin{enumerate}
\item
The group $\BJ$ has a unique maximal compact subgroup $\BJ^0$,
and a unique maximal nor\-mal pro-$p$ subgroup $\BJ^1$.
\item
There is a group isomorphism $\BJ^0/\BJ^1\simeq \GL_m(\ll) $,
where~$\ll$ is the residue field of $\T$.
\item
The restriction of~$\bl$ to~$\BJ^1$ is isotypic for an irreducible
representation~$\n$ of~$\BJ^1$, 
and this~re\-presentation $\n$ extends (non-canonically) to $\BJ$.
\item
The choice of a representation $\bk$ of $\BJ$ extending $\n$
determines a decomposition $\bl\simeq \bk\otimes\bt$,
where $\bt$ is a representation of $\BJ$ trivial on~$\BJ^1$,
uniquely determined up to isomorphism.  
\end{enumerate} 
The fact that $\TT$ is $\s$-selfdual implies that
there is a preferred choice for $(\BJ,\bl)$:
the group $\BJ$~is fixed~by $\s$, 
the representation $\n$ is $\s$-selfdual and
there exists a natural isomorphism between
the space of $G^\s$-invariant linear forms on $\pi$
and that of $\BJ\cap\G^\s$-invariant linear forms on $\bl$.~Such a type~is 
called \textit{generic} (see Definition \ref{genericssdualtype}).
We prove (see Proposition \ref{potiron}):

\begin{prop}
\label{potironintro}
The representation $\n$ 
has~a unique extension~$\bk$ to $\BJ$ which is both 
$\s$-selfdual and $\BJ\cap\G^\s$-distinguished, and
whose determinant has~order a power of~$p$.
\end{prop}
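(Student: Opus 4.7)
The plan is to parametrise the extensions of $\n$ to $\BJ$ by an abelian group of characters, then enforce the three conditions one at a time. Because $\n$ is irreducible as a representation of the normal subgroup $\BJ^1$ of $\BJ$ and is assumed to extend to $\BJ$, Schur's lemma identifies the set of extensions with a torsor under the character group $X := \Hom(\BJ/\BJ^1, R^\times)$: fixing one extension $\bk_0$, every other is uniquely of the form $\chi \otimes \bk_0$ for some $\chi \in X$.

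For existence, I would start from an arbitrary extension of $\n$ and perform three successive twists by characters of $\BJ/\BJ^1$. First, since $\n$ is $\s$-selfdual, one modifies the initial extension to be $\s$-selfdual via a Hilbert~90-style cohomological argument, solving an equation of the form $\psi^\s\psi = \chi_0$ where $\chi_0$ measures the failure of the initial extension to be $\s$-selfdual. Second, using the genericity of $(\BJ, \bl)$---which forces $\bl$ to be $\BJ \cap \G^\s$-distinguished and thereby provides, through the non-canonical decomposition $\bl \simeq \bk \otimes \bt$, access to distinguishedness on the $\bk$-factor---one further twists by a $\s$-selfdual character trivial on $\BJ \cap \G^\s$ to obtain an extension that is both $\s$-selfdual and distinguished. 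Finally, one adjusts by a similar twist to arrange the determinant condition.

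For uniqueness, suppose $\bk_1 = \chi \otimes \bk_0$ also satisfies all three conditions. Then $\s$-selfduality of both forces $\chi^\s\chi = 1$, distinction of both forces $\chi|_{\BJ \cap \G^\s} = 1$, and comparing determinants shows that $\chi^{\dim \n}$ has $p$-power order. Since $\dim \n = p^r$ is a power of $p$ (a standard property of the Heisenberg representation attached to a simple character), the crucial structural claim is that the subgroup $X^\circ \subseteq X$ of characters satisfying the first two conditions has order prime to $p$. On the $\BJ^0/\BJ^1 \simeq \GL_m(\ll)$-component, characters factor through $\det$ into characters of $\ll^\times$, whose order is prime to $p$ since $\ll$ has characteristic $p$. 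On the component coming from $\BJ/\BJ^0$, the combination of $\s$-selfduality with vanishing on the $\s$-fixed points collapses the possible value at a suitable generator to a group of order at most $2$, hence prime to $p$ since $p \neq 2$. Consequently, raising to $p^r$ is a bijection on $X^\circ$, which forces $\chi = 1$ and hence $\bk_1 \simeq \bk_0$.

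The main obstacle is the existence step, specifically arranging $\s$-selfduality and distinguishedness simultaneously: the decomposition $\bl \simeq \bk \otimes \bt$ is non-canonical, so one has to combine the genericity input on $\bl$ with cohomological/averaging arguments on the character group to control the ambiguity. The hypothesis $p \neq 2$ also enters essentially in bounding the $\BJ/\BJ^0$-component of $X^\circ$; a finer analysis, distinguishing the cases where $\T/\T_0$ is unramified or ramified, will be needed to verify the prime-to-$p$ claim in full generality.
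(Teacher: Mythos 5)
Your uniqueness argument is essentially the paper's: parametrise the extensions by characters of $\BJ/\BJ^1$, note that on $\BJ^0/\BJ^1 \simeq \GL_m(\ll)$ such a character factors through $\det$ and therefore has order prime to $p$, and then control the quotient $\BJ/(\BJ\cap\G^\s)\BJ^0$ (which has order at most $2$, by Lemma \ref{YumAubergineindicei}) using $p\neq 2$.

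The existence step, however, contains a genuine gap. You propose to deduce distinguishedness of $\bk$ from ``the genericity of $(\BJ,\bl)$—which forces $\bl$ to be $\BJ\cap\G^\s$-distinguished.'' This is not correct: a generic type is not automatically distinguished. Proposition \ref{pulledpork6heures} says $\bl$ is distinguished if and only if $\pi$ is, and the cuspidal representation $\pi$ need not be distinguished. More seriously, Proposition \ref{potiron} makes no reference to $\pi$ or $\bl$ at all—it is a statement purely about the Heisenberg representation $\n$ of a $\s$-selfdual maximal simple character—and the canonical extension $\tbk$ it produces is precisely what is used later (Proposition \ref{resB}) to \emph{characterise} when $\bl$ is distinguished. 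Assuming distinction of $\bl$ at this stage would make the argument circular. Even granting distinction of $\bl$, the decomposition $\bl\simeq\bk\otimes\bt$ does not transfer distinction to the $\bk$-factor: by Proposition \ref{etachi}(3), what it gives is that $\bt$ is $\chi$-distinguished, where $\chi$ is the character of $\BJ\cap\G^\s$ measuring the failure of $\bk$ to be distinguished.

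There is also a structural difficulty in the ``successive twists'' plan. Once an extension $\bk$ has been made $\s$-selfdual (Proposition \ref{etachi}(4)), the associated obstruction $\chi$ satisfies $\chi^2=1$ but need not be trivial, and one cannot simply twist $\bk$ by a character of $\BJ$ to kill $\chi$: such a twist must be a character of all of $\BJ$ trivial on $\BJ^1$, restrict to $\chi$ on $\BJ\cap\G^\s$, and at the same time preserve $\s$-selfduality—conditions which are not obviously simultaneously satisfiable, and whose compatibility is exactly the hard content. The paper avoids this head-on analysis entirely: for $m=1$ it cites the earlier results \cite{VSANT19} Propositions 7.9, 9.4, and for $m>1$ it uses the Iwahori-decomposition machinery of \S\ref{previousparagraph} with $u=m$ (the map $\bk_*\mapsto\bk$ of Lemma \ref{ArseneLup4}) together with Lemmas \ref{preEtretat} and \ref{Etretat} to transport a $\s$-selfdual distinguished extension from the $\GL_1$-factor $\BJ_*$ up to $\BJ$, correcting by an unramified quadratic character at the end. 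This reduction to $m=1$ is the essential idea of the proof and is absent from your proposal.
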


The choice of the representation $\bk$ given by Proposition \ref{potironintro}
thus uniquely determines a repre\-sen\-tation $\bt$ of $\BJ$ trivial on~$\BJ^1$.

Now there is a natural choice, as explained in \S\ref{proofpotimarron1}, 
of a $\s$-fixed~maximal compact subgroup $\BJT^0$
of $\GL_m(T)$,
with normalizer $\BJT^{\phantom{0}}$ and pro-$p$-radical $\BJT^1$,
such that there is a $\s$-equivariant group isomorphism:
\begin{equation*}
\BJ/\BJ^1\simeq\BJT^{\phantom{0}}/\BJT^1.
\end{equation*}
The representation $\bt$ then defines a representation of $\BJT^{\phantom{0}}$
trivial on $\BJT^1$, denoted $\btt$.
The cuspidal representation $\pit$ associated with $\pi$ by \eqref{douglas}
is then the compact induction 
of $\btt$ to $\GL_m(T)$.

\subsection{}

Having reduced the classification of distinguished 
cuspidal~$\R$-representations to level $0$,~we fur\-ther reduce this classification
to the finite group setting.
Let $\pi$ be a $\s$-selfdual cuspidal~$\R$-re\-pre\-sentation~of $\G$
of level~$0$ with central character~$c_{\pi}$ and generic type~$(\BJ,\bl)$.
Restricting~$\bl$~to $\BJ^0$~de\-fi\-nes a~cuspi\-dal~$\R$-re\-presentation $\Vv$ 
of $\GL_n(\kk)$,
where $\kk$~is the residue field of $\F$.
We prove (see~Theo\-rem \ref{piranesi}): 

\begin{theo}
Suppose~$n\neq 1$.
The representation $\pi$ is $\GL_n(\F_0)$-distinguished if 
and only if its central character $c_\pi$ is trivial on $\F_0^\times$ and 
\begin{enumerate}
\item
if $\F/\F_0$ is unramified, 
then $\Vv$ is $\GL_n(\kk_0)$-distinguished
($\kk_0$ is the residue field of $\F_0$);
\item
if $\F/\F_0$ is ramified, 
then $n$ is even,
$\Vv$ is $\GL_{n/2}(\kk)\times\GL_{n/2}(\kk)$-distinguished,
the vector~spa\-ce~of $\GL_{n/2}(\kk)\times\GL_{n/2}(\kk)$-invariant linear forms
on $\Vv$ has dimension $1$, and 
\begin{equation*}
\ss =
\begin{pmatrix}
0 & {\rm id} \\
{\rm id} & 0 \end{pmatrix}
\in \GL_n(\kk)
\end{equation*}
acts on this space by the sign $c_\pi(\w)$,
where $\w$ is any uniformizer of $\F$.
\end{enumerate}
\end{theo}

\subsection{}

Let~$\pi$ be a cuspidal non-supercuspidal~$\R$-representation of $G$. 
Following~\cite{MSc}, we recall~in \S\ref{par92} that there 
are a uniquely determined integer~$r=r(\pi)\geqslant 2$ and a 
supercuspidal~$\R$-re\-presen\-tation~$\rho$ of $\GL_{n/r}(F)$~such
that~$\pi$ is isomorphic to $\St_{r}(\rho)$, 
where~$\St_r(\rho)$ denotes the unique generic subquotient 
of the parabolically induced representation
\begin{equation*}
\rho\nu^{-(r-1)/2}\tdt\rho\nu^{(r-1)/2}
\end{equation*}
(where $\nu$ denote 
the unramified character which is the absolute value of~$\F$ composed with
the~de\-ter\-minant).~The~re\-presentation $\rho$ is not unique in general,
but,
if~$\pi$ is~$\sigma$-selfdual and $r$~is odd,~and
if one further~de\-mands that $\rho$ be $\sigma$-selfdual,
then $\rho$ is uniquely determined up to isomorphism~(see Proposition \ref{fission}). 
In this case, we obtain further necessary 
conditions for~distinction (see Theorem \ref{THModd}): 

\begin{theo}
Let $\pi$ be a $\s$-selfdual cuspidal non-supercuspidal
$\R$-representation of $\GL_n(\F)$.
Assume that the in\-teger $r=r(\pi)$ is odd,
thus $\pi$ is isomorphic to $\St_r(\rho)$ for a uniquely determined
$\s$-selfdual 
supercuspidal~re\-presentation $\rho$~of $\GL_{n/r}(\F)$.
If $\pi$ is $\GL_n(\F_0)$-distinguished, then 
\begin{enumerate}
\item 
the relative degree $m=m(\pi)$ and the ramification index of $\T/\T_0$
have the same parity,
\item 
the representation $\rho$ is $\GL_{n/r}(\F_0)$-dis\-tin\-guished. 
\end{enumerate}
\end{theo}
As a corollary, we extend the Disjunction Theorem from the supercuspidal 
setting (that is,~the statement that,
if~$\ell\neq 2$,
a supercuspidal~$\R$-representation is not both distinguished 
and~$\x$-dis\-tin\-guished) to include cuspidal
$\R$-representations~$\pi$ with~$r(\pi)$ odd. 

\subsection{} 

Say that an irreducible $\flb$-representation $\pi$ of~$G$
\textit{lifts to} $\qlb$ if there exists a free $\zlb$-lattice~$L$ equip\-ped~with a
linear action of $G$ such that the $\flb$-representation of $G$ on 
$L\otimes\flb$ is isomor\-phic~to $\pi$.
When this is the case,
say that the smooth $\qlb$-representation of $G$ on $L\otimes\qlb$~is
a \textit{lift} of $\pi$ to $\qlb$.
Following \cite{Vigbook},
any cuspidal $\flb$-representation of $G$ lifts to $\qlb$
and any of its lifts is cuspidal.

According to \cite{KuMaAsai} (see Theorem \ref{KuMaTh}),
any cuspidal~$\flb$-representation of~$G$ having a $G^\s$-dis\-tin\-guished lift
to $\qlb$ is $G^\s$-distinguished.
The converse holds for supercuspidal representations
(see \cite{VSANT19} and \cite{CLL}),
but not for cuspidal representations in general.
In the~fi\-nal section, 
we classify~the $G^\s$-distinguished cuspidal~$\Fl$-representations of~$G$
having a $G^\s$-distinguished cuspidal lift to~$\Ql$ 
(see Propositions \ref{poivronsimpairs} and \ref{poivronspairs} for a precise 
statement). 

\begin{center}
{\bf Structure of the paper}
\end{center}

\medskip

After setting some notation in Section \ref{Notationsection},
in Section \ref{soupe} we collect together necessary~back\-ground from the literature and
prove some basic results on~$\sigma$-selfdual
cuspidal~$\R$-representations.

Section \ref{pasfaim} constitutes the technical heart of the paper.
It reduces 
the problem of classifying~dis\-tingui\-shed~cuspidal~$\R$-representations to 
level $0$.

In Section \ref{Vsoddcase},~un\-der a~parity con\-dition,
we~provide~ne\-ces\-sary conditions for 
distinction,
allowing us to deduce the Disjunction Theorem and~a lifting
theorem.

Finally, in Section \ref{seceven}, we
classify~those cus\-pidal~$\Fl$-representations having a 
distinguished~cus\-pidal lift. 

\medskip

\begin{center}
{\bf Acknowledgements}
\end{center}

\medskip

The first author was supported by EPSRC grant EP/V001930/1 and the Heilbronn 
Institute for Mathematical Research.
The third author was partially supported by the Institut Universitai\-re de 
France. 
    
This work was partially supported  by the Erwin Schr\"odinger Institute in
Vienna, where we~be\-ne\-fitted from the Research in Teams grant
``$\ell$-modular Langlands  Quotient Theorem and Applica\-tions''.  We thank
the institute for hospitality, and for excellent working conditions. 
    
We thank Alberto M\'inguez and Shaun Stevens for their interest and useful 
conversations. 

\section{Notation}
\label{Notationsection}
\subsection{}
\label{par21}

Given any non-archimedean locally compact field $\F$,
we write $\oo_{\F}$ for its ring of integers,~$\pp_\F$ for the maximal ideal 
of $\oo_{\F}$, 
$\kk_\F$ for its residue field and
$q_\F$ for the cardinality of $\kk_\F$.

We also write $\val_\F$ for the valuation of $\F$
taking any uniformizer to $1$,
and 
$|\cdot|_\F$ for the~abso\-lute~value of $\F$ taking
any uniformizer to the inverse of $q_\F$. 

Given any finite extension $\L$ of $\K$, 
we write~$\Nm_{\L/\K}$ and~$\tr_{\L/\K}$ for the norm and trace maps. 

\subsection{}

Given
a locally compact, totally disconnected topological group $\G$ and
an algebraically~clo\-sed field $\R$ of characteristic different from $p$,
we consider smooth representations of $\G$ on $\R$-vector spaces. 
We will abbreviate \textit{smooth $\R$-representation} to 
\textit{$\R$-representation},
or even \textit{representation}~if the coefficient field $\R$ is clear from the context.

An $\R$-\textit{character} 
(or \textit{character})
of $\G$ is a group homomorphism from $\G$ to $\R^\times$ with open kernel.

Let $\pi$ be a representation of $\G$.
We write $\pi^\vee$ for its~con\-tra\-gredient. 
Given a character $\chi$ of $\G$,
we write $\pi\chi$ for the representation $g\mapsto\chi(g)\pi(g)$ of~$\G$.

Let $\pi$ be a representation of a closed subgroup $\H$ of $\G$. 
Given any element $g\in\G$,
we write~$\pi^g$ for the representation $x\mapsto\pi(gxg^{-1})$ of
$\H^g=g^{-1}\H g$.
Given any continuous involution $\s$ of~$\G$, we
write $\pi^\s$ for the representation $\pi\circ\s$ of $\s(\H)$.
Given any cha\-rac\-ter~$\mu$ of $\H\cap\G^\s$,
we say that $\pi$ is $\mu$-\textit{distinguished} if the space
$\Hom_{\H\cap\G^\s}(\pi,\chi)$ is non-zero.
If $\mu$ is the trivial character,
we will abbreviate $\mu$-\textit{distinguished} to
$\H\cap\G^\s$-\textit{distinguished},
or just \textit{distinguished}.

\subsection{}

Let us fix a separable quadratic extension $\EE/\FF$ of non-archimedean 
locally compact fields of residual cha\-rac\-teristic~$p$,
and let $\s$ denote its non-trivial automorphism.
Let
\begin{equation}
\x = \x_{\EE/\FF} : \FF^\times \rightarrow \{-1,1\} = \ZZ^\times
\end{equation}
denote the $\ZZ$-valued character of $\FF^\times$
with kernel $\Nm_{\EE/\FF}(\EE^\times)$. 
When needed, 
we will consider~$\x$~as a character with values in any algebraically closed
field $\R$.  
We abbreviate $q=q_\EE$ and $q_0=q_{\FF}$.

We fix a square root
\begin{equation}
\label{choixq012}
q_0^{1/2}\in \R
\end{equation}
of~$q_0$ in $\R$ an define
\begin{equation}
\label{choixq12}
q^{1/2} =
\left\{ 
\begin{array}{ll}
q_0^{1/2} & \text{if $\F/\F_0$ is ramified}, \\ 
q_0 & \text{if $\F/\F_0$ is unramified},
\end{array}\right.
\end{equation}
which we will use to normalize parabolic induction and restriction functors
(see below).

\subsection{}

Given a positive integer $n\>1$, 
the automorphism $\s$ acts~on~the group $\GL_n(F)$~com\-ponent\-wise,
thus defines a continuous involution of $\GL_n(F)$, 
still denoted $\s$.
Its fixed points form the subgroup $\GL_n(F_0)$.

We denote by $\nu$ the unramified character ``absolute value of the 
determinant'' of $\GL_n(F)$ and by $\nu^{1/2}$ the unramified character taking 
any element whose determinant has valuation $1$ to $q^{-1/2}$. 
We thus have $(\nu^{1/2})^2=\nu$.
Similarly, we define the characters $\nu^{\phantom{1}}_0$ and 
$\nu_0^{1/2}$ of $\GL_n(F_0)$.

Given positive integers $n_1,\dots,n_r$ such that $n_1+\dots+n_r=n$ 
and, for each $i=1,\ldots,r$, given 
an $\R$-representation $\pi_i$ of $\GL_{n_i}(F)$, 
we write
\begin{equation}
\label{defsupercusp}
\pi_1\times \cdots \times \pi_r
\end{equation}
for the representation of
$\GL_{n}(F)$ obtained by normalized parabolic induction from 
$\pi_1\otimes \cdots \otimes \pi_r$
along the parabolic subgroup generated by upper triangular matrices 
and the standard Levi~sub\-group $\GL_{n_1}(F)\times\dots\times\GL_{n_r}(F)$.

An irreducible $R$-representation of $\GL_n(F)$ is said to be
\textit{cuspidal} (respectively, \textit{supercuspidal})~if
it does not occur~as~a subrepresentation
(respectively, a subquotient)
of any representation of~the form \eqref{defsupercusp} with $r\>2$.
Any supercuspidal representation of $\GL_n(F)$ is cuspidal.
When $R$ has~cha\-racteristic $0$,
any cuspidal representation of $\GL_n(F)$ is supercuspidal.
When $R$~has~cha\-racteristic $\ell>0$,
the group $\GL_n(F)$ may have cuspidal non-supercuspidal representations
(see \S\ref{par92}).

Given a representation $\pi$ of $\GL_n(F)$ and a character $\chi$ of 
$F^\times$, we will write $\pi\chi$ for $\pi(\chi\circ\det)$.

\subsection{}

Let us fix an algebraic closure $\qlb$ of the field of $\ell$-adic numbers.
Let $\zlb$ denote its ring~of~in\-te\-gers,
and $\flb$ denote the residue field of $\zlb$.

We call an irreducible representation~$\pi$ of a
locally compact, totally disconnected 
group $\G$~on~a $\qlb$-vector space $V$
\emph{integral} if it~stabi\-lizes a~$\Zl$-lattice~$L$ in $V$.
In this case, we obtain a smooth~$\flb$-re\-presentation~$L\otimes\Fl$~of
$\G$~whose isomorphism class may depend on the choice of $L$.

If $\G$ is either the group of rational points of a connected reductive linear
algebraic 
$F$-group~or a finite group
(see \cite[Theorem~1]{VigIntegral} and the Brauer--Nesbitt principle),
the smooth $\flb$-representa\-tion $L\otimes\Fl$ has finite length,
and its~semisimplification is independent of the choice of $L$.
This semisimplification is called~the \emph{re\-duc\-tion modulo~$\ell$}
of~$\pi$, and is denoted by $\rl(\pi)$.

Given an irreducible $\flb$-representation~$\rho$ of $G$,
we call an irreducible~in\-tegral $\qlb$-representation with reduction 
modulo~$\ell$ equal 
to~$\rho$ a $\qlb$-\emph{lift} of~$\rho$. 

\section{Basic results}
\label{soupe}

In this section,
$p$ is an arbitrary prime number,
$\F/\F_0$ is a separable quadratic extension
and~$\R$ has characteristic~$\ell\neq p$.
We fix a positive integer $n\>1$.  

\subsection{}

Fundamental results of Flicker and Prasad \cite{Flicker,Prasad90,Prasad01}
on irreducible complex representa\-tions of $\GL_n(\F)$ distinguished by 
$\GL_n(\F_0)$ 
have been extended to irreducible~$\R$-re\-presentations~in 
\cite{VSANT19} Theorem 4.1.

\begin{theo}
\label{VSANTTHM41}
Let $\pi$ be an irreducible representation of $\GL_n(\F)$ distinguished by 
$\GL_n(\F_0)$.
\begin{enumerate}
\item The central character $c_\pi$ of $\pi$ is trivial on $\F_0^\times$.
\item The $\R$-vector space $\Hom_{\GL_n(\F_0)}(\pi,\R)$ has dimension $1$.
\item The contragredient $\pi^\vee$ of $\pi$ is isomorphic to $\pi^\sigma$.
\end{enumerate}
\end{theo}

We will say that a representation $\pi$ of $\GL_n(F)$
is $\sigma$-\textit{selfdual}
if $\pi^\vee$ is isomorphic to $\pi^\sigma$. 

\subsection{}\label{montoriol}

For supercuspidal representations,
we have the following Di\-cho\-tomy and Disjunction Theo\-rem
(\cite{Kable} Theorem 4, \cite{AnandKableTandon} Corollary 1.6 if $\ell=0$,
\cite{VSANT19} Theorem~10.8 if $p\neq2$ and
\cite{CLL}~Theorem~3.14 if $\ell\neq0,2$).

\begin{theo}
\label{rappelamical1}
Let $\rho$ be a $\s$-selfdual supercuspidal $\R$-repre\-sen\-ta\-tion of 
$\GL_n(\EE)$. 
\begin{enumerate}
\item
If $\ell=2$, then $\rho$ is distinguished.
\item
If $\ell\neq2$, then $\rho$ is either distinguished or $\x$-distinguished,
but not both.  
\end{enumerate} 
\end{theo}

\subsection{}

In this paragraph,
$\ell$~is~a pri\-me number different from $p$
and we will consider representations with co\-ef\-ficients in $\qlb$ or $\flb$. 
The following theorem is \cite{KuMaAsai} Theorem 3.4. 

\begin{theo}
\label{KuMaTh}
Let ${\pi}$ be an integral $\s$-selfdual cuspidal 
$\qlb$-representation of $\GL_n(\F)$.  
If ${\pi}$~is dis\-tin\-guished by $\GL_n(F_0)$,
then its reduction mod $\ell$ is
(irreducible, cuspidal and) distinguished.
\end{theo}

It follows that any $\s$-selfdual cuspidal $\flb$-representation of $\GL_n(\F)$ 
having a dis\-tin\-guished~lift to $\qlb$ is distinguished.
For super\-cus\-pidal representations,
one has the following converse 
(see~\cite{VSANT19} Theorem~10.11 if $p\neq2$,
and \cite{CLL} Theorem 3.4):

\begin{theo}
\label{rappelamical3}
Any $\GL_n(F_0)$-distinguished supercuspidal $\Fl$-repre\-sen\-ta\-tion 
of $\GL_n(\EE)$ has a $\GL_n(F_0)$-distinguished lift to $\qlb$.  
\end{theo}

We also have the following Distinguished Lift Theorem,
making Theorem \ref{rappelamical3} more precise. 

\begin{theo}
\label{rappelamical2}
Let $\rho$ be a $\s$-selfdual supercuspidal $\Fl$-repre\-sen\-ta\-tion 
of $\GL_n(\EE)$. 
\begin{enumerate}
\item 
The representation $\rho$ has a $\s$-selfdual lift to $\qlb$.
\item
Let $\mu$ be a $\s$-selfdual lift of $\rho$ to $\qlb$ 
and suppose that $\ell\neq2$.
Then $\mu$ is distinguished if~and only $\rho$
is distingui\-shed.
\end{enumerate}
\end{theo}

\begin{proof}
If $p\neq2$, this is \cite{VSANT19} Theorem 10.11. 
Assume now that $p=2$, thus $\ell\neq2$.

By Theorem \ref{rappelamical1},
the representation $\rho$ is either distinguished or 
$\x$-distinguished.
If it~is~distin\-guished,
it has a $\s$-selfdual lift thanks to 
Theorem \ref{rappelamical3} and Theorem \ref{VSANTTHM41}(3). 
If~it~is~$\x$-distingui\-shed,
fix a $\qlb$-char\-acter $\xi$ of $F^\times$ extending
the canonical $\qlb$-lift of $\x$.
The reduction~mod~$\ell$~of~$\xi$
is an $\flb$-char\-acter of $F^\times$ extending $\x$,
denoted $\chi$.
The representation $\rho\chi$ is distinguished~and~su\-per\-cus\-pi\-dal.
It thus has a $\s$-selfdual lift $\pi$.
Then $\pi\xi^{-1}$ is a distinguished lift of $\rho$.
This proves~(1).

Let $\mu$ be a $\s$-selfdual lift of $\rho$,
and assume that $\rho$ is distinguished. 
If $\mu$ is not distinguished,~it
must then be $\x$-distinguished.
By Theorem \ref{KuMaTh},
this implies that $\rho$ is $\x$-distinguished,
which~con\-tradicts the Di\-cho\-tomy and Disjunction Theorem.
Conversely, 
if $\mu$ is distinguished,
then $\rho$~is~dis\-tin\-guished thanks to Theorem \ref{KuMaTh}.
\end{proof}

\subsection{}
\label{par92}

From now on, we consider the case of cuspidal non-supercuspidal 
$R$-representations,
thus~$\ell$ is~a pri\-me number different from $p$.
Let us recall
how they are classified in terms of their~super\-cuspidal support. 

Recall that a representation $\pi$ of $\GL_n(F)$ on an $\R$-vector space $V$ 
is \textit{generic} if $V$ carries a~non-zero $\R$-linear form $\La$ such that
$\La(\pi(u)v) = \t(u)v$
for all $v\in V$ and all unipotent upper triangular matrices $u$, where
$\t(u) = \psi(u_{1,2}+\dots+u_{n-1,n})$
and $\psi$ is a non-trivial $R$-character of $F$.

Let $k\>1$ be a positive integer, 
and $\rho$ be a supercuspidal $\R$-representation of $\GL_k(\EE)$. 
Accor\-ding to \cite{MSc} 8.1,
for any $r\>1$, the in\-duced representation 
\begin{equation}
\label{INDRHO}
\rho\nu^{-(r-1)/2}\tdt\rho\nu^{(r-1)/2}
\end{equation}
contains a unique generic irreduci\-ble subquotient, denoted
$\St_{r}(\rho)$. 

Let $\et(\rho)$ be the smallest integer $i\>1$ 
such that $\rho\nu^i$ is isomorphic to $\rho$
and $t(\rho)$~be~the torsion number of $\rho$, 
that is, the number of unramified characters $\chi$ of $\EE^\times$
such that $\rho\chi$ is~iso\-mor\-phic~to $\rho$.
By \cite{MSjl} Lemme 3.6, these integers are related by the identity
\begin{equation}
\label{omegat}
\et(\rho) = \text{order of $q^{t(\rho)}$ mod $\ell$}.
\end{equation}
By \cite{MSc} Th\'eor\`eme~6.14, one has the following classification.

\begin{prop}
\label{Coupure}
Let $\pi$ be a cuspidal non-supercuspidal $\R$-representation of $\GL_n(\EE)$. 
\begin{enumerate}
\item 
There are a unique positive integer $r=r(\pi)\>2$ dividing $n$
and a supercuspidal represen\-tation $\rho$ of $\GL_{n/r}(\EE)$ 
such that $\pi$ is isomorphic to $\St_r(\rho)$.
\item
There is a unique integer $v\>0$ such that $r=\et(\rho)\ell^v$.
\item
Let $\rho'$ be a supercuspidal representation of $\GL_{n/r}(\EE)$.
The representation $\pi$ is isomorphic to $\St_{r}(\rho')$ if and only if 
$\rho'$ is isomorphic to $\rho\nu^i$ for some $i\in\ZZ$.
\end{enumerate} 
\end{prop}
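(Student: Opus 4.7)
The plan is to reduce the classification of cuspidal non-supercuspidal $\R$-representations of $\GL_n(\F)$ to the combinatorics of Zelevinsky segments of supercuspidals, using two inputs: every cuspidal $\R$-representation of $\GL_n(\F)$ is generic (a modular analogue of a result of Rodier), and the parabolic induction from a segment possesses a unique generic subquotient, namely $\St_r(\rho)$ by the definition recalled just before the statement.

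For part (1), I would determine the supercuspidal support of $\pi$. Since $\pi$ is cuspidal and hence generic, any decomposition of this support into unlinked pieces would, via the Geometric Lemma, force a non-zero Jacquet module along a proper standard parabolic, contradicting cuspidality. Hence the support consists of unramified twists of a single supercuspidal $\rho$ of $\GL_{k}(\F)$ for some divisor $k$ of $n$, arranged as a Zelevinsky segment. Being generic, $\pi$ is then the generic subquotient of the corresponding induction \eqref{INDRHO}, so $\pi \simeq \St_r(\rho)$ with $r=n/k$, and $r$ is determined by the cardinality of the cuspidal support.

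For part (2), the task is to determine for which values of~$r$ the generic subquotient $\St_r(\rho)$ is itself cuspidal. I would compute the Jacquet modules of $\St_r(\rho)$ along all maximal standard parabolics by the Bernstein--Zelevinsky machinery, suitably adapted to the modular setting. Vanishing of these Jacquet modules imposes that any initial or final sub-segment of $[\rho\nu^{-(r-1)/2},\ldots,\rho\nu^{(r-1)/2}]$ match its complement after a suitable $\nu$-twist; combined with the identity \eqref{omegat} relating $\et(\rho)$ to the order of $q^{t(\rho)}$ modulo~$\ell$, this singles out exactly the values $r=\et(\rho)\ell^{v}$, and uniqueness of $v$ follows from that of $\et(\rho)$.

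For part (3), the supercuspidal support of $\St_r(\rho')$ equals the multiset $\{\rho'\nu^{j}\}_{-(r-1)/2\le j\le(r-1)/2}$, so equating it with that of $\St_r(\rho)$ forces $\rho'\simeq\rho\nu^{i}$ for some $i\in\ZZ$. Conversely, when $r=\et(\rho)\ell^{v}$, a shift by $\nu$ permutes the segment modulo the isomorphism $\rho\nu^{\et(\rho)}\simeq\rho$, so all $\St_r(\rho\nu^{i})$ coincide. The principal obstacle is step (2): in contrast to the characteristic zero case, the interplay between $\et(\rho)$, the torsion number $t(\rho)$ and the prime $\ell$ must be tracked carefully, since the clean Langlands--Zelevinsky classification of generic irreducibles via (multi)segments no longer holds verbatim in the modular setting and the analysis of Jacquet modules must be redone to detect cuspidality rather than mere irreducibility.
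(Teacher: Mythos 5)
The paper does not actually prove this proposition: the statement is quoted verbatim from \cite{MSc} Th\'eor\`eme 6.14, with the surrounding text serving only to set notation ($\et(\rho)$, $t(\rho)$, the identity \eqref{omegat}). So there is no internal proof here for your sketch to track; you are being asked, in effect, to reprove a cited result.

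Your overall strategy (genericity of cuspidals, analysis of supercuspidal support, Jacquet module computations) is a legitimate line of attack, and parts (1) and (3) can in principle be carried out this way, though part (1) needs an explicit invocation of Vign\'eras' theorem that parabolic induction from unlinked supercuspidal segments is irreducible in the $\ell$-modular setting — without it, the claim that a non-segmental support forces a nonzero Jacquet module does not follow from the Geometric Lemma alone, since the Geometric Lemma by itself controls the Jacquet module of the \emph{induced} representation, not of the generic subquotient. The serious gap, which you correctly flag yourself, is part (2). ``Compute the Jacquet modules... vanishing imposes that any initial or final sub-segment match its complement after a suitable $\nu$-twist'' is not an argument: in the modular setting the composition series of $\rho\nu^{-(r-1)/2}\times\cdots\times\rho\nu^{(r-1)/2}$ is no longer governed by the Zelevinsky multisegment combinatorics, the theory of derivatives must be rebuilt, and it is precisely extracting the dichotomy ``cuspidal iff $r=1$ or $r=\et(\rho)\ell^v$'' that carries the mathematical weight. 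In \cite{MSc} this is done via the $\ell$-modular Bushnell--Kutzko type theory and a reduction to finite general linear groups, where the analogous classification (Proposition \ref{Coupurefinie} in this paper) is known from Dipper--James; the identity \eqref{omegat} is what transports the finite-field criterion to the $p$-adic one. Your proposal, as written, acknowledges the obstacle but does not supply the missing analysis, so it remains a plan rather than a proof.
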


Note that, conversely, by the same references,
if $\rho$ is a supercuspidal representation of $\GL_k(F)$
and $r=\et(\rho)\ell^v$ for 
some~$v\>0$, the representation $\St_r(\rho)$ is cus\-pidal. 

\subsection{}
\label{par93}

We now classify $\s$-selfdual cuspidal representations. 

\begin{lemm}
\label{fusion}
Let $\rho$ be a supercuspidal $\R$-representation of 
$\GL_k(\EE)$ for some $k\>1$.
Let~$r\>2$ be such that $\St_r(\rho)$ is cuspidal,
and suppose that $\St_r(\rho)$ is $\s$-selfdual.
Then there is an $i\in\ZZ$,~uni\-quely determined mod $\et(\rho)$,
such that $\rho^{\vee\s}$ is isomorphic to $\rho\nu^i$.
\end{lemm}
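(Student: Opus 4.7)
The uniqueness of $i$ modulo $\et(\rho)$ is immediate from the very definition of $\et(\rho)$: if $\rho\nu^i\simeq\rho\nu^j$, then $\rho\nu^{i-j}\simeq\rho$, so $\et(\rho)$ divides $i-j$. For existence, the plan is to show that both the contragredient operation $\pi\mapsto\pi^\vee$ and the $\sigma$-twist $\pi\mapsto\pi^\sigma$ commute with the formation of $\St_r$, and then to invoke Proposition~\ref{Coupure}(3) with the input $\rho^{\vee\sigma}$.

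The $\sigma$-compatibility $\St_r(\rho)^\sigma\simeq\St_r(\rho^\sigma)$ is the easy one: $\sigma$ acts componentwise on the standard Levi subgroup $\GL_{n/r}(\F)\tdt\GL_{n/r}(\F)$, preserves the standard upper triangular parabolic, and fixes the character $\nu$ because it preserves $|\cdot|_\F$. Hence parabolic induction commutes with $\sigma$, and since $\sigma$ preserves genericity (up to replacing the fixed character $\psi$ of $\F$ by $\psi\circ\sigma^{-1}$, which produces the same notion of generic representation), the unique generic subquotient of the induced representation is carried to the unique generic subquotient. The contragredient compatibility $\St_r(\rho)^\vee\simeq\St_r(\rho^\vee)$ uses the identity $(\pi_1\tdt\pi_r)^\vee\simeq\pi_r^\vee\tdt\pi_1^\vee$ for normalized parabolic induction from the upper triangular parabolic, applied to the symmetric segment $(\rho\nu^{-(r-1)/2},\dots,\rho\nu^{(r-1)/2})$: reversal yields a segment of the same form for $\rho^\vee$, whose unique generic subquotient is $\St_r(\rho^\vee)$, and contragredient preserves genericity.

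Since contragredient and $\sigma$ commute as operations on smooth representations, combining these two compatibilities gives $\St_r(\rho)^{\vee\sigma}\simeq\St_r(\rho^{\vee\sigma})$. Rewriting the $\sigma$-selfduality hypothesis as $\pi^{\vee\sigma}\simeq\pi$, this reads $\St_r(\rho)\simeq\St_r(\rho^{\vee\sigma})$, and Proposition~\ref{Coupure}(3) applied with $\rho'=\rho^{\vee\sigma}$ then produces the required integer $i\in\ZZ$ satisfying $\rho^{\vee\sigma}\simeq\rho\nu^i$.

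The main delicate point will be the contragredient compatibility: although parabolic induction reverses the order of factors under contragredient, the symmetric nature of the segment around $\rho$ makes this reversal harmless, since the generic subquotient of a segment depends only on the underlying unordered multiset of twists. Everything else is a clean application of standard facts about genericity together with the classification recalled in Proposition~\ref{Coupure}.
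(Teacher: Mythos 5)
Your argument is correct and follows the same route as the paper's: establish the isomorphism $\St_r(\rho)^{\vee\s}\simeq\St_r(\rho^{\vee\s})$ and then invoke Proposition~\ref{Coupure}(3). The paper simply asserts this isomorphism without detail, whereas you fill in the verification (componentwise $\sigma$-action preserving $\nu$ and genericity, contragredient reversing the induction order but fixing the symmetric segment), so the content is the same, only more explicit on your side.
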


\begin{proof}
The representation $\St_r(\rho)$ is $\s$-selfdual if and only if the
representation
\begin{equation*}
\St_r(\rho)^{\vee\s} \simeq \St_r(\rho^{\vee\s})
\end{equation*}
is isomorphic to $\St_r(\rho)$. 
The result then follows from Proposition \ref{Coupure}.
\end{proof}

\begin{prop}
\label{fission}
Let $\pi$ be a cuspidal 
$\s$-selfdual representation of $\GL_{n}(\EE)$.
Set $r=r(\pi)$~and write $k=n/r$.
\begin{enumerate}
\item 
If $r$ is odd or $\ell=2$, 
there is a unique $\s$-selfdual supercuspidal representation $\rho$ of 
$\GL_k(\EE)$ such that $\pi$ is isomorphic to $\St_r(\rho)$.
\item
Suppose that $r$ is even and $\ell\neq2$.
\begin{enumerate}
\item 
There are a supercuspidal representation $\rho$ of $\GL_k(\EE)$
and an $i\in\{0,1\}$
such that $\pi$~is isomorphic~to $\St_r(\rho)$
and $\rho^{\vee\s}\simeq\rho\nu^i$.
\item
Let $\rho'$ be a supercuspidal representation of $\GL_k(\EE)$ 
and $j\in\{0,1\}$ 
such that $\pi$ is~iso\-morphic to $\St_r(\rho')$
and $\rho'^{\vee\s}\simeq\rho'\nu^{j}$.
Then $j=i$, 
and~ei\-ther $\rho'\simeq\rho$ or $\rho'\simeq\rho\nu^{r/2}$. 
\end{enumerate}
\end{enumerate}
\end{prop}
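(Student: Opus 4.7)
The plan is to fix a supercuspidal $\rho_0$ with $\pi \simeq \St_r(\rho_0)$ (existence by Proposition \ref{Coupure}(1)), apply Lemma \ref{fusion} to obtain $i_0 \in \ZZ$, unique modulo $\et(\rho_0)$, with $\rho_0^{\vee\s} \simeq \rho_0\nu^{i_0}$, and then reduce both existence and uniqueness of the desired $\rho$ to elementary arithmetic modulo $\et(\rho_0)$. The key transformation rule is that for $\rho = \rho_0\nu^j$ one has
\[
\rho^{\vee\s} \simeq \rho_0\nu^{i_0-j} \simeq \rho\,\nu^{i_0-2j},
\]
and $\et(\rho_0\nu^j) = \et(\rho_0)$. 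Since by Proposition \ref{Coupure}(3) every supercuspidal whose $\St_r$ equals $\pi$ is of the form $\rho_0\nu^j$, the question becomes the solvability of $2j \equiv i_0 - i \pmod{\et(\rho_0)}$ for the prescribed $i$.

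For part (1), the first step is to observe that $\et(\rho_0)$ is odd. If $r$ is odd, this is immediate from $r = \et(\rho_0)\ell^v$ (Proposition \ref{Coupure}(2)). If $\ell = 2$, then $\ell \neq p$ forces $p$ odd, hence $q$ is odd, and formula \eqref{omegat} gives $\et(\rho_0) = 1$, which is also odd. Consequently $2$ is invertible modulo $\et(\rho_0)$ and the equation $2j \equiv i_0 \pmod{\et(\rho_0)}$ has a unique solution $j$. Setting $\rho = \rho_0\nu^j$ produces a $\s$-selfdual supercuspidal with $\pi \simeq \St_r(\rho)$, and any other such $\rho''$ is of the form $\rho_0\nu^{j''}$ with $2j'' \equiv i_0$, hence is isomorphic to $\rho$.

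For part (2), the hypothesis forces $\et(\rho_0)$ to be even. For (2a), one simply chooses $j$ so that $i_0 - 2j \in \{0,1\}$ modulo $\et(\rho_0)$: take $j = i_0/2$ if $i_0$ is even, $j = (i_0-1)/2$ otherwise. For (2b), given two admissible pairs $(\rho, i)$ and $(\rho', j)$, one writes $\rho' \simeq \rho\nu^c$ and computes both $(\rho')^{\vee\s} \simeq \rho\nu^{i-c}$ and $\rho'\nu^j \simeq \rho\nu^{c+j}$; these must agree modulo $\nu^{\et(\rho)}$, yielding $2c \equiv i - j \pmod{\et(\rho)}$. Since $\et(\rho)$ is even and $i - j \in \{-1,0,1\}$, the only possibility is $i = j$ together with $c \equiv 0 \pmod{\et(\rho)/2}$, so $c \in \{0, \et(\rho)/2\}$ modulo $\et(\rho)$. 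A last observation identifies $\rho\nu^{\et(\rho)/2}$ with $\rho\nu^{r/2}$: writing $r = \et(\rho)\ell^v$ with $\ell$ odd, the difference $r/2 - \et(\rho)/2 = \et(\rho)(\ell^v-1)/2$ is a multiple of $\et(\rho)$.

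The argument is essentially bookkeeping with the parameter shift $i \mapsto i - 2j$, and I do not foresee any real obstacle. The only minor subtlety is the $\ell = 2$ branch of part (1), where the hypothesis ``$r$ odd'' fails but the explicit computation $\et(\rho_0) = 1$ trivially renders every unramified twist isomorphic, so $\rho_0$ is already $\s$-selfdual and tautologically unique.
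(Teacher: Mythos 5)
Your proof is correct and follows essentially the same route as the paper's: fix one $\rho_0$, apply Lemma~\ref{fusion}, and track the effect of the twist $\rho_0\mapsto\rho_0\nu^s$ on the parameter via $i\mapsto i-2s$, reducing everything to congruences modulo $\et(\rho_0)$. The paper's proof is more compressed but uses exactly this mechanism. One small caveat: the identity $r=\et(\rho_0)\ell^v$ (Proposition~\ref{Coupure}(2)) is stated only for cuspidal non-supercuspidal $\pi$, so your argument ``if $r$ is odd then $\et(\rho_0)$ is odd'' does not literally apply when $r=1$; the paper dispatches that case separately as trivial ($\St_1(\rho)=\rho$ forces $\rho=\pi$, which is $\sigma$-selfdual by hypothesis), and you should do the same before invoking the arithmetic.
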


\begin{proof}
If $r=1$, the result is trivial.
Let us assume that $r\>2$.
Fix a supercuspidal irreducible representation $\rho$ of $\GL_k(\EE)$
such that $\pi$~is isomorphic~to $\St_r(\rho)$. 
By Lemma~\ref{fusion},~there is an $i\in\ZZ$
such that $\rho^{\vee\s}\simeq\rho\nu^i$.
Changing $\rho$ to $\rho'=\rho\nu^s$ for some $s\in\ZZ$
does not change $\St_r(\rho)$,
but~chan\-ges $i$ to $i-2s$.
If $r$ is odd or $\ell=2$,
then $\et(\rho)$ is odd,
thus $2\ZZ+\et(\rho)\ZZ=\ZZ$.
This~proves~(1).~Similar\-ly,~if~$r$ is even and $\ell\neq2$,
then $\et(\rho)$ is even:
we thus may assume that $i\in\{0,1\}$, proving (2.a).
Moreover, if $\rho'$ and $j$ are~as in (2.b),
then $j-i$ is even, thus $j=i$.
Moreover, $\rho'$ is isomorphic~to $\rho\nu^s$ for some 
$0\<s<\et(\rho)$ such that $\nu^{2st(\rho)}=1$, 
thus $\et(\rho)$ divides $2s$.
\end{proof}

\subsection{}

We will need the finite field analo\-gue of \ref{par92}
(see \cite{Vigbook} III.2.5 or \cite{Cabanes} Theorem 19.3). 

\begin{prop}
\label{Coupurefinie}
Let $\ke$ be a finite field of characteristic $p$. 
\begin{enumerate}
\item 
Let $f\>1$~be a positive integer and 
$\varrho$ be a~super\-cuspidal representation of $\GL_f(\kk)$.
\begin{enumerate}
\item 
For all $u\>1$, the in\-duced representation 
\begin{equation*}
\label{INDVARRHO}
\varrho\tdt\varrho
\quad
\text{($u$ times)}
\end{equation*}
has a unique generic~irreduci\-ble subquotient, 
denoted $\st_{u}(\varrho)$. 
\item
Let $\et(\varrho)$ be the order of $q^{f}$ mod $\ell$.  
The representation $\st_{u}(\varrho)$ is cuspidal
if and only if $u=1$ or $u=\et(\varrho)\ell^v$ for some $v\>0$.
\end{enumerate}
\item
Let $\Vc$ be a cuspidal representation of $\GL_n(\kk)$. 
There exist a unique integer $u=r(\Vc)\>1$ di\-vi\-ding~$n$
and a unique supercuspidal~represen\-ta\-tion $\varrho$ of $\GL_{n/u}(\kk)$ 
such that $\Vc\simeq\st_u(\varrho)$.
\end{enumerate}
\end{prop}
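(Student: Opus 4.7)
The plan is to adapt the argument of Proposition \ref{Coupure} from the $p$-adic setting to finite reductive groups, appealing to \cite{Vigbook} III.2.5 and \cite{Cabanes} Theorem 19.3 for the hardest ingredients.

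For part (1)(a), I would handle $u=1$ trivially with $\st_1(\varrho)=\varrho$. For $u\ge 2$, I would observe that $\varrho\tdt\varrho$ carries a Whittaker model by transitivity of parabolic induction starting from the generic representation $\varrho\odo\varrho$ of the Levi subgroup $\GL_f(\ke)^u$, and then apply the multiplicity-one theorem for the Gelfand--Graev representation of $\GL_{uf}(\ke)$ to conclude that exactly one irreducible subquotient supports a non-zero Whittaker functional.

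For (1)(b), I would compute the Jacquet modules of $\st_u(\varrho)$ along all proper parabolic subgroups via the geometric lemma, so that cuspidality amounts to the vanishing of these Jacquet modules. The numerical condition $u=1$ or $u=\et(\varrho)\ell^v$ would then emerge from analyzing the mod-$\ell$ multiplicities of supercuspidal constituents in the Jacquet module of the segment representation $\varrho\tdt\varrho$: the quantity $\et(\varrho)$, the order of $q^f$ modulo $\ell$, governs precisely the reducibility points of the segment, in direct analogy with \eqref{omegat} for the $p$-adic case.

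For (2), I would use that a cuspidal representation $\Vc$ of $\GL_n(\ke)$ has a well-defined supercuspidal support, namely an unordered multiset of supercuspidal representations $\varrho_1,\dots,\varrho_s$ of general linear groups $\GL_{n_1}(\ke),\dots,\GL_{n_s}(\ke)$ with $n_1+\dots+n_s=n$, such that $\Vc$ is a subquotient of $\varrho_1\tdt\varrho_s$. The vanishing of all Jacquet modules of $\Vc$ along proper parabolics, combined with Frobenius reciprocity, forces all the $\varrho_i$ to be isomorphic to a single supercuspidal $\varrho$ with $f=n_i$ and $u=s=n/f$. The isomorphism $\Vc\simeq\st_u(\varrho)$ then follows from (1)(a) by matching Whittaker functionals, since a cuspidal representation of $\GL_n(\ke)$ is generic in Vignéras' theory; uniqueness of $(u,\varrho)$ is just the uniqueness of the supercuspidal support. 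The main obstacle is the combinatorial content of (1)(b): the delicate mod-$\ell$ bookkeeping of Jacquet module multiplicities, which is precisely the content of the cited results of Vignéras and Cabanes; once that criterion is in hand, the remaining pieces are formal consequences of the generic representation formalism and of the existence of supercuspidal support.
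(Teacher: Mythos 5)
The paper does not prove Proposition~\ref{Coupurefinie}; it simply cites \cite{Vigbook} III.2.5 and \cite{Cabanes} Theorem 19.3 and states the result. So your proposal is being compared against a citation, not a proof, and should be judged on its own merits.

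Your sketch for (1)(a) is sound: existence of a Whittaker model for $\varrho\tdt\varrho$ follows from the genericity of the supercuspidal $\varrho$ plus transitivity of parabolic induction, and multiplicity one for the Gelfand--Graev representation (valid over $\R$ of characteristic $\ell\neq p$) gives a unique generic constituent. You also correctly flag that the numerical criterion in (1)(b) is the hard combinatorial core and is exactly what the cited references establish.

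The gap is in (2). You claim that ``the vanishing of all Jacquet modules of $\Vc$ along proper parabolics, combined with Frobenius reciprocity, forces all the $\varrho_i$ to be isomorphic.'' This is not a formal consequence of cuspidality. Vanishing of Jacquet modules is the definition of cuspidality, and Frobenius reciprocity (together with second adjunction, which for finite groups holds symmetrically) only says that $\Vc$ neither embeds in nor surjects from a properly induced representation. A cuspidal subquotient could in principle sit in the middle of a filtration of $\varrho_1\tdt\varrho_s$ even if the $\varrho_i$ are pairwise non-isomorphic; nothing in the adjunction formalism rules this out. What actually rules it out is the block decomposition of parabolically induced modules coming from the Dipper--James Hecke-algebra theory: if the $\varrho_i$ fall into two disjoint isomorphism classes, then $\varrho_1\tdt\varrho_s$ factors (up to Jordan--H\"older) through a product of two smaller inductions whose constituents are all properly induced, hence none cuspidal. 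This structural input is exactly part of what \cite{Vigbook} III.2.5 and \cite{Cabanes} Theorem 19.3 provide; it is not a ``formal consequence of the generic representation formalism and of the existence of supercuspidal support.'' So your sketch understates where the non-trivial content of (2) lives, and as written the key step in (2) does not go through.
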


\subsection{}
\label{scaramuccia}

As in the previous paragraph,
$\kk$ is a~fi\-ni\-te field of characteristic $p$. 
Let us recall how~to~pa\-ra\-me\-trize cuspidal representations of $\GL_n(\kk)$
by~regu\-lar characters (\cite{Green}, \cite{Dipper} Theorem~3.5
and \cite{DipperJames,James}).

Let $\overline{\kk}$ be an algebraic closure of $\kk$.
For any integer $s\>1$,
let $\kk_{s}$ be the extension of~$\kk$ of~de\-gree $s$ contained in
$\overline{\kk}$.
Let $\Delta$ denote the group $\Gal(\kk_n/\kk)$.~A
cha\-racter of $\kk_n^\times$ is $\Delta$-regular if it is fixed by
no non-trivial element of $\Delta$. 

\begin{prop} 
\label{classifGreen}
\begin{enumerate}
\item 
Associated with any $\Delta$-regular $\qlb$-character $\xi$ of
$\kk_n^\times$,
there is~a~cus\-pi\-dal $\qlb$-representation
$\Vc_{\xi}$ of $\GL_n(\kk)$,
unique up to isomorphism,
such that
\begin{equation*}
\tr \Vc_{\xi} (x) = (-1)^{n-1}\cdot
\sum\limits_{\d\in\Delta} \xi(x^\d)
\end{equation*}
for all $x\in\kk_n^{\times}$ of degree $n$ over $\kk$,
where $\kk_n^{\times}$ is considered as a maximal torus in
$\GL_n(\kk)$.
\item 
The correspondence 
\begin{equation*}
\xi\mapsto\Vc_{\xi}
\end{equation*} 
induces a bijection from the set of 
$\Delta$-conjugacy~classes~of $\Delta$-regular
$\qlb$-cha\-rac\-ters of $\kk_n^\times$ to that of 
isomorphism classes of cuspidal $\qlb$-re\-pre\-sen\-ta\-tions of 
$\GL_n(\kk)$.
\end{enumerate}
\end{prop}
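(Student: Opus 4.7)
The plan is to construct $\Vc_\xi$ via Deligne--Lusztig induction from the Coxeter (elliptic) maximal torus whose group of $\kk$-rational points is $\kk_n^\times$; one identifies the Weyl group of this torus with $\Delta = \Gal(\kk_n/\kk)$. For any character $\xi$ of $\kk_n^\times$, write $R_\xi$ for the associated virtual $\qlb$-character of $\GL_n(\kk)$. The starting input is the Deligne--Lusztig orthogonality formula
\[
\langle R_\xi, R_{\xi'}\rangle = \#\{\d \in \Delta : \xi' = \xi^\d\},
\]
which, when $\xi$ is $\Delta$-regular, gives $\langle R_\xi, R_\xi\rangle = 1$, so $R_\xi$ is irreducible up to a sign. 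The dimension formula for Deligne--Lusztig characters fixes this sign as $(-1)^{n-1}$, and one sets $\Vc_\xi := (-1)^{n-1} R_\xi$.

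The character formula in (1) then follows from the Deligne--Lusztig character formula applied at a regular element $x \in \kk_n^\times$ of degree $n$ over $\kk$: since the centraliser of such an $x$ in $\GL_n(\kk)$ equals $\kk_n^\times$ itself, the formula degenerates to $\tr R_\xi(x) = \sum_{\d \in \Delta} \xi(x^\d)$, which yields (1) after multiplication by $(-1)^{n-1}$. Cuspidality of $\Vc_\xi$ is then a consequence of the ellipticity of the torus: $\kk_n^\times$ is contained in no proper $\kk$-rational parabolic of $\GL_n$, so the Jacquet module of $R_\xi$ along the unipotent radical of any proper parabolic vanishes.

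For (2), the orthogonality formula shows at once that $\Vc_\xi \simeq \Vc_{\xi'}$ if and only if $\xi$ and $\xi'$ lie in the same $\Delta$-orbit, giving a well-defined injection from $\Delta$-conjugacy classes of $\Delta$-regular characters into isomorphism classes of cuspidal $\qlb$-representations of $\GL_n(\kk)$. Surjectivity follows by comparing cardinalities: the number of $\Delta$-conjugacy classes of $\Delta$-regular characters of $\kk_n^\times$ equals the number of $\GL_n(\kk)$-conjugacy classes of elliptic regular semisimple elements (by a straightforward counting of free $\Delta$-orbits), and this in turn equals the number of isomorphism classes of cuspidal $\qlb$-representations of $\GL_n(\kk)$, e.g.\ via Green's explicit character tables or the Deligne--Lusztig parametrisation of irreducible $\qlb$-representations.

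The main obstacle is establishing the character formula on an elliptic regular element: in a purely Deligne--Lusztig framework this rests on the general character formula combined with the observation that $x$ lies in a unique maximal torus of $\GL_n(\kk)$ up to conjugacy, while in Green's original approach it is extracted by an inclusion--exclusion over Harish-Chandra induced characters. Once the formula on the elliptic locus is in hand, the remaining pieces---irreducibility, cuspidality, and the bijection---follow with minimal additional effort, and the passage from $\mathbb{C}$- to $\qlb$-coefficients is automatic since we remain in characteristic zero.
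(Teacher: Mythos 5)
The paper does not give a proof of this proposition; it is recalled as known, with citations to Green's original combinatorial construction of the cuspidal characters of $\GL_n(\kk)$ and to Dipper and James for the $\ell$-modular counterpart. Your argument, by contrast, is a self-contained modern proof via Deligne--Lusztig induction from the Coxeter torus $\kk_n^\times$: orthogonality gives irreducibility of $\pm R_\xi$ for $\Delta$-regular $\xi$, the sign $(-1)^{n-1}=\epsilon_G\epsilon_T$ comes from the dimension formula, the character value on a regular element of degree $n$ drops out of the character formula because the centraliser of such an element is exactly $\kk_n^\times$, and cuspidality follows from ellipticity of the torus. All of this is correct, and for $\qlb$-coefficients the passage from $\CC$ is immediate as you note. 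This route is more conceptual and uniform than Green's, at the cost of invoking Deligne--Lusztig theory as a black box where Green works by hand with Hall polynomials.

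One place worth tightening is the surjectivity step in (2). You argue by counting, but one of the two counts you invoke ("number of cuspidal $\qlb$-representations equals the number of elliptic regular semisimple classes") is itself a nontrivial fact that is essentially equivalent to the proposition; citing Green's tables or the full Deligne--Lusztig parametrisation there is somewhat circular. A cleaner way inside your framework: every irreducible $\qlb$-representation of $\GL_n(\kk)$ pairs nonzero with some $R_T(\theta)$; if $T$ is not elliptic then $R_T(\theta)$ is Harish-Chandra induced from a proper Levi by transitivity of Lusztig induction, so it has no cuspidal constituents; and for $T=\kk_n^\times$ with $\theta$ not $\Delta$-regular, $\theta$ factors through $\Nm_{\kk_n/\kk_d}$ for a proper divisor $d$, and $R_T^G(\theta)=R_L^G\bigl(R_T^L(\theta)\bigr)$ for the Levi $L\simeq\GL_d(\kk)^{n/d}$, again killing cuspidal constituents. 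This shows every cuspidal occurs in some $R_\xi$ with $\xi$ $\Delta$-regular, completing surjectivity without appealing to a cardinality identity.
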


By reduction mod $\ell$,
we get the following classification.

\begin{prop}
\label{classifJames}
\begin{enumerate}
\item 
Given any $\Delta$-regular $\qlb$-character $\xi$ of $\kk_n^\times$, 
the reduction mod $\ell$ of $\Vc_{\xi}$,
denoted $\overline{\Vc}_\xi$,
is irre\-du\-cible~and cus\-pidal,
and it only depends on the reduction mod $\ell$ of $\xi$.
\item 
Reduction mod $\ell$ induces a bijection from the set of 
$\Delta$-conjugacy~classes~of $\flb$-characters~of 
$\kk_n^\times$~ha\-ving a $\Delta$-re\-gu\-lar lift to $\qlb$ 
to that of isomorphism classes of cuspidal $\flb$-re\-pre\-sen\-ta\-tions
of the group $\GL_n(\kk)$.
\item 
The integer $r(\overline{\Vc}_\xi)$ 
is the greatest divisor $r$ of $n$ such that 
the reduction of $\xi$ mod $\ell$~fac\-torizes through
a cha\-rac\-ter of $\kk_{n/r}^\times$.
\end{enumerate}
\end{prop}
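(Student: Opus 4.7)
The overall plan is to bootstrap Proposition \ref{classifGreen} by reduction mod $\ell$, using a Brauer character analysis for parts (1) and (2), and to combine the result with Proposition \ref{Coupurefinie} for part (3).

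For (1), I would start by noting that $\Vc_\xi$ is integral: its character takes algebraic integer values, so every irreducible $\qlb$-representation of the finite group $\GL_n(\kk)$ is realizable over $\zlb$, and $\rl(\Vc_\xi)$ is well-defined. The key step is then to compute the Brauer character of $\rl(\Vc_\xi)$: Proposition \ref{classifGreen}(1) implies that on a regular elliptic $\ell'$-element $x\in\kk_n^\times$ of degree $n$ over $\kk$, this character equals $(-1)^{n-1}\sum_{\d\in\Delta}\bar\xi(x^\d)$, and it vanishes on the other $\ell'$-classes since it already does for $\Vc_\xi$. Because $\ell\neq p$, the $\ell'$-part of the cyclic group $\kk_n^\times$ is large enough that $\Delta$-regularity of $\xi$ forces $\Delta$-regularity of $\bar\xi$ on $\ell$-regular elements, so a Brauer inner-product computation on the Coxeter torus, analogous to the one used for $\Vc_\xi$ itself, yields irreducibility of $\rl(\Vc_\xi)$. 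Since only $\bar\xi$ enters the formula, the reduction depends only on $\bar\xi$. Cuspidality descends from $\Vc_\xi$ to $\overline{\Vc}_\xi$ because Harish--Chandra restriction along any proper parabolic is exact and commutes with reduction mod $\ell$.

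For (2), the map $\bar\xi\mapsto\overline{\Vc}_\xi$ is well-defined by (1). I would prove injectivity up to $\Delta$-conjugacy from the Brauer character formula together with linear independence of the characters $\bar\xi^\d$ of the $\ell'$-part of $\kk_n^\times$. For surjectivity, every cuspidal $\flb$-representation $\Pi$ of $\GL_n(\kk)$ lifts to an integral cuspidal $\qlb$-representation (via projective covers and the known block structure of $\GL_n(\kk)$ in cross-characteristic), which by Proposition \ref{classifGreen}(2) is of the form $\Vc_\xi$ for some $\Delta$-regular $\xi$, so $\bar\xi$ is the required parameter.

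For (3), I would combine (2) with Proposition \ref{Coupurefinie}(2), writing $\overline{\Vc}_\xi\simeq\st_r(\varrho)$ with $\varrho$ supercuspidal on $\GL_{n/r}(\kk)$; by (2), $\varrho\simeq\overline{\Vc}_{\bar\eta}$ for some character $\bar\eta$ of $\kk_{n/r}^\times$. Comparing Brauer characters of $\st_r(\varrho)$ and $\overline{\Vc}_\xi$ on regular elliptic $\ell'$-classes should identify $\bar\xi$ (up to $\Delta$-conjugacy) with $\bar\eta\circ\Nm_{\kk_n/\kk_{n/r}}$, so that $\bar\xi$ factors through $\kk_{n/r}^\times$; maximality of $r$ then corresponds to supercuspidality of $\varrho$, i.e.\ to non-factorization of $\bar\eta$ through any further norm map. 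The main obstacle is precisely this last matching: one has to verify that the supercuspidal $\flb$-representations of $\GL_{n/r}(\kk)$ correspond exactly to those $\bar\eta$ not factoring through a norm from $\kk_{n/r}$ onto a proper subfield containing $\kk$, using identity \eqref{omegat} to reconcile $r=\et(\varrho)\ell^v$ with the combinatorial structure of $\bar\eta$. This is the finite-field counterpart of the Dipper--James parametrization, and making the two parametrizations compatible is where most of the real work will go.
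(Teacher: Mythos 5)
The paper does not give a proof of this proposition: the sentence immediately preceding it reads ``By reduction mod~$\ell$, we get the following classification,'' and the statement is attributed to the references Dipper, Dipper--James and James. It is a substantial theorem of the Dipper--James theory, not something derived on the spot, so there is no in-paper argument to compare against; I assess your sketch on its own merits.

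Your part (1) has a genuine gap. You claim that ``$\Delta$-regularity of $\xi$ forces $\Delta$-regularity of $\bar\xi$ on $\ell$-regular elements.'' An $\flb$-valued character of the cyclic group $\kk_n^\times$ is automatically trivial on the $\ell$-Sylow subgroup, so $\Delta$-regularity of $\bar\xi$ restricted to the $\ell'$-part is equivalent to $\Delta$-regularity of $\bar\xi$ itself. This is false in general. For a concrete counterexample, take $n=2$ and $\ell$ an odd prime with $\ell\mid q+1$ but $\ell\nmid q-1$, and let $\xi$ be a $\qlb$-character of $\kk_2^\times$ of exact order $\ell^a$, the $\ell$-part of $q+1$. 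Then $\xi^{q-1}$ still has order $\ell^a\neq 1$ since $\gcd(\ell,q-1)=1$, so $\xi$ is $\Delta$-regular; but $\bar\xi$ is trivial and hence fixed by $\Delta$. The resulting reduction $\overline{\Vc}_\xi\simeq\st_2(1)$ is cuspidal but not supercuspidal. Note this directly contradicts your own part (3): your claim in (1) would force $\bar\xi$ to always be $\Delta$-regular, hence $r(\overline{\Vc}_\xi)=1$ always, which part (3) explicitly denies. The whole point of the proposition is that $\Delta$-regular characters can reduce to $\Delta$-degenerate ones, and this is detected by $r(\overline{\Vc}_\xi)$.

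Beyond the false claim, the proposed method is inadequate: Brauer characters of irreducible $\flb$-representations are not orthonormal for the inner product on $\ell$-regular classes, so an inner-product computation on the Coxeter torus cannot by itself establish irreducibility of $\rl(\Vc_\xi)$ — you would already need to know the decomposition matrix. The standard route (James, Dipper--James) instead uses the Gelfand--Graev representation $\Gamma=\Ind_U^{\GL_n(\kk)}\psi$, which is projective in cross characteristic and reduces mod~$\ell$ to the $\flb$-Gelfand--Graev representation. Exactness of the Jacquet functor over $\zlb$ forces every composition factor of $\rl(\Vc_\xi)$ to be cuspidal; each cuspidal $\flb$-representation of $\GL_n(\kk)$ occurs with multiplicity one in $\Gamma_{\flb}$; and projectivity gives $\dim\Hom(\Gamma_{\flb},\rl(\Vc_\xi))=\dim\Hom(\Gamma_{\qlb},\Vc_\xi)=1$, forcing a single composition factor. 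Your parts (2) and (3) are plausible in outline, but (2) depends on the irreducibility in (1), and the matching in (3) between the $\st_r$-presentation of $\overline{\Vc}_\xi$ and the norm-inflation structure of $\bar\xi$ is precisely the content of the Dipper--James parametrization: it needs to be proved, not asserted by ``comparing Brauer characters on regular elliptic $\ell'$-classes,'' because those classes alone do not separate the composition factors of parabolic inductions.
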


\begin{defi}
\label{defparameter}
A \textit{parameter} of a cuspidal representation $\rho$ of $\GL_n(\kk)$
is a character of $\kk^\times_n$ whose $\Delta$-conjugacy~class
corresponds to $\rho$ by the bijection of either Proposition \ref{classifGreen} 
or \ref{classifJames}.
\end{defi}

\subsection{}

Finally,
we will need the following distinction criterion for cuspidal
$\qlb$-representations 
(see \cite{HakimMurnaghan}~Pro\-position 6.1
and \cite{Coniglio} Lemme 3.4.10) of $\GL_n(\kk)$
when $p$ is odd.

\begin{prop}
\label{chourineur}
Assume that $q$ is odd, $n$ is even and~write $n=2u$.
We consider the group $\GL_u(\kk)\times\GL_u(\kk)$ as a Levi subgroup of
$\GL_n(\kk)$.
Let $\xi$ be a $\Delta$-regular $\qlb$-charac\-ter of $\kk_n^\times$.
\begin{enumerate}
\item 
The following assertions are equivalent.
\begin{enumerate}
\item 
The cuspidal $\qlb$-representation $\Vc_\xi$ is
$\GL_u(\kk)\times\GL_u(\kk)$-distinguished. 
\item
The space of
$\GL_u(\kk)\times\GL_u(\kk)$-invariant linear forms on $\Vc_\xi$
has $\qlb$-dimension $1$.
\item
The cuspidal $\qlb$-representation $\Vc_\xi$ is selfdual. 
\item
The character $\xi$ is trivial on $\kk_u^\times$.
\end{enumerate}
\item
Assume that the conditions of {\rm (1)} are satisfied,
and fix an element $\a\in\kk_n^\times$ such that $\a\notin\kk_u^\times$
and $\a^2\in\kk_u^\times$. 
The element
\begin{equation}
\label{repy}
\ss =
\begin{pmatrix}
0 & {\rm id} \\
{\rm id} & 0 \end{pmatrix}
\in\GL_n(\kk),
\end{equation}
where ${\rm id }$ is the identity in $\GL_{u}(\kk)$, 
normalizes the group $\GL_u(\kk)\times\GL_u(\kk)$ and acts
on~the~$\qlb$-vector spa\-ce of $\GL_u(\kk)\times\GL_u(\kk)$-invariant
linear forms on $\Vc_\xi$ by the sign $-\xi(\a)$.
\end{enumerate}
\end{prop}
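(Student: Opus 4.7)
The plan is to establish part (1) in two stages: first the combinatorial equivalence (c)$\Leftrightarrow$(d), then the equivalence (a)$\Leftrightarrow$(c) together with the multiplicity statement (b), using the Gelfand pair property and a Deligne--Lusztig character computation. Part (2) is then a sign computation on the unique invariant form.

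First I would establish (c)$\Leftrightarrow$(d) combinatorially. By Proposition~\ref{classifGreen}, $\Vc_\xi^\vee \simeq \Vc_{\xi^{-1}}$, so $\Vc_\xi$ is selfdual iff $\xi^{q^i+1}=1$ for some $0 \le i < n$. Applying the Frobenius to this identity yields $\xi^{q^{2i}}=\xi$, forcing $n \mid 2i$ by $\Delta$-regularity, so $i\in\{0,u\}$. The case $i=0$ would require $\xi^2=1$; since $q$ is odd, every $(q-1)$-st power in $\kk_n^\times$ is a square, so the unique non-trivial quadratic character of $\kk_n^\times$ is $\Delta$-stable, contradicting regularity. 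Hence $i=u$, which via surjectivity of $\Nm_{\kk_n/\kk_u}\colon\kk_n^\times\to\kk_u^\times$ is equivalent to $\xi|_{\kk_u^\times}=1$.

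For (a)$\Rightarrow$(c) and (b), I would invoke the classical fact that $(\GL_n(\kk),\GL_u(\kk)\times\GL_u(\kk))$ is a Gelfand pair, so $\operatorname{Ind}_H^{\GL_n(\kk)}\mathbf{1}$ is multiplicity-free as a $\GL_n(\kk)$-representation (with $H=\GL_u(\kk)\times\GL_u(\kk)$). As this induced representation is manifestly selfdual (the trivial character being selfdual), every irreducible constituent is selfdual; hence an $H$-distinguished cuspidal $\Vc_\xi$ is selfdual, proving (c), and by multiplicity-freeness the space of $H$-invariant forms has dimension one, proving (b).

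For (d)$\Rightarrow$(a) and part (2), I would use Green's character formula (Proposition~\ref{classifGreen}) to realize $\Vc_\xi$ as a Deligne--Lusztig character indexed by $\xi$ on the elliptic torus $T\simeq\kk_n^\times$. Applying Frobenius reciprocity and computing $\langle \operatorname{tr}\Vc_\xi,\,\operatorname{Ind}_H^{\GL_n(\kk)}\mathbf{1}\rangle$ by restricting characters to $\theta$-stable maximal tori, where $\theta=\operatorname{Ad}(\operatorname{diag}(I_u,-I_u))$ is the involution with fixed point group $H$, the multiplicity comes out to $1$ exactly when $\xi$ is trivial on $T^\theta=\kk_u^\times$, which is (d). For part (2), the element $\ss$ normalizes $H$ and represents the non-trivial element of the quotient $W(T)^\theta$, its action on $T^\theta$ coinciding with the non-trivial element of $\Gal(\kk_n/\kk_u)$; its action on the unique $H$-invariant form is then computed from Green's formula by evaluating $\xi$ at the $\theta$-anti-fixed element $\alpha$, where the Green sign $(-1)^{n-1}$ combines with the twist by $\xi(\alpha)$ to yield the scalar $-\xi(\alpha)$. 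The main obstacle is this final character and sign computation, which requires careful bookkeeping of $\theta$-stable tori in $\GL_n(\kk)$ and of their contributions to both the multiplicity and the sign; this is precisely the content of \cite{HakimMurnaghan} Proposition~6.1 and \cite{Coniglio} Lemme~3.4.10, which I would invoke directly to complete the argument.
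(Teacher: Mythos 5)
The paper itself gives no proof of this Proposition: it is stated as a quotation of \cite{HakimMurnaghan} Proposition 6.1 and \cite{Coniglio} Lemme 3.4.10, and your proposal ultimately cites the same two sources for the Deligne--Lusztig character computation that yields (d)$\Rightarrow$(a) and part (2). The extra self-contained argument you offer for (c)$\Leftrightarrow$(d) is correct and is a nice addition.

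There is, however, a genuine gap in your argument for (a)$\Rightarrow$(c). The implication ``$\Ind_H^G\mathbf{1}$ is multiplicity-free and selfdual, therefore every constituent is selfdual'' is not valid. Multiplicity-freeness plus selfduality only says that the set of irreducible constituents is closed under $\pi\mapsto\pi^\vee$; it does not force $\pi\cong\pi^\vee$ for each one. For a counterexample to the bare logical step, take any non-trivial finite abelian group $G$ of odd order and $H=\{1\}$: the regular representation is multiplicity-free and selfdual, yet no non-trivial character of $G$ is selfdual. In the present setting (a)$\Rightarrow$(c) is true, but it needs an actual argument. Two clean fixes: (i) note that (a)$\Rightarrow$(c) is a formal consequence of (a)$\Leftrightarrow$(d), which is exactly what the cited character computation produces, combined with your combinatorial (c)$\Leftrightarrow$(d) --- so the Gelfand-pair detour is redundant for this implication; or (ii) argue directly that an $H$-distinguished $\pi$ satisfies $\pi^\vee\simeq\pi^\theta$ for the involution $\theta=\operatorname{Ad}(\operatorname{diag}(\mathrm{id}_u,-\mathrm{id}_u))$ defining $H$, and observe that $\theta$ is inner in $\GL_n(\kk)$, so $\pi^\theta\simeq\pi$. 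Your use of the Gelfand pair property is entirely legitimate for (a)$\Rightarrow$(b); it is only the further inference to (c) that is broken.
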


\begin{rema}
\label{comtesseartoff}
Suppose that $\Vc_\xi$ is $\GL_u(\kk)\times\GL_u(\kk)$-distingui\-shed.
By \cite{VSANT19} Lemma 2.6, 
the cuspidal $\flb$-representation
$\overline{\Vc}_\xi$ is $\GL_u(\kk)\times\GL_u(\kk)$-distinguished as well.
More precisely, 
if we fix 
a non-zero $\GL_u(\kk)\times\GL_u(\kk)$-in\-va\-riant
$\qlb$-linear form ${\it\La}$ on $\Vc_\xi$ together with 
a $\GL_n(\kk)$-stable $\zlb$-lattice ${L}\subseteq\Vc_\xi$,
then the associated $\flb$-linear form 
\begin{equation*}
\overline{{\it\La}} :
{L}\otimes\flb \to {\it\La}({L})\otimes\flb
\end{equation*}
is non-zero and $\GL_u(\kk)\times\GL_u(\kk)$-in\-va\-riant.
Moreover, 
if $\ss$ acts on ${\it\La}$ by a sign
$c\in\{-1,1\}\subseteq\overline{\mathbb{Q}}{}^\times_\ell$,
then $\ss$ acts on $\overline{{\it\La}}$
by the image of $c$ in $\overline{\mathbb{F}}{}^\times_\ell$.
\end{rema}

\section{Reduction to level zero}
\label{pasfaim}

In this section,
$p$ is odd,
$\ell$~is any pri\-me number different from $p$ and
$\R$ has characteristic $0$ or $\ell$.
Let us fix a positive integer $n\>1$, and set $\G=\GL_n(\EE)$.
We fix~a cha\-rac\-ter
\begin{equation}
\label{psiF}
\psi : \F \to \R^\times
\end{equation}
which is trivial on $\pp_\F$ but not on $\oo_\F$.

\subsection{}
\label{prelim}

Let us recall the definitions and main results of
\cite{BK,BHEffective,MSt,AKMSS} which we will need.

Let $[\aa,\b]$ be a simple stratum in 
the algebra $\Mat_{n}(\F)$ of $n\times n$ matrices with entries 
in $\F$.  
Recall that $\aa$ is a hereditary $\oo_\F$-order of $\Mat_{n}(\F)$
and $\b$ is an element of $\Mat_{n}(\F)$ such that
\begin{itemize}
\item
the $\F$-algebra $\E=\F[\b]$ is a field, and
\item
the multiplicative group $\E^\times$ normalizes $\aa$
\end{itemize}
(plus an extra technical condition on
$\b$ which is not necessary to recall here: see \cite{BK} 1.5.5).

Let $\Kk_{\aa}$ be the normalizer of $\aa$ in $\G$
and $\pp_\aa$ be its Jacobson radical, and set
$\U^1_{\aa} = 1 + \pp_\aa$. 
Let $\B$ be the cen\-trali\-zer of $\E$ in $\Mat_{n}(\F)$.
The inter\-section $\bb=\aa\cap\B$ is a hereditary order in $\B$.

Associated with $[\aa,\b]$ in \cite{BK} Chapter 3,
there are compact mod centre open subgroups
\begin{equation*}
\H^1(\aa,\b)\subseteq
\BJ^1(\aa,\b)\subseteq\BJ^0(\aa,\b)\subseteq\BJ(\aa,\b)\subseteq\Kk_{\aa}
\end{equation*}
and a non-empty finite set $\Cc(\aa,\b)$
of characters of $\H^1(\aa,\b)$ called \textit{simple characters}, 
depending on the choice of \eqref{psiF}.
We write 
$\BJ=\BJ(\aa,\b)$, $\BJ^0=\BJ^0(\aa,\b)$, $\BJ^1=\BJ^1(\aa,\b)$ 
and $\H^1=\H^1(\aa,\b)$~for simplicity.

We will only be interested in the case where $\bb$ is a maximal order in $\B$,
in which case the~sim\-ple~stratum $[\aa,\b]$ and the simple characters 
in $\Cc(\aa,\b)$ are said to be \textit{maximal}.
For the following result, see \cite{BHEffective} {\rm 2.1, 3.2}
and \cite{BK} {\rm 5.1.1}.

\begin{prop}
\label{patel}
Let $[\aa,\b]$ be a maximal simple stratum.
\begin{enumerate}
\item
The group $\BJ^0$ is the unique maximal compact subgroup of $\BJ$,
and $\BJ^1$ is its unique maximal normal pro-$p$-subgroup.
\item
One has $\BJ=\E^\times\BJ^0=(\BJ\cap\B^\times)\BJ^1$ and
\begin{equation}
\label{lapremiereegalite1}
\BJ\cap\B^\times=\Kk^{}_{\bb}, 
\quad
\BJ^0\cap\B^\times=\bb^\times,
\quad
\BJ^1\cap\B^\times=\U^1_{\bb}.
\end{equation}
\item
There is an isomor\-phism of $\E$-algebras 
\begin{equation}
\label{melon}
\B\simeq\Mat_m(\E),
\quad
m=n/[\E:\F],
\end{equation}
sending $\bb^\times$ to the maximal compact open subgroup $\GL_m(\oo_\E)$, 
which induces iso\-mor\-phisms 
\begin{equation}
\label{JJ1UU1GLmax}
\BJ^0/\BJ^1 \simeq \bb^\times/\U^1_{\bb} \simeq \GL_{m}(\ll)
\end{equation}
where $\ll$ is the residue field of $\E$.
\item 
Given any simple character $\t\in\Cc(\aa,\b)$,
we have
\begin{enumerate}
\item 
the normalizer of $\t$ in $\G$ is equal to $\BJ$, and
\item
there is an irreducible representation $\n$ of $\BJ^1$,
unique up to isomorphism,
whose~res\-triction to $\H^1$ contains $\t$,
and such a representation extends to $\BJ$.
\end{enumerate}
\end{enumerate}
\end{prop}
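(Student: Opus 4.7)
This proposition gathers four structural facts about maximal simple strata, all essentially due to Bushnell--Kutzko and Bushnell--Henniart; the plan is to verify each in turn by unpacking the explicit construction of the filtration $\H^1 \subseteq \BJ^1 \subseteq \BJ^0 \subseteq \BJ$ and of the set $\Cc(\aa,\b)$ recalled in \cite{BK}~Chapter~3.

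For~(1), recall that $\BJ^1$ is built out of the principal congruence filtration of $\aa$ intersected with pro-$p$ data from the centraliser $\B$, while $\BJ^0 = \H^1\,\bb^\times$ and $\BJ = \BJ^0 \lp \varpi_\E \rp$ for any prime element $\varpi_\E$ of $\E$. Pro-$p$-ness and normality of $\BJ^1$ in $\BJ^0$ are built into the construction, and compactness of $\BJ^0$ follows from $\BJ^0 \subseteq \Kk_\aa$ together with compactness of $\Kk_\aa$ modulo the centre. Uniqueness of $\BJ^0$ as the maximal compact subgroup is then immediate from $\BJ/\BJ^0 \simeq \ZZ$; uniqueness of $\BJ^1$ as the maximal normal pro-$p$ subgroup of $\BJ^0$ follows once~(3) is established, since $\GL_m(\ll)$ has no non-trivial normal $p$-subgroup in its defining characteristic.

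For~(2), the decomposition $\BJ = \E^\times \BJ^0$ is a consequence of $\E^\times$ normalising $\aa$, and $\BJ = (\BJ \cap \B^\times)\BJ^1$ follows from the maximality of $\bb$ (which forces $\E^\times \subseteq \Kk_\bb$) combined with the Iwahori-type decomposition of $\BJ$ along $\B^\times$. The three equalities in~\eqref{lapremiereegalite1} are then read off by intersecting each level of the filtration with $\B^\times$, using that the centraliser in $\aa$ of $\E$ equals $\bb$ and similarly for the Jacobson radicals. For~(3), the isomorphism $\B \simeq \Mat_m(\E)$ is Wedderburn applied to the central simple $\E$-algebra $\B$, which has $\E$-dimension $m^2$; maximality of $\bb$ ensures its image is $\B^\times$-conjugate to $\Mat_m(\oo_\E)$, yielding $\bb^\times / \U^1_\bb \simeq \GL_m(\ll)$, and combining with~(2) gives the isomorphism $\BJ^0/\BJ^1 \simeq \GL_m(\ll)$.

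The substantive content lies in~(4). For~(a), one invokes the intertwining formula $I_\G(\theta) = \BJ^1 \B^\times \BJ^1$ of \cite{BK}~3.3.2 and checks that the elements of $\B^\times$ normalising $\theta$ are exactly those of $\Kk_\bb = \BJ \cap \B^\times$. For~(b), the group $\BJ^1/\ker\theta$ is a Heisenberg $p$-group with centre $\H^1/\ker\theta$, on which $\theta$ induces a non-degenerate symplectic form; the Stone--von~Neumann theorem then furnishes a unique irreducible $\n$ of $\BJ^1$ whose restriction to $\H^1$ is $\theta$-isotypic. Extending $\n$ to the full group $\BJ$ is the main obstacle: using~(4)(a) one builds, for each $g \in \BJ$, an intertwiner between $\n$ and $\n^g$, producing a projective representation of $\BJ/\BJ^1$; the true content is that this projective representation lifts to an honest one, which is the cohomological computation carried out in \cite{BK}~5.1.1 and refined in \cite{BHEffective}~§§2--3, where the relevant Schur multiplier of the pro-cyclic-by-$\GL_m(\ll)$-like quotient is shown to vanish.
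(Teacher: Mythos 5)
The paper does not prove this proposition; it simply cites \cite{BHEffective}~2.1, 3.2 and \cite{BK}~5.1.1, so your reconstruction must be measured against those sources. For parts~(1)--(3) and~(4.a) your sketch is broadly faithful: the filtration identities, the Wedderburn argument, and the intertwining formula $I_\G(\t)=\BJ^1\B^\times\BJ^1$ are indeed the ingredients, and the two uniqueness assertions in~(1) do reduce to $\BJ/\BJ^0\simeq\ZZ$ and to the absence of non-trivial normal $p$-subgroups in $\GL_m(\ll)$, as you say. One small slip: compactness of $\BJ^0$ does not follow from $\BJ^0\subseteq\Kk_\aa$ (which is only compact modulo the centre), but from $\BJ^0\subseteq\aa^\times=\U_\aa$, the unit group of $\aa$.

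The real issue is your account of the extension step in~(4.b). You describe \cite{BK}~5.1.1 as ``the cohomological computation \dots\ where the relevant Schur multiplier of the pro-cyclic-by-$\GL_m(\ll)$-like quotient is shown to vanish''. That is not what Bushnell--Kutzko do, and it is not obviously sufficient. The projective representation of $\BJ/\BJ^1$ and its obstruction class in $H^2$ are the right starting picture, but the vanishing of that specific cocycle is \emph{not} deduced from vanishing of a Schur multiplier: $\BJ/\BJ^1$ is a non-trivial extension of $\ZZ$ by $\GL_m(\ll)$, and the multiplier of $\GL_m(\ll)$ is not trivial in all cases, so nothing is free here. What \cite{BK}~Chapter~5 (and, in the maximal setting relevant here, the part around 5.1.1 and 5.2) actually does is build a $\beta$-extension $\bk$ explicitly, by an inductive mechanism propagating through Iwahori decompositions and parabolic restriction of simple characters -- a genuinely different and much more labour-intensive argument, carried over to coefficients of characteristic $\ell$ in \cite{MSt}~2.4, as the paper itself invokes later for Lemma~\ref{ArseneLup3}. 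Your sketch therefore contains the right intuition but misattributes the method and, as written, leaves a gap: the assertion that ``the projective representation lifts to an honest one'' is precisely the non-trivial content, and it is not settled by a Schur-multiplier remark.
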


The representation $\n$ of (4.b) is called the
\textit{Heisenberg representation}
associated with $\t$.
If $\bk$ is a representation of
$\BJ$ extending $\n$, 
any other extension of $\n$ to $\BJ$ has the form $\bk\bx$
for a unique character $\bx$ of $\BJ$ trivial on $\BJ^1$.

\begin{rema}
\label{rembase1}
\begin{enumerate}
\item 
An isomorphism as in Proposition \ref{patel}(3) comes from the
choice of~an $\oo_\E$-basis of an $\oo_\E$-lattice $\Ll$ in $\F^n$
whose endomorphism algebra is $\bb$. 
\item
Changing the isomorphism \eqref{melon},
that is, changing the basis of $\Ll$ above, 
has the effect of conjugating 
the identification \eqref{JJ1UU1GLmax}~by 
an inner automorphism of $\GL_{m}(\ll)$.
\end{enumerate}
\end{rema}

A character $\t$ of an open pro-$p$-subgroup $\H$ of $\G$ will be called a
\textit{maximal simple~cha\-rac\-ter} if there is a maximal simple~stra\-tum 
$[\aa,\b]$ in $\Mat_n(\F)$ such that $\H=\H^1(\aa,\b)$ and
$\t\in\Cc(\aa,\b)$.~Gi\-ven~a maximal simple~cha\-rac\-ter $\t$ of $\G$,
we will write $\H^1_\t$ for the group on which $\t$ is defined,~$\BJ^{}_\t$
for its $\G$-normalizer, 
$\BJ^0_\t$ for its unique maximal compact subgroup, 
$\BJ^1_\t$ for its unique maximal~nor\-mal pro-$p$-subgroup 
and $\T$ for the maximal tamely ramified extension of $\F$ in 
$\E=\F[\b]$.
The following result shows how the latter depends on the choice of $[\aa,\b]$
(see \cite{BHEffective} 2.1, 2.5 and 2.6).

\begin{prop}
Let $[\aa',\b']$ be a sim\-ple stratum such that $\t\in\Cc(\aa',\b')$,
and set $\E'=\F[\b']$. 
\begin{enumerate}
\item 
The orders $\aa$, $\aa'$ are equal and $\E$, $\E'$ have the same degree over 
$\F$. 
\item
The simple stratum $[\aa,\b']$ is maximal.
\item
The maximal tamely ramified extension of $\F$ in $\E'$ is
$\BJ_\t^1$-conjugate to $\T$.
\end{enumerate}
\end{prop}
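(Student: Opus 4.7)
The plan is to deduce all three assertions from the structural theory of simple characters of \cite{BK} and \cite{BHEffective}, by showing that each invariant in question is intrinsically encoded in the simple character $\t$ alone. Throughout, the key tool will be the intertwining-and-conjugacy theorem for simple characters, together with the explicit description of the filtration of $\H^1_\t$ in terms of the defining stratum.

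For (1), I would first recover the order $\aa$ directly from $\H^1_\t$. By construction, $\H^1_\t$ contains a principal congruence subgroup of the form $1+\pp_\aa^{s}$ with $s$ determined by the critical exponent of the stratum, and more importantly $\H^1_\t$ carries a natural decreasing filtration whose associated graded pieces recognise the lattice chain underlying $\aa$. Applied to both strata, this forces $\aa=\aa'$. For the equality of degrees, the index $[\BJ^1_\t:\H^1_\t]$ is an invariant of $\t$ alone, and once $\aa$ is fixed this index is a strictly monotone function of $[\E:\F]$; hence $[\E:\F]=[\E':\F]$.

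For (2), I would use (1) to reduce to showing that $\bb':=\aa\cap\B'$ is a maximal $\oo_{\E'}$-order in the centralizer $\B'$ of $\E'$. Maximality of the original stratum $[\aa,\b]$ is equivalent to the quotient $\BJ^0_\t/\BJ^1_\t$ being isomorphic to $\GL_m(\ll)$ for some finite extension $\ll$ of $\kk_\F$ with $m=n/[\E:\F]$, a condition purely on $\t$. Since $[\E':\F]=[\E:\F]$ and the quotient does not depend on whether one presents the character via $[\aa,\b]$ or $[\aa,\b']$, the same isomorphism type is forced on the side of $\E'$, yielding maximality of $[\aa,\b']$.

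The main obstacle is (3). Here I would invoke the intertwining theorem, which identifies the $\G$-intertwining of $\t$ with $\BJ^1_\t\B^\times\BJ^1_\t$, together with its conjugacy refinement relating different simple strata producing $\t$. The strategy is: first show that there exists an element of $\BJ_\t$ conjugating some embedding of $\E'$ inside $\Mat_n(\F)$ to an embedding whose tame parameter field sits inside $\E$; then, at the level of maximal tamely ramified sub-extensions, promote this to $\BJ^1_\t$-conjugacy by absorbing the residual part of the conjugator, using that on the residue-field side the tame sub-extension is already uniquely determined, so the remaining conjugation can be taken within the pro-$p$ normalizer $\BJ^1_\t$. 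The delicate point is precisely this final refinement from $\BJ_\t$ to $\BJ^1_\t$, which is the technical heart of the result.
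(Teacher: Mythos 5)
The paper does not supply a proof of this proposition: it is quoted from Bushnell--Henniart, with the citation ``\cite{BHEffective} 2.1, 2.5 and 2.6'' appearing immediately before the statement. So the comparison has to be with those sources, not with an in-text argument.

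Your sketch of (1) and (2) is in the right spirit --- the idea is indeed that $\H^1_\t$, $\BJ^1_\t$, $\BJ^0_\t$, $\BJ_\t$ are intrinsic to $\t$ and carry all the invariants --- but two of your specific claims are under-justified. You assert that $[\BJ^1_\t:\H^1_\t]$ is ``a strictly monotone function of $[\E:\F]$'' once $\aa$ is fixed; this is not obvious, and it is not how the degree invariance is established in the source (there it comes out of the endo-class formalism, where the degree is an invariant by construction). For (2), your argument tacitly assumes that $\bb'^\times/\U^1_{\bb'}$ coincides with $\BJ^0_\t/\BJ^1_\t$ and that the residue field $\ll'$ of $\E'$ is the same as $\ll$; the latter is a consequence of (3), which you have not yet proved at that point, so as written the argument is circular unless you justify $\ll=\ll'$ independently.

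The real gap is (3). This is the content of the Tame Parameter Theorem in \cite{BHEffective}, and it is a genuinely hard statement: it is proved there using the whole tame lifting / base change apparatus that Bushnell--Henniart built up across several papers, not by an elementary conjugation argument. Your outline --- invoke the intertwining formula $I_\G(\t)=\BJ^1_\t\B^\times\BJ^1_\t$, get a conjugator in $\BJ_\t$, then ``absorb the residual part'' to land in $\BJ^1_\t$ using that ``on the residue-field side the tame sub-extension is already uniquely determined'' --- is not a proof. Two concrete problems: (i) the intertwining formula is stated relative to the centraliser $\B$ of $\E$, not of $\E'$, so it does not directly produce an element relating $\T'$ to $\T$, and (ii) ``absorbing the residual part'' is precisely the difficult step; nothing in the sketch explains why the discrepancy between a $\BJ_\t$-conjugator and a $\BJ^1_\t$-conjugator can be killed, and indeed the Bushnell--Henniart proof does not proceed this way. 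To repair this you would essentially have to reproduce the tame lifting argument, which is a much larger undertaking than your outline suggests.
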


It follows that the $\G$-conjugacy class of the simple character $\t$
uniquely determines the integer $m$ in \eqref{melon},
as well as the extension $\T$ up to $\G$-conjugacy
(or equivalently up to $\F$-isomorphism). 
However, the fields $\E$, $\E'$ need not be isomorphic
(see \cite{BHEffective} Example 2.1).

\subsection{}
\label{nom42}

In this paragraph only,
we let $n$ vary among all positive integers,
and consider the set
\begin{equation*}
\Cc_{\rm max}(\F) = \bigcup\limits_{[\aa,\b]} \Cc(\aa,\b)
\end{equation*}
where the union is taken over all maximal simple strata of 
$\Mat_n(\F)$, for any $n\>1$.
It is endowed with an equivalence relation called \textit{endo-equivalence}
(\cite{BHLTL1,BHIMRN}).
An equivalence class for this equi\-va\-len\-ce relation is called an
\textit{endo-class} (see \cite{AKMSS} 3.2).

Given a maximal simple character $\t\in\Cc_{\rm max}(\F)$ with endo-class 
$\TT$, 
the degree $[\E:\F]$ and the $\F$-isomorphism class of its tame parameter 
field $\T$ only depend on $\TT$.
They are called the \textit{degree} and \textit{tame parameter field} of 
$\TT$, respectively.

For a given $n$,
any two maximal simple characters of $\GL_n(\F)$
are endo-equivalent if and only if they are $\GL_n(\F)$-conjugate.  

\begin{rema}
Note that endo-equivalence is defined in \cite{BHLTL1,BHIMRN} for arbitrary 
simple characters, not only for maximal ones,
but we will not need this extra generality. 
\end{rema}

\subsection{}
\label{rappelstypesssd}

We go back to the situation of Paragraph \ref{prelim},
assuming further that the character $\psi$ of~\eqref{psiF} is $\s$-invariant,
which is possible since $p\neq 2$.
As in \cite{VSANT19}, we will say that:
\begin{itemize}
\item 
a~simple stratum $[\aa,\b]$~in $\Mat_n(\F)$ is $\s$-\textit{selfdual} 
if $\aa$ is $\s$-stable and $\s(\b)=-\b$,
\item
a simple character $\t$ is $\s$-\textit{self\-dual}
if the group $\H^1_\t$ is $\s$-stable and $\t^{-1}\circ\s=\t$,
\item
an endo-class $\TT$ of (maximal) simple characters is
$\s$-\textit{selfdual} if for some (or equivalently for any)
$\t\in\TT$, the character $\t^{-1}\circ\s$ is in $\TT$.
\end{itemize} 

\begin{prop}
\label{pechessd}
Let $\t$ be a $\s$-selfdual maximal simple character.
\begin{enumerate}
\item
There is a $\s$-stable simple stratum $[\aa,\b]$ such that $\t\in\Cc(\aa,\b)$. 
\item
Let $\E_0$ be the $\s$-fixed points of $\E$
and $\ll_0$ be its residue field.
There exists an~iso\-mor\-phism \eqref{melon} inducing 
an isomorphism \eqref{JJ1UU1GLmax} 
which transports the action of $\s$ on $\BJ^0/\BJ^1$~to
\begin{enumerate}
\item 
the action of the non-trivial element of $\Gal(\ll/\ll_0)$
on $\GL_m(\ll)$ if $\E/\E_0$ is unramified,
\item 
the adjoint action of
\begin{equation}
\label{banane}
\begin{pmatrix}
-{\rm id}_{i} & 0 \\ 0 & {\rm id}_{m-i}
\end{pmatrix} \in \GL_m(\ll) 
\end{equation}
on $\GL_m(\ll)$ if $\E/\E_0$ is ramified,
for a uniquely determined integer $i\in\{0,\dots,\lfloor m/2\rfloor\}$.
\end{enumerate}
\end{enumerate}
\end{prop}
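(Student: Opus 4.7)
For part (1), I would start from any maximal simple stratum $[\aa_1,\b_1]$ with $\t\in\Cc(\aa_1,\b_1)$. Since $\psi$ was chosen $\s$-invariant (possible because $p\neq 2$) and $\t^{-1}\circ\s=\t$, the character $\t$ also lies in $\Cc(\s(\aa_1),-\s(\b_1))$. The uniqueness statement of Proposition \ref{patel}(4a) (the $\G$-normalizer of $\t$ is $\BJ^{\phantom{0}}_\t$) together with the rigidity proposition that follows forces $\s(\aa_1)=\aa_1$ and forces the tame parameter field of $[\aa_1,-\s(\b_1)]$ to be $\BJ^1_\t$-conjugate to $\T$. After replacing $\b_1$ by a $\BJ^1_\t$-conjugate, one may assume $\T$ itself is $\s$-stable. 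A further Hensel-type refinement inside the non-tame layer, in the spirit of the $\s$-selfdual type constructions of \cite{AKMSS,VSANT19}, then yields an element $\b$ with $\s(\b)=-\b$ while preserving $\t\in\Cc(\aa_1,\b)$.

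For part (2), I would fix a $\s$-selfdual stratum $[\aa,\b]$ as above and set $\E_0=\E^\s$, which is a subfield of $\E$ with $\E/\E_0$ a separable quadratic extension (since $\s(\b)=-\b\neq\b$). Both the centralizer $\B$ and the maximal $\oo_\E$-order $\bb=\aa\cap\B$ are $\s$-stable. Choose a $\s$-stable $\oo_\E$-lattice $\Ll\subseteq\F^n$ with $\End_{\oo_\E}(\Ll)=\bb$, as provided by Remark \ref{rembase1}(1) applied equivariantly. The $\s$-action on $\F^n$ is $\oo_\E$-semilinear with respect to $\s|_\E$, so in any $\oo_\E$-basis of $\Ll$ the involution $\s$ is encoded by a matrix $M\in\GL_m(\oo_\E)$ with $M\s(M)=1$, and the induced action on $\bb^\times/\U^1_\bb\simeq\GL_m(\ll)$ sends $A$ to $\bar M\,\s(A)\,\bar M^{-1}$, where $\s$ acts on $\ll$ via reduction.

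In the unramified case (2a), $\s$ acts non-trivially on $\ll$ and $\Ll^\s$ is a free $\oo_{\E_0}$-module of rank $m$; taking an $\oo_{\E_0}$-basis of $\Ll^\s$ as $\oo_\E$-basis of $\Ll$ makes $\bar M=1$, so the induced $\s$-action on $\GL_m(\ll)$ reduces to the non-trivial Galois automorphism of $\ll/\ll_0$. In the ramified case (2b), $\s$ is trivial on $\ll$ and the cocycle condition degenerates to $\bar M^2=1$; since $p\neq 2$, the involution $\bar M$ is diagonalizable over $\ll$ with eigenvalues $\pm 1$. Lifting an eigenbasis of $\bar M$ to an $\oo_\E$-basis of $\Ll$ (by a Hensel-type lifting, possible because $\bb$ is maximal) transports $\bar M$ to the matrix \eqref{banane} for some $i\in\{0,\dots,m\}$. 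Swapping the two eigenspaces by a further permutation of the basis produces the same action up to inner automorphism, so one may arrange $i\le\lfloor m/2\rfloor$; the integer $i$ is then the dimension of the $(-1)$-eigenspace of $\bar M$, a conjugation invariant, hence uniquely determined.

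The main obstacle will be part (1): the delicate sign convention $\s(\b)=-\b$ must be reconciled with the construction of simple characters from $\psi$, and the adjustment of $\b_1$ to a $\s$-anti-invariant generator requires a careful layer-by-layer argument (tame first, then wild) to remain inside $\Cc(\aa_1,\cdot)$. Part (2), once (1) is in hand, is essentially a book-keeping exercise in semilinear algebra together with the standard diagonalization of an involution over a field of characteristic not $2$.
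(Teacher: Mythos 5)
The paper gives no proof of this proposition; it is a recall from \cite{AKMSS} Section 4 and \cite{VSANT19}, so I can only assess your proposal against the standard arguments there. Your outline of part (1) matches the known route: from $\s$-invariance of $\psi$ and $\t^{-1}\circ\s=\t$ one gets $\t\in\Cc(\s(\aa_1),-\s(\b_1))$, the rigidity proposition forces $\s(\aa_1)=\aa_1$ and makes $\s(\T)$ a $\BJ^1_\t$-conjugate of $\T$, and the remaining step is a $p\neq 2$ cohomological refinement that replaces $\b_1$ by a $\s$-anti-invariant element while staying in $\Cc(\aa_1,\cdot)$. Flagging that refinement as the hard part is fair; a fully detailed argument would need to reproduce the Stevens-type computation, but as a sketch this is sound.

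Your part (2), however, contains a genuine error at the very point the statement is most delicate: the arrangement and uniqueness of $i\in\{0,\dots,\lfloor m/2\rfloor\}$. You write that "swapping the two eigenspaces by a further permutation of the basis produces the same action up to inner automorphism, so one may arrange $i\le\lfloor m/2\rfloor$," and then that $i$ is "a conjugation invariant, hence uniquely determined." A permutation of the basis only conjugates $\bar M$, which preserves the dimension of the $(-1)$-eigenspace; it cannot move $i$ to $m-i$. The correct mechanism is that $\bar M$ is only determined by the induced involution up to sign, since $\mathrm{Ad}(\bar M)=\mathrm{Ad}(-\bar M)$; concretely, rescaling the lattice $\Ll$ by a uniformizer $\w_\E$ with $\s(\w_\E)=-\w_\E$ replaces the descent cocycle $M$ by $-M$, trading $i$ for $m-i$. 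Exactly one of $i,m-i$ lies in $\{0,\dots,\lfloor m/2\rfloor\}$ (both coincide when $m=2i$), and it is unique because every other change of identification acts by conjugating $\bar M$ (by Remark \ref{rembase1}(2) and the cocycle transformation $M\mapsto gM\s(g)^{-1}$), hence preserves the eigenvalue multiset up to the global sign. As written, your argument conflates conjugation with the sign ambiguity and would not establish the claimed dichotomy.
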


\begin{rema}
\label{rembase2}
Choose a ba\-sis~of $\Ll$ as in Remark \ref{rembase1},
with the additional condition that~its vectors are $\s$-invariant. 
The isomorphism \eqref{melon} associated with it then transports
the action of $\s$ on $\B$ to that of the generator of $\Gal(\E/\E_0)$
on $\Mat_m(\E)$.
When $\E/\E_0$ is ramified,
this~cor\-res\-ponds to the case where $i=0$ in \eqref{banane}. 
\end{rema}

If $\t$ is a $\s$-selfdual maximal simple character,
we will write $\T_0$ for the maximal tamely ra\-mified extension of
$\F_0$ in $\E_0$, that is, $\T_0=\T\cap\E_0$.
By \cite{AKMSS} Lemma 4.10,
the canonical homomorphism
\begin{equation}
\label{T0FT}
\T_0\otimes_{\F_0}\F \to \T
\end{equation}
is an isomorphism.
Also, $\T/\T_0$ and $\E/\E_0$ have the same ramification index. 
By \cite{AKMSS} Lemma 4.29,
the $\F_0$-isomorphism~class of $\T_0$ is uniquely determined by
the endo-class $\TT$ of $\t$. 
And it follows from \eqref{T0FT} that
the $\F_0$-isomorphism~class of $\T_0$ determines the 
$\F$-isomorphism class of $\T$. 

The following result is given by \cite{VSANT19} Proposition 6.12, Lemma 6.20
and \cite{VSPTB}~Lemme 3.28.~(The latter reference in \cite{VSPTB} 
is for representations with coefficients~in $\R=\CC$,
but its proof is still valid in the $\ell$-modular case.)

\begin{prop}
\label{etachi}
Let $\t$ be a $\s$-selfdual maximal simple character. 
\begin{enumerate}
\item 
The Heisenberg representation $\n$ of $\t$ is $\s$-selfdual and 
$\BJ^1\cap\G^\s$-distinguished, and the space
$\Hom_{\BJ^1\cap\G^\s}(\n, \R)$ has dimension $1$.
\item
For any representation $\bk$ of $\BJ$ extending $\n$, there are
\begin{enumerate}
\item 
a unique character $\bx$ of $\BJ$ trivial on $\BJ^1$ such that
$\bk^{\vee\s}$ is isomorphic to $\bk\bx$,
\item
a unique character $\chi$ of $\BJ\cap\G^\s$ trivial on
$\BJ^1\cap\G^\s$ such that
\begin{equation}
\Hom_{\BJ^1\cap\G^\s}(\n,\R) = \Hom_{\BJ\cap\G^\s}(\bk,\chi^{-1}),
\end{equation}
and the restriction of $\bx$ to $\BJ\cap\G^\s$ is equal to $\chi^2$. 
\end{enumerate}
\item
Given a representation $\bk$ as in {\rm (2)}
and an irreducible representation $\bt$ of $\BJ$ trivial on $\BJ^1$, 
the canonical linear map
\begin{equation}
\Hom_{\BJ^1\cap\G^\s}(\n, \R) \otimes \Hom_{\BJ\cap\G^\s}(\bt,\chi)
\to
\Hom_{\BJ\cap\G^\s}(\bk\otimes\bt,\R)
\end{equation}
is an isomorphism of $\R$-vector spaces. 
\item
There exists a $\s$-selfdual representation of $\BJ$ extending $\n$.  
\end{enumerate}
\end{prop}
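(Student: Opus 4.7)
The plan is to establish the four claims in order, using the symplectic description of the Heisenberg representation $\n$ together with $\s$-equivariant Clifford theory for the pair $\BJ^1\subseteq\BJ$.

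For (1), the $\s$-selfduality of $\n$ follows from the uniqueness clause in Proposition \ref{patel}(4b): since $\t^{-1}\circ\s=\t$, the representation $\n^{\vee\s}$ restricts on $\H^1$ to a representation containing $\t$, forcing $\n^{\vee\s}\simeq\n$. For the distinction statement with multiplicity one, recall that $\n$ is the Heisenberg module attached to the non-degenerate $\F_p$-symplectic space $V=\BJ^1/\H^1$ with form $(\bar x,\bar y)\mapsto\t([x,y])$. The involution $\s$ flips the sign of this form (since $\t\circ\s=\t^{-1}$), so the fixed subspace is automatically isotropic; a dimension count using the $\s$-stable structure of $[\aa,\b]$ then shows that the image of $\BJ^1\cap\G^\s$ in $V$ is in fact Lagrangian, and the standard Stone--von Neumann analysis yields $\dim\Hom_{\BJ^1\cap\G^\s}(\n,\R)=1$.

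Part (2a) is formal: any two extensions of $\n$ to $\BJ$ differ by a character trivial on $\BJ^1$. For (2b), the natural conjugation action of $\BJ\cap\G^\s$ on the one-dimensional space $\Hom_{\BJ^1\cap\G^\s}(\n,\R)$ factors through a character $\chi$ of $(\BJ\cap\G^\s)/(\BJ^1\cap\G^\s)$, making the displayed equality of Hom-spaces tautological. To derive $\bx|_{\BJ\cap\G^\s}=\chi^2$, I would view the isomorphism $\bk^{\vee\s}\simeq\bk\bx$ as a non-degenerate $\BJ$-equivariant pairing $B\colon\bk\otimes\bk^\s\to\bx$, restrict to $\BJ\cap\G^\s$ (on which $\bk^\s=\bk$), and compare it with the bilinear form $(v,w)\mapsto\Lambda(v)\Lambda(w)$ built from a non-zero $\Lambda\in\Hom_{\BJ\cap\G^\s}(\bk,\chi^{-1})$; matching their transformation laws produces the identity. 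For (3), the canonical map is obviously injective, and surjectivity follows by a direct Mackey-style decomposition: restricting $\bk\otimes\bt$ to $\BJ^1\cap\G^\s$ (on which $\bt$ acts trivially) and extracting the one-dimensional $\n$-invariant line identifies the right-hand side with $\Hom_{\BJ^1\cap\G^\s}(\n,\R)\otimes\Hom_{\BJ\cap\G^\s}(\bt,\chi)$.

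The main obstacle is (4). Starting from any extension $\bk_0$ of $\n$ with associated character $\bx_0$ from (2a), applying the relation $\bk_0^{\vee\s}\simeq\bk_0\bx_0$ twice forces $\bx_0^\s=\bx_0$. A twist $\bk=\bk_0\bm$ by a character $\bm$ of $\BJ/\BJ^1$ then has associated character $\bx_0(\bm\bm^\s)^{-1}$, so producing a $\s$-selfdual extension reduces to solving the norm-type equation $\bm\bm^\s=\bx_0$. Using the explicit description $\BJ/\BJ^1\simeq\Kk_\bb/\U^1_\bb$ and the $\s$-action supplied by Proposition \ref{pechessd}, together with the structural constraints on $\bx_0$ inherited from the construction of $\bk_0$, I would produce such an $\bm$ by a Hilbert-90-type argument, ultimately reducing to taking $\s$-invariant square roots of characters valued in the divisible group $\R^\times$.
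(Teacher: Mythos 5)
The paper does not give its own proof of this proposition; it cites \cite{VSANT19} Proposition 6.12, Lemma 6.20 and \cite{VSPTB} Lemme 3.28, so the comparison is with those references rather than with an argument in this text. Your outline for (1)--(3) follows the same strategy those references use: the symplectic description of $\BJ^1/\H^1$ with $\s$ reversing the form, the Lagrangian argument for multiplicity one, and then formal Clifford theory. For (2b), the cleanest way to make your pairing argument rigorous is to observe that the non-degenerate pairing between the one-dimensional spaces $\n^{\BJ^1\cap\G^\s}\subseteq V_{\bk}$ and $\Hom_{\BJ^1\cap\G^\s}(\n,\R)\subseteq V_{\bk}^{*}$ is $\BJ\cap\G^\s$-invariant, while the intertwiner $\phi\colon\bk\to\bk^{\vee\s}\bx^{-1}$ transports the $\bk$-action on the first space to a twist by $\bx^{-1}$ of the $\bk^{\vee}$-action on the second; comparing characters gives $\bx=\chi^{2}$ without having to prove a priori that your $B$ is proportional to $\Lambda\otimes\Lambda$.

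The gap is in part (4). Your reduction is correct: writing $\bk=\bk_{0}\bm$ with $\bk_{0}$ any extension and $\bx_{0}$ the $\s$-invariant character from (2a), a $\s$-selfdual extension exists if and only if $\bx_{0}$ lies in the image of the norm map $\bm\mapsto\bm\bm^{\s}$ on characters of $\BJ/\BJ^{1}$. But the mechanism you invoke --- ``$\s$-invariant square roots of characters valued in the divisible group $\R^{\times}$'' --- does not close this. A character of $\BJ/\BJ^{1}$ is determined by a character $\a$ of $\ll^{\times}$ (giving $\a\circ\det$ on $\GL_m(\ll)\cong\BJ^0/\BJ^1$) together with a value at a uniformizer of $\E$; the second factor does live in the divisible group $\R^{\times}$, but the first lives in $\Hom(\ll^{\times},\R^{\times})$, which is a \emph{finite cyclic} group and is not divisible. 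When $\T/\T_{0}$ is ramified, $\s$ acts trivially on this factor, so the norm map on it is $\a\mapsto\a^{2}$, whose image is the proper subgroup of squares. Thus surjectivity of the norm onto $\s$-invariants fails in general, and you must argue that the specific $\bx_{0}$ arising is a square on the $\ll^{\times}$-component. This is true, but it requires an extra input: taking determinants in $\bk_{0}^{\vee\s}\simeq\bk_{0}\bx_{0}$ shows that $\bx_{0}^{\dim\n}$ is the inverse of the norm $(\det\bk_{0})(\det\bk_{0}\circ\s)$; since $\dim\n$ is a power of $p$ and $\bx_{0}|_{\BJ^0}$ has order prime to $p$, one can invert and conclude $\bx_{0}|_{\BJ^0}$ is itself a norm, hence a square on $\ll^{\times}$ in the ramified case. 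Without some such structural argument on $\bx_{0}$, the ``Hilbert 90'' step you sketch does not go through.
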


Conversely,
let $\TT$ be a $\s$-selfdual endo-class of degree dividing $n$.
By \cite{AKMSS} Section 4,
it contains a $\s$-selfdual maximal simple character $\t$ in $\G$,
and we have the following classifica\-tion.

\begin{prop}
\label{poussin}
Let $\T/\T_0$ be the quadratic extension associated~with $\TT$ as above,
and let us write $m=n/\deg(\TT)$. 
\begin{enumerate}
\item 
If $\T/\T_0$ is unramified,
the $\G$-conjugacy class of $\t$ contains a unique $\G^\s$-conjugacy class 
of $\s$-selfdual simple charac\-ters.
\item 
If $\T/\T_0$ is ramified,
the number of $\G^\s$-conjugacy classes of $\s$-selfdual simple characters~in 
the $\G$-conjugacy class of $\t$ is equal to $\lfloor m/2\rfloor+1$,
each class corresponding bijectively to an integer
$i\in\{0,\dots,\lfloor m/2\rfloor\}$
characte\-rized by Propo\-si\-tion \ref{pechessd}(2.b).
\end{enumerate} 
\end{prop}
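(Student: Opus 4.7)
The plan is to recast the counting problem in (non-abelian) Galois cohomology and then reduce it to a classification of involutions in $\GL_m(\ll)$. Fix a $\s$-selfdual maximal simple character $\t$ in $\TT$ (provided by \cite{AKMSS} Section 4). Let $X$ denote the set of $\s$-selfdual maximal simple characters in the $\G$-conjugacy class $\Omega$ of~$\t$. By Proposition \ref{patel}(4.a), the $\G$-stabilizer of $\t$ is $\BJ = \BJ_\t$, so $\Omega \simeq \G/\BJ$ as a $\G$-set; a short computation shows that under this identification the $\s$-action becomes $g\BJ \mapsto \s(g)\BJ$, with $X$ as its fixed-point set.

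Standard non-abelian Galois cohomology then identifies $\G^\s\backslash X$ with the pointed set $\ker\bigl(H^1(\langle\s\rangle,\BJ) \to H^1(\langle\s\rangle,\G)\bigr)$, the class of $\t^g$ being represented by $g^{-1}\s(g)$. Hilbert 90 gives $H^1(\langle\s\rangle,\GL_n(\F)) = 1$, so the number of $\G^\s$-orbits in $X$ equals $|H^1(\langle\s\rangle,\BJ)|$. Since $\BJ^1$ is $\s$-stable and pro-$p$ with $p$ odd, a standard averaging argument together with cocycle-lifting through the pro-$p$ filtration produces a bijection $H^1(\langle\s\rangle,\BJ) \simeq H^1(\langle\s\rangle,\BJ/\BJ^1)$.

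It remains to compute the right-hand side. By Proposition \ref{patel}(2) and the fact that a uniformizer of $\E$ is central in $\B$, one has $\BJ/\BJ^1 \simeq \ZZ \times \GL_m(\ll)$. Pick the base point $\t$ so that the identification \eqref{JJ1UU1GLmax} arises from an $\s$-invariant basis of $\Ll$ as in Remark \ref{rembase2}; in the ramified case this gives $i = 0$. In the unramified case, $\w$ can be taken in $\E_0^\times$, so $\s$ acts trivially on the $\ZZ$-factor and as the non-trivial element of $\Gal(\ll/\ll_0)$ on $\GL_m(\ll)$; Lang's theorem gives $H^1(\langle\s\rangle,\GL_m(\ll)) = 1$, hence $H^1(\langle\s\rangle,\BJ/\BJ^1) = 1$, proving~(1). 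In the ramified case, $\ll = \ll_0$ so $\s$ acts trivially on $\GL_m(\ll)$, but $\s(\w) = -\w$ for a suitable $\w$, giving the twisted action $\s(k,u) = (k,(-1)^k u)$ on $\ZZ \times \GL_m(\ll)$. A direct cocycle analysis shows that $H^1(\langle\s\rangle,\BJ/\BJ^1)$ is represented by involutions $u \in \GL_m(\ll)$ modulo the equivalence $u \sim \pm vuv^{-1}$ for $v \in \GL_m(\ll)$; since $p \neq 2$, conjugacy classes of such involutions are indexed by the dimension of the $(-1)$-eigenspace, and the extra identification $u \sim -u$ pairs this dimension with its complement in $m$, producing exactly $\lfloor m/2 \rfloor + 1$ classes.

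The hard part will be matching this cohomological invariant with the integer $i$ of Proposition \ref{pechessd}(2.b). Concretely, if $g^{-1}\s(g)$ is cohomologous to $(0, \mathrm{diag}(-\mathrm{id}_i, \mathrm{id}_{m-i}))$, one must check that, after transport by $g$, the $\s$-action on $\BJ_{\t^g}$ and the resulting identification \eqref{JJ1UU1GLmax} realize the adjoint action of the matrix \eqref{banane} with the same parameter $i$. This is a concrete but careful book-keeping of how $g$ twists the $\s$-invariant basis of $\Ll$ underlying Remark \ref{rembase2}.
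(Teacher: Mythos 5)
Your cohomological approach is correct in outline and is genuinely different from the paper's: Proposition \ref{poussin} is not proved here but is cited from \cite{AKMSS}, where (to judge from Remark~\ref{indice}, which refers to \cite{AKMSS} Remark 4.28) the argument constructs explicit representatives $\t^{y_i}$ with $y_i=\diag(\w,\dots,\w,1,\dots,1)$ and shows directly that these exhaust the $\G^\s$-conjugacy classes. Your reduction $\G^\s\backslash X \simeq \ker\bigl(H^1(\langle\s\rangle,\BJ) \to H^1(\langle\s\rangle,\G)\bigr) \simeq H^1(\langle\s\rangle,\BJ)$ via Hilbert 90, and the further descent to $H^1(\langle\s\rangle,\BJ/\BJ^1)$ using that $\BJ^1$ is pro-$p$ with $p\neq 2$, are both sound and standard. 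The explicit computation on $\ZZ\times\GL_m(\ll)$ is right; in the unramified case the correct attribution is Hilbert 90 for the finite Galois extension $\ll/\ll_0$ (Lang's theorem is the affine-algebraic-group statement that implies it), and in the ramified case the classification of $1$-cocycles up to $u \sim \pm vuv^{-1}$ does give exactly $\lfloor m/2\rfloor+1$ classes.

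The unfinished step is the one you flag: matching the cohomological class of the involution $u$ with the index $i$ of Proposition \ref{pechessd}(2.b). This matching is genuinely required, since the proposition asserts a specific bijection, not merely a count, and your proof is incomplete without it. It does go through: if $c=g^{-1}\s(g)$ has image $(0,u)$ with $u=\diag(-{\rm id}_j,{\rm id}_{m-j})$ modulo $\BJ^1$, then transporting $\s$ from $\BJ_{\t^{g^{-1}}}=g\BJ g^{-1}$ back to $\BJ$ by ${\rm Ad}(g^{-1})$ yields ${\rm Ad}(c)\circ\s$, which modulo $\BJ^1$ (where $\s$ acts trivially at the base point $i=0$) is conjugation by $u$; comparing with \eqref{banane} gives index $\min(j,m-j)$, consistently with the identification $u\sim -u$ in cohomology. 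One should also invoke Remark~\ref{rembase1}(2) to see that changing the isomorphism \eqref{JJ1UU1GLmax} only conjugates $u$ inside $\GL_m(\ll)$, so the index is well-defined. With that detail supplied, your argument is complete and gives a clean conceptual alternative to the constructive one in \cite{AKMSS}.
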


\begin{rema}
\label{indice}
When $\T/\T_0$ is ramified, 
we define (as in \cite{AKMSS,VSANT19})
the \textit{index} of~a $\s$-selfdual~ma\-xi\-mal simple character $\t$ 
to be the integer $i\in\{0,\dots,\lfloor m/2\rfloor\}$ 
associated with its $\G^\s$-conjugacy class.
By \cite{AKMSS} Remark 4.28 or \cite{VSANT19} 5.D,
if $\t$ has index $0$ and if
\begin{equation*}
y_i = {\rm diag}(\w,\dots,\w,1,\dots,1) \in\GL_{m}(\E)\simeq\B^\times, 
\quad
\text{$\w$ a uniformizer of $\E$ occurring $i$ times,} 
\end{equation*}
for some $i\in\{0,\dots,\lfloor m/2\rfloor\}$, 
then $\t^{y_i}$ is a $\s$-selfdual maximal simple character of index $i$.
\end{rema}

\subsection{}
\label{previousparagraph}

Let $\t$ be a maximal simple~cha\-rac\-ter,
and $[\aa,\b]$ be a simple stratum such that $\t\in\Cc(\aa,\b)$.
As in \ref{prelim}, write $\BJ=\BJ_\t^{}$, $\BJ^0=\BJ^0_\t$, $\BJ^1=\BJ^1_\t$ 
and $\H^1=\H^1_\t$.
Let $\n$ be the Heisenberg representation of $\BJ^1$ associated with 
$\t$.
In this paragraph, we give a slightly generalized version of \cite{BHEffective} 3.3. 

Fix an $\oo_\E$-lattice $\Ll$ in $\V=\F^n$ whose endomorphism algebra is 
$\bb$. 
(It is uniquely determined up to homothety as $\bb$ is maximal.)
Fix a divisor $u\>1$ of $m$ and 
decompositions 
\begin{equation}
\label{decVL}
\Ll = \Ll_* \oplus\dots\oplus \Ll_*,
\quad
\V = \V_* \oplus\dots\oplus \V_*,
\end{equation}
in which $\V_*$ is an $\E$-vector space of dimension $m/u$
and $\Ll_*$ is an $\oo_\E$-lattice of rank $m/u$
in~$\V_*$. 
It defines a Levi subgroup $\M$ of $\G$.
Fix a pair $(\Q_-,\Q_+)$ of $\M$-opposite parabolic subgroups of~$\G$
with Levi component $\M$, 
and write $\N_-$, $\N_+$ for their~unipotent radicals.
Define $\aa_*=\End_{\oo_\F}(\Ll_*)$.
It is a hereditary order, 
and $[\aa_*,\b]$ is a maximal simple stratum in 
$\End_\F(\V_*)$. 
Write $\BJ^{}_*$, $\BJ^{0}_*$,~$\BJ^{1}_*$ and $\H^1_*$ 
for the subgroups associated with it. 
Compare with \cite{BHEffective} 3.3, 
which corresponds to~the~par\-ticular ca\-se where $u=m$.

The next result follows from \cite{BHLTL1} Example 10.9 
(compare with Lemma 1 in \cite{BHEffective} 3.3).

\begin{lemm}
\label{ArseneLup1}
\begin{enumerate}
\item 
There are Iwahori decompositions
\begin{eqnarray*}
\H^1 &=& (\H^1\cap\N_-) \cdot (\H^1\cap\M) \cdot (\H^1\cap\N_+), \\
\H^1\cap\M &=& \H^1_* \times \dots \times \H^1_*
\end{eqnarray*}
and
\begin{eqnarray*}
\BJ^1 &=& (\BJ^1\cap\N_-) \cdot (\BJ^1\cap\M) \cdot (\BJ^1\cap\N_+), \\
\BJ^1\cap\M &=& \BJ^1_* \times \dots \times \BJ^1_*.
\end{eqnarray*}  
\item
The character $\t$ is trivial on $\H^1\cap\N_-$, $\H^1\cap\N_+$ and 
there exists a unique simple character $\t_*\in\Cc(\aa_*,\b)$ such that $\t$
agrees with $\t_*\otimes\dots\otimes\t_*$ on $\H^1\cap\M$.
\end{enumerate}
\end{lemm}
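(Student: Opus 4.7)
The plan is to generalize the case $u=m$ treated in \cite{BHEffective} 3.3 to the present setting, with $u$ blocks of size $m/u$, by invoking the formalism of decomposed simple characters from \cite{BHLTL1}.

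First, I would verify that $\b$ lies in the Levi subgroup $\M$: since $\b\in\E=\F[\b]$ and each $\V_*$ is an $\E$-subspace preserved by the $\E$-linear action of $\b$, and $\b$ preserves each $\Ll_*$ as an $\oo_\E$-lattice, the element $\b$ acts diagonally on \eqref{decVL}. In particular the centralizer $\B\simeq\Mat_m(\E)$ is block-diagonally embedded, so its intersections with $\N_\pm$ are trivial.

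Next, I would exploit the compatibility of the hereditary structure with the parabolic decomposition. The order $\aa=\End_{\oo_\F}(\Ll)$ decomposes as
\begin{equation*}
\aa = (\aa\cap\mathfrak{n}_-) \oplus (\aa\cap\mathfrak{m}) \oplus (\aa\cap\mathfrak{n}_+),
\quad
\aa\cap\mathfrak{m} = \aa_*\times\cdots\times\aa_*,
\end{equation*}
and the same decomposition holds for every power $\pp_\aa^k$ of the Jacobson radical, as well as for the ideals $\pp_\aa^k\cap\B$ by the previous paragraph. Since the groups $\H^1$ and $\BJ^1$ are built from such lattices via the construction of \cite{BK} Chapter 3, this yields the Iwahori decomposition for both groups, and the product structure for their Levi parts.

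For part (2), $\t$ is constructed inductively in \cite{BK} 3.2 from base characters $x\mapsto\psi(\tr(c(x-1)))$ attached to a sequence of derived strata, all of whose defining elements $c$ lie in $\F[\b]\subseteq\M$. A block-diagonal element has zero trace against any element with vanishing block-diagonal part, so every character entering the construction is trivial on $\H^1\cap\N_\pm$ and restricts on $\H^1\cap\M$ to a product of $u$ identical factors. This forces $\t|_{\H^1\cap\M} = \t_*\otimes\cdots\otimes\t_*$ for a uniquely determined $\t_*\in\Cc(\aa_*,\b)$, with uniqueness automatic since $\t_*$ is its own restriction to $\H^1_*$.

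The main technical obstacle is ensuring that \emph{every} step of the inductive construction of simple characters respects the block decomposition, i.e. that the derived strata, their associated subgroups, and the characters at each layer all behave diagonally with respect to \eqref{decVL}. This is precisely the content of the decomposed simple characters formalism of \cite{BHLTL1}, whose Example 10.9 covers the setup of a partition into $u$ equal parts; once applied, both statements of the lemma are immediate.
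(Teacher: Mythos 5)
Your proposal converges on the very reference -- \cite{BHLTL1} Example 10.9 (with the comparison to Lemma~1 of \cite{BHEffective}~3.3) -- that constitutes the paper's entire proof of this lemma, so the approach is essentially the same. One small inaccuracy in your supplementary sketch: $\b$ does \emph{not} preserve each lattice $\Ll_*$ (it has negative $\E$-valuation), only each $\E$-subspace $\V_*$; this is nevertheless all that is needed to conclude $\b\in\M$, and similarly the claim that every element of a defining sequence for $\t$ lies in $\F[\b]$ is stronger than what holds or is required -- what \cite{BHLTL1} actually provides is that the construction can be run compatibly with the block decomposition.
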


Moreover,
the map $\t\mapsto\t_*$ defined by Lemma \ref{ArseneLup1}(2)
is a bijection from $\Cc(\aa,\b)$ to $\Cc(\aa_*,\b)$:
this is an instance of the transfer of \cite{BK} 3.6.

Let $\n_*^{}$ denote the Heisenberg representation of $\BJ_*^1$ associated with 
$\t_*^{}$. 
Compare the next~result with Lemma 2 in \cite{BHEffective} 3.3 and 
the~dis\-cussion~after it. 

\begin{lemm}
\label{ArseneLup3}
Let $\bk_*$ be a representation of $\BJ_*$ extending $\n_*$.
\begin{enumerate}
\item 
The set $\BJ_+=(\H^1\cap\N_-)\cdot(\BJ\cap\Q_+)$
is a group, and there is a unique representation $\bk_+$ of $\BJ_+$
which is trivial on $\H^1\cap\N_-$, $\BJ^0\cap\N_+$ and agrees with
$\bk_*\otimes\dots\otimes\bk_*$ on $\BJ\cap\M$.
\item
The representation $\widetilde{\bk}_+$ of 
$(\J^1\cap\N_-)\cdot(\BJ\cap\Q_+)=\BJ^1\BJ_+$
induced by $\bk_+$ extends $\n$. 
\item
There is a unique irreducible representation $\bk$ of $\BJ$
extending $\widetilde{\bk}_+$.
\end{enumerate} 
\end{lemm}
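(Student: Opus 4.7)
The plan is to adapt the argument of Lemma 2 of \cite{BHEffective} 3.3, which treats only the case $u=m$ (where $\M$ is the maximal elliptic torus $(\E^\times)^m$), to an arbitrary divisor $u$ of $m$, for which $\M\simeq\GL_{m/u}(\E)^u$ is a proper standard Levi. The Iwahori decompositions and the triviality of $\t$ on $\H^1\cap\N_\pm$ given by Lemma \ref{ArseneLup1} continue to hold in this generality, so the generalization is essentially formal; only the final extension step in (3) requires an additional block-symmetry observation.

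For (1), one first checks that $\BJ\cap\Q_+$ normalizes $\H^1\cap\N_-$ via the standard commutator estimates for simple strata. Combined with the Iwahori-type decomposition $\BJ\cap\Q_+=(\BJ\cap\M)(\BJ^0\cap\N_+)$, this makes $\BJ_+$ a group with unique factorization $\BJ_+=(\H^1\cap\N_-)\cdot(\BJ\cap\M)\cdot(\BJ^0\cap\N_+)$. The map $\bk_+(hmn)=\bk_*^{\otimes u}(m)$ is then unambiguously defined; the homomorphism property reduces to checking that $\BJ\cap\M$ normalizes $\H^1\cap\N_-$ and $\BJ^0\cap\N_+$ preserving their (trivial) values, together with the multiplicativity of $\bk_*^{\otimes u}$ on $\BJ\cap\M$.

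For (2), set $\widetilde{\bk}_+=\Ind_{\BJ_+}^{\BJ^1\BJ_+}\bk_+$ and appeal to Proposition \ref{patel}(4.b): $\n$ is characterized as the unique irreducible representation of $\BJ^1$ whose restriction to $\H^1$ contains $\t$. By Lemma \ref{ArseneLup1}(2), $\bk_+|_{\H^1\cap\M}\supseteq\t_*^{\otimes u}=\t|_{\H^1\cap\M}$, while $\bk_+$ and $\t$ are both trivial on $\H^1\cap\N_\pm$, so $\widetilde{\bk}_+|_{\H^1}$ contains $\t$. A direct dimension count from $\dim\widetilde{\bk}_+=[\BJ^1\cap\N_-:\H^1\cap\N_-]\cdot(\dim\n_*)^u$, the Heisenberg formula $\dim\n=[\BJ^1:\H^1]^{1/2}$, and the symplectic duality $[\BJ^1\cap\N_-:\H^1\cap\N_-]=[\BJ^1\cap\N_+:\H^1\cap\N_+]$ shows that the dimensions agree, whence $\widetilde{\bk}_+|_{\BJ^1}\simeq\n$.

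For (3), fix any extension $\bk^\circ$ of $\n$ to $\BJ$ provided by Proposition \ref{patel}(4.b). The restriction $\bk^\circ|_{\BJ^1\BJ_+}$ is another irreducible extension of $\n$, so differs from $\widetilde{\bk}_+$ by a character $\chi^\circ$ of $\BJ^1\BJ_+/\BJ^1\simeq(\BJ\cap\M)/(\BJ^1\cap\M)$. Via \eqref{JJ1UU1GLmax}, this quotient identifies with an $\E^\times$-extension of the standard Levi $\GL_{m/u}(\ll)^u\subseteq\GL_m(\ll)\simeq\BJ^0/\BJ^1$. The key observation is that $\chi^\circ$ is symmetric across the $u$ blocks: this follows from $\BJ$-invariance of $\bk^\circ$ under conjugation by block-permutation matrices in $\bb^\times\simeq\GL_m(\oo_\E)\subseteq\BJ^0$, which force $\bk^\circ|_{\BJ\cap\M}$ to have the same tensor factor in each block. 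A symmetric character factors through the product of determinants $\GL_{m/u}(\ll)^u\to\ll^\times$ and extends uniquely to a character $\chi$ of $\BJ/\BJ^1$; the twist $\bk=\bk^\circ\chi^{-1}$ is then the required extension. Uniqueness is immediate: two such extensions differ by a character of $\BJ/\BJ^1$ trivial on $(\BJ\cap\M)/(\BJ^1\cap\M)$, which must be trivial since $\GL_{m/u}(\ll)^u$ surjects onto $\ll^\times$ under product-of-determinants. The main obstacle is the symmetry argument for $\chi^\circ$, which is automatic for $u=m$ (the Bushnell--Henniart torus case, where all blocks are one-dimensional) but requires the explicit block-permutation elements in $\bb^\times$ for intermediate $u$.
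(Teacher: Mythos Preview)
Your treatment of (1) and (2) is essentially the same as the paper's: both rely on the Iwahori decompositions of Lemma~\ref{ArseneLup1}, verify that $\bk_+$ is well-defined via the commutation relations between the unipotent pieces (the paper makes explicit the containment $(\BJ^0\cap\N_+)\cdot(\H^1\cap\N_-)\subseteq(\H^1\cap\N_-)\cdot(\BJ^1\cap\M)\cdot(\BJ^0\cap\N_+)$, referring to \cite{VS2} 2.3), and establish (2) by combining ``contains $\t$'' with the dimension count $\dim\widetilde{\bk}_+=(\BJ^1:\H^1)^{1/2}=\dim\n$.

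For (3) the paper takes a much shorter route: uniqueness is argued exactly as you do (a character of $\BJ$ trivial on $\BJ^1\BJ_+$ must be trivial), while \emph{existence is simply cited from} \cite{BK}~5.2.4 (or \cite{MSt}~2.4 in the modular case), i.e.\ the standard construction of $\beta$-extensions, which is precisely built to produce an extension of $\widetilde{\bk}_+$ to $\BJ$. Your attempt to construct the extension by hand, by showing that the correcting character $\chi^\circ$ extends from the Levi to all of $\BJ/\BJ^1$, is more ambitious but has a real gap. Your symmetry claim ``$\bk^\circ|_{\BJ\cap\M}$ has the same tensor factor in each block'' does not make sense as stated: $\bk^\circ|_{\BJ\cap\M}$ is not a tensor product but an irreducible representation of dimension $\dim\n$. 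What you actually need is $\chi^\circ(wxw^{-1})=\chi^\circ(x)$ for block permutations $w\in\bb^\times$ and $x\in\BJ\cap\M$. Conjugation by $w$ does give $\bk^\circ\circ{\rm Ad}(w)\simeq\bk^\circ$, but $w$ does \emph{not} normalize $\BJ^1\BJ_+$ (it sends $\N_+$ to a different unipotent radical), so you cannot directly compare $\widetilde{\bk}_+\circ{\rm Ad}(w)$ with $\widetilde{\bk}_+$ on $\BJ^1\BJ_+$. One would need to show that the two extensions of $\n$ to $\BJ^1(\BJ\cap\M)$ obtained from $\widetilde{\bk}_+$ and from its $w$-conjugate coincide, which is not automatic and essentially amounts to re-proving the compatibility that \cite{BK}~5.2.4 already packages. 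Either fill this in carefully (it can be done, but it is not a one-line observation) or, as the paper does, invoke the $\beta$-extension machinery directly.
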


\begin{proof}
That $\BJ_+$ is a group follows from the fact that $\H^1$ is normalized by 
$\BJ$, thus by $\BJ\cap\Q_+$.
The existence of $\bk_+$ follows from the containment
\begin{equation*}
(\BJ^0\cap\N_+)\cdot(\H^1\cap\N_-) \subseteq
(\H^1\cap\N_-)\cdot(\BJ^1\cap\M)\cdot(\BJ^0\cap\N_+)
\end{equation*} 
(see the argument of \cite{VS2} 2.3).
Mackey's formula implies that
the restriction of $\widetilde{\bk}_+$ to $\BJ^1$ is 
\begin{equation*}
\Ind^{\BJ^1}_{\BJ^1\cap\BJ_+} (\bk_+).
\end{equation*}
The restriction of $\bk_+$ to
$\BJ^1\cap\BJ_+=(\H^1\cap\N_-)\cdot(\BJ^1\cap\Q_+)$
is the unique representation $\n_+$
which is trivial on $\H^1\cap\N_-$, $\BJ^1\cap\N_+$ and agrees with
$\n_*\otimes\dots\otimes\n_*$ on $\BJ^1\cap\M$.
The representation it induces to~$\BJ^1$
is isomorphic to $\n$:
indeed,
this representation contains $\t$ by Lemma \ref{ArseneLup1}(2),
and it has dimension
\begin{eqnarray*}
\dim (\n_*\otimes\dots\otimes\n_*) \cdot (\BJ^1\cap\N_-:\H^1\cap\N_-)
&=& (\BJ^1\cap\M:\H^1\cap\M)^{1/2}\cdot (\BJ^1\cap\N_-:\H^1\cap\N_-) \\
&=& (\BJ^1:\H^1)^{1/2}
\end{eqnarray*}
which is the dimension of $\n$ (see \cite{MSt} 2.3).

It remains to prove (3).
First,
uniqueness follows from the fact that any two such extensions differ from
a character of $\BJ$ trivial on $\BJ^1\BJ_+$,
and such a character is trivial since $p\neq2$.
Existen\-ce~follows from \cite{BK} 5.2.4
(see \cite{MSt} 2.4 in the modular case).
\end{proof}

The reader will pay attention to the fact that $\BJ\cap\M$
is not equal to $\BJ_*\tdt\BJ_*$ in general (unless $u=1$),
but is generated by $\BJ^0_*\tdt\BJ^0_*$ and $\E^\times$
(the latter being diagonal in $\M$).

\begin{lemm}
\label{ArseneLup4}
\begin{enumerate}
\item 
The map
\begin{equation}
\label{kappa0kappa}
\bk_* \mapsto \bk
\end{equation}
from isomorphism classes of representations of $\BJ_*$ extending $\n_*$
to isomorphism classes of repre\-sen\-tation of $\BJ$ extending $\n$ is 
surjective. 
\item
Any two~re\-pre\-sentations of $\BJ_*$ extending $\n_*$ have the
same image 
if and only if they are twists of each other by a character of $\BJ_*^{}$
trivial on $\BJ_*^0$ and of order dividing $u$.
\end{enumerate}
\end{lemm}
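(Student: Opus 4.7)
The plan is to show that the map \eqref{kappa0kappa} is equivariant with respect to a natural group homomorphism $\Psi \colon \widehat{\BJ_*/\BJ_*^1} \to \widehat{\BJ/\BJ^1}$ between the character groups, and then to compute $\Psi$ explicitly. Since the set of extensions of $\n_*$ (resp.\ $\n$) to $\BJ_*$ (resp.\ $\BJ$) is a torsor under twisting by $\widehat{\BJ_*/\BJ_*^1}$ (resp.\ $\widehat{\BJ/\BJ^1}$), surjectivity of $\Psi$ will give (1) while its kernel will describe exactly when two choices of $\bk_*$ yield the same $\bk$, giving (2).

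The first step is to establish the equivariance. Fixing a character $\eta_*$ of $\BJ_*$ trivial on $\BJ_*^1$, I would track the twist $\bk_* \mapsto \bk_* \eta_*$ through the three stages of Lemma \ref{ArseneLup3}. From the product structure, $\bk_+$ gets twisted by the unique character $\eta_+$ of $\BJ_+$ trivial on $\H^1 \cap \N_-$ and $\BJ^0 \cap \N_+$ that agrees with $\eta_*^{\otimes u}$ on $\BJ \cap \M$; then $\widetilde{\bk}_+$ gets twisted by the extension of $\eta_+$ to $\BJ^1 \BJ_+$ which is trivial on $\BJ^1 \cap \N_-$ (via the projection formula for induction); finally, the unique irreducible extension $\bk$ to $\BJ$ gets twisted by a character $\Psi(\eta_*)$ of $\BJ/\BJ^1$ restricting to this character on $\BJ^1 \BJ_+$. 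The assignment $\eta_* \mapsto \Psi(\eta_*)$ is then a group homomorphism.

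The second step is to compute $\Psi$ explicitly. Since $\w_\E$ is central in $\BJ$ and $\BJ = \E^\times \BJ^0$ with $\BJ^0/\BJ^1 \simeq \GL_m(\ll)$, one obtains a direct product decomposition $\BJ/\BJ^1 \simeq \ZZ \times \GL_m(\ll)$, and analogously $\BJ_*/\BJ_*^1 \simeq \ZZ \times \GL_{m/u}(\ll)$. Since the derived group of $\GL_r(\ll)$ is $\operatorname{SL}_r(\ll)$ in our range of parameters, every character factors through the determinant, so a character of $\BJ/\BJ^1$ is encoded as a pair $(\chi, \a)$ with $\chi \colon \ll^\times \to \R^\times$ and $\a = \eta(\w_\E) \in \R^\times$, and similarly for $\BJ_*/\BJ_*^1$. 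Matching restrictions on $\BJ \cap \M$, which contains both the block-diagonal Levi $(\BJ_*^0)^u$ and the scalar $\w_\E$ (realized diagonally as $(\w_\E, \ldots, \w_\E) \in \BJ_* \times \cdots \times \BJ_*$), yields $\Psi \colon (\chi_*, \a_*) \mapsto (\chi_*, \a_*^u)$.

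Since $\R$ is algebraically closed, the $u$-th power map on $\R^\times$ is surjective, so $\Psi$ is surjective, giving (1). The kernel of $\Psi$ consists of pairs $(\chi_*, \a_*)$ with $\chi_* = 1$ (equivalently, $\eta_*$ trivial on $\BJ_*^0$) and $\a_*^u = 1$ (equivalently, of order dividing $u$), proving (2). The main obstacle I foresee is the first step: verifying that the twist by $\eta_*$ propagates cleanly through the three intermediate constructions and yields a well-defined character of $\BJ$, which requires careful bookkeeping with the Iwahori-type decompositions and essential use of the uniqueness in Lemma \ref{ArseneLup3}(3), itself depending on $p \neq 2$.
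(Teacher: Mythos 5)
Your proof is correct but takes a genuinely different route from the paper's. The paper disposes of the lemma in one line: it cites Corollary~1 in \cite{BHEffective}~3.3 for the case $u=m$ and deduces the general case by transitivity of the construction of \S\ref{previousparagraph}. You instead give a self-contained argument, exploiting the torsor structure of extensions under twisting and computing the transfer map $\Psi$ on characters explicitly. The key structural inputs you use are all available in the paper: the internal direct product $\BJ/\BJ^1 \simeq \ZZ\times\GL_m(\ll)$ (with $\ZZ$ generated by a uniformizer of $\E$, central because scalar in $\B^\times$), the fact that characters of $\GL_m(\ll)$ factor through $\det$ when $p\neq2$ (the same observation appears in the proof of Proposition~\ref{potiron}), the description of $\BJ\cap\M$ as generated by $\BJ^0_*\tdt\BJ^0_*$ and diagonal $\E^\times$, and the uniqueness assertions in Lemma~\ref{ArseneLup3}. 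Your computation $\Psi\colon(\chi_*,\a_*)\mapsto(\chi_*,\a_*^u)$ is right (the determinant of the block-diagonal Levi is the product of the block determinants, and $\w_\E$ sits diagonally as $(\w_\E,\dots,\w_\E)$), so surjectivity follows from $R$ being algebraically closed and the kernel is exactly the characters trivial on $\BJ^0_*$ of order dividing $u$. What your approach buys is transparency and independence from the Bushnell--Henniart black box, plus an explicit description of the map on characters that implicitly underlies the cited result; what the paper's approach buys is brevity. The propagation of the twist through the three stages that you flag as the potential obstacle does go through cleanly, by precisely the uniqueness in Lemma~\ref{ArseneLup3}(1),(3) and the projection formula for induction, as you anticipate.
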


\begin{proof}
The case where $u=m$ is given by Corollary 1 in \cite{BHEffective} 3.3, 
and the general case follows by transitivity. 
\end{proof}

\begin{rema}
\label{conjugaisony}
Suppose that $u$ is equal to $m$.
Let $y\in\M\cap\B^\times$ and write $\t'=\t^y\in\Cc(\aa^y,\b)$.
The
groups associated~with $\t'$ are $\BJ'=\BJ^y$, etc. 
The group isomorphism $\B^\times\simeq\GL_m(\E)$~iden\-tifies
$\M\cap\B^\times$ with the diagonal torus
$\E^\times\tdt\E^\times$,
and $\E^\times$ normalizes $\t_*$.
The character $\t'$ is thus trivial~on
$\H^{1y}\cap\N_-$, $\H^{1y}\cap\N_+$ and 
agrees with $\t_*\otimes\dots\otimes\t_*$ on
$\H^{1y}\cap\M=\H^1\cap\M$.
If $\bk_*$ is a~represen\-ta\-tion~of~$\BJ_*$ extending $\n_*$,
the repre\-sen\-ta\-tion of $\BJ'$ corresponding to it by 
\eqref{kappa0kappa} is $\bk^y$. 
\end{rema}

\subsection{}
\label{typeneutre}

Suppose now that the simple character $\t$ and the simple stratum $[\aa,\b]$ 
of \ref{previousparagraph} are~$\s$-self\-dual.
The groups $\BJ$, $\BJ^0$, $\BJ^1$ and $\H^1$ are thus 
$\s$-stable.
Suppose also that the decompositions \eqref{decVL}~are $\s$-sta\-ble. 
The Levi subgroup $\M$ is thus $\s$-stable,
and we may assume that 
$\Q_-$, $\Q_+$ are $\s$-stable as well.
Also,
the simple~stra\-tum $[\aa_*,\b]$
and the simple~cha\-rac\-ter $\t_*$ given by Lemma \ref{ArseneLup1}
are $\s$-self\-dual.
Let $\G_*$ denote the group $\Aut_\F(\V_*)$.
It is isomorphic to $\GL_{n/u}(\F)$.

We may also assume that our choice of basis induces an
isomorphism of groups \eqref{JJ1UU1GLmax}~bet\-ween 
$\BJ^0/\BJ^1$ and $\GL_m(\ll)$ as in Pro\-po\-sition~\ref{pechessd}(4), 
transporting the action of $\s$ on $\BJ^0/\BJ^1$ to
\begin{itemize}
\item 
the action of the non-trivial element of $\Gal(\ll/\ll_0)$
on $\GL_m(\ll)$ if $\T/\T_0$ is unramified,
\item 
the adjoint action of \eqref{banane} 
on $\GL_m(\ll)$
for some $i\in\{0,\dots,\lfloor m/2\rfloor\}$,
if $\T/\T_0$ is ramified. 
\end{itemize} 
Let $\bk_*$ be a representation of $\BJ_*$ extending $\n_*$,
and let $\bk$ correspond to it by \eqref{kappa0kappa}. 

\begin{lemm}
\label{preEtretat}
If $\bk_*$ is $\s$-selfdual, then $\bk$ is $\s$-selfdual. 
\end{lemm}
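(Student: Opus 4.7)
The plan is to show that $\bk^{\vee\s}$ satisfies the defining properties of $\bk$ listed in Lemma \ref{ArseneLup3} and then conclude by uniqueness. All the ingredients of the construction---the parabolic subgroups $\Q_\pm$, their unipotent radicals $\N_\pm$, the Levi $\M$, and the groups $\H^1$, $\BJ^1$, $\BJ^0$, $\BJ$---are $\s$-stable by the standing assumptions of \S\ref{typeneutre}. Consequently so is $\BJ_+=(\H^1\cap\N_-)\cdot(\BJ\cap\Q_+)$, and $\s$ permutes the summands of the decomposition \eqref{decVL} (either fixing them individually or permuting them in pairs, depending on how $\s$ acts, but in either case consistently with the diagonal structure of $\BJ\cap\M$).

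First, I would examine $\bk_+^{\vee\s}$. Since the trivial representation is preserved by both contragredient and the $\s$-twist, $\bk_+^{\vee\s}$ is trivial on $\H^1\cap\N_-$ and on $\BJ^0\cap\N_+$. On $\BJ\cap\M$, its restriction is $\bk_*^{\vee\s}\otimes\cdots\otimes\bk_*^{\vee\s}$ (with possibly permuted factors if $\s$ permutes the $\V_*$'s, but this is innocuous). The hypothesis $\bk_*^{\vee\s}\simeq\bk_*$ together with the uniqueness statement in Lemma \ref{ArseneLup3}(1) then gives $\bk_+^{\vee\s}\simeq\bk_+$.

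Next, induction commutes with the $\s$-twist and with contragredient for induction along a subgroup of finite index, so
\begin{equation*}
\widetilde{\bk}_+^{\vee\s} \simeq \Ind_{\BJ_+^{}}^{\BJ^1\BJ_+^{}}(\bk_+^{\vee\s}) \simeq \Ind_{\BJ_+^{}}^{\BJ^1\BJ_+^{}}(\bk_+^{}) = \widetilde{\bk}_+^{}.
\end{equation*}
Thus $\bk^{\vee\s}$ is an irreducible representation of $\BJ$ (the operations $\vee$ and $\s$ preserve irreducibility) whose restriction to $\BJ^1\BJ_+$ is isomorphic to $\widetilde{\bk}_+$. The uniqueness assertion in Lemma \ref{ArseneLup3}(3) then forces $\bk^{\vee\s}\simeq\bk$, proving the lemma.

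The only subtle point is the bookkeeping at the Levi level when $\s$ permutes the summands of \eqref{decVL}, but since the construction of $\bk_+$ from $\bk_*$ is symmetric in these factors (all copies are the same $\bk_*$), such a permutation has no effect on the isomorphism class, and the uniqueness argument goes through unchanged.
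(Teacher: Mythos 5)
Your proof is correct and follows essentially the same route as the paper's: verify that $\bk_+^{\vee\s}$ satisfies the three defining conditions of $\bk_+$ and deduce $\bk_+^{\vee\s}\simeq\bk_+$ by the uniqueness in Lemma~\ref{ArseneLup3}(1), then pass to $\widetilde{\bk}_+^{\vee\s}\simeq\widetilde{\bk}_+$ and conclude by the uniqueness in Lemma~\ref{ArseneLup3}(3). (Minor note: in the paper's setup of \S\ref{typeneutre} the decomposition \eqref{decVL} is $\s$-stable factor-by-factor, so the hedge about $\s$ possibly permuting the $\V_*$'s is not needed.)
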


\begin{proof}
First, $\bk_+^{\vee\s}$
is trivial on both $\H^1\cap\N_-$, $\BJ^1\cap\N_+$ and agrees 
with $\bk_*^{\vee\s}\otimes\dots\otimes\bk_*^{\vee\s}$ on $\BJ\cap\M$.
If $\bk_*^{}$ is $\s$-self\-dual, it follows by uniqueness that 
$\bk_+^{\vee\s}$ is $\s$-self\-dual,
thus $\widetilde{\bk}_+^{\vee\s}$ is $\s$-self\-dual as well. 
The unique representation of $\BJ$
extending $\widetilde{\bk}_+^{\vee\s}$ is $\bk^{\vee\s}$,
hence $\bk$ is $\s$-self\-dual.
\end{proof}

We will need the following lemma. 
Let $\w$ be a uniformizer of $\E$ such that
\begin{equation}
\label{fixw}
\s(\w)=
\left\{ 
\begin{array}{rl}
\w & \text{if $\T/\T_0$ is unramified}, \\ 
-\w & \text{if $\T/\T_0$ is ramified}.
\end{array}\right.
\end{equation}
Note that the group $\BJ$~is generated by $\BJ^0$ and $\w$. 

\begin{lemm}
\label{YumAubergineindicei}
The group $\BJ\cap\G^\s$~is generated by 
$\BJ^0\cap\G^\s$ and an element $\w'$ such that 
\begin{enumerate}
\item
$\w'=\w$ if $\T/\T_0$ is unramified,
\item
$\w'=\w^2$ if $\T/\T_0$ is ramified and $m\neq2i$,
\item
$\w'\in\w\BJ^0$ if $\T/\T_0$ is ramified and $m=2i$.
\end{enumerate}
\end{lemm}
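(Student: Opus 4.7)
The plan is to use the factorization $\BJ=\E^\times\BJ^0$ from Proposition~\ref{patel}(2). Since $\oo_\E^\times\subseteq\BJ^0$ and $\E^\times=\langle\w\rangle\cdot\oo_\E^\times$, every element of $\BJ$ has the form $\w^k b$ with $k\in\ZZ$ and $b\in\BJ^0$, and it lies in $\G^\s$ if and only if $\s(\w)^k\s(b)=\w^k b$. The three cases of the lemma correspond to the different behaviors of $\s(\w)$ prescribed by \eqref{fixw}. In case (1), $\s(\w)=\w$ reduces the condition to $\s(b)=b$, so $\BJ\cap\G^\s=\langle\w\rangle\cdot(\BJ^0\cap\G^\s)$ and $\w'=\w$ works.

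In the ramified case $\s(\w)=-\w$, so the condition becomes $\s(b)=(-1)^k b$. Even powers of $\w$ are always $\s$-fixed and contribute $\langle\w^2\rangle\cdot(\BJ^0\cap\G^\s)$; odd powers contribute only when there exists $b\in\BJ^0$ with $\s(b)=-b$. Reducing modulo $\BJ^1$ via the identification $\BJ^0/\BJ^1\simeq\GL_m(\ll)$ of \S\ref{typeneutre}, such a $b$ exists only if there is $\bar b\in\GL_m(\ll)$ with $d\bar b d^{-1}=-\bar b$, where $d$ is the matrix in \eqref{banane}. Writing $\bar b$ in $(i,m-i)$ block form shows that its two diagonal blocks must vanish, so invertibility forces $m=2i$. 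This immediately gives (2): when $m\neq 2i$ only even powers of $\w$ contribute, and $\w'=\w^2$.

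The main obstacle is case (3), $m=2i$, where one must actually produce such a $b$. I would start from the antidiagonal block matrix $s\in\GL_m(\ll)$ with both off-diagonal $i\times i$ blocks equal to $\mathrm{id}_i$, which satisfies $dsd^{-1}=-s$, take any lift $b_0\in\BJ^0$ of $s$, and consider the involution $\phi(x)=-\s(x)$ of $\BJ^0$ (well-defined since $-1$ is central and $\s^2=\mathrm{id}$). Then $c:=\phi(b_0)b_0^{-1}$ reduces to the identity in $\GL_m(\ll)$, hence lies in $\BJ^1$, and a short calculation yields $\phi(c)=c^{-1}$. Since $\BJ^1$ is pro-$p$ with $p$ odd, the squaring map is a bijection on $\BJ^1$ and $c$ has a unique square root $u\in\BJ^1$; combining uniqueness with $\phi(c)=c^{-1}$ forces $\phi(u)=u^{-1}$, so $b:=ub_0$ satisfies $\phi(b)=b$, equivalently $\s(b)=-b$. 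The element $\w':=\w b\in\w\BJ^0$ then satisfies $\s(\w')=\w'$. A final short check, writing any odd-power element $\w^{2j+1}b'\in\BJ\cap\G^\s$ as $\w^{2j}\w'(b^{-1}b')$ with $b^{-1}b'\in\BJ^0\cap\G^\s$, and verifying $(\w')^2\in\w^2(\BJ^0\cap\G^\s)$ via $\w b\w b=\w^2(\w^{-1}b\w)b$, shows that $\w^2$ is redundant as a generator and $\BJ\cap\G^\s=\langle\w'\rangle\cdot(\BJ^0\cap\G^\s)$, completing the proof.
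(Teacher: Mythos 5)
Your overall strategy matches the paper's: you use the same decomposition $\BJ=\E^\times\BJ^0$, reduce the ramified odd-power question to the existence of a $b\in\BJ^0$ with $\s(b)=-b$, and the block-decomposition obstruction you give (diagonal blocks of $\bar b$ vanish, invertibility forces $i=m-i$) is essentially the paper's observation that $\d$ and $-\d$ must be $\GL_m(\ll)$-conjugate. Where you diverge is the existence step in case (3): the paper just writes down $w=\left(\begin{smallmatrix}0&\mathrm{id}_i\\\mathrm{id}_i&0\end{smallmatrix}\right)\in\BJ^0\cap\B^\times=\GL_{2i}(\oo_\E)$, which is $\s$-anti-invariant on the nose since $\s$ acts on $\B^\times$ by the Galois automorphism composed with conjugation by $\d$, so no lifting or correction from $\GL_m(\ll)$ is needed. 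Your lift-and-correct route is a legitimate alternative (a Hilbert-90-type argument for the pro-$p$ group $\BJ^1$, valid because $p\neq2$), but it is considerably more involved.

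Moreover, as written your existence step contains an error. The map $\phi(x)=-\s(x)$ is \emph{not} a group homomorphism, and your claim $\phi(c)=c^{-1}$ is false: $\phi(c)=-\s(c)$ does not even lie in $\BJ^1$ (since $-1\notin\BJ^1$ for $p$ odd), so it cannot equal $c^{-1}\in\BJ^1$. The correct relation is $\s(c)=c^{-1}$, which one checks directly from $c=-\s(b_0)b_0^{-1}$ and $\s^2=\mathrm{id}$; uniqueness of square roots in the pro-$p$ group $\BJ^1$ then gives $\s(u)=u^{-1}$ (not $\phi(u)=u^{-1}$), and finally $\s(ub_0)=\s(u)\s(b_0)=u^{-1}(-u^2b_0)=-ub_0$ shows $b=ub_0$ is anti-invariant. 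With these two sign corrections your derivation goes through, and your concluding verification that $\w'$ together with $\BJ^0\cap\G^\s$ generates $\BJ\cap\G^\s$ is correct; but you should either fix the $\phi$-versus-$\s$ confusion or, better, adopt the paper's one-line explicit construction.
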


\begin{proof}
If $\T/\T_0$ is unramified, see \cite{VSANT19} Lemma 9.1.
Suppose that $\T/\T_0$ is ramified, 
and assume that there is an $x\in\BJ\cap\G^\s$,
$x\notin\langle\w^2,\BJ^0\cap\G^\s\rangle$.
We have $x=\w^{v}y$ where $v\in\ZZ$ is odd and $y\in\BJ^0$
satisfies $\s(y)=-y$.
Reducing mod $\BJ^1$,
we~get an $\a\in\GL_m(\ll)$ such that $\s(\a)=-\a$. 
Since the involution $\s$ acts on $\GL_m(\ll)$ by conjugacy by 
\begin{equation*}
\d = {\rm diag}(-1,\dots,-1,1,\dots,1)
\end{equation*}
(where $-1$ occurs $i$ times), 
this implies that $\d$ and $-\d$ are $\GL_m(\ll)$-conjugate, 
thus $m=2i$.~Conversely, if $m=2i$, then 
\begin{equation}
\label{defwavants}
w = 
\begin{pmatrix}
0&{\rm id}_i\\ {\rm id}_i&0
\end{pmatrix}
\in\BJ^0\cap\B^\times = \GL_{2i}(\oo_\E)
\end{equation}
is $\s$-anti-invariant, 
and $\w'=\w w$ has the required property. 
\end{proof}

We now investigate the behavior of the map \eqref{kappa0kappa} 
with respect to distinction.
The case where $u=m$ will be sufficient for our purpose
(see Paragraph \ref{valerie}).

\begin{lemm}
\label{Etretat}
Suppose that $u=m$ and 
$\bk_*^{}$ is $\BJ_*^{}\cap\G_*^\s$-distin\-guished.
\begin{enumerate}
\item 
If $\T/\T_0$ is unramified, 
or if $\T/\T_0$ is ramified and $m\neq2i$,
the representation $\bk$ is $\BJ\cap\G^\s$-distin\-guished.
\item 
If $\T/\T_0$ is ramified and $m=2i$,
there exists a quadratic character $\bx$ of $\BJ$ trivial on $\BJ^0$~such
that $\bk\bx$ is $\BJ\cap\G^\s$-distin\-guished.
\end{enumerate}
\end{lemm}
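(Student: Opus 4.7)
The strategy is to translate distinction of $\bk$ into triviality of the character supplied by Proposition \ref{etachi}(2.b) and to transfer this character from the already-distinguished $\bk_*$ through the inductive construction of Lemma \ref{ArseneLup3}. Apply Proposition \ref{etachi}(2.b) to both $\bk$ and $\bk_*$, producing characters $\chi$ of $\BJ\cap\G^\s$ and $\chi_*$ of $\BJ_*\cap\G_*^\s$, each trivial on the respective pro-$p$-radicals and measuring the failure of distinction. The hypothesis gives $\chi_*=1$; the goal is therefore to show $\chi=1$ in case (1), and that $\chi=\bx|_{\BJ\cap\G^\s}$ for some quadratic character $\bx$ of $\BJ$ trivial on $\BJ^0$ in case (2).

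To transfer distinction, I would fix a nonzero $\Lambda_*\in\Hom_{\BJ_*\cap\G_*^\s}(\bk_*,R)$, form the tensor product $\Lambda_*^{\otimes m}$ on $\bk_*^{\otimes m}$, and view it as a linear form on $\bk_+$ via Lemma \ref{ArseneLup3}(1). The $\s$-stability of $\Q_\pm$ yields the Iwahori-type decomposition
\begin{equation*}
\BJ_+\cap\G^\s=(\H^1\cap\N_-)^\s\cdot(\BJ\cap\M)^\s\cdot(\BJ\cap\N_+)^\s,
\end{equation*}
and combining this with the triviality of $\bk_+$ on $\H^1\cap\N_-$ and $\BJ\cap\N_+$ upgrades $\Lambda_*^{\otimes m}$ to a $\BJ_+\cap\G^\s$-invariant form. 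Inducing to $\BJ^1\BJ_+$ and restricting produces a nonzero $\BJ^1\cap\G^\s$-invariant form on $\n$, and comparison with $\bk$ via Proposition \ref{etachi}(2.b) shows that $\chi$ is trivial on the image of the diagonal $(\BJ_*^0\cap\G_*^\s)^m$ in $\BJ^0/\BJ^1\simeq\GL_m(\ll)$. Every character of $\GL_m(\ll_0)$ (unramified case) or of $\GL_i(\ll)\times\GL_{m-i}(\ll)$ (ramified case) factors through the determinant(s), and on each factor the image of the diagonal torus surjects onto the determinant quotient; hence $\chi|_{\BJ^0\cap\G^\s}=1$, and $\chi$ is determined by its value at the element $\w'$ of Lemma \ref{YumAubergineindicei}.

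It remains to compute $\chi(\w')$. In the unramified case and in the ramified case with $m\neq 2i$, the generator $\w'$ equals $\w$ or $\w^2$, which is central in $\M$; since $\bk_+$ agrees with $\bk_*^{\otimes m}$ on $\BJ\cap\M$, the transferred form forces $\chi(\w')$ to equal the corresponding power of $\chi_*(\w)$ or $\chi_*(\w^2)$, which is trivial, yielding (1). The main obstacle is the ramified case with $m=2i$: here $\w'=\w w$ with $w$ as in \eqref{defwavants} is not central in $\M$, so the transfer argument no longer controls $\chi(\w')$. To handle it, I would first reduce to a $\s$-selfdual choice of extension $\bk$ of $\n$ (which exists by Proposition \ref{etachi}(4)); for such an extension the character $\bx_\bk$ from Proposition \ref{etachi}(2.b) is trivial, forcing $\chi^2=1$ and hence $\chi(\w')\in\{\pm 1\}$. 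Since $\BJ/\BJ^0$ is infinite cyclic generated by $\w$, the only quadratic characters trivial on $\BJ^0$ are the trivial one and the character $\bx$ with $\bx(\w)=-1$, and one verifies $\bx(\w')=\bx(\w)\bx(w)=\bx(\w)$; choosing $\bx$ so that $\bx(\w)=\chi(\w')$ then makes the obstruction character of $\bk\bx$ trivial, giving (2). The delicate point in this last step is propagating the $\s$-selfdual reduction back to the original $\bk_*$-distinguishing data, which requires tracking how the tensor transfer of Lemma \ref{ArseneLup3} interacts with a character twist on $\bk_*$.
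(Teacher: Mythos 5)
Your proposal follows the same route as the paper: produce a $\BJ_+\cap\G^\s$-invariant form on $\bk_+$ from the distinguishing form on $\bk_*$, transfer distinction up to $\BJ^1\BJ_+\cap\G^\s$, read the obstruction character $\chi$ of Proposition~\ref{etachi}(2.b), kill $\chi$ on $\BJ^0\cap\G^\s$ by observing that any character of $\GL_m(\ll)^\s$ factors through the determinant(s) while $\chi$ already vanishes on the $\s$-fixed diagonal torus, and finally reduce to the value $\chi(\w')$ via Lemma~\ref{YumAubergineindicei}. That is exactly the paper's argument. Your handling of case (1) via the centrality of $\w'$ in $\M$ is equivalent to the paper's observation that $\w'\in\BJ^1\BJ_+\cap\G^\s$.

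Where you depart from the paper is in case (2), and the departure introduces the gap you yourself flag. You want $\chi(\w')\in\{\pm 1\}$ and try to obtain $\chi^2=1$ by replacing $\bk$ with a $\s$-selfdual extension $\bk'=\bk\bm$; but then the obstruction character changes to $\chi'=\chi\cdot\bm^{-1}|_{\BJ\cap\G^\s}$, and knowing $\chi'(\w')^2=1$ does not give $\chi(\w')^2=1$ unless you also control $\bm(\w')$. Moreover the lemma concerns the original $\bk$ (the one attached to the distinguished $\bk_*$), so the conclusion for $\bk'$ does not transfer back without further work. The cleaner observation, which avoids changing $\bk$ at all, is that $\w'^2=(\w w)^2=\w^2 w^2=\w^2$, since $\w\in\E^\times$ is central in $\B^\times$ (so it commutes with $w\in\B^\times$) and $w^2={\rm id}$; now $\w^2\in\E_0^\times\subseteq\BJ\cap\M\cap\G^\s\subseteq\BJ^1\BJ_+\cap\G^\s$, on which $\chi$ is already known to be trivial, hence $\chi(\w')^2=\chi(\w^2)=1$ directly. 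With this in hand, setting $\bx(\w')=\chi(\w')$ does define a quadratic character of $\BJ$ trivial on $\BJ^0$, and $\bk\bx$ is $\BJ\cap\G^\s$-distinguished, closing case (2) without any selfduality detour.
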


\begin{proof}
The representation $\bk_+$ is $\BJ_+\cap\G^\s$-distinguished,
thus $\widetilde{\bk}_+$ is $\BJ^1\BJ_+\cap\G^\s$-distinguished.
It follows that $\bk$ is $\BJ^1\BJ_+\cap\G^\s$-distinguished.
Let $\chi$ be the character of $\BJ\cap\G^\s$ associated with $\bk$ by
Proposition \ref{etachi}.
It is trivial on $\BJ^1\BJ_+\cap\G^\s$.
Restricting to $\BJ^0\cap\G^\s$, it is a character of
\begin{equation*}
(\BJ^0\cap\G^\s)/(\BJ^1\cap\G^\s) \simeq \GL_m(\ll)^\s.
\end{equation*}
Since $\BJ\cap\M\subseteq\BJ_+$, 
it is trivial on the image of $(\BJ\cap\M)\cap(\BJ^0\cap\G^\s)$ in 
$\GL_m(\ll)^\s$,
which is made of the $\s$-fixed points of
the diagonal torus $\textsf{M}=\ll^\times\tdt\ll^\times$.

If $\T/\T_0$ is unramified,
we have $\GL_m(\ll)^\s=\GL_m(\ll_0)$ and
$\textsf{M}^\s=\ll_0^\times\tdt\ll_0^\times$,
thus $\chi$ is trivial on $\BJ^0\cap\G^\s$.
If $\T/\T_0$ is ramified, 
we have $\GL_m(\ll)^\s=\GL_i(\ll)\times\GL_{m-i}(\ll)$ and
$\textsf{M}^\s=\textsf{M}$.
Again, $\chi$ is trivial on $\BJ^0\cap\G^\s$.

By Lemma \ref{YumAubergineindicei}, 
it remains to consider the value of $\chi$ at $\w'$.
If $\T/\T_0$ is unramified,
or if $\T/\T_0$ is ramified and $m\neq2i$,
we have $\w' \in\BJ^1\BJ_+\cap\G^\s$, 
thus $\chi$ is trivial.

Now assume that $\T/\T_0$ is ramified and $m=2i$. 
Let $\bx$ be the quadratic character of $\BJ$ trivial on $\BJ^0$ defined by
$\xi(\w')=\chi(\w')$.
Then $\bk\bx$ is $\BJ\cap\G^\s$-distinguished.
\end{proof}

We will prove in Paragraph \ref{valerie} that the quadratic character $\bx$
of Lemma \ref{Etretat}(2)
is always~tri\-vial: see Corollary \ref{Etretatcoro}.

\subsection{}
\label{valerie}

As in Paragraph \ref{typeneutre},
the simple character $\t$ and the simple stratum $[\aa,\b]$ 
are both maximal $\s$-self\-dual,
and $\n$ is the Heisenberg representation of $\BJ^1$ associated with $\t$. 
The next proposition,
which says that $\n$ has a canonical extension to $\BJ$, 
is the core of our proof of Theorem \ref{potimarron}.

\begin{prop}
\label{potiron}
There is, up to isomorphism,
a unique representation $\bk$ of $\BJ$ extending $\n$
satis\-fying the following conditions:
\begin{enumerate}
\item 
it is both $\s$-selfdual and $\BJ\cap\G^\s$-distinguished, 
\item 
its determinant has order a power of $p$. 
\end{enumerate} 
This unique representation will be denoted $\tbk$. 
\end{prop}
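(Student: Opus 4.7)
The plan is to convert the proposition into a problem about characters of $\BJ/\BJ^1$. By Proposition \ref{etachi}(4), fix a $\s$-selfdual extension $\bk_0$ of $\n$ to $\BJ$, and let $\chi_0$ be the character of $\BJ\cap\G^\s$ attached to $\bk_0$ by Proposition \ref{etachi}(2.b); since $\bk_0^{\vee\s}\simeq\bk_0$, the character $\bx_0$ of Proposition \ref{etachi}(2.a) is trivial, so $\chi_0^2=1$. Any other extension of $\n$ has the form $\bk_0\bx$ for a unique character $\bx$ of $\BJ/\BJ^1$, and I would check that $\bk_0\bx$ is $\s$-selfdual iff $\bx^\s=\bx^{-1}$, that $\bk_0\bx$ is $\BJ\cap\G^\s$-distinguished iff $\bx|_{\BJ\cap\G^\s}=\chi_0$, and (using $\det(\bk_0\bx)=\det(\bk_0)\cdot\bx^{\dim\n}$ with $\dim\n=(\BJ^1:\H^1)^{1/2}$ a $p$-power) that condition (2) is equivalent to $\bx$ having $p$-power order modulo characters whose values on the abelianization already have $p$-power order.

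For uniqueness, I would show that the only character $\bx$ of $\BJ/\BJ^1$ with $\bx^\s=\bx^{-1}$, $\bx|_{\BJ\cap\G^\s}=1$, and $\bx$ of $p$-power order is trivial. Using that $\BJ=\E^\times\BJ^0$ with $\E^\times\cap\BJ^0=\oo_\E^\times$ and the identification $\BJ^0/\BJ^1\simeq\GL_m(\ll)$ of Proposition \ref{pechessd}(4), $\bx$ is determined by a character $\a$ of $\ll^\times$ (through $\det$) together with the scalar $\b=\bx(\w)$. I would then treat the three cases of Lemma~\ref{YumAubergineindicei}: in the unramified case, the anti-invariance forces $\a^{q_0+1}=1$ and $\b^2=1$, while triviality on $\ll_0^\times\subseteq\ll^\times$ and on $\w'=\w$ gives $\a|_{\ll_0^\times}=1$ and $\b=1$, so $\a$ factors through the cyclic group $\ll^\times/\ll_0^\times$ of order $q_0+1$; in the ramified cases, $\s$ acts on $\ll^\times$ trivially (through an inner automorphism), anti-invariance gives $\a^2=1$ and $\b^2=\a(-1)$, while triviality on $\GL_i(\ll)\times\GL_{m-i}(\ll)$ forces $\a=1$, and triviality on $\w'$ (equal to $\w^2$ when $m\neq 2i$, or to $\w\cdot w$ with $\det(w)=(-1)^i$ when $m=2i$) gives $\b^2=1$ or $\b=1$. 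Since $p\neq 2$ and $q_0+1$ is coprime to $p$, the $p$-power order hypothesis forces $\a=1$ and $\b=1$.

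For existence, I would proceed in two steps. First, I would construct a $\s$-anti-invariant character $\bx_1$ of $\BJ/\BJ^1$ whose restriction to $\BJ\cap\G^\s$ equals $\chi_0$: working case-by-case on the same decomposition $\BJ/\BJ^1\simeq\GL_m(\ll)\times\langle\w\rangle$, the problem reduces to finding $\a\in\widehat{\ll^\times}$ and $\b\in\R^\times$ satisfying the anti-invariance relations above, with prescribed restriction to the description of $\chi_0$ as $\gamma_0\circ\det$ on $(\BJ^0/\BJ^1)^\s$ together with a value at $\w'$, using the surjectivity of the relevant norm or restriction maps to lift $\gamma_0$. Replacing $\bk_0$ by $\bk_0\bx_1$ reduces to the case $\chi_0=1$. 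Second, to ensure condition (2), I would adjust by an additional character $\bx_2$ of $\BJ/\BJ^1$ which is $\s$-anti-invariant, trivial on $\BJ\cap\G^\s$, and cancels the $p^\prime$-part of $\det(\bk_0\bx_1)$. Here one uses that $\det(\bk_0\bx_1)$ is itself $\s$-anti-invariant (as a determinant of a $\s$-selfdual representation) and trivial on $\BJ\cap\G^\s$ modulo $p$-power order (because $\bk_0\bx_1$ is distinguished), so that its $p^\prime$-part is already of the right shape; and that raising to the $p$-power $\dim\n$ is a bijection on characters of order coprime to $p$, so $\bx_2$ is uniquely determined (and compatible with the uniqueness argument above).

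The main obstacle is the first step of the existence proof: verifying that $\chi_0$ actually lies in the image of the restriction map from $\s$-anti-invariant characters of $\BJ/\BJ^1$ to characters of $\BJ\cap\G^\s/(\BJ^1\cap\G^\s)$. This requires a careful case analysis of both the $\s$-module structure of $\BJ/\BJ^1$ (controlled by Proposition \ref{pechessd} and Lemma \ref{YumAubergineindicei}) and the explicit possibilities for $\chi_0$, which must be constrained by the $\s$-selfduality of $\bk_0$ and the structure of the Heisenberg representation $\n$ — so some extra input from the construction of $\n$ (for instance, that the character $\chi_0$ factors through the determinant and is compatible with the norm structure of $\ll/\ll_0$ or the product structure of $\GL_i(\ll)\times\GL_{m-i}(\ll)$) is what makes the lift possible.
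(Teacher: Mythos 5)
Your uniqueness argument is essentially the one in the paper: both proofs reduce to showing that a character $\bx$ of $\BJ/\BJ^1$ that is $\s$-anti-invariant, trivial on $\BJ\cap\G^\s$, and of $p$-power order must be trivial, using the facts that characters of $\GL_m(\ll)$ factor through the determinant and that $p\neq2$. That part is fine.

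The existence half, however, contains a genuine gap, and you have correctly diagnosed where it lies. Your plan is to start from a $\s$-selfdual extension $\bk_0$, read off the distinction defect $\chi_0$ of Proposition~\ref{etachi}(2.b), and then produce a $\s$-anti-invariant character $\bx_1$ of $\BJ/\BJ^1$ restricting to $\chi_0$ on $\BJ\cap\G^\s$. But the surjectivity of the restriction map from $\s$-anti-invariant characters of $\BJ/\BJ^1$ onto the possible $\chi_0$'s is exactly the nontrivial content of the proposition, and nothing in the general framework of Proposition~\ref{etachi} or the decomposition in Lemma~\ref{YumAubergineindicei} forces $\chi_0$ to lie in that image. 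For example, in the ramified case with $\GL_m(\ll)^\s=\GL_i(\ll)\times\GL_{m-i}(\ll)$, $\s$-anti-invariant characters of $\GL_m(\ll)$ restrict to characters of the form $(\a\circ\det)\boxtimes(\a\circ\det)$ with $\a^2=1$, while a priori $\chi_0$ could be any quadratic character of the product, and ruling out the bad cases requires control on $\chi_0$ that you invoke only as ``extra input from the construction of $\n$'' without supplying it. You have thus correctly located the hard point, but not closed it.

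The paper's proof circumvents this by a different strategy. It first proves existence when $m=1$, where the relevant quotient is abelian and the analysis of $\chi_0$ is carried out in \cite{VSANT19} Propositions~7.9 and~9.4; this is precisely the ``extra input'' you were looking for, packaged as an external result. It then bootstraps from $m=1$ to general $m$ via the map $\bk_*\mapsto\bk$ of \eqref{kappa0kappa}, using Lemmas~\ref{preEtretat} and~\ref{Etretat} to propagate $\s$-selfduality and distinction through the Iwahori-type decomposition of Paragraph~\ref{previousparagraph}, finally absorbing the potential unramified quadratic defect by a twist. This inductive/transfer mechanism is what lets the paper avoid ever having to analyze $\chi_0$ directly for $m>1$. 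Without either reproducing the $m=1$ computation or discovering the bootstrap, your proof cannot be completed.
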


\begin{rema}
\label{bingo}
This extends (and makes more precise) the results of \cite{VSANT19}
(see \textit{ibid.}, Propositions 7.9, 9.4)
where $\t$ is assumed to be generic
and either $\T/\T_0$ is unramified and $m$ is odd,~or
$\T/\T_0$ is ramified and $m\in\{1,2i\}$.
See also \cite{VSANT19} Remarks 9.5 and 9.9.
\end{rema}

\begin{proof}
Suppose first that there exists a representation satisfying (1).
As in the proof of~\cite{VSPTB} Corollary 6.12, 
one then easily proves the existence of a representation $\bk$
satisfying (1) and~(2).
Let us prove that such a representation is unique.
Any other representation of $\BJ$ satisfying the conditions of the proposition 
is of the form $\bk\boldsymbol{\phi}$ for some character
$\boldsymbol{\phi}$ of $\BJ$ which is
$\s$-selfdual~and trivial on $(\BJ\cap\G^\s)\BJ^1$,
and whose order is a power of $p$.
The restriction of $\boldsymbol{\phi}$ to $\BJ^0$ can be~consi\-dered
as a character of $\GL_m(\ll)$.
Since the latter group is not isomorphic to $\GL_2(\mathbb{F}_2)$
(for $p\neq2$),
this character factors through the determinant.  
Its order is thus prime to $p$,
which implies that $\boldsymbol{\phi}$ is trivial on $\BJ^0$. 
It is thus a character of $\BJ/(\BJ\cap\G^\s)\BJ^0$ which,
by Lemma \ref{YumAubergineindicei}, 
has order at most $2$.
Uniqueness follows from the fact that $p\neq2$.

We are now reduced to proving the existence of 
a representation $\bk$ satis\-fying (1).
If $m=1$,
this follows from \cite{VSANT19} Propositions 7.9, 9.4.
(See also Remark \ref{bingo}.) 

Now consider the constructions of \ref{previousparagraph} and
\ref{typeneutre} with $u=m$.  
Thanks to the case where~$m$ is equal to $1$,
there is a~repre\-sentation $\bk_*$ of $\BJ_*$ extending $\n_*$
which is both $\s$-selfdual and $\BJ^{}_*\cap\G_*^\s$-distinguished.
Let $\bk$ be the representation of $\BJ$ extending $\n$
associated with it by \eqref{kappa0kappa}.
Lemma \ref{preEtretat} implies that
it is $\s$-selfdual, 
and Lemma \ref{Etretat} implies that 
there is a quadratic character $\bx$ of $\BJ$ trivial on $\BJ^0$
such that $\bk\bx$ is $\BJ\cap\G^\s$-distinguished. 
Since $\bx$ is unramified and quadratic,
$\bk\bx$ is also $\s$-selfdual and extends $\n$.  
\end{proof}

\begin{rema}
Notice that this gives another proof of \cite{VSANT19} Propositions 7.9, 9.4, 
based on the case $m=1$ only. 
\end{rema}

Now we can improve Lemma \ref{Etretat}.
Suppose we are in the situation of Paragraphs
\ref{previousparagraph} and \ref{typeneutre}, with $u=m$. 

\begin{coro}
\label{Etretatcoro}
Suppose that $u=m$.  
Let $\bk_*$ be a representation of $\BJ_*$ extending $\n_*$
and $\bk$ correspond to it by the map \eqref{kappa0kappa}.  
If $\bk_*^{}$ is $\BJ_*^{}\cap\G_*^\s$-distin\-guished,
then $\bk$ is $\BJ\cap\G^\s$-distin\-guished.
\end{coro}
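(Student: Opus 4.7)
By Lemma \ref{Etretat}(1), only the case $\T/\T_0$ ramified with $m = 2i$ requires argument; there Lemma \ref{Etretat}(2) furnishes a quadratic character $\bx$ of $\BJ$, trivial on $\BJ^0$, such that $\bk\bx$ is $\BJ \cap \G^\s$-distinguished, and the task reduces to showing that $\bx$ is trivial.

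The plan is to reduce to a canonical choice of $\bk_*$ and exploit the uniqueness in Proposition \ref{potiron}. By Lemma \ref{ArseneLup4}, the isomorphism class of $\bk$ depends on $\bk_*$ only up to twists by characters of $\BJ_*$ trivial on $\BJ_*^0$ of order dividing $m$. In our setting ($\T/\T_0$ ramified with $m_* = 1$), there are exactly two distinguished extensions of $\n_*$, and they differ by a quadratic character of $\BJ_*$ trivial on $\BJ_*^0$; as $m$ is even, this character is among the twists allowed by Lemma \ref{ArseneLup4}, so both distinguished $\bk_*$ yield the same $\bk$. We may therefore take $\bk_* = \tbk_*$, the canonical representation provided by Proposition \ref{potiron} applied in the $m = 1$ case, which is $\s$-selfdual, $\BJ_* \cap \G_*^\s$-distinguished, and has determinant of $p$-power order. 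Lemma \ref{preEtretat} then makes $\bk$ $\s$-selfdual; moreover, $\bx$ is automatically $\s$-invariant (because $\s(\w) = -\w$ and $-1 \in \BJ^0$), so $\bk\bx$ is also $\s$-selfdual.

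By Proposition \ref{potiron}, the canonical extension $\tbk$ satisfies $\tbk = \bk\bx\boldsymbol{\phi}$ for some character $\boldsymbol{\phi}$ of $\BJ$ of $p$-power order, trivial on $(\BJ \cap \G^\s)\BJ^1$; this is the modification carried out in the proof of that proposition. Taking determinants, and using that $\dim\bk = (\BJ^1 : \H^1)^{1/2}$ is a power of the odd prime $p$ (since $\BJ^1/\H^1$ is a symplectic $\mathbb{F}_p$-space of even dimension), one obtains
\begin{equation*}
\det\tbk = \det\bk \cdot \bx \cdot \boldsymbol{\phi}^{\dim\bk}.
\end{equation*}
If $\det\bk$ has $p$-power order, then the displayed identity forces $\bx$ to have $p$-power order as well; being quadratic with $p$ odd, this gives $\bx = 1$, as desired.

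The remaining step, which is the main technical obstacle, is to verify that $\det\bk$ has $p$-power order. One traces the three-step construction of $\bk$ from $\tbk_*$ in Lemma \ref{ArseneLup3}: first, $\bk_+$ agrees with $\tbk_*^{\otimes m}$ on $\BJ \cap \M$ and is trivial on the unipotent parts, so its determinant inherits $p$-power order from that of $\tbk_*$ via the multiplicativity $\det(A \otimes B) = (\det A)^{\dim B}(\det B)^{\dim A}$; second, the induced representation $\widetilde{\bk}_+$ has index $[\BJ^1\BJ_+ : \BJ_+] = [\BJ^1 \cap \N_- : \H^1 \cap \N_-]$ equal to a power of $p$ by the Iwahori decomposition, so the sign character from the permutation action is trivial and the determinant of the induction stays in $p$-power order; third, the unique extension of $\widetilde{\bk}_+$ to $\bk$ on $\BJ$ (Lemma \ref{ArseneLup3}(3)) is controlled on the quotient $\BJ/\BJ^1\BJ_+$, where the absence of non-trivial $p$-prime characters (once one restricts to $\BJ^0$, whose image in $\GL_m(\ll)$ receives only characters factoring through the determinant $\GL_m(\ll) \to \ll^\times$) ensures no prime-to-$p$ order is introduced.
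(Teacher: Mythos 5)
The overall strategy is different from the paper's: both proofs reduce to the ramified case $m = 2i$ via Lemma \ref{Etretat} and both aim to show $\det\bk$ has order a power of $p$ (from which $\bx$, being quadratic with $p$ odd, is forced to be trivial). The paper does this by a conjugation trick: choose $y$ so that $\t' = \t^y$ has index $0$; then $\bk^y$ is the representation attached to $\bk_*$ by \eqref{kappa0kappa} for $\t'$, it is distinguished with no twist by Lemma \ref{Etretat}(1), and comparing with $\bk_{\t'}$ shows $\det\bk^y$ has $p$-power order; since $\det\bk$ and $\det\bk^y$ are conjugate, they have the same order. You instead try to compute $\det\bk$ directly by tracing the three steps of the construction in Lemma \ref{ArseneLup3}. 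This is a genuinely different route, but it has a gap.

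The gap is in your step 2 of the determinant trace. You claim that the sign character of $\BJ^1\BJ_+$ acting on $\BJ^1\BJ_+/\BJ_+$ is trivial because the index $[\BJ^1\BJ_+ : \BJ_+]$ is a power of $p$. This does not follow: the sign of a group acting on a coset space of $p$-power size is automatically trivial only when the \emph{acting group} is pro-$p$ (for odd $p$), not when the \emph{set} has $p$-power size. Here $\BJ^1\BJ_+$ is not pro-$p$ — it contains $\BJ\cap\M$, which has quotients of even order — and an element of $\BJ\cap\M$ acting by conjugation on the $\mathbb{F}_p$-space $(\BJ^1\cap\N_-)/(\H^1\cap\N_-)$ can a priori have nontrivial sign (by Zolotarev-type considerations the sign is governed by a Legendre symbol of the determinant of the linear action). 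Since $\dim\bk_+$ is odd, a nontrivial sign would survive into $\det\widetilde{\bk}_+$ and wreck the $p$-power conclusion. You would need a further argument — for instance, showing the determinant of the conjugation action is a square mod $p$, or bypassing the determinant of the induction altogether as the paper does — and without it the key step of your proof is not established.

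A secondary point: your justification that the character $\boldsymbol{\phi}$ with $\tbk = \bk\bx\boldsymbol{\phi}$ has $p$-power order ("this is the modification carried out in the proof of that proposition") is not right — the modification in the existence part of Proposition \ref{potiron} twists by the prime-to-$p$ part of the determinant, which is a $p'$-order character. Your conclusion about $\boldsymbol{\phi}$ is nevertheless correct, and in fact sharper: since $\boldsymbol{\phi}$ is trivial on $(\BJ\cap\G^\s)\BJ^1$, its restriction to $\BJ^0$ factors through $\det$ on $\GL_m(\ll)$ and is trivial on $\GL_i(\ll)\times\GL_i(\ll)$, hence trivial on $\BJ^0$; and since $\w' \in \w\BJ^0\cap\G^\s$ in the case $m = 2i$, $\boldsymbol{\phi}$ is trivial at $\w$ as well, so $\boldsymbol{\phi} = 1$. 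Observing this (as the paper does) would simplify your determinant identity, but it does not repair the gap in the sign-character step.
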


\begin{proof}
The result is given by Lemma \ref{Etretat},
except when $\T/\T_0$ is ramified and $m=2i$,~which
we assume now.
Suppose that $\bk_*^{}$ is $\BJ_*^{}\cap\G_*^\s$-distin\-guished. 
By Lemma \ref{Etretat}, 
there is a quadratic character $\bx$ of $\BJ$ trivial on $\BJ^0$ such
that $\bk\bx$ is $\BJ\cap\G^\s$-distin\-guished.
Let $\bk_\t$ be the representation given by Proposition 
\ref{potiron} and write $\bk\bx=\bk_\t\boldsymbol{\phi}$ for some
character $\boldsymbol{\phi}$ of $\BJ$ trivial on $(\BJ\cap\G^\s)\BJ^1$.
Restricting to $\BJ^0$,
the character $\boldsymbol{\phi}$ can be seen as a character of $\GL_m(\ll)$
of the form $\a\circ\det$,
for some character $\a$ of $\ll^\times$,
which is trivial on
$\GL_m(\ll)^\s=\GL_i(\ll)\times\GL_i(\ll)$.
This implies that $\a$ is trivial,
thus $\boldsymbol{\phi}$ is trivial on $\BJ^0$.
Also,
$\boldsymbol{\phi}$ is trivial at $\w'\in\w\BJ^0$
by Lemma \ref{YumAubergineindicei}.
It is thus trivial.
In conclusion, we have $\bk=\bk_\t\bx$.
Taking determinants, we get
\begin{equation}
\label{comp1}
\det\ \bk=\bx \cdot \det\ \bk_\t.
\end{equation}
Now there is a $y\in\M\cap\B^\times$ such that 
$\t'=\t^y\in\Cc(\aa^y,\b)$
is a $\s$-selfdual maximal simple charac\-ter~of index $0$
(in the sense of Remark \ref{indice}). 
By Remark \ref{conjugaisony},
the simple character of $\Cc(\aa_*,\b)$ associated with $\t'$
by Lemma \ref{ArseneLup1} is still $\t_*$,
and the representation of $\BJ'=\BJ^y$
corresponding to $\bk_*$ by \eqref{kappa0kappa} is
$\bk^y$.
Let $\bk_{\t'}$ be the representation associated with $\t'$
by Proposition \ref{potiron}. 
By Lemma \ref{Etretat}, $\bk'$ is distinguished.
By the discussion above, it follows that
\begin{equation}
\label{comp2}
\det\ \bk^y = \det\ \bk_{\t'}.
\end{equation}
But the characters $\det\ \bk$, $\det\ \bk^y$ have the same order
(since they are conjugate to each others),
and the latter one has order a power of $p$ thanks to
\eqref{comp2}.
Now \eqref{comp1} implies that $\bx$ has order a power of $p$.
Since $\bx$ is quadratic and $p\neq2$,
this character is trivial. 
\end{proof}

We extract from the proof of Corollary \ref{Etretatcoro}
the following valuable corollary.

\begin{coro}
\label{Etretatcoro2}
Suppose that $u=m$.  
Let $\bk_{\t_*}$ and $\bk_{\t}$ be the representations 
associated with $\t_*$ and $\t$ by Proposition \ref{potiron},
respectively.
Then the map \eqref{kappa0kappa} takes $\bk_{\t_*}$ to $\bk_{\t}$. 
\end{coro}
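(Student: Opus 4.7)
The plan is to apply the uniqueness statement in Proposition \ref{potiron}. Write $\bk$ for the image of $\bk_{\t_*}$ under the map \eqref{kappa0kappa}. To conclude $\bk=\bk_\t$, I will verify that $\bk$ satisfies both defining conditions (1) and (2) of Proposition \ref{potiron}, and then invoke uniqueness.

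Condition (1) is essentially free from the work already done. Since $\bk_{\t_*}$ is $\s$-selfdual by definition (Proposition \ref{potiron} applied to $\t_*$), Lemma \ref{preEtretat} shows that $\bk$ is $\s$-selfdual. Since $\bk_{\t_*}$ is $\BJ_*\cap\G_*^\s$-distinguished (Proposition \ref{potiron} again), Corollary \ref{Etretatcoro} shows that $\bk$ is $\BJ\cap\G^\s$-distinguished. So (1) holds for $\bk$.

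The substance of the argument is condition (2), the fact that $\det\bk$ has order a power of $p$. The idea is to extract this from the computations already performed in the proof of Corollary \ref{Etretatcoro}. When $\T/\T_0$ is unramified, or $\T/\T_0$ is ramified with $m\neq 2i$, Lemma \ref{Etretat}(1) yields $\bk$ distinguished without twist, and one may write $\bk=\bk_\t\boldsymbol{\phi}$ with $\boldsymbol{\phi}$ an $\s$-selfdual character of $\BJ$ trivial on $(\BJ\cap\G^\s)\BJ^1$; the argument at the beginning of the proof of Proposition \ref{potiron} (factorization through the determinant of $\GL_m(\ll)$ on $\BJ^0$, combined with Lemma \ref{YumAubergineindicei} and $p\neq 2$) forces $\boldsymbol{\phi}=1$. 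In the remaining case $\T/\T_0$ ramified and $m=2i$, the proof of Corollary \ref{Etretatcoro} already establishes $\bk=\bk_\t\bx^{-1}$ with $\bx$ the quadratic character produced by Lemma \ref{Etretat}(2), and then proves $\bx$ is trivial by conjugating to index $0$ by some $y\in\M\cap\B^\times$, comparing determinants, and observing that $\det\bk^y=\det\bk_{\t'}$ has $p$-power order.

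The main obstacle, as in the proof of Corollary \ref{Etretatcoro}, is the case $\T/\T_0$ ramified with $m=2i$: it is precisely there that a non-trivial quadratic twist could \emph{a priori} intervene. The determinant conjugacy trick through $y$ rules this out, and together with the easy cases treated above this completes the verification of condition (2); uniqueness in Proposition \ref{potiron} then yields the equality $\bk=\bk_\t$ claimed by the corollary.
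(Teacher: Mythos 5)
Your strategy — verify conditions (1) and (2) of Proposition \ref{potiron} for the image $\bk$ of $\bk_{\t_*}$ and then invoke uniqueness — is the right way to frame the argument, and your treatment of condition (1) via Lemma \ref{preEtretat} and Corollary \ref{Etretatcoro} is fine. But the step you flag as ``the substance'', namely condition (2), is where the argument breaks down, and in a way that is circular rather than merely incomplete. You write $\bk=\bk_\t\boldsymbol{\phi}$ and then claim that ``the argument at the beginning of the proof of Proposition \ref{potiron}'' forces $\boldsymbol{\phi}=1$ when $\T/\T_0$ is unramified or ramified with $m\neq 2i$. That argument, however, crucially uses that $\boldsymbol{\phi}$ has order a power of $p$ to pass from ``$\boldsymbol{\phi}|_{\BJ^0}$ factors through $\det$'' to ``$\boldsymbol{\phi}|_{\BJ^0}$ is trivial'' — and $\boldsymbol{\phi}$ has $p$-power order precisely if and only if $\det\bk$ has $p$-power order, which is the condition (2) you are trying to establish.

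Without that input, the factorisation argument only gives that $\boldsymbol{\phi}|_{\BJ^0}=\a\circ\det$ with $\a$ trivial on $\det(\GL_m(\ll)^\s)$. In the unramified case $\det(\GL_m(\ll)^\s)=\ll_0^\times\subsetneq\ll^\times$, so $\a$ may well be non-trivial; in the ramified case with $m\neq 2i$ one does get $\boldsymbol{\phi}|_{\BJ^0}=1$, but then Lemma \ref{YumAubergineindicei} only yields $\boldsymbol{\phi}(\w^2)=1$, so $\boldsymbol{\phi}$ may have order $2$, and $\s$-selfduality (note $-1\in\BJ^0$) gives no further constraint. This is not a cosmetic gap: for $m_*=1$ there genuinely are several $\s$-selfdual, distinguished extensions of the Heisenberg representation (a $\mathbb{Z}/(q_0+1)$-torsor of them in the unramified case, two of them in the ramified case), and it is exactly the determinant condition (2) that singles out $\bk_{\t}$ among them — so conditions (1) alone cannot be used to identify $\bk$ with $\bk_\t$. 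The actual content of the corollary is therefore to prove directly that $\det\bk$ has $p$-power order, and this is what the paper's determinant-conjugation manoeuvre (moving by $y\in\M\cap\B^\times$ to an index-$0$ character, where Proposition \ref{potiron} can be applied and the orders of the determinants compared) is designed to do; some such computation, or a direct analysis of $\det\bk$ in terms of $\det\bk_{\t_*}$ through the construction of Lemma \ref{ArseneLup3}, is needed in all three cases, not just the one you isolate as the main obstacle.
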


We also have the following corollary,
which extends \cite{VSANT19} Lemma 7.10(3), Corollary 9.6(1).

\begin{coro}
Any $\BJ\cap\G^\s$-distinguished representation of $\BJ$
extending $\n$ is $\s$-selfdual.
\end{coro}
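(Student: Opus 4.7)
The plan is to reduce the $\s$-selfduality claim to a character identity, which is then verified case by case. Let $\bk'$ extend $\n$ and be $\BJ \cap \G^\s$-distinguished. Writing $\bk' \simeq \tbk \boldsymbol{\phi}$ with $\tbk$ the canonical representation supplied by Proposition \ref{potiron} and $\boldsymbol{\phi}$ the unique character of $\BJ$ trivial on $\BJ^1$ so defined, the $\s$-selfduality of $\tbk$ reduces the desired statement $\bk'^{\vee\s} \simeq \bk'$ to the identity $\boldsymbol{\phi} \cdot (\boldsymbol{\phi} \circ \s) = 1$ on $\BJ$.

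The first preparatory step is to pin down $\boldsymbol{\phi}$ on the fixed-point subgroup. By Proposition \ref{etachi}(2.b), each extension of $\n$ has an attached character of $\BJ \cap \G^\s$ trivial on $\BJ^1 \cap \G^\s$, and the extension is $\BJ \cap \G^\s$-distinguished if and only if that character is trivial. For $\tbk$ this character is trivial by Proposition \ref{potiron}, and for the twist $\tbk \boldsymbol{\phi}$ it is the restriction $\boldsymbol{\phi}|_{\BJ \cap \G^\s}$; so the distinction of $\bk'$ forces $\boldsymbol{\phi}|_{\BJ \cap \G^\s} = 1$. The second preparatory step is that, since $p \neq 2$ excludes the exceptional case $\GL_m(\ll) \simeq \GL_2(\mathbb{F}_2)$, the abelianization of $\BJ^0/\BJ^1 \simeq \GL_m(\ll)$ is detected by the determinant, so $\boldsymbol{\phi}|_{\BJ^0} = \a \circ \det$ for some character $\a$ of $\ll^\times$.

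The remaining work is a case analysis driven by Proposition \ref{pechessd}(2) and Lemma \ref{YumAubergineindicei}. When $\T/\T_0$ is unramified, $\BJ^0 \cap \G^\s$ surjects onto $\GL_m(\ll_0)$ whose determinant image is $\ll_0^\times = \Nm_{\ll/\ll_0}(\ll^\times)$, so $\a$ is trivial on norms; this translates directly into $\a \cdot (\a \circ \s) = 1$, and together with the $\s$-invariance of $\w' = \w$ yields the identity on $\BJ^0$ and at the uniformizer, hence on all of $\BJ$. When $\T/\T_0$ is ramified, $\BJ^0 \cap \G^\s$ surjects onto $\GL_i(\ll) \times \GL_{m-i}(\ll)$ whose determinant image is all of $\ll^\times$, forcing $\a = 1$ and hence $\boldsymbol{\phi}|_{\BJ^0} = 1$. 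The value $\boldsymbol{\phi}(\w)$ is then extracted from $\boldsymbol{\phi}(\w') = 1$ using $\w' = \w^2$ when $m \neq 2i$ (yielding $\boldsymbol{\phi}(\w)^2 = 1$), and using $\w' \in \w \BJ^0$ when $m = 2i$ (yielding $\boldsymbol{\phi}(\w) = 1$ directly); the triviality of $\boldsymbol{\phi}$ on $\BJ^0$, combined with $\s(\w) = -\w$, then produces $(\boldsymbol{\phi} \cdot \boldsymbol{\phi}^\s)(\w) = 1$ in either sub-case.

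The most delicate point is the ramified case $m = 2i$: because the generator $\w'$ is only determined modulo $\BJ^0$, one must first prove $\boldsymbol{\phi}|_{\BJ^0} = 1$ before being able to read off $\boldsymbol{\phi}(\w)$ from $\boldsymbol{\phi}(\w')$. This is why the two preparatory steps must be carried out in the order indicated; the rest of the argument is organizational, combining Proposition \ref{etachi} with the explicit generators of $\BJ \cap \G^\s$ given by Lemma \ref{YumAubergineindicei}.
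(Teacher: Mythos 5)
Your proof is correct and follows essentially the same strategy as the paper: write the distinguished extension as $\tbk\boldsymbol{\phi}$, use Proposition \ref{etachi} to force $\boldsymbol{\phi}|_{\BJ\cap\G^\s}=1$, note that $\boldsymbol{\phi}|_{\BJ^0}$ factors through the determinant, and conclude by a case analysis on the ramification of $\T/\T_0$. The only (cosmetic) difference is that in the ramified case you sub-split on $m=2i$ versus $m\neq 2i$; the paper avoids this by observing that $\w^2\in\BJ\cap\G^\s$ regardless of $i$, so $\boldsymbol{\phi}(\w)^2=1$ follows immediately once $\boldsymbol{\phi}$ is known to be trivial on $\BJ\cap\G^\s$.
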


\begin{proof}
Let $\bk$ be a $\BJ\cap\G^\s$-distinguished representation of $\BJ$
extending $\n$,
and $\bx$ be the unique character of $\BJ$ trivial on $\BJ^1$ such that
$\bk=\tbk\bx$.
We have to prove that $\bx^{-1}\circ\s=\bx$.
The fact~that $\bk$ is distinguished implies that $\bx$ is trivial on
$(\BJ\cap\G^\s)\BJ^1$.
Restricting to $\BJ^0$,
the character $\bx$ can be seen as a character of $\GL_m(\ll)$ of the
form $\a\circ\det$,
for some character $\a$ of $\ll^\times$.

If $\T/\T_0$ is unramified,
$\a$ is trivial on $\ll_0^\times$,
thus $\bx^{-1}\circ\s$ and $\bx$ coincide on $\BJ^0$.
They also~co\-in\-ci\-de~on~$\w\in\BJ\cap\G^\s$,
thus they are equal.

If $\T/\T_0$ is ramified, $\a$ is trivial,
thus $\bx^{-1}\circ\s$ and $\bx$ are both trivial on $\BJ^0$.
Since 
$\bx$ is trivial on $\w^2\in\BJ\cap\G^\s$,
we get
$\bx^{-1}\circ\s(\w)=\bx^{-1}(-\w)=\bx^{-1}(\w)=\bx(\w)$,
which finishes the proof.  
\end{proof}

\subsection{}
\label{groseille}
\label{peche}

We now come to the type theoretic description of cuspidal representations of 
$\G$.
The~follow\-ing proposition follows from
\cite{BK} Theorem 8.4.1, Corollary 6.2.3, Theorem 5.7.1
(see \cite{MSt}~Théo\-rèmes 3.4, 3.7 and \cite{SS6} Theorem 7.2
in the modular case).

\begin{prop}
Let $\pi$ be a cuspidal representation of $\G$.
There is, up to $\G$-conjugacy,~a
unique simple character $\t$ such that the restriction of~$\pi$ to $\H^1_\t$ 
contains $\t$, and it is maximal. 
\end{prop}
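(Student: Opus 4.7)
The plan is to invoke the type-theoretic classification of cuspidal representations of $\G$ recalled just before the statement. By \cite{BK} Theorem 8.4.1, together with its modular extensions \cite{MSt} Théorèmes 3.4, 3.7 and \cite{SS6} Theorem 7.2, there exists an extended maximal simple type $(\BJ,\bl)$, unique up to $\G$-conjugacy, such that $\pi\simeq\ind_{\BJ}^{\G}(\bl)$. By the explicit construction recalled in \S\ref{prelim}, we have $\BJ=\BJ_\t$ for some maximal simple character $\t\in\Cc(\aa,\b)$, and the restriction of $\bl$ to $\BJ^1_\t$ is a multiple of the Heisenberg representation $\n$ of $\t$. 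Since $\n|_{\H^1_\t}$ contains $\t$ by Proposition \ref{patel}(4), Frobenius reciprocity yields that $\pi|_{\H^1_\t}$ contains $\t$. This produces the existence part of the statement together with maximality.

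For uniqueness, let $\t'$ be any simple character in $\G$, \emph{a priori not assumed maximal}, such that $\pi|_{\H^1_{\t'}}$ contains $\t'$. Combining Frobenius reciprocity with $\pi\simeq\ind_\BJ^{\G}(\bl)$ and Mackey's decomposition, one obtains an element $g\in\G$ together with a non-zero map between the corresponding conjugate restrictions; this element $g$ intertwines $\t'$ with $\t$ in the sense of simple characters of \cite{BK}. The task is then to upgrade this intertwining relation to a genuine $\G$-conjugacy relation between $\t'$ and $\t$.

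The main obstacle is precisely this upgrade, and ruling out the possibility that $\t'$ might be non-maximal. Both are handled by the intertwining theory of simple characters: the intertwining formula (\cite{BK} Proposition~3.3.2 and Theorem 5.7.1) shows that two simple characters which intertwine via an element of $\G$ and whose ambient hereditary orders are both principal of the same period must be $\G$-conjugate. The non-maximal case is excluded via the covering-type machinery of \cite{BK} Chapter 7 (modular counterpart: \cite{MSt,SS6}): a non-maximal simple character occurring in $\pi$ would force $\pi$ to lie in the Bernstein block attached to a proper Levi subgroup, contradicting the cuspidality of $\pi$. Applied to the pair $(\t,\t')$ these two ingredients yield that $\t'$ is necessarily maximal and $\G$-conjugate to $\t$, giving the required uniqueness.
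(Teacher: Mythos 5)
The paper offers no proof here: the proposition is cited straight from \cite{BK} Theorem~8.4.1, Corollary~6.2.3 and Theorem~5.7.1, together with the $\ell$-modular extensions \cite{MSt} and \cite{SS6}. Your reconstruction has the right architecture — existence and maximality from the classification of cuspidal representations by maximal simple types, uniqueness from Mackey theory plus intertwining-implies-conjugacy — and this is indeed how those references work. Two places deserve more care. The reference \cite{BK} Proposition~3.3.2 gives the intertwining set of a simple character (it equals $\BJ^1\B^\times\BJ^1$), not the conjugacy statement; the latter is \cite{BK} Theorem~3.5.11, and it only applies when the two simple characters live on the \emph{same} hereditary order $\aa$. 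For two maximal simple characters of $\GL_n(\F)$ attached a priori to different maximal strata, the route is as sketched in §3.2 of the paper: intertwining implies endo-equivalence (via \cite{BHLTL1}), and endo-equivalent maximal simple characters of $\GL_n(\F)$ are $\G$-conjugate. Your phrase ``principal of the same period'' collapses this two-step argument in a way that does not quite match what \cite{BK} Theorem~3.5.11 says.

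The second point is the exclusion of a non-maximal $\t'$, which is where the real content is. Your formulation — that a non-maximal simple character ``would force $\pi$ to lie in the Bernstein block attached to a proper Levi'' — is off: the block of $\pi$ is always the inertial class of $\pi$ itself, and that does not change according to which $\t'$ one considers. The correct mechanism, and the reason the paper cites \cite{BK} Theorem~5.7.1 and its modular counterparts, is via split types and covers: if a non-maximal simple character (or, more precisely, a simple type attached to a non-maximal order $\bb'$) occurred in $\pi$, the cover structure of \cite{BK} Chapter~7 would produce a non-zero Jacquet module of $\pi$ along a proper parabolic subgroup, contradicting cuspidality. With this correction the argument is sound and matches the intent of the paper's citations.
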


Let $\pi$ be a cuspidal representation of $\G$,
and let $\t$ be a simple character occurring in $\pi$.~As\-so\-ciated with it, 
there are: 
\begin{itemize}
\item 
the positive integer $m(\pi)=m\>1$ defined by \eqref{melon},
called the \textit{relative degree} of $\pi$,
\item
the $\G$-conjugacy class (or equivalently the $\F$-iso\-mor\-phism class)
of the tamely ramified~ex\-ten\-sion $\T$ of $\EE$ associated with $\t$,
called the \textit{tame parameter field} of $\pi$,
\item
the endo-class $\TT$ of $\t$,
called the \textit{endo-class} of $\pi$.
\end{itemize}
(Note that,
when $\pi$ has level $0$,
one has $m=n$ and $\T=\F$,
and $\TT$ is the null endo-class.) 

Write $\BJ=\BJ_\t^{}$, $\BJ^0=\BJ^0_\t$, $\BJ^1=\BJ^1_\t$ 
and let $\n$ be the Heisenberg representation of $\t$.
The~next pro\-position follows from \cite{MSt} Lemme 5.3, Theorem 3.11. 

\begin{prop} 
\label{deftau}
Let $\bk$ be a representation of $\BJ$ extending $\n$,
and define a representation of $\BJ$ on the space
$\Hom_{\BJ^1}(\bk,\pi)$ by making $x\in\BJ$
act on $f\in\Hom_{\BJ^1}(\bk,\pi)$ by
\begin{equation*}
\label{defKappa}
x \cdot f = \pi(x) \circ f \circ \bk(x)^{-1}.
\end{equation*}
This representation, denoted $\bt$, has the following properties:
\begin{enumerate}
\item 
It is irreducible, and trivial on $\BJ^1$. 
\item
If one identifies $\BJ^0/\BJ^1$ with a finite general linear group as in
\eqref{JJ1UU1GLmax}, 
its restriction to $\BJ^0$~is the inflation of a~cuspi\-dal~re\-presentation. 
\item
The compact induction of $\bk\otimes\bt$ from $\BJ$
to $\G$ is isomorphic to $\pi$.
\end{enumerate}
\end{prop}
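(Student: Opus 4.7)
The plan is to verify the action is well-defined and trivial on $\BJ^1$, then identify $\bt$ with the second factor of an extended maximal simple type for $\pi$, and derive the three claims from this identification.

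First I would check that $x\cdot f = \pi(x)\circ f\circ \bk(x)^{-1}$ really defines an action of $\BJ$ on $\Hom_{\BJ^1}(\bk,\pi)$ that is trivial on $\BJ^1$. For $u\in\BJ^1$ and $v$ in the space of $\bk$, the normality of $\BJ^1$ in $\BJ$ together with the $\BJ^1$-equivariance of $f$ gives $(x\cdot f)(\bk(u)v) = \pi(x)f(\bk(x^{-1}ux)\bk(x)^{-1}v) = \pi(u)(x\cdot f)(v)$, so $x\cdot f\in\Hom_{\BJ^1}(\bk,\pi)$. Triviality on $\BJ^1$ is then immediate: for $u\in\BJ^1$, $(u\cdot f)(v)=\pi(u)f(\bk(u)^{-1}v)=\pi(u)\pi(u)^{-1}f(v)=f(v)$.

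Next I would invoke the existence of an extended maximal simple type inducing $\pi$ (\cite{BK} 8.4.1, \cite{MSt} Th\'eor\`eme 3.7): there exists a type $(\BJ',\bl')$ with $\pi\simeq\ind_{\BJ'}^{\G}\bl'$ whose underlying simple character is $\G$-conjugate to $\t$. After conjugation I may assume $\BJ'=\BJ$ and that the simple character is $\t$ itself, so $\bl'|_{\BJ^1}$ is $\n$-isotypic. Since any two extensions of $\n$ to $\BJ$ differ by a character of $\BJ/\BJ^1$, I can write $\bl'\simeq\bk\otimes\bt_0$ for a uniquely determined irreducible representation $\bt_0$ of $\BJ$ trivial on $\BJ^1$, whose restriction to $\BJ^0$ is the inflation of a cuspidal representation of $\BJ^0/\BJ^1\simeq\GL_m(\ll)$ by the very definition of an extended maximal simple type.

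Finally, I would produce an explicit $\BJ$-equivariant map $\bt_0\to\bt$ by $w\mapsto \phi_w$, where $\phi_w(v)$ is the image of $v\otimes w\in\bl'$ under the canonical embedding $\bl'\hookrightarrow\ind_\BJ^\G\bl'=\pi$. That $\phi_w$ is $\BJ^1$-equivariant and that $x\cdot\phi_w=\phi_{\bt_0(x)w}$ is a direct consequence of $\pi(x)[v\otimes w]=[\bk(x)v\otimes\bt_0(x)w]$. To show this injection is an isomorphism, I would compute $\Hom_{\BJ^1}(\bk,\pi)$ via Mackey decomposition of $\pi|_{\BJ^1}$ over $\BJ^1\backslash\G/\BJ$: the trivial double coset contributes $\End_{\BJ^1}(\bk)\otimes\bt_0\simeq\bt_0$, since $\bk|_{\BJ^1}=\n$ is irreducible by Proposition \ref{patel}, while the remaining contributions must vanish because $\bl'$ is a type, and hence is not intertwined by any element outside $\BJ$. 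This Mackey vanishing is the expected main technical obstacle and is precisely the content of the intertwining formula for simple types established in \cite{BK} Chapter 5 and its modular counterpart \cite{MSt} Th\'eor\`eme 3.11. Once $\bt\simeq\bt_0$ is established, claim (1) is immediate, claim (2) follows from the defining property of the type, and claim (3) reduces to $\ind_\BJ^\G(\bk\otimes\bt)\simeq\ind_\BJ^\G\bl'\simeq\pi$.
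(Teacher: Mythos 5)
Your proposal is correct and reconstructs the argument behind the references the paper simply cites (\cite{MSt} Lemme 5.3 and Th\'eor\`eme 3.11); the structure (verify the action, realize $\bt$ inside $\Hom_{\BJ^1}(\bk,\pi)$ via the canonical embedding $\bl'\hookrightarrow\ind_\BJ^\G\bl'$, then Mackey for the reverse inclusion) is exactly the standard route.

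One point of precision on the Mackey step. The vanishing of the contribution from a nontrivial double coset $\BJ^1 g\BJ$ is not a literal corollary of the statement $I_\G(\bl')=\BJ$: that statement concerns $\Hom_{\BJ\cap\BJ^g}(\bl'^g,\bl')$, whereas the Mackey term for $g$ is $\Hom_{\BJ^1\cap\BJ^g}(\n,\bl'^g)$, a Hom-space over the smaller group $\BJ^1\cap\BJ^g\subseteq\BJ\cap\BJ^g$, so nonvanishing there is a priori weaker than $g\in I_\G(\bl')$. The actual argument first observes that nonvanishing forces $g$ to intertwine the simple character $\t$, so $g\in\BJ^1\B^\times\BJ^1$ by the Bushnell--Kutzko intertwining computation, and then eliminates $g\in\B^\times\setminus\Kk_\bb$ by reducing, via the Heisenberg representation, to the cuspidality of the level-zero factor $\bt_0$ of $\bl'$ as a representation of $\GL_m(\ll)$. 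This two-step reduction is precisely the content of \cite{BK} Chapter~5 (and its modular analogue in \cite{MSt}), so your attribution is sound, but it is the full intertwining analysis, and in particular the cuspidality of $\bt_0$, that does the work, not merely the final statement $I_\G(\bl')=\BJ$.
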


Any two representations of $\BJ$ extending $\n$
differ from a character of $\BJ$ trivial on $\BJ^1$.
The pair 
\begin{equation}
\label{deftypekt}
(\BJ,\bk\otimes\bt)
\end{equation}
thus only depends on $\pi$ 
and the choice of $\t$, 
and not on the choice of $\bk$.

When $\pi$ varies among all cuspidal representations of $\G$ 
and $\t$ varies among all maximal simple charac\-ters in $\pi$, 
the pairs \eqref{deftypekt}
are called \textit{extended maximal simple types} in \cite{BK,MSt},
which we will abbreviate to \textit{types} here.
A given cuspidal~re\-presentation of $\G$ thus contains,
up to $\G$-conju\-gacy, a unique type $(\BJ,\bl)$:
there is a unique maximal simple character $\t$ such that $\BJ_\t=\BJ$ 
and $\t$ occurs in the restriction of $\bl$ to $\H^1_\t$,
a representation $\bk$ of $\BJ$ which res\-tricts irreducibly to $\BJ^1$
and a representation~$\bt$ of $\BJ$ trivial on $\BJ^1$ such that 
$\bl$ is isomorphic to $\bk\otimes\bt$.

\begin{rema}
\label{reftypelevel0}
If $\aa$ is a maximal order in $\Mat_n(\F)$,
the trivial character of $\U^1_\aa$ is a maximal simple character, 
with $\E=\T=\F$ and $m=n$.
The cuspidal representations of $\G$ that contain such a simple character 
are precisely the cus\-pidal~re\-pre\-sentations of level~$0$.  
\end{rema}

Fix a representation $\bk$ of $\BJ$ extending $\n$
and define $\bt$ as in Proposition \ref{deftau},
and fix a~simple stratum $[\aa,\b]$ such that $\t\in\Cc(\aa,\b)$
and an isomorphism \eqref{melon}.
This gives a field $\E$ and~an~iso\-mor\-phism
$\BJ^0/\BJ^1\simeq\GL_m(\ll)$,
where $\ll$ is the re\-si\-due field of $\T$.

By Proposition \ref{deftau}(2), 
the restriction of $\bt$ to $\BJ^0$ is~the inflation of a cuspi\-dal 
irreducible~re\-presentation, denoted $\Vv$.

On the other hand, the representation $\bt$ has a central character~:
it is a character~of~the~cen\-tre~$\E^\times \BJ^1/\BJ^1$ of $\BJ/\BJ^1$,
or~equi\-valently a tamely ramified character of $\E^\times$.
Since $\E$~is~totally~wild\-ly~ramified over its
maximal tamely ramified subextension $\T$,
any tamely ramified character~of $\E^\times$
factors through the norm $\Nm_{\E/\T}$.
The restriction of $\bt$ to $\E^\times$
is thus a multiple of $\ic\circ\Nm_{\E/\T}$
for a uniquely determined
tamely ramified character 
$\ic$ of $\T^\times$.

The data $\Vv$ and $\ic$ are subject to the compatibility condition that
the restriction of $\Vv$ to $\ll^\times$~is a~multiple
of the character of $\ll^\times$ whose inflation to $\oo_\T^\times$ is
the restriction of $\ic^{p^e}$, with~$p^e=[\E:\T]$.
Associated with $\Vv$ by Proposition \ref{Coupurefinie},
there are a unique integer $u\>1$ dividing~$m$
and a~uni\-que supercuspidal~represen\-ta\-tion $\varrho$ of $\GL_{m/u}(\ll)$ 
such that $\Vv$ is isomorphic to $\st_u(\varrho)$.
The~next~impor\-tant result is \cite{MSjl} Lemma~3.2.
The integer $r(\pi)$ has been defined in Paragraph~\ref{par92}.

\begin{lemm}
\label{poulet}
The integer $u$ is equal to $r(\pi)$.
\end{lemm}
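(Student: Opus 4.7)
My plan is to identify $\pi$ as $\St_u(\rho)$ for a specific supercuspidal representation $\rho$ of $\GL_{n/u}(\F)$, at which point the uniqueness in Proposition \ref{Coupure}(1) forces $r(\pi)=u$.

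To construct $\rho$, apply the set-up of \S\ref{previousparagraph} with the chosen divisor $u$ of $m$: decompose $\V=\V_*\oplus\dots\oplus\V_*$ into $u$ copies of an $\F$-subspace $\V_*$ of dimension $n/u$, giving a Levi subgroup $\M\simeq\GL_{n/u}(\F)^u$ of $\G$ and a maximal simple stratum $[\aa_*,\b]$ in $\End_\F(\V_*)$. Let $\t_*\in\Cc(\aa_*,\b)$ be the transferred simple character of Lemma \ref{ArseneLup1}, and, using Lemma \ref{ArseneLup4}, choose a representation $\bk_*$ of $\BJ_*$ extending the Heisenberg representation $\n_*$ of $\t_*$ whose image under \eqref{kappa0kappa} is $\bk$. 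On the finite-field side, the hypothesis $\Vv\simeq\st_u(\varrho)$ with $\varrho$ a supercuspidal representation of $\GL_{m/u}(\ll)$ identifies the generic Jacquet quotient of $\Vv$ along the standard parabolic as $\varrho^{\otimes u}$; this allows us to produce a representation $\bt_*$ of $\BJ_*$, trivial on $\BJ_*^1$, whose restriction to $\BJ_*^0$ inflates $\varrho$ and whose restriction to $\E^\times$ is compatible with the tamely ramified central character $\ic$ of $\bt$. Then $\rho:=\ind_{\BJ_*}^{\GL_{n/u}(\F)}(\bk_*\otimes\bt_*)$ is cuspidal by Proposition \ref{deftau}(3), and supercuspidal because its associated finite-field representation $\varrho$ is, by the standard characterization of super\-cuspidality via the type (as in \cite{MSc}).

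By the theory of $G$-covers (\cite{BK} Theorem 7.9.4, and its modular refinements in \cite{MSt,SS6}), $(\BJ,\bk\otimes\bt)$ is a cover of the Levi-type data on $\M$ obtained from $u$ copies of $(\BJ_*,\bk_*\otimes\bt_*)$, with the twists between factors being exactly those forced by the embedding of $\st_u(\varrho)$ in $\varrho\tdt\varrho$. Consequently, $\pi\simeq\ind_{\BJ}^{\G}(\bk\otimes\bt)$ is an irreducible subquotient of $\rho\nu^{-(u-1)/2}\tdt\rho\nu^{(u-1)/2}$. Since a cuspidal irreducible representation of $\G$ is generic (either $\pi$ is supercuspidal, in which case $u=1$, or $\pi\simeq\St_r(\rho')$ which is generic by definition), and the unique generic subquotient of the above induced representation is $\St_u(\rho)$, we conclude $\pi\simeq\St_u(\rho)$ and thus $r(\pi)=u$.

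The delicate point is justifying the exact string of $\nu$-twists in the cover statement: one must rule out $\pi\simeq\St_u(\rho\chi)$ for some non-trivial unramified character $\chi$ of $\F^\times$, equivalently check that the tamely ramified central character $\ic$ of $\bt$ pins down $\rho$ up to the ambiguity $\rho\mapsto\rho\nu^u$ allowed by Proposition \ref{Coupure}(3). This is exactly what the compatibility condition between $\Vv$ and $\ic$ recalled just before the statement of the lemma is designed to ensure, and is the step that requires the most care.
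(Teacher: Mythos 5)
The paper proves Lemma \ref{poulet} purely by citation to \cite{MSjl} Lemma 3.2, so you are reconstructing an argument the paper outsources; your strategy --- transfer the type to a supercuspidal $\rho$ on $\GL_{n/u}(\F)$, use $G$-covers to realize $\pi$ inside a parabolic induction from copies of $\rho$, then invoke uniqueness in Proposition \ref{Coupure} --- is indeed the right one and is in the spirit of the cited reference.

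Two remarks on the execution, though. First, your diagnosis of the ``delicate point'' is off: the compatibility condition between $\Vv$ and $\ic$ recalled before the lemma merely guarantees that a representation $\bt$ of $\BJ$ trivial on $\BJ^1$ with the prescribed restrictions to $\BJ^0$ and $\E^\times$ exists; it does not constrain the relationship between $\ic$ and the central character $\ic_*$ of $\rho$, and so does not by itself pin down which element of the unramified orbit of $\rho$ appears in which slot of the parabolic induction. In the paper, that bookkeeping ($\ic=\ic_*^r$) is obtained from a Hecke-algebra computation for the cover, see the proof of Proposition \ref{step2} via \cite{SSblocks} Section~5. Second, and more to the point, you do not actually need to pin down the exact $\nu$-twists at all. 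The cover theory already gives that the Jacquet module of $\pi$ along a parabolic with Levi $\M\simeq\GL_{n/u}(\F)^u$ contains the supercuspidal type $(\BJ_*,\bk_*\otimes\bt_*)^{\otimes u}$, so that the supercuspidal support of $\pi$ consists of $u$ representations of $\GL_{n/u}(\F)$, all lying in the unramified orbit of $\rho$. Since $\pi$ is cuspidal and generic, Proposition \ref{Coupure}(1) then forces $\pi\simeq\St_u(\rho')$ for some $\rho'$ in that orbit, and in particular $r(\pi)=u$. This short-circuits the last paragraph of your argument and removes the genuine gap there; with that replacement the proof is correct.
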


It follows that $r(\pi)$ divides $m$,
and that $\pi$ is supercuspidal if and only if $\Vv$ is supercus\-pidal.

\subsection{}
\label{compatypes}

Write $r=r(\pi)$ and $k=n/r$,
and let $\rho$ be a~super\-cuspidal represen\-tation of $\GL_{k}(\EE)$ 
such
that $\pi$ is isomorphic to $\St_r(\rho)$ given by
Proposition \ref{Coupure}.
In this paragraph,
we will compare the type theoretic description of~$\pi$ with
that of $\rho$.
As in \ref{groseille},
we fix~a~re\-pre\-sentation $\bk$ of $\BJ$ extending $\n$.
It defines an~irreducible~re\-presentation $\bt$ of $\BJ$
trivial on $\BJ^1$,
then a cuspidal representation $\Vv$ of $\GL_m(\ll)$ and 
a tamely~rami\-fied character $\ic$ of $\T^\times$.
There is also a (unique) supercuspidal~repre\-sen\-ta\-tion $\varrho$ of
$\GL_{m/r}(\ll)$ such that $\Vv$ is isomorphic to $\st_r(\varrho)$.

Since $r$ divides $m$, 
we may apply the results of \ref{previousparagraph} to the case where $u=r$, 
which~we assume now.
Let $\t_*$ be the simple character associated with $\t$ by Lemma 
\ref{ArseneLup1}. 

\begin{lemm}
\label{step1}
The representation $\rho$ contains $\t_*$.
\end{lemm}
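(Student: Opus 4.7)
The plan is to exploit the definition of $\pi = \St_r(\rho)$ as a subquotient of a parabolic induction, together with the Iwahori decomposition of $\t$ given by Lemma \ref{ArseneLup1}. First I would take $P = \M\N$ to be the parabolic subgroup of $\G$ whose Levi factor $\M$ is the stabilizer of the decomposition \eqref{decVL} with $u = r$, so that $\M \simeq \GL_k(\F)^r$, and set $\sigma = \rho\nu^{-(r-1)/2} \otimes \cdots \otimes \rho\nu^{(r-1)/2}$, so that $\pi$ is a subquotient of $I = \Ind_P^{\G}(\sigma)$. Since $\H^1$ is pro-$p$ and $\R$ has characteristic different from $p$, the restriction $I|_{\H^1}$ is semisimple as a smooth $\R$-representation of $\H^1$; any $\t$-isotypic vector in the subquotient $\pi$ then lifts, via projection onto the $\t$-isotypic summand of $I|_{\H^1}$, to a nonzero $\t$-isotypic vector in $I$.

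Next I would analyze the $\t$-isotypic component of $I|_{\H^1}$ via the Mackey decomposition for restriction of parabolic induction. By Lemma \ref{ArseneLup1}, the character $\t$ is trivial on $\H^1 \cap \N$ and $\H^1 \cap \bar\N$ and restricts to $\t_* \otimes \cdots \otimes \t_*$ on $\H^1 \cap \M$. The contribution of the trivial double coset $P \cdot 1 \cdot \H^1$ to $\Hom_{\H^1}(\t, I)$ identifies, via Frobenius reciprocity together with the Iwahori decomposition $\H^1 \cap P = (\H^1 \cap \M)(\H^1 \cap \N)$, with $\Hom_{\H^1_* \times \cdots \times \H^1_*}(\t_* \otimes \cdots \otimes \t_*, \sigma)$. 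Standard results on the intertwining of simple characters, which form the cornerstone of the Bushnell--Kutzko cover theory, show that this trivial double coset is the only one contributing to $\Hom_{\H^1}(\t, I)$. Hence $\sigma$ contains $\t_* \otimes \cdots \otimes \t_*$ upon restriction to $\H^1_* \times \cdots \times \H^1_*$, and since the unramified twists $\nu^j$ are trivial on the pro-$p$ subgroup $\H^1_*$, this gives $\rho|_{\H^1_*} \supset \t_*$.

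The main obstacle is controlling the nontrivial double coset contributions: one must show that $\t$ cannot embed into $\Ind_{\H^1 \cap gPg^{-1}}^{\H^1}(\sigma^g)$ for any $g \notin P$. This is the $\ell$-modular analogue of the intertwining property of simple types, established in \cite{BK} Chapter 7 and extended to the modular setting in \cite{MSt, MSjl}. Alternatively, one can bypass the Mackey analysis entirely by working within Bushnell--Kutzko's cover framework: the explicit construction of Lemma \ref{ArseneLup3} shows that a type for $\pi$ can be obtained by extending a tensor-product type on $\BJ_* \times \cdots \times \BJ_*$, and the uniqueness of the maximal simple character contained in a given cuspidal representation then yields the desired conjugacy of $\t_*$ with the simple character of $\rho$.
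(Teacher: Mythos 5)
Your proposal takes a genuinely different route from the paper's. The paper's proof is a one-line citation to the description of $\St_r(\rho)$ in \cite{MSc} Section~6, where this cuspidal representation is constructed by compact induction from a type built from a type in~$\rho$ via the Bushnell--Kutzko cover machinery, so that the relationship between $\t$ and $\t_*$ is essentially built in by construction. Your proposal instead attempts a from-scratch Mackey analysis of $\Hom_{\H^1}(\t, I)$ for the parabolically induced representation~$I$, reducing the claim to showing that only the trivial $(\P,\H^1)$-double coset contributes. The virtue of your approach is that it makes visible exactly what kind of intertwining computation is needed; the drawback is that it concentrates all the real work in the step you defer to ``standard results.''

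That deferred step is where the gap lies, and I do not think it is as innocuous as your phrasing suggests. The $\G$-intertwining set of the simple character $\t$ is $\BJ^1\B^\times\BJ^1$, which is large, so the assertion that the trivial coset is the only one contributing to $\Hom_{\H^1}(\t,I)$ is not a routine consequence of ``intertwining of simple characters''; rather, pinning down precisely which $(\P,\H^1)$-double cosets survive and showing the trivial one accounts for everything is the content of the cover theory of \cite{BK} Chapters 7--8 (and its modular extension in \cite{MSt}), i.e.\ precisely the material that \cite{MSc} Section~6 invokes. Moreover, even granting that the trivial coset contribution is what is needed, your argument as written establishes ``the total is nonzero'' plus ``only the trivial coset could contribute,'' and you offer no independent proof of the second half. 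Your alternative route in the last paragraph --- use Lemma~\ref{ArseneLup3} and the uniqueness up to conjugacy of the maximal simple character in a cuspidal representation --- is closer to what the cited reference actually provides and is the cleaner way to proceed; but note it only yields directly that $\rho$ contains a $\G_*$-conjugate of $\t_*$, and one must check, as in the explicit construction of \cite{MSc}, that with the consistent choices made in \S\ref{compatypes} the representative can be taken to be $\t_*$ itself.
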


\begin{proof}
This follows from the
descrip\-tion\footnote{Warning: the representation denoted $\St(\rho,r)$
in \cite{MSc} corresponds to $\St_r(\rho\nu^{(r-1)/2})$,
and the one denoted $\St_r(\rho)$ in \cite{MSc} corresponds to 
$\St_{v}(\rho\nu^{(v-1)/2})$ with $v=e(\rho)\ell^r$.}
of $\St_r(\rho)$ in \cite{MSc} Section 6.
\end{proof}

Consequently,
$\pi$ and $\rho$ have the same endo-class. 
We have the following immediate corollary.

\begin{coro}
\label{proofRing} 
We have $m(\pi)=m(\rho)r$
and the representations $\pi$, $\rho$ have the same tame parameter field.  
\end{coro}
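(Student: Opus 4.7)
The plan is straightforward given Lemma \ref{step1} and the setup of Paragraph \ref{previousparagraph}. By Lemma \ref{step1}, the supercuspidal representation $\rho$ contains the simple character $\t_*$ obtained from $\t$ by applying Lemma \ref{ArseneLup1} with $u = r$. Crucially, $\t_*$ belongs to $\Cc(\aa_*,\b)$, where $[\aa_*,\b]$ is the \emph{maximal} simple stratum in $\End_\F(\V_*) \simeq \Mat_{n/r}(\F) = \Mat_k(\F)$ given by the construction of \ref{previousparagraph}. In particular, the same element $\b$ (hence the same field $\E = \F[\b]$, and consequently the same tame parameter field $\T$) underlies both $\t$ and $\t_*$.

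First, I would handle the tame parameter field. Since $\t_*$ is the transfer of $\t$ along $\b$ in the sense of \cite{BK} 3.6 (as noted in the paragraph following Lemma \ref{ArseneLup1}), $\t$ and $\t_*$ are endo-equivalent; equivalently, they share the same endo-class $\TT$. Thus $\pi$ and $\rho$ both have endo-class $\TT$. By the statement recalled in \S\ref{nom42}, the $\F$-isomorphism class of the tame parameter field depends only on $\TT$, so $\pi$ and $\rho$ share the same tame parameter field $\T$.

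For the relative degree, I would just read it off the isomorphism \eqref{melon} applied in both settings. On the $\pi$-side, the centralizer $\B$ of $\E$ in $\Mat_n(\F)$ satisfies $\B \simeq \Mat_{m(\pi)}(\E)$ with $m(\pi) = n/[\E:\F]$. On the $\rho$-side, the centralizer of $\E$ in $\End_\F(\V_*) \simeq \Mat_{n/r}(\F)$ satisfies the same kind of isomorphism, yielding $m(\rho) = (n/r)/[\E:\F]$. Dividing the two identities gives $m(\pi) = m(\rho) \cdot r$.

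There is no serious obstacle here: Lemma \ref{step1} already does the real work by identifying a common simple character underlying $\pi$ and $\rho$ through the decomposition \eqref{decVL} with $u = r$, and the corollary reduces to extracting the elementary numerical and endo-class consequences of that fact. The only mild subtlety is checking that $\t_*$ is indeed \emph{maximal} (so that its data define $m(\rho)$ and the tame parameter field of $\rho$), but this is built into the construction of \ref{previousparagraph}: $[\aa_*,\b]$ is maximal in $\End_\F(\V_*)$ by construction (since $\bb = \aa \cap \B$ is maximal in $\B$ and the factors of the decomposition \eqref{decVL} have equal rank).
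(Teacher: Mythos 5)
Your proof is correct and follows essentially the same route as the paper, which treats the corollary as immediate once Lemma~\ref{step1} shows $\rho$ contains the transfer $\t_*$ of $\t$: same endo-class $\TT$, hence same tame parameter field and same degree $[\E:\F]=\deg(\TT)$, so $m(\pi)=n/\deg(\TT)=r\cdot(n/r)/\deg(\TT)=r\,m(\rho)$. Your added remark that $[\aa_*,\b]$ is maximal is a fine sanity check, but the paper has already asserted this in the setup of \S\ref{previousparagraph}.
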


Let $\n_*$ be the Heisenberg representation associated with $\t_*$, 
and let $\bk_*$ be a representation of $\BJ_*$ extending $\n_*$
such that the representation of $\BJ$~asso\-ciated with it by 
\eqref{kappa0kappa} is~$\bk$.
It defines~an irreducible representation $\bt_*^{}$ of $\BJ^{}_*$
trivial on $\BJ^1_*$,
such that the pair $(\BJ_*,\bk_*\otimes\bt_*)$ is a
type in~$\rho$.
As\-so\-ciated with this, 
there are a cuspidal representation $\varrho_*$ of $\GL_{m/r}(\ll)$ 
(which is supercuspidal thanks to the comment after Lemma \ref{poulet}) and 
a tamely ramified character $\ic_*$ of $\T^\times$.
The~fol\-lowing proposition compares the pairs $(\varrho,\ic)$ and
$(\varrho_*,\ic_*)$ associated with $\bt$ and $\bt_*$.

\begin{prop}
\label{step2}
We have $\varrho\simeq\varrho_*$ and $\ic=\ic_*^r$.  
\end{prop}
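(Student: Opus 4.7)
The strategy is to compare $(\BJ,\bk\otimes\bt)$ and $(\BJ_*,\bk_*\otimes\bt_*)$ via the Levi subgroup $\M\simeq\GL_k(\F)^r$ of \ref{previousparagraph}, and then read off $\varrho$ from the restriction to $\BJ^0$ and $\ic$ from the central character on $\E^\times$. First I would establish the tensor decomposition
\begin{equation*}
\bt|_{\BJ_*^0\tdt\BJ_*^0}\simeq\bt_*\otimes\cdots\otimes\bt_*.
\end{equation*}
The Heisenberg part of this compatibility is already provided by Corollary \ref{Etretatcoro2}, which sends $\bk_{\t_*}$ to $\bk_{\t}$ under \eqref{kappa0kappa}. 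Extending it to the full type $\bl=\bk\otimes\bt$ uses Lemmas \ref{ArseneLup3} and \ref{ArseneLup4} together with the fact that $(\BJ,\bl)$ is a cover of $(\BJ_*,\bl_*)^{\otimes r}$ (appropriately twisted) in the sense of Bushnell--Kutzko; this is the content of the construction of \cite{MSc} recalled in Lemma~\ref{step1}.

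Next I would identify $\varrho$ with $\varrho_*$. Modulo $\BJ^1$, the subgroup $\BJ_*^0\tdt\BJ_*^0$ projects onto the standard block-diagonal Levi $\GL_{m/r}(\ll)^r$ inside $\GL_m(\ll)\simeq\BJ^0/\BJ^1$. By the previous step, the inflation of $\Vv\simeq\st_r(\varrho)$ to $\BJ^0$, restricted to this Levi, is $\varrho_*\otimes\cdots\otimes\varrho_*$. Comparing with the description of $\st_r(\varrho)$ as the generic subquotient of $\varrho\times\cdots\times\varrho$ provided by Proposition \ref{Coupurefinie}(1.a), and invoking the uniqueness of the supercuspidal support in Proposition \ref{Coupurefinie}(2), forces $\varrho_*\simeq\varrho$.

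For the central character, I would evaluate $\bt$ on the diagonal $\E^\times\subseteq\M$. On $\oo_\E^\times\subseteq\BJ_*^0\tdt\BJ_*^0$, Step~1 gives directly
\begin{equation*}
(\ic\circ\Nm_{\E/\T})|_{\oo_\E^\times}=(\ic_*^r\circ\Nm_{\E/\T})|_{\oo_\E^\times}.
\end{equation*}
For a uniformizer $\w$ of $\E$, the extension of $\bt_*$ to $\BJ_*$ contributes $r$ factors twisted by the unramified characters $\nu^{-(r-1)/2},\dots,\nu^{(r-1)/2}$ coming from the supercuspidal support of $\pi=\St_r(\rho)$; since $\sum_{j=0}^{r-1}\bigl(-(r-1)/2+j\bigr)=0$, the product of these twists on $\w$ is trivial, and the same equality holds at $\w$. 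Because every tamely ramified character of $\E^\times$ factors uniquely through the surjection $\Nm_{\E/\T}:\E^\times\twoheadrightarrow\T^\times$, this yields $\ic=\ic_*^r$.

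The main obstacle is Step~1. Since $\pi$ is cuspidal, its Jacquet module along any proper parabolic vanishes, so the tensor decomposition of $\bt|_{\BJ\cap\M}$ cannot be read off directly from parabolic restriction of $\pi$. It must instead be extracted from the Bushnell--Kutzko cover structure: the type $(\BJ,\bl)$, though attached to a cuspidal representation, arises by the internal-induction construction of \cite{MSc} from the tensor product of types of the supercuspidals in the support of $\pi$, and it is precisely this hidden Levi-compatibility that one must make explicit. Once it is in hand, Steps~2 and~3 reduce to straightforward finite-field and central-character bookkeeping.
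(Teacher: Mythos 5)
Your Step 1 is false, and it cannot be repaired by appealing to the cover structure. You claim $\bt|_{\BJ^0_*\tdt\BJ^0_*}\simeq\bt_*\otimes\cdots\otimes\bt_*$; modulo $\BJ^1$ this says that the cuspidal representation $\Vv\simeq\st_r(\varrho)$ of $\GL_m(\ll)$, restricted to the Levi $\GL_{m/r}(\ll)^r$, is the irreducible tensor product $\varrho_*\otimes\cdots\otimes\varrho_*$. This fails already on dimension grounds: $\st_r(\varrho)$ is cuspidal on $\GL_m(\ll)$ and hence has dimension $\prod_{i=1}^{m-1}(q^i-1)$, which is vastly larger than $\dim(\varrho_*)^r$ when $r>1$. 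You acknowledge an obstacle at the end of your proposal, but you misidentify it: the issue is not that the decomposition must be extracted from the cover rather than from a Jacquet module — it is that the decomposition does not hold for $\bt$ at all.

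The paper sidesteps this by not working with $\bt=\Hom_{\BJ^1}(\bk,\pi)$ directly. Instead it considers the $\BJ$-module $\Hom_{\BJ^1}(\bk,\Ii(\rho,r))$ where $\Ii(\rho,r)$ is the \emph{full} parabolic induction \eqref{INDRHO}, not its cuspidal subquotient $\pi$. By \cite{SSblocks} Proposition 5.6 this restricts to $\BJ^0$ as the inflation of the \emph{parabolically induced} finite-group representation $\varrho_*\tdt\varrho_*$ of $\GL_m(\ll)$ (a large reducible representation, not the tensor product), and tracking the $\E^\times$-action there shows it acts through $\ic_*^r\circ\Nm_{\E/\T}$. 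Since $\bt$ is a subquotient of this $\BJ$-module and $\E^\times$ acts by a single character on the whole thing, one reads off $\ic=\ic_*^r$. Your Step 3 also contains an unjustified leap: the unramified twists $\nu^{j-(r-1)/2}$ live on $\GL_k(\F)$ and there is no direct way to see them "cancel" on a uniformizer of $\E$ without something like the paper's tracking argument. Your Step 2 and the identification $\varrho_*\simeq\varrho$ are fine (and match the paper, which cites the construction of \cite{MSc} Section 6 for this part), but the central-character argument needs the shift from $\pi$ to $\Ii(\rho,r)$.
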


\begin{proof}
Again, the fact that $\varrho$ is isomorphic to $\varrho_*$ follows from the
descrip\-tion of $\St_r(\rho)$ in~\cite{MSc} Section 6. 
It thus remains to prove the second equality.
For this, consider the action of $\BJ$ on 
\begin{equation}
\label{patati}
\Hom_{\BJ^1}(\bk,\Ii(\rho,r))
\end{equation}
where $\Ii(\rho,r)$ is the parabolically induced representation 
\eqref{INDRHO}.
By \cite{SSblocks} Proposition 5.6,
its~res\-tric\-tion to $\BJ^0$ is the inflation of
the induced representation
$\varrho_*\tdt\varrho_*$ of $\GL_m(\ll)$.
By tracking the action of $\E^\times$ in 
the arguments of \cite{SSblocks} Section 5,
we see that it acts on the space \eqref{patati}
by the character
\begin{equation*}
\ic_*^r\circ\Nm_{\E/\T}.
\end{equation*}
In particular, $\E^\times$ acts through this character
on the subquotient
$\Hom_{\BJ^1}(\bk,\pi)$,
which is $\bt$.
\end{proof}

\subsection{}
\label{lwcan}

Suppose that the cuspidal representation $\pi$ is $\s$-selfdual.
We say a type $(\BJ,\bl)$ is $\s$-\textit{self\-dual} if
$\BJ$ is $\s$-stable and $\bl^{\vee\s}$ is isomorphic to $\bl$.
The next result is \cite{AKMSS} Theorem 4.1.

\begin{prop}
\label{pechessdpi} 
The representation $\pi$ contains a $\s$-selfdual type. 
\end{prop}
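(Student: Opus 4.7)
The plan is to proceed in three stages: first, deduce that the endo-class of $\pi$ is $\s$-selfdual; second, conjugate an arbitrary type in $\pi$ so that its underlying maximal simple character is literally $\s$-selfdual; third, use Proposition~\ref{etachi} together with the uniqueness of the type up to $\G$-conjugacy to upgrade to full $\s$-selfduality of $(\BJ,\bl)$. For the first stage, start with an arbitrary type $(\BJ,\bl)$ in $\pi$ with underlying maximal simple character~$\t$: since $\pi^{\vee\s}$ contains $\t^{-1}\circ\s$ and $\pi\simeq\pi^{\vee\s}$, Proposition~\ref{groseille} forces $\t$ and $\t^{-1}\circ\s$ to be $\G$-conjugate, so the endo-class $\TT$ of $\t$ is $\s$-selfdual. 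For the second stage, invoke the existence result of \cite{AKMSS} Section~4 cited just before Proposition~\ref{poussin} to obtain a genuinely $\s$-selfdual maximal simple character $\t'$ with endo-class~$\TT$; since any two maximal simple characters of $\GL_n(\F)$ sharing an endo-class are $\G$-conjugate (cf.~\S\ref{nom42}), there is $g\in\G$ with $\t^g=\t'$, and replacing $(\BJ,\bl)$ by the $\G$-conjugate $(\BJ^g,\bl^g)$, which is still a type in $\pi$, we may assume $\t$ itself is $\s$-selfdual; in particular $\BJ=\BJ_\t$ becomes $\s$-stable.

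For the third stage, apply Proposition~\ref{etachi}(4) to select a $\s$-selfdual extension $\bk$ of the Heisenberg representation $\n$ of $\t$ to $\BJ$, and write $\bl\simeq\bk\otimes\bt$ for the unique representation $\bt$ of $\BJ$ trivial on $\BJ^1$. Because $\bk$ is $\s$-selfdual, $(\BJ,\bk\otimes\bt^{\vee\s})\simeq(\BJ,\bl^{\vee\s})$ is again a type for $\pi^{\vee\s}\simeq\pi$ with the same underlying simple character~$\t$. Uniqueness of the type up to $\G$-conjugacy therefore yields $g\in\G$ with $\BJ^g=\BJ$ and $(\bk\otimes\bt)^g\simeq\bk\otimes\bt^{\vee\s}$; a standard argument, using that the simple character of a type is determined up to $\BJ$-conjugacy and that $\BJ$ normalises $\t$, shows that $g$ must normalise $\t$, whence $g\in\BJ$ by Proposition~\ref{patel}(4.a). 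Conjugation by $g\in\BJ$ preserves the isomorphism class, so $\bk\otimes\bt\simeq\bk\otimes\bt^{\vee\s}$; extracting $\bt$ via the functor $\Hom_{\BJ^1}(\n,-)$ as in Proposition~\ref{deftau} then produces $\bt\simeq\bt^{\vee\s}$, and hence $\bl=\bk\otimes\bt$ is $\s$-selfdual, as required.

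The hardest step is the second stage: $\s$-selfduality of the endo-class only asserts that some representative is endo-equivalent to its $\s$-conjugate inverse, whereas a genuinely $\s$-selfdual representative is a far more rigid object, and constructing one requires the delicate strata-theoretic analysis of \cite{AKMSS} Section~4 that underpins Proposition~\ref{poussin}. Everything else in the plan is essentially formal once that input is available.
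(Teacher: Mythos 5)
Your proof is correct, and it takes a genuinely different route from the paper's treatment: the paper gives no proof at all, simply citing \cite{AKMSS}~Theorem~4.1, whereas you reconstruct a proof from pieces that are present elsewhere in the paper. Your stage~2 input (existence of a $\s$-selfdual maximal simple character in the endo-class) is the same result the authors cite from \cite{AKMSS}~Section~4 just before Proposition~\ref{poussin}, so in effect you reduce Theorem~4.1 of \cite{AKMSS} to the simple-character existence result plus formal bookkeeping. The bookkeeping in your stage~3 is sound: once $\t$ is $\s$-selfdual and $\bk$ is chosen $\s$-selfdual via Proposition~\ref{etachi}(4), the pair $(\BJ,\bl^{\vee\s})$ is again a type for $\pi$ on the same group, and the conjugating element $g$ with $\BJ^g=\BJ$, $\bl^g\simeq\bl^{\vee\s}$ must lie in $\BJ$ because $\t^g$ and $\t^{-1}\circ\s=\t$ are both simple characters contained in $\bl^{\vee\s}$ with $\G$-normalizer $\BJ$, and a type contains a unique such character; so $g$ normalizes $\t$, hence $g\in\BJ$ by Proposition~\ref{patel}(4.a). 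Two small remarks: the final detour through extracting $\bt\simeq\bt^{\vee\s}$ is unnecessary, since $\bl\simeq\bk\otimes\bt^{\vee\s}$ together with $\s$-selfduality of $\bk$ already gives $\bl\simeq\bl^{\vee\s}$; and your stage~1 reference should be to the (unnumbered) proposition at the start of~\S\ref{groseille}, not to the paragraph label itself. What your approach buys is transparency — it shows exactly how much of the heavy strata-theoretic machinery from \cite{AKMSS} is really being used (namely only the simple-character existence) and how the rest follows from the Heisenberg-representation and uniqueness-of-type formalism already set up in Propositions~\ref{patel} and~\ref{etachi}.
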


A type $(\BJ,\bl)$ contains a unique simple character $\t$ such that 
$\BJ_\t=\BJ$: 
it follows that,
if $(\BJ,\bl)$ is $\s$-selfdual, 
$\t$ is $\s$-selfdual as well. 
In particular, $\pi$ contains a $\s$-selfdual simple character. 

Let $\t$ be a $\s$-selfdual simple character occurring in $\pi$,
and $[\aa,\b]$ be a $\s$-selfdual simple stratum such that
$\t\in\Cc(\aa,\b)$
(which exists by Proposition \ref{pechessd}).
The $\G^\s$-conjugacy class (or equivalently the $\F_0$-iso\-mor\-phism class) 
of the tamely ramified~ex\-ten\-sion $\T_0$ of $\E_0$ associated with $\t$ 
only~de\-pends on $\pi$.
Associated with $\pi$, there is thus a quadratic extension $\T/\T_0$.

\begin{rema}
When $\pi$ has level $0$,
one has $\T_0=\F_0$.
\end{rema}

If follows from Proposition \ref{poussin} that $\pi$ contains
\begin{itemize}
\item 
only one $\G^\s$-conjugacy class of $\s$-selfdual types
if $\T/\T_0$ is unramified,
\item
$\lfloor m/2\rfloor+1$ different $\G^\s$-conjugacy classes 
of $\s$-selfdual types if $\T/\T_0$ is ramified.
\end{itemize}
Among these $\G^\s$-conjugacy classes of $\s$-selfdual types,
one is of par\-ticular im\-portance. 

\begin{defi}\label{genericssdualtype}
A $\s$-selfdual type $(\BJ,\bl)$ is said to be \textit{generic} if 
either $\T/\T_0$ is unramified,
or $\T/\T_0$ is ramified and the integer $i$ of
Propo\-si\-tion \ref{pechessd}(2.b)
is equal to $\lfloor m/2\rfloor$. 
\end{defi}

A $\s$-selfdual cuspidal representation of $\G$ thus contains,
up to $\G^\s$-conjugacy,
a unique generic $\s$-selfdual type.
The next result is \cite{VSANT19} Theorem 10.3
(see also \cite{AKMSS} Section 6).

\begin{prop}
\label{pulledpork6heures}
Let $\pi$ be a $\s$-selfdual cuspidal representation of $\G$
and $(\BJ,\bl)$ be its~gene\-ric $\s$-selfdual type.
Then $\pi$ is distinguished 
if and only if $\bl$ is $\BJ\cap\G^\s$-distinguished.
\end{prop}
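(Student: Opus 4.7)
The plan is to analyse $\Hom_{\G^\s}(\pi,R)$ via Mackey's formula applied to the compact induction presentation $\pi \simeq \ind_\BJ^\G \bl$. Since $\BJ$ is open and compact modulo the centre, this yields
\begin{equation*}
\Hom_{\G^\s}(\pi,R) \simeq \prod_{g} \Hom_{g^{-1}\BJ g \cap \G^\s}(\bl^g, R),
\end{equation*}
where $g$ runs over a set of representatives of $\G^\s \backslash \G / \BJ$. The ``if'' direction is then immediate: distinction of $\bl$ by $\BJ \cap \G^\s$ is precisely the non-vanishing of the trivial-coset summand, which produces a non-zero $\G^\s$-invariant linear form on~$\pi$.

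For the converse, the first step is to identify which double cosets can carry a non-zero summand. Using the intertwining theory of simple characters of \cite{BK}, extended to the modular setting in \cite{MSt,SS6}, I would show that a summand at $g$ is non-zero only when $g\s(g)^{-1}$ intertwines the simple character $\t$ underlying $\bl$, which forces $(g^{-1}\BJ g, \bl^g)$ to again be a $\s$-selfdual type in~$\pi$. By Proposition~\ref{poussin}, the contributing double cosets therefore correspond to the $\G^\s$-conjugacy classes of $\s$-selfdual types in $\pi$: a single class if $\T/\T_0$ is unramified, and $\lfloor m/2 \rfloor + 1$ classes indexed by $i \in \{0, \dots, \lfloor m/2 \rfloor\}$ if $\T/\T_0$ is ramified.

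The next step is to transfer distinction across these classes. Using the conjugating elements $y_i$ of Remark~\ref{indice} to pass from the generic type ($i = \lfloor m/2 \rfloor$) to an arbitrary index, and combining the canonical extension $\tbk$ from Proposition~\ref{potiron} with the factorisation
\begin{equation*}
\Hom_{\BJ \cap \G^\s}(\bl, R) \simeq \Hom_{\BJ^1 \cap \G^\s}(\n, R) \otimes \Hom_{\BJ \cap \G^\s}(\bt, \chi)
\end{equation*}
of Proposition~\ref{etachi}(3), the distinction question for $\bl$ at each index reduces to a distinction question for the finite-group cuspidal representation $\Vv$ of $\GL_m(\ll)$ obtained from $\bt$, with respect to the appropriate fixed-point subgroup: $\GL_m(\ll_0)$ in the unramified case, or the subgroup attached to \eqref{banane}, namely $\GL_i(\ll) \times \GL_{m-i}(\ll)$, in the ramified case of index~$i$.

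The main obstacle is the final finite-group comparison: one must show that distinction of $\Vv$ at any contributing index is equivalent to distinction at the generic index, so that $\pi$ being distinguished forces precisely the generic type $(\BJ,\bl)$ to be distinguished. In the ramified case this requires a careful analysis of $\GL_i(\ll) \times \GL_{m-i}(\ll)$-distinction for varying $i$, leveraging Proposition~\ref{chourineur} for the generic index when $m$ is even (together with its analogue for $m$ odd) and the classification of cuspidals via $\Delta$-regular characters of Proposition~\ref{classifGreen} to pin down which parameters $\xi$ can give rise to distinction at a non-generic index. Once this finite-group equivalence is established, combining it with the Mackey decomposition and the rigidity of $\s$-selfdual types yields the desired equivalence for~$\pi$.
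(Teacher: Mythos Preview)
The paper does not prove this proposition: it simply cites \cite{VSANT19} Theorem~10.3 (and \cite{AKMSS} Section~6). Your Mackey-theoretic outline is indeed the strategy used in those references, and the ``if'' direction and the reduction to $\s$-selfdual types via intertwining are correct.

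However, your formulation of the ``main obstacle'' is not quite right, and the tools you name do not close the gap. The point is \emph{not} that distinction of $\Vv$ at a non-generic index is \emph{equivalent} to distinction at the generic index; it is that the non-generic indices contribute \emph{nothing}. In the ramified case, at index $i$ the finite-group question is whether the cuspidal representation $\Vv$ of $\GL_m(\ll)$ is $\GL_i(\ll)\times\GL_{m-i}(\ll)$-distinguished. The decisive input is \cite{VSANT19} Proposition~2.14 (used elsewhere in the present paper, see the end of \S\ref{antoinette48}): a cuspidal representation of $\GL_m(\ll)$ admits no non-zero $\GL_i(\ll)\times\GL_{m-i}(\ll)$-invariant linear form unless $i=m-i$. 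This immediately kills every index $i<\lfloor m/2\rfloor$ with $i>0$, and the index $i=0$ is killed since $\Vv$ has no $\GL_m(\ll)$-invariant linear form for $m>1$. So only the generic coset can contribute, and the equivalence follows. Proposition~\ref{chourineur} is not the right reference here: it analyses distinction \emph{at} the generic (balanced) Levi, whereas what you need is the vanishing at the unbalanced ones; and there is no ``analogue for $m$ odd'' of the kind you suggest, because when $m$ is odd and $\T/\T_0$ is ramified, the same Proposition~2.14 shows that even the generic index yields zero (unless $m=1$), consistently with Proposition~\ref{fraise}.
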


If $(\BJ,\bl)$ is a $\s$-selfdual type,
and if $\t$ is the unique simple
character contained in $\bl$ such~that $\BJ_\t=\BJ$,
we will write $\blw$ for the unique representation $\tbk$ of $\BJ$ extending 
the Heisenberg~re\-pre\-sentation~of~$\t$ given by Proposition \ref{potiron}.
The next result extends \cite{VSANT19} Propositions 7.9, 9.8
to the case of~ar\-bi\-trary $\s$-selfdual cuspidal representations.

\begin{prop}
\label{resB}
Let $\pi$ be a $\s$-selfdual cuspidal representation of $\G$.
Let $(\BJ,\bl)$~be~a~ge\-ne\-ric $\s$-selfdual type in $\pi$
and $\bt$ be the representation of $\BJ$ trivial on $\BJ^1$ such 
that $\bl$~is isomorphic to $\blw\otimes\bt$. 
Then $\pi$ is dis\-tin\-guished if and only if $\bt$ is
$\BJ\cap\G^\s$-distinguished. 
\end{prop}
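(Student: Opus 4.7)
The plan is to combine Proposition \ref{pulledpork6heures}, which reduces distinction of $\pi$ to distinction of its generic type $(\BJ,\bl)$, with the tensor factorization supplied by Proposition \ref{etachi}(3), applied to the canonical extension $\blw$ of Proposition \ref{potiron}. By Proposition \ref{pulledpork6heures} it suffices to prove that $\bl$ is $\BJ\cap\G^\s$-distinguished if and only if $\bt$ is. First I would apply Proposition \ref{etachi} with the choice $\bk=\blw$, obtaining the characters $\bx$ and $\chi$ of parts (2.a) and (2.b). Since $\blw$ is $\s$-selfdual by Proposition \ref{potiron}, the character $\bx$ is trivial, so its restriction $\chi^2$ to $\BJ\cap\G^\s$ is trivial as well, forcing $\chi$ to be quadratic.

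The next step is to upgrade this to $\chi=1$, exploiting the second defining property of $\blw$, namely that it is $\BJ\cap\G^\s$-distinguished. By Proposition \ref{etachi}(1) the space $\Hom_{\BJ^1\cap\G^\s}(\n,\R)$ is one-dimensional, and Proposition \ref{etachi}(2.b) identifies it with $\Hom_{\BJ\cap\G^\s}(\blw,\chi^{-1})$. A non-zero $\BJ\cap\G^\s$-invariant linear form on $\blw$ restricts to a non-zero element of $\Hom_{\BJ^1\cap\G^\s}(\n,\R)$, so it lives inside this one-dimensional space and is therefore simultaneously $\BJ\cap\G^\s$-invariant and $\chi^{-1}$-equivariant, which forces $\chi=1$.

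With $\chi=1$ in hand, Proposition \ref{etachi}(3) supplies a canonical isomorphism of $\R$-vector spaces
\begin{equation*}
\Hom_{\BJ^1\cap\G^\s}(\n,\R)\otimes\Hom_{\BJ\cap\G^\s}(\bt,\R)\to\Hom_{\BJ\cap\G^\s}(\bl,\R),
\end{equation*}
and since the first tensor factor is non-zero and one-dimensional, the right-hand side is non-zero exactly when $\bt$ is $\BJ\cap\G^\s$-distinguished; combined with Proposition \ref{pulledpork6heures} this yields the equivalence claimed. The only delicate point is pinning $\chi$ down as trivial rather than merely quadratic, which is resolved by the comparison above of invariant and $\chi^{-1}$-equivariant forms inside the one-dimensional space of Proposition \ref{etachi}(1).
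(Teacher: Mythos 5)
Your proposal is correct and follows essentially the same route as the paper: it reduces to Proposition \ref{pulledpork6heures}, then uses the tensor factorization of Proposition \ref{etachi}(3) together with the one-dimensionality of $\Hom_{\BJ\cap\G^\s}(\blw,\R)$. You simply make explicit the step that the character $\chi$ of Proposition \ref{etachi}(2.b) is trivial when $\bk=\blw$ (using both that $\blw$ is $\s$-selfdual, so $\bx=1$ and $\chi$ is quadratic, and that $\blw$ is $\BJ\cap\G^\s$-distinguished, so $\chi=1$), which the paper leaves implicit in its terse citation of Proposition \ref{etachi}.
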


\begin{proof}
This follows from Proposition \ref{pulledpork6heures}
together with the fact that
\begin{equation*}
\Hom_{\BJ\cap\G^\s}(\bl,\R) \simeq
\Hom_{\BJ\cap\G^\s}(\blw,\R) \otimes\Hom_{\BJ\cap\G^\s}(\bt,\R)
\end{equation*}
and $\Hom_{\BJ\cap\G^\s}(\blw,\R)$ has dimension $1$
(see Proposition \ref{etachi}(4)).
\end{proof}

Fix isomorphisms
\begin{equation}
\label{choixisoB}
\B\simeq\Mat_m(\E),
\quad
\BJ^0/\BJ^1\simeq\GL_m(\ll), 
\end{equation} 
as in Proposition \ref{pechessd}(4). 
The restriction of $\bt$ to $\BJ\cap\B^\times$ is a generic $\s$-selfdual
type of level $0$ in~$\B^\times\simeq\GL_m(\E)$
and $\BJ/\BJ^1$ is naturally isomorphic to
$(\BJ\cap\B^\times) / (\BJ^1\cap\B^\times)$.
The representation $\bt$ is thus distinguished by $\BJ\cap\G^\s$ if and only if
its restriction to $\BJ\cap\GL_m(\E)$ is
distin\-gui\-shed by~$\BJ\cap\GL_m(\E_0)$.
Pro\-position \ref{resB} used twice thus implies
that $\pi$ is distinguished by $\G^\s$
if and only if the cuspidal~repre\-sen\-ta\-tion
of level $0$ of $\GL_m(\E)$ 
compactly induced from~the~restriction
of~$\bt$ to $\BJ\cap\GL_m(\E)$ is
dis\-tin\-guished by $\GL_m(\E_0)$.

However, the field extension $\E$ is not~cano\-nical.
In~\ref{proofpotimarron1},
we will canonically associate with~$\pi$ a $\s$-selfdual~cus\-pi\-dal
representation $\pit$~of level~$0$ of $\GL_m(\T)$,
which is $\GL_m(\T_0)$-dis\-tin\-guished~if and only if $\pi$
is $\G^\s$-dis\-tinguished,
where $\T/\T_0$ is the quadratic extension associated with $\pi$.
Our strategy is inspired from \cite{BHEffective} Section 3.

The following proposition
relates the parity of $m/r$ to the ramification of $\T/\T_0$.

\begin{prop}
\label{fraise}
Let $\pi$ be a $\s$-selfdual cuspidal representation of $\GL_n(\EE)$
with quadratic~ex\-tension $\T/\T_0$, 
and write $m=m(\pi)$, $r=r(\pi)$.
Then
\begin{equation*}
\text{$m/r$ is }
\left\{ 
\begin{array}{l}
\text{odd if $\T/\T_0$ is unramified}, \\ 
\text{either even or equal to $1$ if $\T/\T_0$ is ramified}.
\end{array}\right.
\end{equation*} 
\end{prop}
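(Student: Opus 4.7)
The plan is to use the type-theoretic framework of Section~\ref{pasfaim} to translate the parity question into one concerning parameters of cuspidal representations of a finite general linear group. First I would fix a generic $\s$-selfdual type $(\BJ,\bl)$ in $\pi$ (which exists by Proposition~\ref{pechessdpi} together with Definition~\ref{genericssdualtype}), and decompose $\bl\simeq\blw\otimes\bt$, with $\blw=\tbk$ the canonical extension of the Heisenberg representation supplied by Proposition~\ref{potiron} and $\bt$ the irreducible representation of $\BJ$ trivial on $\BJ^1$ from Proposition~\ref{deftau}. Since $\bl$ and $\blw$ are both $\s$-selfdual and the decomposition of $\bl$ as $\blw\otimes\bt$ with $\bt$ trivial on $\BJ^1$ is uniquely determined once $\blw$ is fixed, one gets $\bt^{\vee\s}\simeq\bt$. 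Fixing an identification $\BJ^0/\BJ^1\simeq\GL_m(\ll)$ as in Proposition~\ref{pechessd}(2), the restriction of $\bt$ to $\BJ^0$ inflates a cuspidal representation $\Vv$ of $\GL_m(\ll)$, and Lemma~\ref{poulet} yields $\Vv\simeq\st_r(\varrho)$ for a unique supercuspidal representation $\varrho$ of $\GL_{m/r}(\ll)$. Transporting the relation $\bt^{\vee\s}\simeq\bt$ to $\GL_m(\ll)$, then using the compatibility of $\st_r$ with duality and with automorphisms of $\GL_m(\ll)$ together with the uniqueness in Proposition~\ref{Coupurefinie}, yields $\varrho^{\vee\s}\simeq\varrho$, where $\s$ now acts on $\GL_{m/r}(\ll)$ as prescribed by Proposition~\ref{pechessd}(2).

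Next I would split into cases according to the ramification of $\T/\T_0$. In the ramified case the $\s$-action on $\GL_m(\ll)$ is conjugation by the matrix~\eqref{banane} with $i=\lfloor m/2\rfloor$ (since the type is generic), hence inner, so the relation $\varrho^{\vee\s}\simeq\varrho$ reduces to $\varrho\simeq\varrho^\vee$. If $m/r=1$ there is nothing to prove; otherwise $\varrho$ is a selfdual cuspidal representation of $\GL_{m/r}(\ll)$ with $m/r\geq 2$, and Proposition~\ref{chourineur}, combined with the parameter-theoretic observation that no $\Delta$-regular character $\xi$ of $\ll_s^\times$ satisfies $\xi^{-1}\in\Delta\cdot\xi$ for $s$ odd and $\geq 3$ (the key bound being that $\gcd(q^j+1,q^s-1)$ divides $q^{\gcd(2j,s)}-1$, which for $s$ odd forces the order of $\xi$ to divide some $q^i-1$ with $0<i<s$), forces $m/r$ to be even. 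In the unramified case $\s$ acts as the non-trivial element of $\Gal(\ll/\ll_0)$; writing $\varrho\simeq\Vc_\xi$ for a $\Delta$-regular character $\xi$ of $\ll_{m/r}^\times$, the condition $\varrho^{\vee\s}\simeq\varrho$ translates, up to $\Delta$-twist, into the triviality of the character $x\mapsto\xi(x)\,\xi(\tilde\s(x))$ on $\ll_{m/r}^\times$, for any lift $\tilde\s$ of $\s|_\ll$. The key computation is that for odd $k$ the gcd $\gcd(q_0^k+q_0^{2j},q_0^{2m/r}-1)$ divides $q_0^{2\gcd(k,m/r)}-1$, and when $m/r$ is even one has $\gcd(k,m/r)<m/r$, forcing $\xi$ to be fixed by a non-trivial element of $\Delta$ and thus contradicting its $\Delta$-regularity. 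Hence $m/r$ must be odd.

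The main obstacle is this finite-field parity computation, in particular the unramified case; it is an elementary cyclotomic argument, but one needs to cover all possible exponents $j$ and all possible orders of $\xi$ carefully. A secondary point to check is that the factorisation $\bl\simeq\blw\otimes\bt$ behaves well under $\s$-duality, but this is immediate from Proposition~\ref{potiron}(1) and the essential uniqueness of the factor $\bt$ once $\blw$ is fixed.
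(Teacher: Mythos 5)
Your approach is correct but takes a genuinely different route from the paper's. The paper's proof is a short reduction to the already-treated supercuspidal case: it invokes Proposition~\ref{fission} to write $\pi\simeq\St_r(\rho)$ with $\rho^{\s\vee}\simeq\rho\nu^i$ for some $i\in\{0,1\}$, observes that $\rho\nu^{i/2}$ is a $\s$-selfdual \emph{supercuspidal} representation of $\GL_{n/r}(\F)$ with the same associated quadratic extension $\T/\T_0$ and with relative degree $m/r$, and then cites \cite{VSANT19} Propositions 8.1 and 9.8, which already record exactly this parity dichotomy for $\s$-selfdual supercuspidals. You instead stay at the level of the generic $\s$-selfdual type, descend to the supercuspidal support $\varrho$ of the finite cuspidal $\Vv\simeq\st_r(\varrho)$, and carry out the cyclotomic computation on Green parameters directly; in effect you are re-deriving (a cuspidal incarnation of) the finite-field input behind those earlier propositions. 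Your argument is therefore more self-contained and makes the source of the parity constraint explicit, while the paper's is shorter because it offloads the work to prior results. One small precision to fix in your unramified case: the gcd bound should be phrased in terms of $\gcd(c,m/r)$ with $c=k-2j$ the odd exponent left after factoring the unit $q_0^{2j}$ out of $q_0^k+q_0^{2j}$ modulo $q_0^{2m/r}-1$, not $\gcd(k,m/r)$; since $c$ is odd, $\gcd(c,m/r)$ is odd, hence strictly smaller than $m/r$ whenever $m/r$ is even, which is exactly what you need to contradict $\Delta$-regularity.
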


\begin{proof}
Write $\pi$ as $\St_r(\rho)$ as in Proposition \ref{fission} with
$\rho^{\s\vee}\simeq\rho\nu^i$ for some $i\in\{0,1\}$.
Then~$\rho\nu^{i/2}$ is a $\s$-selfdual supercuspidal representation of
$\GL_{n/r}(\EE)$,
and the quadratic~ex\-tension associated with it is $\T/\T_0$.
Applying \cite{VSANT19} Propositions 8.1, 9.8,~we
get the expected result. 
\end{proof}

\subsection{}
\label{proofpotimarron1}
\label{par1092}

In order to prove Theorem \ref{potimarron},
it will be useful to consider the slightly more general~si\-tua\-tion where
$\pi$ is a cuspidal representation of $\G$ with $\s$-selfdual endo-class 
$\TT$.
Thus $\pi$ itself needs~not be $\s$-selfdual. 
However,
it has a relative degree $m$ and,
since $\TT$ is $\s$-selfdual,
there is a quadratic ex\-tension $\T/\T_0$ associated with it.
Moreover,
by Proposition \ref{poussin},
it contains,
up to $\G^\s$-conjugacy,
a unique generic $\s$-selfdual maximal simple character $\t$.
Let $\BJ$ be its normalizer in $\G$~and $\tbk$~be the~re\-presentation
of $\BJ$ given by
Proposition \ref{potiron}.
Then $\pi$ contains a unique type of the form
\begin{equation}
\label{hergemont}
(\BJ,\tbk\otimes\bt)
\end{equation}
for a uniquely determined irreducible representation $\bt$ of
$\BJ$ trivial on $\BJ^1$. 
Fix a $\s$-selfdual simple stratum $[\aa,\b]$ 
and isomorphisms \eqref{choixisoB} as in Proposition \ref{pechessd}.

First,
we define an open and compact mod centre subgroup $\BJT$ of
$\GL_m(\T)$ as follows:
\begin{itemize}
\item 
if $\T/\T_0$ is unramified,
$\BJT$ is the normalizer of $\GL_m(\oo_\T)$ in $\GL_m(\T)$,
\item 
if $\T/\T_0$ is ramified, 
and if $t$ is a uniformizer of $\T$ such that $\s(t)=-t$, 
then~$\BJT$ is the~nor\-mali\-zer in $\GL_m(\T)$ of the conjugate of
$\GL_m(\oo_\T)$ by the diagonal element
\begin{equation*}
{\rm diag}(t,\dots,t,1,\dots,1) \in \GL_m(\T)
\end{equation*}
where $t$ occurs $\lfloor m/2\rfloor$ times.
\end{itemize}
The group $\BJT$
(which does not depend on the choice of $t$ in the
ramified case)
has a~uni\-que~maxi\-mal com\-pact subgroup $\BJT^0$
and a unique normal maximal pro-$p$-subgroup $\BJT^1$.
The~natural~group iso\-morphism
\begin{equation}
\label{bjtGL}
\BJT^0/\BJT^1\simeq\GL_m(\ll)
\end{equation}
transports the action of $\s\in\Gal(\T/\T_0)$ on $\BJT^0/\BJT^1$ to
\begin{itemize}
\item 
the action of the non-trivial element of $\Gal(\ll/\ll_0)$
on $\GL_m(\ll)$ 
if $\T/\T_0$ is unramified,
\item 
the adjoint action of
\begin{equation}
\label{banane0bis}
\begin{pmatrix}
-{\rm id}_{\lfloor m/2\rfloor} & 0 \\ 0 & {\rm id}_{m-\lfloor m/2\rfloor}
\end{pmatrix} \in \GL_m(\ll),
\end{equation}
on $\GL_m(\ll)$ if $\T/\T_0$ is ramified.
\end{itemize}

\begin{rema}
\label{onchanget}
When $\T/\T_0$ is ramified,
the isomorphism \eqref{bjtGL} depends on the choice of~$t$~:
changing $t$ to another uniformizer $t'$ conjugates the isomorphism
by the $\s$-invariant element
\begin{equation*}
{\rm diag}(\a,\dots,\a,1,\dots,1) \in \GL_m(\ll)
\end{equation*}
where $\a$ (which occurs $i$ times)
is the image of $t't^{-1}$ in $\ll^\times$. 
This element is central in $\GL_m(\ll)^\s$.
\end{rema}

We now associate to $\bt$ an irreducible representation
$\btt^{}$ of $\BJT^{}$ trivial on $\BJT^1$.
On the one hand,
the restriction of $\bt$ to $\BJ^0$ is the inflation of an irreducible cuspidal
representation $\Vv$ of $\GL_m(\ll)$. 
On the other hand, the restriction of $\bt$ to $\E^\times$
is a multiple of $\ic\circ\Nm_{\E/\T}$ for a
uniquely~determi\-ned~tamely ramified character $\ic$ of $\T^\times$:
see \ref{groseille}.
Note that $[\E:\T]=p^e$ for some $e\>1$.

\begin{lemm}
\label{pinson}
Let $\Vv$ and $\ic$ be as above. 
\begin{enumerate}
\item
There is a unique representation $\btt^{}$ of $\BJT^{}$ trivial on 
$\BJT^1$ such that
\begin{enumerate}
\item
the restriction of $\btt^{}$ to $\T^\times$ is a multiple of the character $\ic$,
\item 
the restriction of $\btt^{}$ to $\BJT^0$ is the inflation of $\Vv^{(p^{-e})}$, 
where $\Vv^{(p^{-e})}$ is the representation of $\GL_m(\ll)$ obtained from 
$\Vv$ by applying the automorphism
$x\mapsto x^{p^{-e}}$.
\end{enumerate}
\item 
The pair $(\BJT,\btt)$ is a level $0$ type in $ \GL_m(\T)$.
\item
Up to isomorphism, 
the representation $\btt$ only depends on $\bt$,
and not on the choice of~the $\s$-selfdual simple stratum $[\aa,\b]$,
the uniformizer $t$ and
the identification $\BJ^0/\BJ^1\simeq\GL_m(\ll)$. 
\end{enumerate}
\end{lemm}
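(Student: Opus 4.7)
The plan is to exploit the simple group-theoretic structure of $\BJT$, namely that it is generated by its centre $\T^\times$ and its unique maximal compact subgroup $\BJT^0$; this reduces the problem of defining a representation of $\BJT$ trivial on $\BJT^1$ to specifying a compatible pair of representations, one of $\GL_m(\ll)$ and one of $\T^\times$.

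First, I would establish that $\BJT = \T^\times \BJT^0$ with $\T^\times \cap \BJT^0 = \oo_\T^\times$, in both the unramified and the ramified cases. This follows from the standard fact that the $\GL_m(\T)$-normalizer of a maximal compact subgroup is the product of that subgroup with $\T^\times$. Moreover, under \eqref{bjtGL} the image of $\oo_\T^\times$ in $\BJT^0/\BJT^1 \simeq \GL_m(\ll)$ is central, landing in $\ll^\times$ through the reduction map. Consequently, giving a representation of $\BJT$ trivial on $\BJT^1$ amounts to giving a pair $(\sigma,\vartheta)$, with $\sigma$ a representation of $\GL_m(\ll)$ and $\vartheta$ a character of $\T^\times$, such that the central character of $\sigma$ inflated to $\oo_\T^\times$ equals $\vartheta|_{\oo_\T^\times}$.

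Next I would verify that the pair $(\Vv^{(p^{-e})},\ic)$ satisfies this compatibility, which yields (1). Writing $\omega$ for the central character of $\Vv$ on $\ll^\times$, the central character of $\Vv^{(p^{-e})}$ is $\omega \circ (x \mapsto x^{p^{-e}})$; since $x \mapsto x^{p^e}$ is bijective on $\ll^\times$, the hypothesis recalled in \S\ref{groseille} that $\omega$ inflated to $\oo_\T^\times$ equals $\ic^{p^e}|_{\oo_\T^\times}$ is equivalent to the statement that $\omega \circ (x \mapsto x^{p^{-e}})$ inflated to $\oo_\T^\times$ equals $\ic|_{\oo_\T^\times}$. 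Existence and uniqueness of $\btt$ follow at once. Part (2) is then immediate: $\Vv^{(p^{-e})}$ is cuspidal as a Frobenius twist of the cuspidal $\Vv$, and the pair $(\BJT,\btt)$ is a level $0$ extended maximal simple type in $\GL_m(\T)$ by the standard theory recalled in Remark \ref{reftypelevel0}.

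For (3) I would inspect each of the three choices in turn. A different $\s$-selfdual simple stratum $[\aa',\b']$ containing $\t$ yields the same tame parameter field $\T$ up to $\F$-isomorphism (\S\ref{nom42}), hence the same group $\BJT$. In the ramified case, changing the uniformizer $t$ conjugates \eqref{bjtGL} by a central element of $\GL_m(\ll)^\s$ (Remark \ref{onchanget}), which leaves $\Vv^{(p^{-e})}$ unchanged up to isomorphism and has no effect on $\ic$. Changing the $\oo_\E$-basis of $\Ll$ conjugates the identification $\BJ^0/\BJ^1 \simeq \GL_m(\ll)$ by an inner automorphism of $\GL_m(\ll)$ (Remark \ref{rembase1}(2)), again preserving the isomorphism class of $\Vv^{(p^{-e})}$; and $\ic$ is intrinsic to $\bt$ via the norm $\Nm_{\E/\T}$. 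The main point requiring care is (3), where multiple implicit identifications must be tracked consistently, but each reduces to an inner or central twist and no genuine obstacle arises.
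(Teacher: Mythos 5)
Parts (1) and (2) of your proposal are correct and essentially match the paper: the uniqueness comes from $\BJT=\T^\times\BJT^0$, existence from the compatibility of the central character of $\Vv^{(p^{-e})}$ with $\ic|_{\oo_\T^\times}$, and (2) is immediate from cuspidality of $\Vv^{(p^{-e})}$.

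However, your treatment of (3) has a genuine gap, specifically in the independence of the choice of the $\s$-selfdual simple stratum $[\aa,\b]$. You dismiss this by saying a different $\b'$ ``yields the same tame parameter field $\T$ up to $\F$-isomorphism, hence the same group $\BJT$,'' and that ``$\ic$ is intrinsic to $\bt$ via the norm $\Nm_{\E/\T}$.'' But this is exactly where the subtlety lies: the character $\ic$ is defined via the norm map $\Nm_{\E/\T}$, and $\E=\F[\b]$ changes with $\b$ even when $\T$ does not. Replacing $\b$ by $\b'$ gives a new field $\E'=\F[\b']$ (with the same $\T$) and a new norm $\Nm_{\E'/\T}$, producing a priori a different tamely ramified character $\ic'$ of $\T^\times$ determined by the condition that $\bt|_{\E'^\times}$ is a multiple of $\ic'\circ\Nm_{\E'/\T}$. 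Nothing formal forces $\ic=\ic'$; the restrictions of $\ic$ and $\ic'$ to $\oo_\T^\times$ agree automatically (both are determined by the central character of $\Vv^{(p^{-e})}$), but their values at a uniformizer of $\T$ must be compared. The paper addresses this by reducing to the Claim that if $\w\in\E$ and $\w'\in\E'$ are uniformizers with $\w'\in\w\BJ^1$, then $\Nm_{\E'/\T}(\w')\equiv\Nm_{\E/\T}(\w)\bmod 1+\pp_\T$; this is proved in Paragraph~\ref{grange} by first handling $m=1$ directly and then reducing the general case to $m=1$ via a diagonal embedding and the Skolem--Noether theorem. This step is substantial, and indeed the paper's concluding remark in \S\ref{grange} points out that the analogous claim in \cite{BHEffective} Lemma~3.6 is actually false as stated, so this dependence on $\b$ cannot be waved away. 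Your proposal needs to supply an argument for this Claim, or at minimum flag it as the nontrivial input.
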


\begin{proof}
Uniqueness follows from the fact that $\BJT^{}$ is generated by $\BJT^0$
and $\T^\times$,
and the existence of $\btt^{}$ follows from the fact that 
the restriction of $\Vv^{(p^{-e})}$ to $\ll^\times$ is a multiple
of the character of~$\ll^\times$ defined by 
the restriction of $\ic$ to the units of $\T^\times$.
Since $\Vv^{(p^{-e})}$ is cuspidal,
the pair $(\BJT,\btt)$ is a level $0$ type by construction.
It remains to prove (3).
Since it will require techniques which are not used anywhere else in the 
paper, we will prove it apart, in Paragraph \ref{grange}.
\end{proof}

It will be convenient to give another description of the representation $\btt$. 

\begin{lemm} 
\label{defNnlemma}
\begin{enumerate}
\item 
There is a unique group isomorphism
$\NF:\BJ/\BJ^1 \to \BJT^{}/\BJT^1$ such that 
\begin{enumerate}
\item 
its restriction to $\GL_m(\ll)$ is the automorphism acting entrywise by 
$\phi:x\mapsto x^{p^{e}}$,
\item
for all $x\in\E^\times$,
the image of $x\BJ^1$ is $\Nm_{\E/\T}(x)\BJT^1$.
\end{enumerate} 
\item
The isomorphism $\NF$ is $\s$-equivariant. 
\item
The representation $\btt$ is isomorphic to $\bt\circ\NF^{-1}$.
\end{enumerate}
\end{lemm}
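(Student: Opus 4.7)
My plan is to construct $\NF$ on two subgroups that generate $\BJ/\BJ^1$, verify a single compatibility condition, and then derive the three assertions.

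First, $\BJ = \E^\times \BJ^0$ by Proposition \ref{patel}(2), and analogously $\BJT = \T^\times \BJT^0$ (since $\T^\times$ is central in $\GL_m(\T)$). In both quotients $\BJ/\BJ^1$ and $\BJT/\BJT^1$, the image of $\E^\times$ (resp.\ $\T^\times$) centralizes the image of $\BJ^0$ (resp.\ $\BJT^0$), because $\E^\times$ is central in $\B^\times$ and $\BJ^0=\bb^\times\BJ^1$. Hence conditions (a), (b) determine a group homomorphism provided they agree on $\E^\times \cap \BJ^0 = \oo_\E^\times$ modulo $\BJ^1$. An element $u \in \oo_\E^\times$ embeds in $\GL_m(\oo_\E)=\bb^\times$ as a scalar matrix and reduces to $\bar u\cdot\mathrm{Id}$ in $\BJ^0/\BJ^1\simeq\GL_m(\ll)$; applying $\phi$ gives $\bar u^{p^e}\cdot\mathrm{Id}$. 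Via $\Nm_{\E/\T}$, the same $u$ maps to $\Nm_{\E/\T}(u)\in\oo_\T^\times$, whose reduction in the common residue field $\ll$ is $\bar u^{p^e}$, since the norm of the totally ramified extension $\E/\T$ of degree $p^e$ induces $x\mapsto x^{p^e}$ on residue fields. The two descriptions agree. Bijectivity of $\NF$ then follows from $\phi$ being an automorphism of $\GL_m(\ll)$ and $\Nm_{\E/\T}(\w)$ being a uniformizer of $\T$ (by Eisenstein, since $\E/\T$ is totally ramified), hence generating $\BJT/\BJT^0$.

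For part (2), on the $\GL_m(\ll)$-part the two $\s$-actions coincide: since $(\BJ,\bl)$ is generic, Proposition \ref{pechessd}(2) gives index $i=\lfloor m/2\rfloor$ in the ramified case, which matches the $\s$-action on $\BJT^0/\BJT^1$ recorded right after \eqref{bjtGL}. The automorphism $\phi$ is $\s$-equivariant for this action because it is a power of Frobenius (commuting with $\Gal(\ll/\ll_0)$ in the unramified case) and acts entrywise (commuting with conjugation by a diagonal $\pm 1$ matrix in the ramified case). On the $\E^\times$-part, $\s$ stabilizes $\T$ by \eqref{T0FT}, so $\Nm_{\E/\T}\circ\s = \s\circ\Nm_{\E/\T}$, completing the verification.

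For part (3), by Lemma \ref{pinson}(1) it suffices to verify the two characterizing properties of $\btt$ for $\bt\circ\NF^{-1}$. Restricting to $\BJT^0/\BJT^1\simeq\GL_m(\ll)$, $\NF^{-1}$ acts as $\phi^{-1}$, so $(\bt\circ\NF^{-1})|_{\BJT^0}$ is the inflation of $\Vv\circ\phi^{-1}=\Vv^{(p^{-e})}$. For the $\T^\times$-part, the residue-field computation shows that $\Nm_{\E/\T}\colon\E^\times\to\T^\times$ is surjective modulo $\U^1_\T$; given $y\in\T^\times$, choose $x\in\E^\times$ with $\Nm_{\E/\T}(x)\in y\cdot\U^1_\T$, so that $\NF^{-1}(y\BJT^1)=x\BJ^1$ and $\bt(x)$ acts by the scalar $\ic(\Nm_{\E/\T}(x))=\ic(y)$, using tameness of $\ic$. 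The main obstacle is the residue-field identity $\Nm_{\E/\T}(u)\equiv\bar u^{p^e}\pmod{\pp_\T}$ for $u\in\oo_\E^\times$: once this is in place, both the consistency needed to define $\NF$ and the identification of $(\bt\circ\NF^{-1})|_{\T^\times}$ with $\ic$ follow, and everything else is forced.
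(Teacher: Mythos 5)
Your proof is correct and takes essentially the same approach as the paper: both decompose $\BJ/\BJ^1$ into the contributions of $\BJ^0$ and $\E^\times$, rely on the same fact that $\Nm_{\E/\T}$ induces $x\mapsto x^{p^e}$ at the residue level and an isomorphism $\E^\times/(1+\pp_\E)\simeq\T^\times/(1+\pp_\T)$, and verify $\s$-equivariance on generators using that $\phi$ fixes the matrix $\d$ (as $p$ is odd). The paper's proof is terser — it derives (2) by showing $\s\circ\NF\circ\s^{-1}$ satisfies the same characterizing conditions and invoking uniqueness, and calls (3) ``immediate'' — but the content is the same; your version merely makes explicit the compatibility check on $\oo_\E^\times$ and the verification of Lemma~\ref{pinson}(1)'s defining properties.
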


\begin{proof}
Again, uniqueness follows from the the fact that $\BJ$ is generated by $\BJ^0$ 
and $\E^\times$.
Exis\-tence follows from the fact that
$\Nm_{\E/\T}(x)=x^{p^{e}}$
for all $x\in\oo_{\E}^\times$ of order prime to $p$,
and that $\Nm_{\E/\T}$ induces a group isomorphism from 
$\E^\times/(1+\pp_\E)$ to $\T^\times/(1+\pp_\T)$.

Define $\NF_1$ to be $\s\circ\NF\circ\s^{-1}$.
The restriction of $\NF_1$ to~$\E^\times$ corresponds to 
$\s\circ\Nm_{\E/\T}\circ\s^{-1}$,
which is equal to $\Nm_{\E/\T}$ since $\E$ and $\T$ are stable by $\s$.
The restriction of $\NF_1$ to $\GL_m(\ll)$ is
\begin{itemize}
\item
the automorphism defined
by making $\s\circ\phi\circ\s^{-1} = \phi\in\Gal(\ll/\mathbb{F}_p)$
act entrywise if $\T/\T_0$ is unramified,
\item
the automorphism ${\rm Ad}(\d^{-1} \phi(\d))\circ\phi=\phi$ if $\T/\T_0$ is 
ramified, 
where $\d$ is the $\phi$-invariant~ma\-trix~defined by \eqref{banane}.
\end{itemize}
The fact that $\NF$ is $\s$-equivariant now 
follows from its uniqueness,
and (3) is immediate. 
\end{proof}

Now let us describe the behavior of $\bt\mapsto\btt$
with respect to duality and distinction.

\begin{lemm}
\label{pinsondeux}
\begin{enumerate} 
\item 
The representation $\btt$ is $\s$-selfdual if and only if 
$\bt$ is $\s$-selfdual.
\item 
The representation $\btt$ is $\BJT\cap\GL_m(\T_0)$-distinguished
if and only if $\bt$ is $\BJ\cap\G^\s$-dis\-tin\-gui\-shed.
\end{enumerate}
\end{lemm}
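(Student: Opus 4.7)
The plan is to deduce both assertions from Lemma~\ref{defNnlemma}, which supplies a $\s$-equivariant group isomorphism $\NF:\BJ/\BJ^1\to\BJT^{\phantom{0}}/\BJT^1$ together with the identification $\btt\simeq\bt\circ\NF^{-1}$.

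For part (1), I would argue that precomposition with the $\s$-equivariant group isomorphism $\NF^{-1}$ commutes with both the contragredient and the $\s$-conjugation functors on representations. Concretely, $\btt^\vee\simeq\bt^\vee\circ\NF^{-1}$ and $\btt^\s\simeq\bt^\s\circ\NF^{-1}$, and since $\NF^{-1}$ is a bijection of groups, any intertwiner between $\bt^\s$ and $\bt^\vee$ gives one between $\btt^\s$ and $\btt^\vee$, and vice versa. Thus $\btt$ is $\s$-selfdual if and only if $\bt$ is.

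For part (2), since $\bt$ is trivial on $\BJ^1$ and $\btt$ is trivial on $\BJT^1$, the spaces $\Hom_{\BJ\cap\G^\s}(\bt,\R)$ and $\Hom_{\BJT\cap\GL_m(\T_0)}(\btt,\R)$ depend only on the images $\overline{H}$ and $\overline{\widetilde{H}}$ of these subgroups in $\BJ/\BJ^1$ and $\BJT^{\phantom{0}}/\BJT^1$ respectively. Transporting $\overline{\widetilde{H}}$-invariant linear forms on $\btt$ to linear forms on $\bt$ via $\btt\simeq\bt\circ\NF^{-1}$, one obtains $\NF^{-1}(\overline{\widetilde{H}})$-invariant linear forms on $\bt$. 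Hence it suffices to verify $\NF^{-1}(\overline{\widetilde{H}})=\overline{H}$; given the $\s$-equivariance of $\NF$, this reduces in turn to showing that each of $\overline{H}$ and $\overline{\widetilde{H}}$ equals the full $\s$-fixed subgroup of its ambient quotient.

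The main obstacle is therefore this last identification. It is a Galois-cohomological statement: since $\BJ^1$ and $\BJT^1$ are pro-$p$ groups, $p$ is odd and $\s$ has order~$2$, the pointed sets $H^1(\langle\s\rangle,\BJ^1)$ and $H^1(\langle\s\rangle,\BJT^1)$ vanish, which yields the required surjections onto the $\s$-fixed subgroups. A more concrete route combines Lemma~\ref{YumAubergineindicei} with its evident analogue for $\BJT$ (noting that, since $(\BJ,\bl)$ is generic $\s$-selfdual, the integer~$i$ of Proposition~\ref{pechessd}(2.b) equals $\lfloor m/2\rfloor$ and so matches the $\s$-action on $\BJT^0/\BJT^1$ given by \eqref{banane0bis}), reducing the claim to the easily verified surjection $\BJ^0\cap\G^\s\twoheadrightarrow\GL_m(\ll)^\s$ and its counterpart for $\BJT^0\cap\GL_m(\T_0)$. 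Once this is in hand, $\NF((\BJ/\BJ^1)^\s)=(\BJT^{\phantom{0}}/\BJT^1)^\s$ by $\s$-equivariance, and both equivalences of the lemma are established.
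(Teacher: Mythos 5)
Your proof is correct, and in places slightly more careful than the paper's. For part~(1) you and the paper diverge: the paper argues directly from the decomposition of $\bt$ into the cuspidal piece $\Vv$ on $\BJ^0$ and the central character $\ic\circ\Nm_{\E/\T}$ on $\E^\times$, checking that $\s$-selfduality of each factor transfers to the corresponding data for $\btt$ (via the identities $(\Vv^{(p^{-e})})^{\vee\s}\simeq(\Vv^{\vee\s})^{(p^{-e})}$ and $(\ic\circ\Nm_{\E/\T})^{-1}\circ\s=(\ic^{-1}\circ\s)\circ\Nm_{\E/\T}$). Your route instead goes straight through Lemma~\ref{defNnlemma}: since $\NF$ is a $\s$-equivariant group isomorphism with $\bt=\btt\circ\NF$, one gets $\bt^{\vee\s}=\btt^{\vee\s}\circ\NF$, and the equivalence is immediate. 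This is a genuinely cleaner argument and buys you a uniform treatment of~(1) and~(2) from a single structural fact about $\NF$, at the cost of being slightly less explicit about what $\s$-selfduality says in terms of $(\Vv,\ic)$.

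For part~(2) you follow the same route as the paper (transport of invariant functionals through $\NF$), but you make explicit a point the paper's one-line argument leaves implicit: that the image of $\BJ\cap\G^\s$ in $\BJ/\BJ^1$ is the full fixed subgroup $(\BJ/\BJ^1)^\s$, and likewise for $\BJT\cap\GL_m(\T_0)$ inside $\BJT/\BJT^1$. You are right that this is needed — the paper's assertion that $\NF$ maps $(\BJ/\BJ^1)^\s$ onto $(\BJT^{\phantom{0}}/\BJT^1)^\s$ is immediate from $\s$-equivariance, but to conclude something about $\Hom_{\BJ\cap\G^\s}(\bt,\R)$ one must identify that subgroup with the image of $\BJ\cap\G^\s$. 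Your justification via $H^1(\langle\s\rangle,\BJ^1)=1$ for the pro-$p$ group $\BJ^1$ with $p$ odd is correct, and the alternative via Lemma~\ref{YumAubergineindicei} (noting that genericity forces $i=\lfloor m/2\rfloor$, matching~\eqref{banane0bis}) also works, though the surjectivity $\BJ^0\cap\G^\s\twoheadrightarrow\GL_m(\ll)^\s$ on which it rests is itself the same cohomological vanishing in disguise. In short: correct, with a cleaner~(1) and a usefully pedantic~(2).
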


\begin{proof} 
Saying that $\bt$ is $\s$-selfdual
is equivalent to saying that the representation 
$\Vv$ and~the cha\-racter~$\ic\circ\Nm_{\E/\T}$ are $\s$-selfdual.
Assertion (1) follows from the fact that 
$(\Vv^{(p^{-e})})^{\vee\s}$
is isomor\-phic~to $(\Vv^{\vee\s})^{(p^{-e})}$
and $(\ic\circ\Nm_{\E/\T})^{-1}\circ\s$ 
is equal to $(\ic^{-1}\circ\s)\circ\Nm_{\E/\T}$.

Assertion (2) follows from the fact that $\btt\circ\NF=\bt$
and $\NF$ maps $(\BJ/\BJ^1)^\s$ to $(\BJT^{}/\BJT^1)^\s$. 
\end{proof}

\begin{coro}
\label{pinsontrois}
The pair $(\BJT,\btt)$ is a generic $\s$-selfdual type 
if and only if $(\BJ,\tbk\otimes\bt)$ is a generic $\s$-self\-dual~type. 
\end{coro}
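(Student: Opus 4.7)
The plan is to separate the statement into its two components, ``$\s$-selfdual'' and ``generic'', and handle each in turn. Both the group $\BJ$ (which normalizes the $\s$-selfdual character $\t$) and the group $\BJT$ (built by conjugating $\GL_m(\oo_\T)$ by a diagonal element whose entries are $\s$-stable up to sign) are $\s$-stable by construction, so the question is about the representations.

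For $\s$-selfduality: I would observe that $\tbk$ is $\s$-selfdual by Proposition \ref{potiron}, hence $(\BJ,\tbk\otimes\bt)$ is $\s$-selfdual if and only if $\bt$ is $\s$-selfdual. On the level-zero side, $(\BJT,\btt)$ is $\s$-selfdual if and only if $\btt$ is, and by Lemma \ref{pinsondeux}(1) this is equivalent to $\bt$ being $\s$-selfdual. So the $\s$-selfduality of the two types is equivalent.

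For genericity: I would argue that both types satisfy the index condition of Definition \ref{genericssdualtype} automatically by construction. On the $\GL_n(\F)$ side, the type $(\BJ,\tbk\otimes\bt)$ in \S\ref{par1092} was chosen to contain the unique (up to $\G^\s$-conjugacy) \emph{generic} $\s$-selfdual maximal simple character $\t$, so by definition the integer $i$ attached to its $\G^\s$-conjugacy class via Proposition \ref{pechessd}(2.b) is $\lfloor m/2\rfloor$ in the ramified case. On the $\GL_m(\T)$ side, the explicit construction of $\BJT$ via the diagonal matrix ${\rm diag}(t,\dots,t,1,\dots,1)$ with $t$ appearing $\lfloor m/2\rfloor$ times forces the action of $\s$ on $\BJT^0/\BJT^1\simeq\GL_m(\ll)$ to be conjugation by the matrix \eqref{banane0bis} with block of size $\lfloor m/2\rfloor$, which is again $i=\lfloor m/2\rfloor$. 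So once either type is known to be $\s$-selfdual, it is automatically generic.

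Combining the two parts, the corollary follows immediately from Lemma \ref{pinsondeux}(1) together with the $\s$-selfduality of $\tbk$. The only subtlety, and what I would expect to be the main point to articulate carefully, is the genericity check for $(\BJT,\btt)$: one must verify that the $\s$-action on $\BJT^0/\BJT^1$ induced by Lemma \ref{defNnlemma}(2) really is the one prescribed in Proposition \ref{pechessd}(2.b) with the maximal index $\lfloor m/2\rfloor$, which is precisely what the construction of $\BJT$ in \S\ref{proofpotimarron1} was designed to ensure.
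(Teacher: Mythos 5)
Your proof is correct and takes essentially the same route as the paper's one-line argument (``This follows from Lemma \ref{pinsondeux}(1), thanks to our choice of $\BJT$ (see \eqref{banane0bis})''): you use Lemma \ref{pinsondeux}(1) for the $\s$-selfduality equivalence, and you observe that genericity is automatic on both sides because the constructions in \S\ref{proofpotimarron1} fix the index at $\lfloor m/2\rfloor$ — which is exactly the content of the paper's parenthetical remark pointing to \eqref{banane0bis}.
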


\begin{proof}
This follows from Lemma \ref{pinsondeux}(1), 
thanks to our choice of $\BJT$~(see \eqref{banane0bis}).
\end{proof}

\subsection{}

We still are in the situation of Paragraph \ref{proofpotimarron1}.
Consider the compactly induced represen\-tation
\begin{equation}
\label{defpit}
\pit = \ind^{\GL_m(\T)}_{\BJT} (\btt).
\end{equation}
It satisfies the following properties.

\begin{prop}
\label{proppit}
\begin{enumerate}
\item
The representation $\pit$ 
is cuspidal, irreducible and has level $0$. 
\item
One has $m(\pit)=m$ and $r(\pit)=r$.
\item
The representation $\pit$ 
is $\s$-selfdual if and only if $\pi$ is $\s$-selfdual.
\item
The representation $\pit$ is $\GL_m(\T_0)$-distinguished if and only $\pi$ is 
$\GL_n(\FF)$-distinguished.
\end{enumerate}
\end{prop}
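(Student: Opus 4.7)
The plan is to deduce each of the four statements by stringing together the results just established about the construction $\bt\mapsto\btt$, together with the standard type-theoretic dictionary between cuspidal representations and types.

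For (1), I invoke Lemma \ref{pinson}(2), which says $(\BJT,\btt)$ is a level $0$ type in $\GL_m(\T)$: by the Bushnell--Kutzko theory recalled at the start of \ref{groseille}, the compact induction of any type is cuspidal and irreducible, and a level $0$ type produces a level $0$ representation by Remark \ref{reftypelevel0}. For (2), since $\pit$ is level $0$ on $\GL_m(\T)$, its tame parameter field is $\T$ and its relative degree is $m(\pit)=m$ by definition. To compute $r(\pit)$, I apply Lemma \ref{poulet} to $\pit$: its value of $r$ is the integer $u$ appearing in the decomposition $\Vv^{(p^{-e})}\simeq\st_u(\varrho')$ given by Proposition \ref{Coupurefinie}, where $\Vv^{(p^{-e})}$ is the cuspidal representation of $\GL_m(\ll)$ in the type $(\BJT,\btt)$. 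But $\Vv\simeq\st_r(\varrho)$ by Lemma \ref{poulet} applied to $\pi$, and applying the Frobenius twist yields $\Vv^{(p^{-e})}\simeq\st_r(\varrho^{(p^{-e})})$, so $u=r$ as required.

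For (3), I use that $\pi$ is $\s$-selfdual if and only if the type $(\BJ,\tbk\otimes\bt)$ is isomorphic to its $(\vee\s)$-dual: the two induce to $\pi$ and $\pi^{\vee\s}$ respectively, so isomorphism of inductions is equivalent to $\G$-conjugacy of types, and since both types contain the $\s$-selfdual character $\t$ (so $\BJ$ is fixed), they are isomorphic as representations of $\BJ$. Because $\tbk$ is $\s$-selfdual by Proposition \ref{potiron}, this reduces to $\bt$ being $\s$-selfdual, which by Lemma \ref{pinsondeux}(1) is equivalent to $\btt$ being $\s$-selfdual. The same dictionary applied on the $\GL_m(\T)$ side -- with $\BJT$ being $\s$-stable by construction -- shows that $\pit$ is $\s$-selfdual if and only if $\btt$ is, closing the loop.

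For (4), if $\pi$ is not $\s$-selfdual then neither is $\pit$ by (3), and neither is distinguished (by Theorem \ref{VSANTTHM41}(3)); so assume $\pi$ (equivalently $\pit$) is $\s$-selfdual. Then $(\BJ,\tbk\otimes\bt)$ is the generic $\s$-selfdual type in $\pi$, and by Proposition \ref{pulledpork6heures}, $\pi$ is $\GL_n(\F)$-distinguished iff $\tbk\otimes\bt$ is $\BJ\cap\G^\s$-distinguished. By Proposition \ref{etachi}(3), combined with the fact that $\tbk$ itself is $\BJ\cap\G^\s$-distinguished (Proposition \ref{potiron}), so that the associated character $\chi$ is trivial, this is equivalent to $\bt$ being $\BJ\cap\G^\s$-distinguished. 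Lemma \ref{pinsondeux}(2) then transfers this to $\btt$ being $\BJT\cap\GL_m(\T_0)$-distinguished, and Corollary \ref{pinsontrois} ensures $(\BJT,\btt)$ is the generic $\s$-selfdual type of $\pit$, so a final application of Proposition \ref{pulledpork6heures} on the $\GL_m(\T)$ side gives the equivalence with $\GL_m(\T_0)$-distinction of $\pit$. The main subtlety is the ordering: we must establish (3) before (4) in order to have the $\s$-selfduality hypothesis needed to invoke Proposition \ref{pulledpork6heures} on both sides simultaneously.
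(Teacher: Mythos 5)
Your proof is correct and follows essentially the same route as the paper: the paper cites Lemma \ref{pinson} and Remark \ref{reftypelevel0} for (1), Remark \ref{reftypelevel0} and Lemma \ref{poulet} for (2), the $\s$-selfduality of $\bt$ plus Lemma \ref{pinsondeux}(1) for (3), and Proposition \ref{resB} together with Lemma \ref{pinsondeux}(2) for (4). The only cosmetic difference is that in (4) you re-derive Proposition \ref{resB} from Propositions \ref{pulledpork6heures}, \ref{etachi}(3) and \ref{potiron} rather than citing it directly; and you are more explicit than the paper about the Frobenius-twist step $\Vv^{(p^{-e})}\simeq\st_r(\varrho^{(p^{-e})})$ in (2), about Corollary \ref{pinsontrois} guaranteeing that $(\BJT,\btt)$ is generic, and about the logical dependence of (4) on (3) via Theorem \ref{VSANTTHM41}(3) — all of which the paper leaves implicit but which are worth spelling out.
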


\begin{proof}
Assertion (1) follows from the fact that $\pit$ is compactly induced from a 
level $0$ type~in $\GL_m(\T)$
(see Lemma \ref{pinson} and Remark \ref{reftypelevel0}).
The first equality of Assertion (2) follows from Remark \ref{reftypelevel0},
and the second one from Lemma \ref{poulet}.

Suppose that $\pi$ is $\s$-selfdual.
Then $\bt$ is $\s$-selfdual (see \ref{lwcan}).
By Lemma \ref{pinsondeux},
the representa\-tion $\btt$ is $\s$-selfdual as well.
By compact induction, it follows that $\pit$ is $\s$-selfdual.
The~argu\-ment also works the other way round,
proving (3).
Assertion (4) follows from Proposition \ref{resB} to\-ge\-ther with
Lemma \ref{pinsondeux}(2).
\end{proof}

\begin{theo}
\label{potimarrong}
\label{potimarron}
\begin{enumerate}
\item 
The process
\begin{equation}
\label{bijpit}
\pi\mapsto\pit
\end{equation}
induces a bijection from the set of isomorphism classes of cuspidal
representations of~$\G$~with~endo-class $\TT$ to that of 
cus\-pi\-dal re\-presentations of level~$0$ of $\GL_m(\T)$.
\item
The bijection \eqref{bijpit} maps $\s$-selfdual representations onto
$\s$-selfdual representations~and~$\G^\s$-dis\-tinguished
representations onto 
$\GL_m(\T_0)$-distinguished representations.
\item 
For any cuspidal representation $\pi$ with~endo-class $\TT$
and any tamely ramified character $\chi$ of $\F^\times$,
the~re\-presentation $(\pi\chi)_{\rm t}$ is isomorphic to
$\pit(\chi\circ\Nm_{\T/\F})$.
\end{enumerate}
\end{theo}

\begin{proof}
For (1),
let $\pi_0$ be a cus\-pi\-dal re\-presentation of level~$0$ of $\GL_m(\T)$.
It contains a level $0$ type $(\BJT,\bt_0)$ for a~uni\-que\-ly determined
representation $\bt_0$ of $\BJT$ trivial on $\BJT^1$.
It then suffices to check that the process 
\begin{equation*}
\pi_0 \mapsto \ind^\G_{\BJ} (\tbk\otimes(\bt_0\circ\NF))
\end{equation*}
gives the inverse bijection.
For (3),
notice that if $\pi$ contains the type $(\BJ,\tbk\otimes\bt)$,
then $\pi\chi$~con\-tains the type
$(\BJ,\tbk\otimes\bt\chi^{\BJ})$,
where $\chi^{\BJ}$ is the unique character of $\BJ$ trivial on $\BJ^1$ 
whose restriction to $\BJ\cap\B^\times\simeq\GL_m(\E)$ is equal to
$(\chi\circ\Nm_{\E/\F})\circ\det_\E$
where $\det_\E$ is the determinant on $\B\simeq\Mat_m(\E)$.
Then $(\bt\chi^{\BJ})_{\rm t}$ is isomorphic to the representation
$\btt$ twisted by the character of $\BJT$ trivial on $\BJT^1$
given by $(\chi\circ\Nm_{\T/\F})\circ\det_\T$ , 
where $\det_\T$ is the determinant on $\Mat_m(\T)$.
Assertion (2) is given by Proposition \ref{proppit}.
\end{proof}

\begin{coro}
\label{potimarroncor}
Let $\mu$ be a tamely ramified character of $F_0^\times$. 
A cuspidal representation $\pi$ of $\GL_n(F)$ with endo-class $\TT$
is distinguished by $\mu$ if and only if $\pit$ is distinguished by
$\mu\circ{\rm N}_{T_0/F_0}$.
\end{coro}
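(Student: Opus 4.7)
The plan is to deduce this twisted statement from Theorem~\ref{potimarrong} by twisting $\pi$ by a tamely ramified character of $F^\times$ restricting to $\mu^{-1}$ on $F_0^\times$.

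First, I would produce a tamely ramified character $\chi$ of $F^\times$ whose restriction to $F_0^\times$ is $\mu^{-1}$. This is possible because tamely ramified characters factor through $F^\times/(1+\pp_F)$, and the inclusion $F_0^\times/(1+\pp_{F_0})\hookrightarrow F^\times/(1+\pp_F)$ has finite cokernel in both the unramified and ramified cases, so any character of the subgroup extends.  With such a $\chi$ in hand, the representation $\pi$ is distinguished by $\mu$ (viewed as $\mu\circ\det$ on $G^\s$) if and only if $\pi\chi$ is distinguished (untwisted) by~$G^\s$, since for $h\in G^\sigma$ the value $\chi(\det h)=\mu^{-1}(\det h)$ converts the $\mu$-equivariance of an invariant linear form to plain invariance.

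Next, I would apply Theorem~\ref{potimarrong}(3) to rewrite $(\pi\chi)_{\rm t}\simeq\pit(\chi\circ\Nm_{T/F})$, and then apply Theorem~\ref{potimarrong}(2) to the cuspidal representation~$\pi\chi$ (which still has endo-class~$\TT$, since twisting by a character of~$F^\times$ preserves the endo-class): $\pi\chi$ is $G^\s$-distinguished if and only if $(\pi\chi)_{\rm t}=\pit(\chi\circ\Nm_{T/F})$ is $\GL_m(T_0)$-distinguished.

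Finally, I would use the elementary identity $\Nm_{T/F}|_{T_0^\times}=\Nm_{T_0/F_0}$, which follows at once from the isomorphism $T\simeq T_0\otimes_{F_0}F$ of \eqref{T0FT} (multiplication by $x\in T_0^\times$ on $T$ as an $F$-vector space and on $T_0$ as an $F_0$-vector space have the same characteristic polynomial, hence the same determinant). Combined with the preceding step this shows that $\pit(\chi\circ\Nm_{T/F})$ is $\GL_m(T_0)$-distinguished if and only if $\pit$ is distinguished by $(\chi\circ\Nm_{T/F})^{-1}|_{T_0^\times}=\mu\circ\Nm_{T_0/F_0}$, concluding the equivalence. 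There is no real obstacle here; the only point requiring a moment's care is the existence of a tame extension $\chi$ of $\mu^{-1}$, and this is a direct verification.
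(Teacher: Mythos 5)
Your proof is correct and follows essentially the same approach as the paper: the paper fixes a tamely ramified character $\xi$ of $F^\times$ extending $\mu$ and considers $\pi\xi^{-1}$, which is precisely your $\pi\chi$ with $\chi=\xi^{-1}$. The only difference is that you spell out the two routine verifications (existence of the tame extension, and $\Nm_{T/F}|_{T_0^\times}=\Nm_{T_0/F_0}$) that the paper leaves implicit.
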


\begin{proof}
Fix a tamely ramified character $\xi$ of $F^\times$ extending $\mu$.
Then $\pi$ is $\mu$-distinguished~if~and only if $\pi\xi^{-1}$ is 
distinguished, 
and $(\pi\xi^{-1})_{\rm t}$ is isomorphic to
$\pit(\xi^{-1}\circ\Nm_{\T/\F})$.
Thus $\pi$ is $\mu$-dis\-tingui\-shed if and only if $\pit$ is distinguished by
the character
$\xi\circ\Nm_{\T/\F}|_{T_0^\times}=\mu\circ{\rm N}_{T_0/F_0}$.
\end{proof}

Finally,
let us describe the compatibility of the process \eqref{bijpit}
with the des\-crip\-tion of cuspidal representations in terms of supercuspidal 
ones of \ref{compatypes}.

\begin{prop}
\label{compapitrho}
Let $\pi$ be a cuspidal representation of~$\G$~with~endo-class $\TT$
and $r=r(\pi)$.
Let $\rho$ be a supercuspidal representation of $\GL_{n/r}(\F)$
such that $\pi$ is isomorphic to $\St_r(\rho)$.
Then~$\pit$ is isomorphic to $\St_r(\rhot)$.  
\end{prop}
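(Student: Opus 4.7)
The plan is to show that $\pit$ and $\St_r(\rhot)$ admit the same extended maximal simple type in $\GL_m(\T)$, and then invoke uniqueness of the cuspidal representation containing a given type to conclude. The first step is to extract the type-theoretic data on the $\F$-side. Fixing a type $(\BJ, \tbk \otimes \bt)$ for $\pi$ and applying the constructions of Paragraph \ref{compatypes} with $u = r$, Lemma \ref{step1} produces a type $(\BJ_*, \bk_* \otimes \bt_*)$ for $\rho$, and Proposition \ref{step2} identifies their residue-field and tame-character parameters $(\Vv, \ic)$ and $(\varrho_*, \ic_*)$ via $\Vv \simeq \st_r(\varrho_*)$ and $\ic = \ic_*^r$. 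Applying Lemma \ref{pinson} to each then produces representations $\btt$ on $\BJT$ and $\btt_*$ on $\BJT_*$ whose compact inductions are $\pit$ and $\rhot$, and whose level-zero parameters (in the sense of Lemma \ref{pinson}(1)) are $(\st_r(\varrho_*^{(p^{-e})}), \ic_*^r)$ and $(\varrho_*^{(p^{-e})}, \ic_*)$ respectively.

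The next step is to verify that $\St_r(\rhot)$ is actually cuspidal, i.e., that $r = \et(\rhot)\ell^v$ for some $v \ge 0$. Proposition \ref{Coupure} applied to the cuspidal $\pi = \St_r(\rho)$ gives $r = \et(\rho)\ell^v$, so it suffices to prove the equality $\et(\rho) = \et(\rhot)$. This would follow from Theorem \ref{potimarrong}(3) applied to $\chi = \nu_\F$: the standard identity $|\Nm_{\T/\F}(x)|_\F = |x|_\T$ gives $\nu_\F \circ \Nm_{\T/\F} = \nu_\T$ on $\T^\times$, so the bijection $\rho \mapsto \rhot$ of Theorem \ref{potimarrong}(1) intertwines twists by $\nu$, forcing $\rho \nu_\F^i \simeq \rho$ if and only if $\rhot \nu_\T^i \simeq \rhot$, and hence $\et(\rho) = \et(\rhot)$.

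Once this is in place, I would apply Paragraph \ref{compatypes} a second time, now in the level-zero setting of $\GL_m(\T)$, to the cuspidal representation $\St_r(\rhot)$ with supercuspidal base $\rhot$: Proposition \ref{step2} of that paragraph shows that its type on $\BJT$ has parameters $(\st_r(\varrho_*^{(p^{-e})}), \ic_*^r)$, matching those of $\btt$ computed above. Since any irreducible representation of $\BJT$ trivial on $\BJT^1$ is determined by its restriction to $\BJT^0$ (a cuspidal representation of $\GL_m(\ll)$) together with its central character on $\T^\times$, the two types coincide up to isomorphism, and uniqueness of the cuspidal representation containing a given type yields $\pit \simeq \St_r(\rhot)$.

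The main technical point is the cuspidality check, which requires the compatibility $\nu_\F \circ \Nm_{\T/\F} = \nu_\T$ on $\T^\times$ and careful transport of the torsion number along the bijection of Theorem \ref{potimarrong}; beyond that, the argument is a direct translation of Paragraph \ref{compatypes} from the $\F$-side to the level-zero side of $\GL_m(\T)$, with both applications yielding the same pair of invariants $(\st_r(\varrho_*^{(p^{-e})}), \ic_*^r)$.
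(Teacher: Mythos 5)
Your proposal follows the same outline as the paper's proof: extract the level-zero type parameters of $\pit$ and of $\St_r(\rhot)$ through the machinery of Paragraph \ref{compatypes} and Lemma \ref{pinson}, and match them. Two remarks.

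First, you explicitly verify the cuspidality of $\St_r(\rhot)$, by transporting $\et(\rho)=\et(\rhot)$ through Theorem \ref{potimarrong}(3) applied to the tamely ramified character $|\cdot|_\F$ and the identity $|\cdot|_\F\circ\Nm_{\T/\F}=|\cdot|_\T$. The paper's proof leaves this cuspidality implicit when it asserts that $\St_r(\rhot)$ is compactly induced from a level-zero type on $\BJT$; your argument is correct and a worthwhile clarification.

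Second, there is a gap in the step ``whose compact inductions are $\pit$ and $\rhot$.'' The $\bk_*$ produced by Paragraph \ref{compatypes} is, by construction, a preimage of $\tbk$ under the map \eqref{kappa0kappa}; but the representation $\rhot$ is defined using the \emph{canonical} extension $\bk_{\t_*}$ of $\n_*$ given by Proposition \ref{potiron}, and a priori these two extensions of $\n_*$ may differ by a non-trivial unramified character of $\BJ_*$ (see Lemma \ref{ArseneLup4}(2)). If they do, your $\bt_*$ is a twist of the representation entering the definition of $\rhot$, and the tame parameter $\ic_*$ you compute need not be the one attached to $\rhot$ — so the compact induction of $\bt_{*,{\rm t}}$ would be an unramified twist of $\rhot$ rather than $\rhot$ itself. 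The missing ingredient is Corollary \ref{Etretatcoro2} (applied, via the transitivity underlying Lemma \ref{ArseneLup4}, with $u=r$), which says precisely that \eqref{kappa0kappa} carries $\bk_{\t_*}$ to $\tbk$; the paper invokes it at exactly this point. With that one citation added, your argument coincides with the paper's.
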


\begin{rema}
Note that this makes sense since, by Corollary \ref{proofRing}, 
the representations~$\pi$, $\rho$ have the same endo-class $\TT$,
thus the same
quadratic extension $\T/\T_0$. 
\end{rema}

\begin{proof}
The re\-pre\-sentation $\pi$ contains a type of the form
$(\BJ,\tbk\otimes\bt)$ for a unique representa\-tion $\bt$ of
$\BJ$ trivial on $\BJ^1$.  
Fix a $\s$-selfdual simple stratum $[\aa,\b]$~and
isomorphisms \eqref{choixisoB} as in Proposition~\ref{pechessd}.
Associated with $\bt$, there are
a tamely~ra\-mified character $\ic$ of $\T$, and 
a cuspidal representa\-tion
$\Vv=\st_r(\varrho)$ of $\GL_{m}(\ll)$,
for some supercuspidal representation $\varrho$ of $\GL_{m/r}(\ll)$. 
The re\-pre\-sentation $\bt$ is entirely determined by the fact that
\begin{itemize}
\item 
its restriction to $\BJ^0$ is the inflation of $\Vv$,
\item 
its restriction to $\E^\times$ is a multiple of the character 
$\ic\circ\Nm_{\E/\T}$. 
\end{itemize}
We now use the results of \ref{previousparagraph} and \ref{typeneutre}
for $u=r$.
Let $\t_*$ be the simple character associated~with $\t$ by Lemma 
\ref{ArseneLup1}. 
Thanks to Corollary \ref{Etretatcoro2},
the~re\-presentation $\bk_{\t_*}$ corresponds to $\tbk$ via the map
\eqref{kappa0kappa}. 
Paragraph \ref{compatypes} says that $\rho$ contains the
type $(\BJ_*, \bk_{\t_*}\otimes\bt_*)$,
where $\bt_*$ is the~re\-pre\-sentation of $\BJ_*$ trivial on $\BJ^1_*$
determined by 
\begin{itemize}
\item 
its restriction to $\BJ_*^0$ is the inflation of $\varrho$,
\item 
its restriction to $\E^\times$ is a multiple of 
$\ic_*\circ\Nm_{\E/\T}$, 
where $\ic_*$ is a tamely ramified character of $\T^\times$ 
such that $\ic_*^r=\ic$. 
\end{itemize} 
Thus $\rhot$ is compactly induced from the level $0$ type
$(\BJ_{*,{\rm t}},\bt_{*,{\rm t}})$ where $\bt_{*,{\rm t}}$ is determined by 
\begin{itemize}
\item 
its restriction to $\BJ_{*,{\rm t}}^0$ is the inflation of $\varrho^{(p^{-e})}$,
\item 
its restriction to $\T^\times$ is a multiple of $\ic_*$.
\end{itemize} 
Thus $\St_r(\rhot)$ is compactly induced from
the level $0$ type $(\BJT,\bd)$ where $\bd$ is determined by 
\begin{itemize}
\item 
its restriction to $\BJT^0$ is the inflation of
$\st_r(\varrho^{(p^{-e})})\simeq\Vv^{(p^{-e})}$,
\item 
its restriction to $\T^\times$ is a multiple of $\ic_*^r=\ic$.
\end{itemize}
It follows that $\bd$ is isomorphic to $\btt$, 
whence $\St_r(\rhot)$ is isomorphic to $\pit$.
\end{proof}

\subsection{}

Finally,
let $\pi$ be a $\s$-selfdual cuspidal representation~of $\G$, of level $0$.
It has a central~cha\-racter $c_\pi$ and its generic type
$(\BJ,\bl)$ defines a cuspi\-dal re\-presentation $\Vv$ of $\GL_n(\kk)$.
Assume~that $n\neq1$.
In the spirit of Proposition \ref{resB},
we give a necessary and sufficient condition for $\pi$ to~be
distin\-guished by $\GL_n(\F_0)$
in terms of $c_\pi$ and $\Vv$.

\begin{theo}
\label{piranesi}
The representation $\pi$ is $\GL_n(\F_0)$-distinguished if and only if 
its central~cha\-racter $c_\pi$ is trivial on $\F_0^\times$ and 
\begin{enumerate}
\item
if $\F/\F_0$ is unramified, 
then $\Vv$ is $\GL_n(\kk_0)$-distinguished,
\item
if $\F/\F_0$ is ramified, 
then $n$ is even,
$\Vv$ is $\GL_{n/2}(\kk)\times\GL_{n/2}(\kk)$-distinguished,
the vector~spa\-ce~of $\GL_{n/2}(\kk)\times\GL_{n/2}(\kk)$-invariant linear forms
on $\Vv$ has dimension $1$ and 
\begin{equation}
\label{pauillac2015}
\ss =
\begin{pmatrix}
0 & {\rm id} \\
{\rm id} & 0 \end{pmatrix}
\in \GL_n(\kk)
\end{equation}
acts on this space by the sign $c_\pi(\w)$,
where $\w$ is any uniformizer of $\F$.
\end{enumerate}
\end{theo}

\begin{proof}
By Proposition \ref{resB}, 
the representation $\pi$ is $\GL_n(\F_0)$-distinguished if and only if
$\bl$~is $\BJ\cap\G^\s$-distinguished.
In the unramified case,
the result follows from the fact that $\BJ\cap\G^\s$~is~ge\-nerated by
$\F_0^\times$ and $\BJ^{0}\cap\G^\s$
(see Lemma \ref{YumAubergineindicei})
and $(\BJ^{0}\cap\G^\s)/(\BJ^1\cap\G^\s)$ identifies with
$\GL_n(\kk_0)$.

Assume now that we are in the ramified case.
Since $c_\pi$ is trivial on $\F_0^\times$ and $1+\p_\F$,
its value at $\w$ does not depend on the choice of a uniformizer
$\w$ of $\F$.
We thus may and will assume that $\s(\w)=-\w$,
thus $c_\pi(\w)\in\{-1,1\}$.
By Lemma \ref{YumAubergineindicei} again,
$\BJ\cap\G^\s$~is~ge\-nerated by
$\F_0^\times$, $\BJ^{0}\cap\G^\s$ and~$\w w$,
where the element $w\in\BJ^0$ is defined by \eqref{defwavants}.
The quotient $(\BJ^{0}\cap\G^\s)/(\BJ^1\cap\G^\s)$~iden\-tifies
with~$\GL_{n/2}(\kk)\times\GL_{n/2}(\kk)$,
the image of $w$ in $\BJ^0/\BJ^1\simeq\GL_{n}(\kk)$ is the element $\ss$,
the space~of
$\GL_{n/2}(\kk)\times\GL_{n/2}(\kk)$-in\-va\-riant linear 
forms on $\Vv$
has dimension $1$ by \cite{VSANT19} Corollary 2.16
and~$\w\ss$ acts on this space as $c_\pi(\w)\ss$.
\end{proof}

Putting Theorems \ref{potimarrong} and \ref{piranesi} together, 
we have thus reduced the problem of characterizing
dis\-tin\-guished cuspidal
representations of $\GL_n(\F)$ to a problem about distinction
of cuspidal~re\-pre\-sentations of
finite general linear groups. 

\subsection{}
\label{grange}

In this paragraph,
we prove Lemma \ref{pinson}(3).
First,
by Remarks \ref{rembase1}, \ref{onchanget}, 
changing~\eqref{choixisoB} and $t$ does not affect 
the isomorphism class of $\btt$.
Let $[\aa,\b']$ be another $\s$-selfdual 
maximal~sim\-ple stratum such that
$\t\in\Cc(\aa,\b')$. 
Conjugating by $\BJ^1$,
we may and will assume that the~maxi\-mal
tamely~ramified~ex\-ten\-sion
of $\F$ in $\E'=\F[\b']$ is $\T$.  
This gives us another isomorphism $\NF'$ from 
$\BJ/\BJ^1$ to $\BJT^{}/\BJT^1$. 
By construction,
it coincides with $\NF$ on $\BJ^0/\BJ^1$
and the image of $x\BJ^1$ by $\NF'$ is equal to
$\Nm_{\E'/\T}(x)\BJ^1$ for all $x\in\E'^\times$.
We are going to prove that $\NF'$ is equal to $\NF$. 
The result will~then follow from the fact that
$\btt$~is equal to $\bt\circ\NF$.
For this,
it suffices to prove that $\NF$ and  $\NF'$
take~the same value at some given uniformizer of $\E'$.
Let $\w$, $\w'$ be uniformizers of $\E$, $\E'$ respectively.

The centre of $\BJ/\BJ^1$ is $\E^\times\BJ^1/\BJ^1=\E'^\times\BJ^1$, 
thus $\E'^\times\subseteq\E^\times\BJ^1$.
We thus may write
$\w'\in\w\zeta\BJ^1$ for some root of unity $\zeta$ of $\T^\times$
of order prime to $p$.
Changing $\w'$ to $\w'\zeta^{-1}$,
we may and will assume that $\w'\in\w\BJ^1$.
It suffices to prove the following claim. 

\begin{enonce}{Claim}
\label{lemmebtt} 
We have $\Nm_{\E'/\T}(\w')\equiv\Nm_{\E/\T}(\w) \mod 1+\pp_\T$.
\end{enonce}

First, this is true when $m=1$.
Indeed, 
writing $\G_\T$ for the centralizer of $\T$ in $\G$ and 
$\det_\T$ for the determinant on $\G_\T$,
we have $\det_\T(x)=\Nm_{\E/\T}(x)$ for all $x\in\E^\times$,
thus 
\begin{equation*}
\Nm_{\E'/\T}(\w')=\det_\T(\w')\in\det_\T(\w)\cdot\det_\T(\BJ^1\cap\G_\T)
=\Nm_{\E/\T}(\w) \cdot (1+\pp_\T).
\end{equation*}

Now assume that $m>1$. 
We use the results of \ref{previousparagraph} for $u=m$. 
Let $\t_*\in\Cc(\aa_*,\b)$ denote the trans\-fer of $\t$
as in Lemma \ref{ArseneLup1}. 
Fix a $\T$-embedding
\begin{equation*}
\iota : \E' \to \End_\T(\V_*) \subseteq \End_\F(\V_*) 
\end{equation*}
such that $\aa_*$ is normalized by $\iota\E'^\times$,
and transfer $\t$ to $\t_\bullet\in\Cc(\aa_*,\iota\b')$
in the sense of \cite{BK} 3.6.
The simple character $\t$ is in $\Cc(\aa,\b)\cap\Cc(\aa,\b')$.
It follows from \cite{BHLTL1} Theorem 8.7
that $\t_*$, $\t_\bullet$ intertwine in $\G_*$,
and from \cite{BK} Theorem 3.5.11 that 
$\t_\bullet^{}=\t_*^x$ for some $x\in\Kk_{\aa_*}$.
Changing $\iota$ to ${\rm Ad}(x)\circ\iota$,
we thus may assume that 
$\t_\bullet=\t_*\in\Cc(\aa_*,\b)\cap\Cc(\aa_*,\iota\b')$. 
By using $\iota$, 
we get a diagonal embedding
\begin{equation*}
\E' \to \End_\T(\V_*)\tdt\End_\T(\V_*) 
\subseteq \End_\T(\V)
\end{equation*}
denoted $\phi$,
which is the identity on $\T^\times$.
The Skolem-Noether theorem implies that 
$\phi={\rm Ad}(g)$~for some $g\in\G_\T$.
Conjugating by $g$,
we thus may assume that $\E^\times$ and $\E'^\times$ are both diagonal in 
$\M$.
The identity $\w'\in\w\BJ^1$ thus implies $\w'\in\w\BJ_*^1$.
We are thus reduced to the case where $m=1$.

\begin{rema}
The fact that $\btt$ does not depend on the choice of $\b$ 
is claimed in \cite{BHEffective}~Lem\-ma~3.6.
However, Property (b) of this lemma does not hold:
using the notation of \textit{ibid.}, 
the~res\-tric\-tion of $\l_{\xi}^{\BJ}$ to $\P^\times$ is a multiple
of the character $\xi\circ\Nm_{\P/\T}$,
whereas the res\-tric\-tion of $(\xi|_{\T^\times})^{\BJ}$
to $\P^\times$ is $(\xi\circ\Nm_{\P/\T})^s$.
(Note that $\P$ corresponds to our $\E$,
and $s$ corresponds to our $m$.)
\end{rema}

\section{The odd case}
\label{Vsoddcase}
In this section,
$p$ is odd,
$\ell$~is any pri\-me number different from $p$ and
the field $\R$ has characteris\-tic $\ell$.
This~section is de\-voted to the proof of the following theorem.

\begin{theo}
\label{THModd}
Let $\pi$ be a $\s$-selfdual cuspidal non-supercuspidal
$\R$-representation of $\GL_n(\F)$.
Assume that the in\-teger $r=r(\pi)$ is odd,
thus $\pi$ is isomorphic to $\St_r(\rho)$ for a uniquely determined
$\s$-selfdual 
supercuspidal~re\-presentation $\rho$~of $\GL_{k}(\F)$, with $k=n/r$. 
If $\pi$ is $\GL_n(\F_0)$-distinguished, then 
\begin{enumerate}
\item 
the relative degree $m=m(\pi)$ and the ramification index of $\T/\T_0$
have the same parity,
\item 
the representation $\rho$ is $\GL_k(\F_0)$-dis\-tin\-guished. 
\end{enumerate}
\end{theo}

Note that the fact that $r$ is odd and $r\neq1$ implies that $\ell\neq2$.  

\subsection{}

Before we start the proof of Theorem \ref{THModd},
let us prove the following Disjunction Theorem.

\begin{coro}
Let $\pi$ be a $\s$-selfdual cuspidal $\R$-representation of $\GL_n(\F)$.
Assume that~$r(\pi)$ is odd.
Then $\pi$ cannot be both distinguished and $\x$-distinguished.
\end{coro}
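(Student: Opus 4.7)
The plan is to reduce to the supercuspidal Disjunction Theorem (Theorem \ref{rappelamical1}(2)) via a character twist, so I assume $\ell \neq 2$ throughout (otherwise $\x$ becomes trivial in $\R^\times$ and the statement is content-free). If $r := r(\pi) = 1$, then $\pi$ is itself supercuspidal and the conclusion is immediate from Theorem \ref{rappelamical1}(2). I therefore assume $r \geq 3$, which is consistent since $r$ odd and $r \neq 1$ forces $\ell \neq 2$, as noted just after Theorem \ref{THModd}. By Proposition \ref{fission}(1), there is a uniquely determined $\s$-selfdual supercuspidal representation $\rho$ of $\GL_{n/r}(\F)$ such that $\pi \simeq \St_r(\rho)$.

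The technical heart of the argument is the construction of a smooth character $\eta : \F^\times \to \R^\times$ satisfying both
\begin{equation*}
\eta|_{\F_0^\times} = \x \qquad \text{and} \qquad \eta \circ \s = \eta^{-1}.
\end{equation*}
Granted such $\eta$, the twisted representation $\pi' := \pi \otimes (\eta^{-1} \circ \det)$ is $\s$-selfdual (because $(\pi')^{\vee\s} \simeq \pi \cdot (\eta \circ \s) = \pi\eta^{-1} = \pi'$) and cuspidal non-supercuspidal with $r(\pi') = r$ odd; by the compatibility of twisting with the formation of $\St_r$ (at the level of the parabolically induced representation whose unique generic subquotient defines $\St_r$), we have $\pi' \simeq \St_r(\rho\eta^{-1})$ with $\rho\eta^{-1}$ the associated $\s$-selfdual supercuspidal. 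Moreover, since $\eta|_{\F_0^\times} = \x$ and $\x^2 = 1$, one immediately checks that $\pi$ is $\x$-distinguished if and only if $\pi'$ is $\GL_n(\F_0)$-distinguished, and analogously $\rho$ is $\x$-distinguished if and only if $\rho\eta^{-1}$ is distinguished.

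The existence of $\eta$ is the step that requires the most care, and I view it as the main obstacle. In the unramified case I would take a uniformizer $\w$ of $\F_0$ (which is also a uniformizer of $\F$) and define $\eta(\w) = -1$, $\eta|_{\oo_\F^\times} = 1$; both conditions are then straightforward to verify, using that $\x$ is trivial on $\oo_{\F_0}^\times$ in this case. In the ramified case, I would choose a uniformizer $\w$ of $\F$ with $\s(\w) = -\w$, build an antiinvariant extension of $\x|_{\oo_{\F_0}^\times}$ to $\oo_\F^\times$ using the Hilbert~90-type cohomological triviality of $\s$ on $\oo_\F^\times$, and then pick $\eta(\w) \in \R^\times$ so that $\eta(\w)^2 = \x(-1)$; the consistency of these choices with $\eta|_{\F_0^\times} = \x$ reduces to the identity $\x(\w^2) = \x(-1)$, which follows from the fact that $-\w^2 = \Nm_{\F/\F_0}(\w) \in \ker \x$.

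With $\eta$ in hand, the conclusion is immediate. Suppose that $\pi$ is both distinguished and $\x$-distinguished. Then $\pi$ and $\pi'$ are both $\s$-selfdual cuspidal non-supercuspidal distinguished representations with $r$ odd. Applying Theorem \ref{THModd}(2) to $\pi$ and to $\pi'$ yields that $\rho$ and $\rho\eta^{-1}$ are both distinguished, which by the translation of the previous paragraph means that $\rho$ is both distinguished and $\x$-distinguished, contradicting Theorem \ref{rappelamical1}(2).
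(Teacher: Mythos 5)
Your proposal is correct and follows essentially the same route as the paper: twist $\pi$ by a character of $\F^\times$ extending $\x$, apply Theorem~\ref{THModd} to both $\pi$ and the twist, and derive a contradiction with the supercuspidal Disjunction Theorem. The paper's proof is shorter because it simply takes $\chi$ to be any \emph{tamely ramified} extension of $\x$ to $\F^\times$; the antiinvariance condition $\chi\circ\s=\chi^{-1}$ that you isolate as the ``main obstacle'' then holds automatically (on $\oo_\F^\times$ because any tame extension of $\x|_{\oo_{\F_0}^\times}$ is quadratic, and on a uniformizer by a short valuation computation), so no separate construction or appeal to Hilbert~90 is needed.
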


\begin{proof}
Assume that $\pi$ is both distinguished and $\x$-distinguished,
and let $\chi$ be a tamely rami\-fied character of $\F^\times$ extending $\x$. 
Then~$\pi\chi$ is distinguished,
it is isomorphic to $\St_r(\rho\chi)$~and
$\rho\chi$ is su\-per\-cuspidal and $\s$-selfdual.
Theorem \ref{THModd}
applied to both $\pi$ and $\pi\chi$
implies that $\rho$~is~both~dis\-tin\-guished and $\x$-distinguished.
This contradicts Theorem \ref{rappelamical1}.
\end{proof}

We also have the following Distinguished Lift Theorem. 

\begin{coro}
\label{corobitmap}
Let $\pi$ be a $\GL_n(\F_0)$-distinguished cuspidal 
$\flb$-representation of $\GL_n(\F)$ with $r(\pi)$ odd. 
There is a $\GL_n(\F_0)$-distinguished integral generic $\qlb$-representation
of $\GL_n(\F)$ whose reduction mod $\ell$ contains $\pi$.
\end{coro}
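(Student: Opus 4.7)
The plan is to lift the supercuspidal datum underlying $\pi$ to $\qlb$ and form a symmetric parabolic induction. If $r(\pi) = 1$, then $\pi$ is supercuspidal and the conclusion is Theorem~\ref{rappelamical3}; we therefore focus on $r := r(\pi) \geq 3$ odd, which forces $\ell \neq 2$ by the remark after Theorem~\ref{THModd}. Since $\pi$ is distinguished, it is $\sigma$-selfdual by Theorem~\ref{VSANTTHM41}(3); Proposition~\ref{fission}(1) then yields a unique $\sigma$-selfdual supercuspidal $\flb$-representation $\rho$ of $\GL_{n/r}(\F)$ with $\pi \simeq \St_r(\rho)$, and Theorem~\ref{THModd}(2) forces $\rho$ to be $\GL_{n/r}(\F_0)$-distinguished.

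Theorem~\ref{rappelamical2} now supplies a $\sigma$-selfdual lift $\tilde\rho$ of $\rho$ to $\qlb$ which, since $\ell \neq 2$, is itself $\GL_{n/r}(\F_0)$-distinguished. We set
\begin{equation*}
\tilde\pi := \tilde\rho\nu^{-(r-1)/2} \tdt \tilde\rho\nu^{(r-1)/2},
\end{equation*}
the normalized parabolic induction from the block-diagonal Levi $\GL_{n/r}(\F)^r$. This representation is generic (normalized parabolic induction of generic representations is generic) and integral: a $\Zl$-lattice in $\tilde\rho$ gives, via tensor product on the Levi and normalized parabolic induction, a $\GL_n(\F)$-stable $\Zl$-lattice in $\tilde\pi$, using $\ell \neq p$ to ensure that $q$ and $q^{1/2}$ are integral units. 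Since normalized parabolic induction commutes with reduction mod $\ell$ at the level of semisimplifications, the semisimplification of $\rl(\tilde\pi)$ coincides with that of $\rho\nu^{-(r-1)/2} \tdt \rho\nu^{(r-1)/2}$, whence $\pi = \St_r(\rho)$ appears as a subquotient of $\rl(\tilde\pi)$ by the defining property of $\St_r$.

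It remains to verify the distinction of $\tilde\pi$. The $\sigma$-selfduality $\tilde\rho^\vee \simeq \tilde\rho^\sigma$ implies $(\tilde\rho\nu^j)^\vee \simeq (\tilde\rho\nu^{-j})^\sigma$ for every $j \in \{1,\dots,(r-1)/2\}$, so the $r$ inducing factors group into $(r-1)/2$ symmetric pairs of the form $(\tau, \tau^{\vee\sigma})$ around the central factor $\tilde\rho$, the latter being itself distinguished. This is precisely the symmetric arrangement handled by the Blanc--Delorme distinction criterion for parabolic induction (for $\GL_n$, see~\cite{MatringeIMRN09}), which produces a nonzero $\GL_n(\F_0)$-invariant linear form on $\tilde\pi$. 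The main obstacle is this last step: one must invoke the distinction criterion over $\qlb$ rather than just $\CC$; the underlying open-orbit and Mackey computations carry over verbatim, and the hypothesis that $r$ is odd is essential because it is what supplies the self-paired middle factor.
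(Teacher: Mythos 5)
Your argument agrees with the paper through the first three moves: using Theorem~\ref{THModd}(2) to write $\pi\simeq\St_r(\rho)$ with $\rho$ distinguished, and Theorem~\ref{rappelamical2} to obtain a distinguished cuspidal $\qlb$-lift $\mu$ of $\rho$. Where you diverge is in the choice of the final $\qlb$-representation. The paper takes the \emph{irreducible} generic subquotient $\St_r(\mu)$ of the parabolic induction and cites the distinction results of Anandavardhanan and Matringe for discrete series / generic subquotients to conclude that $\St_r(\mu)$ itself is $\GL_n(\F_0)$-distinguished. You instead take the \emph{full} parabolic induction
$\widetilde\pi=\mu\nu^{-(r-1)/2}\tdt\mu\nu^{(r-1)/2}$
and argue its distinction by the open-orbit (Blanc--Delorme-type) criterion.

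The difficulty is that $\widetilde\pi$ is reducible for $r\ge 2$ (the consecutive segments $\mu\nu^{j}$, $\mu\nu^{j+1}$ are linked). The paper's conventions --- the definition of ``integral'' in Section 2, and the remark following the corollary which compares the object to being cuspidal and speaks of ``lifts'' --- point to an irreducible representation; and indeed the paper's own proof produces the irreducible $\St_r(\mu)$. Distinction of the full induced representation via the open orbit does \emph{not} formally imply distinction of its unique irreducible generic subquotient: a nonzero invariant form on $\widetilde\pi$ could conceivably vanish on a generic sub and be supported only on degenerate subquotients. Passing from distinction of $\widetilde\pi$ to distinction of $\St_r(\mu)$ is precisely the nontrivial content of the references the paper invokes; your argument sidesteps that step rather than proving it. If the corollary were read liberally so as to allow reducible generic representations with a stable lattice, your construction (and the genericity, integrality and reduction-mod-$\ell$ claims, which are all fine) would suffice, and would have the advantage of being more elementary. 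But to match the paper's intent you still need the deeper distinction-of-generic-subquotient theorem, so the claim that the Mackey/open-orbit argument is the ``main obstacle'' understates what is required.
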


\begin{proof}
Write $\pi$ as $\St_r(\rho)$ with $\rho$ distinguished. 
Let $\mu$ be a distinguished integral cuspidal~lift~of $\rho$,
which exists by Theorem \ref{rappelamical2}.
Then the generic representation $\St_r(\mu)$ satisfies the
required~pro\-per\-ty
(see \cite{AnandIMRN08} Theorem 1.3
or \cite{MatringeIMRN09} Corollary 4.2 when $F$ has characteristic zero, and observe that the argument in \cite{MatringeIMRN09} holds verbatim in positive characteristic thanks to \cite[Theorem 4.7]{Jo23}).
\end{proof}

\begin{rema}
A $\GL_n(\F_0)$-distinguished integral generic $\qlb$-representation
of $\GL_n(\F)$ as in Corollary \ref{corobitmap} may not be cuspidal. 
See Section \ref{seceven} for the classification of all distinguished cuspidal 
$\flb$-representations of $\GL_n(F)$ having a cuspidal distinguished 
lift to $\qlb$.
\end{rema}

Finally, compare Theorem \ref{THModd} with the following finite field analogue. 

\begin{prop}
Let $\ke/\kf$ be a quadratic extension of finite fields~of
characteristic $p$.~Let~$\varrho$ be a supercuspidal $R$-representation of 
$\GL_f(\ke)$ for 
some $f\>1$,~and $r$~be an odd integer such that $\st_r(\varrho)$ is cuspidal.
If $\st_r(\varrho)$ is distinguished by $\GL_{fr}(\kf)$,
then $\varrho$ is distinguished by $\GL_f(\kf)$.
\end{prop}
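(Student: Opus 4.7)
The plan is to work through the Green--Dipper--James parametrisation and track the distinction condition through the character parameter on the anisotropic torus.

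Write $\varrho=\overline{\Vc}_\eta$ for some $\Delta_f$-regular $\flb$-character $\eta$ of $\ke_f^\times$ (Proposition \ref{classifJames}), where $\Delta_f=\Gal(\ke_f/\ke)$, and $\st_r(\varrho)=\overline{\Vc}_\xi$ for some $\flb$-character $\xi$ of $\ke_{fr}^\times$ admitting a $\Delta_{fr}$-regular $\qlb$-lift. The first step is to identify $\xi$ in terms of $\eta$: by Proposition \ref{classifJames}(3) applied to $r(\st_r(\varrho))=r$, the character $\xi$ factors as $\xi=\xi_0\circ\Nm_{\ke_{fr}/\ke_f}$ for some $\flb$-character $\xi_0$ of $\ke_f^\times$. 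Conversely, since $\eta$ is $\Delta_f$-regular, the character $\eta\circ\Nm_{\ke_{fr}/\ke_f}$ does not factor through any proper subfield of $\ke_f$, so $\overline{\Vc}_{\eta\circ\Nm_{\ke_{fr}/\ke_f}}$ is a cuspidal $\flb$-representation of $\GL_{fr}(\ke)$ with $r$-invariant equal to $r$ and supercuspidal support concentrated on $\varrho$, hence must coincide with $\st_r(\varrho)=\overline{\Vc}_\xi$; uniqueness of parameters up to $\Delta_{fr}$-conjugation then gives $\xi_0=\eta$ after a possible $\Delta_f$-conjugation.

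Second, I would invoke the following distinction criterion for cuspidal $R$-representations of $\GL_N(\ke)$, letting $\phi_0\colon x\mapsto x^{q_0}$ be the Frobenius of $\kf$: a cuspidal representation with parameter $\lambda\colon\ke_N^\times\to R^\times$ is $\GL_N(\kf)$-distinguished if and only if $\lambda^{q_0^N+1}=1$, equivalently $\lambda$ is trivial on $\kf_N^\times=\mathbb{F}_{q_0^N}^\times\subseteq\ke_N^\times$. Over $\qlb$ this is Gow's theorem; for $R=\flb$ it follows by lifting $\lambda$ to a $\Delta_N$-regular $\qlb$-character (Proposition \ref{classifJames}(2)), applying Gow's theorem to the lift, and transferring distinction across reduction mod $\ell$ by a Remark \ref{comtesseartoff}-type argument using multiplicity one in both characteristics.

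For the arithmetic conclusion, the criterion applied to $\st_r(\varrho)$ gives $\xi^{q_0^{fr}+1}=1$ on $\ke_{fr}^\times$; substituting $\xi=\eta\circ\Nm_{\ke_{fr}/\ke_f}$ and using surjectivity of the norm yields $\eta^{q_0^{fr}+1}=1$ on $\ke_f^\times$. Since the order of $\eta$ divides $|\ke_f^\times|=q_0^{2f}-1$ and $r$ is odd, so that $q_0^{f(r-1)}=(q_0^{2f})^{(r-1)/2}\equiv 1\pmod{q_0^{2f}-1}$, we obtain $q_0^{fr}+1\equiv q_0^f+1\pmod{q_0^{2f}-1}$ and hence $\eta^{q_0^{fr}+1}=\eta^{q_0^f+1}$. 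Thus $\eta^{q_0^f+1}=1$, and the criterion applied to $\varrho$ gives the required $\GL_f(\kf)$-distinction. The main obstacle is extending the Gow criterion to $\ell$-modular cuspidal representations, which amounts to a finite-field analogue of Theorem \ref{rappelamical3}: every $\GL_N(\kf)$-distinguished cuspidal $\flb$-representation of $\GL_N(\ke)$ should admit a distinguished cuspidal $\qlb$-lift; the expected proof combines Proposition \ref{classifJames}(2) with the one-dimensionality of the space of invariant linear forms and the Brauer--Nesbitt principle.
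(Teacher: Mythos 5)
Your arithmetic with parameters is correct and — unlike the paper's own argument — genuinely uses the hypothesis that $r$ is odd: the congruence $q_0^{fr}+1\equiv q_0^{f}+1\pmod{q_0^{2f}-1}$ is exactly where oddness enters, and the identification $\xi=\eta\circ\Nm_{\ke_{fr}/\ke_f}$ up to conjugation follows from Propositions~\ref{classifJames}(3) and~\ref{Coupurefinie}. The problem is the distinction criterion you invoke for cuspidal $\flb$-representations, namely that $\overline{\Vc}_\lambda$ is $\GL_N(\kf)$-distinguished if and only if $\lambda$ is trivial on $\kf_N^\times$. You propose to establish this by lifting $\lambda$ to a $\Delta$-regular $\qlb$-character and transporting Gow's theorem through reduction mod $\ell$. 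That works for the implication ``distinguished lift $\Rightarrow$ distinguished reduction'' (the Remark~\ref{comtesseartoff} direction), but the direction you actually need for $\st_r(\varrho)$ — that an $\flb$-distinguished cuspidal has parameter trivial on $\kf_N^\times$ — would require a distinguished \emph{cuspidal} $\qlb$-lift to exist. This is precisely the finite analogue of Theorem~\ref{rappelamical3} that you flag as ``expected,'' and it fails for cuspidal non-supercuspidal representations: Lemma~\ref{poulain} shows that a $\sigma$-selfdual cuspidal $\flb$-representation of $\GL_n(\kk)$ with $n$ odd has a distinguished cuspidal lift only when it is supercuspidal or the order of $|\kk_0|$ modulo $\ell$ is even. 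The multiplicity-one and Brauer--Nesbitt argument you sketch only shows that the dimension of the invariant-form space cannot drop under reduction; it cannot rule out that dimension being zero on every lift. So this is a real gap, not a deferred lemma.

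A correct route to what you want, avoiding lifting, goes through $\sigma$-selfduality: by \cite{VSANT19} Remark~4.3, a distinguished irreducible $\flb$-representation is $\sigma$-selfdual, and $\sigma$-selfduality of $\overline{\Vc}_\xi$ translates directly on parameters to $\xi^{-q_0}$ being $\Delta$-conjugate to $\xi$, with no lift involved; you would also need to note that this forces $N$ odd, which here follows from the hypothesis and $r$ being odd. The paper's own proof bypasses parameters entirely: distinction of $\st_r(\varrho)$ forces $\sigma$-selfduality (\cite{VSANT19} Remark~4.3), which descends to $\varrho$ by the uniqueness of the supercuspidal support in Proposition~\ref{Coupurefinie}, and a $\sigma$-selfdual supercuspidal $\flb$-representation is distinguished by \cite{VSANT19} Lemma~2.5. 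Notably, that argument never uses $r$ odd; the hypothesis appears in the statement only for parallelism with Theorem~\ref{THModd}.
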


\begin{proof} 
First, \cite{VSANT19} Remark 4.3 tells us that $\st_r(\varrho)$ is 
$\s$-selfdual (where $\s$~is here the nontrivial automorphism of $\ke/\kf$).
Proposition \ref{Coupurefinie} implies that $\varrho$ is $\s$-selfdual.
By \cite{VSANT19} Lemma 2.5,~it~is~dis\-tinguished by $\GL_f(\kf)$.
\end{proof}

\subsection{}

Let us prove Theorem \ref{THModd}(1).
Since $r$ is odd, $m$ has the same parity as $m/r$, and,
since $\pi$~is non-supercuspidal, we have $r>1$, thus $m>1$.
It follows from \cite{VSANT19} Proposition 7.1
that,~if~$m$ is odd, $\T/\T_0$ is unramified,
and from Proposition \ref{fraise} that,
if $m$ is even, $\T/\T_0$ is~ra\-mi\-fied. 

\subsection{}

We now start the proof of Theorem \ref{THModd}(2).
We thus have a distinguished cuspidal~repre\-sen\-ta\-tion $\pi$ of $\GL_n(\F)$, 
which we write $\St_r(\rho)$ with $\rho$ supercuspidal and $\s$-selfdual. 

Associated with $\pi$,
there are a positive divisor $m$ of $n$,
a quadratic extension $\T/\T_0$ and a~cus\-pi\-dal representation $\pit$ of 
$\GL_m(\T)$.
By Proposition \ref{proppit},
the representation $\pit$ has level $0$,~it~is
distinguished by $\GL_m(\T_0)$ and it satisfies $r(\pit)=r$.

Similarly,
associated with $\rho$,
there is a supercus\-pi\-dal $\s$-selfdual representation $\rhot$ of 
$\GL_{m/r}(\T)$,
which has level $0$,
and is distinguished by $\GL_{m/r}(\T_0)$ if and only if $\rho$ is
distinguished by $\GL_k(\F_0)$.
By Proposition \ref{compapitrho},
the representation $\pit$ is isomorphic to $\St_r(\rhot)$.

It follows that,
in order to prove Theorem \ref{THModd}(2),
we may assume that $\pi$ has level $0$.

\subsection{}

Let $\pi$ be a distinguished cuspidal representation 
of level $0$ of $\GL_n(\F)$.
Associated with~it,
there are its central character $c_\pi$ and
a cuspidal representation $\Vv$ of $\GL_n(\kk)$
(see \S\ref{groseille}).

The representation $\pi$ is isomorphic to $\St_r(\rho)$
for a unique $\s$-selfdual supercuspidal representation $\rho$,
and $\rho$ has level $0$. 
Associated with $\rho$,
there are its central character $c_\rho$ and
a~super\-cuspidal representation $\varrho$~of $\GL_k(\kk)$.
We have the relation
\begin{equation}
\label{urberville}
c^{\phantom{r}}_\pi = \(c_\rho\)^r
\end{equation}
and, by Proposition \ref{step2},
the repre\-sentation $\Vv$ is isomorphic to $\st_r(\varrho)$.

Since $\pi$ is distinguished,
its central character is trivial on $\F_0^\times$. 
Since $\rho$ is $\s$-selfdual,
the restric\-tion of $c_\rho$ to $\F_0^\times$ has order at most $2$.
Restricting the relation \eqref{urberville} to $\F_0^\times$,
and since $r$ is odd,~we
deduce that $c_\rho$ is trivial on $\F_0^\times$. 

\subsection{}

In this paragraph,
we will assume that $\F/\F_0$ is unramified.
By Theorem \ref{piranesi},
the represen\-tation~$\Vv$ is distinguished by $\GL_n(\kk_0)$.
By \cite{VSANT19} Remark 4.3,
it is thus $\s$-selfdual,
that is
\begin{equation*}
\st_r(\varrho) \simeq \Vv \simeq \Vv^{\s\vee} \simeq \st_r(\varrho^{\s\vee}).
\end{equation*}
It follows from Proposition \ref{Coupurefinie} that
$\varrho$ is $\s$-selfdual.
By \cite{VSANT19} Lemma 2.5,
it is thus distinguished by $\GL_k(\kk_0)$.
Applying Theo\-rem \ref{piranesi} again,
we deduce that $\rho$ is distinguished by $\GL_k(\F_0)$.
This proves Theorem \ref{THModd}
in the unramified case. 

\subsection{}
\label{cocorico}

From now on,
and until the end of this section,
we assume that $\F/\F_0$ is ramified.
By~Theo\-rem \ref{piranesi},
we may write $n=2u$ for some integer $u\>1$.
We wri\-te $\G=\G_n=\GL_n(\kk)$,
$\H=\H_n=\GL_{u}(\kk)\times\GL_{u}(\kk)$
and $\K=\K_n$ for the normalizer of $\H$ in $\G$,
which is generated by $\H$ and 
\begin{equation*}
\ss = \ss_n = 
\begin{pmatrix}
0 & {\rm id} \\
{\rm id} & 0 \end{pmatrix}
\in \G
\end{equation*}
where ${\rm id}$ is the identity in $\GL_{u}(\kk)$.
It will be convenient to introduce the following definition. 

\begin{defi}
\label{epsidist}
Let $c\in\{-1,1\}\subseteq\R^\times$.
An irreducible $R$-representation $V$ of $G$ is said to~be
$c$-dis\-tin\-guished by $H$ if $V$ is $H$-distinguished
and $\ss$ acts on the space of $H$-invariant linear forms on $V$
by multiplication by $c$.
\end{defi}

By Theorem \ref{piranesi}, the representation $\Vv$ is $\H$-distinguished
and $\ss$ acts on the $1$-dimensional~vec\-tor~space
$\Hom_\H(\Vv,\R)$ by the sign $c=c_\pi(\w)$.
In other~words, 
$\Vv$~is $c$-distinguished by $H$.
We~are now reduced to proving the following result.
(Note that $k$ is even since $n$ is even and $r$ is odd.)

\begin{prop}
\label{BastardBattle}
The supercuspidal representation $\varrho$ is $c$-distinguished by $H_{k}$.
\end{prop}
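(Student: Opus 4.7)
The plan is to match parameters: passing through the classifications of cuspidal representations of finite general linear groups by regular characters (Propositions \ref{classifGreen} and \ref{classifJames}), I would transfer the distinction from $\Vv=\st_r(\varrho)$ to $\varrho$ by reading it off the parameter via Proposition \ref{chourineur} together with Remark \ref{comtesseartoff}.

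Since $r$ is odd and $n=kr$ is even, $k$ is even; write $k=2v$, so that $u=n/2=rv$. Because $\gcd(2v,rv)=v$, one has $\kk_k\cap\kk_u=\kk_v$ and $\kk_k\cdot\kk_u=\kk_n$, so restriction induces an isomorphism $\Gal(\kk_n/\kk_k)\simeq\Gal(\kk_u/\kk_v)$. This yields $\Nm_{\kk_n/\kk_k}(x)=\Nm_{\kk_u/\kk_v}(x)$ for every $x\in\kk_u^\times$, and hence the surjection $\Nm_{\kk_n/\kk_k}(\kk_u^\times)=\kk_v^\times$. By Proposition \ref{classifJames} I would write $\varrho=\overline{\Vc}_\zeta$ and $\Vv=\overline{\Vc}_\eta$ for regular $\qlb$-characters $\zeta$, $\eta$ of $\kk_k^\times$, $\kk_n^\times$ respectively, and the crucial technical step is to verify, up to $\Delta_n$-conjugacy, the identity
\begin{equation*}
\overline{\eta}=\overline{\zeta}\circ\Nm_{\kk_n/\kk_k}.
\end{equation*}
This is the finite-field counterpart of the compatibility $\ic=\ic_*^r$ of Proposition \ref{step2}; it should follow from the realization of $\st_r(\varrho)$ as the generic subquotient of $\varrho\times\cdots\times\varrho$ (see \cite{MSc} Section 6) together with Proposition \ref{classifJames}(3).

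Granted this parameter identity, the $\flb$-version of Proposition \ref{chourineur} (only the forward direction from Remark \ref{comtesseartoff} is needed, and it requires merely $\ell\neq 2$) translates the hypothesis that $\Vv$ is $c$-distinguished by $H_n$ into the two statements $\overline{\eta}|_{\kk_u^\times}=1$ and $-\overline{\eta}(\alpha)=c$ for any $\alpha\in\kk_n^\times\setminus\kk_u^\times$ with $\alpha^2\in\kk_u^\times$. The first forces $\overline{\zeta}$ to be trivial on $\Nm_{\kk_n/\kk_k}(\kk_u^\times)=\kk_v^\times$. Picking $\beta\in\kk_k^\times\setminus\kk_v^\times$ with $\beta^2\in\kk_v^\times$, the element $\alpha=\beta$ is a legitimate choice (since $\kk_k\cap\kk_u=\kk_v$), and the computation
\begin{equation*}
\overline{\eta}(\beta)=\overline{\zeta}\bigl(\Nm_{\kk_n/\kk_k}(\beta)\bigr)=\overline{\zeta}(\beta^r)=\overline{\zeta}(\beta)^r=\overline{\zeta}(\beta),
\end{equation*}
where the last equality holds because $\overline{\zeta}(\beta)\in\{\pm1\}$ and $r$ is odd, yields $c=-\overline{\zeta}(\beta)$. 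By Proposition \ref{chourineur}(2) this is precisely the sign by which $\ss_k$ acts on the one-dimensional space $\Hom_{H_k}(\varrho,\R)$, so that $\varrho$ is $c$-distinguished by $H_k$, as required.

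The main obstacle I anticipate is the verification of the parameter identity $\overline{\eta}=\overline{\zeta}\circ\Nm_{\kk_n/\kk_k}$; once that is available, the remaining arguments reduce to norm computations in finite fields and the triviality of odd powers of signs. Extending Proposition \ref{chourineur} to $\flb$-coefficients in the direction used here is routine, the only characteristic-$\ell$ obstruction being $\ell=2$, which is excluded by the parity hypothesis.
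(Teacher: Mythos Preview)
Your parameter-matching strategy is natural and much of it is correct: the identity $\overline{\eta}=\overline{\zeta}\circ\Nm_{\kk_n/\kk_k}$ does hold (this is Proposition~\ref{classifJames}(3) together with unicity of the supercuspidal support), the norm computations with $\kk_k\cap\kk_u=\kk_v$ are fine, and the final step---lifting $\overline{\zeta}$ canonically to a regular $\qlb$-character trivial on $\kk_v^\times$, applying Proposition~\ref{chourineur} to get a $(-\overline{\zeta}(\beta))$-distinguished lift of $\varrho$, and descending via Remark~\ref{comtesseartoff}---is exactly what the paper does at the end of \S\ref{antoinette48}.

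The gap is the step you flag as routine: deducing $c=-\overline{\eta}(\alpha)$ from the hypothesis that $\Vv$ is $c$-distinguished. This is the \emph{converse} direction of Proposition~\ref{chourineur}(2), over $\flb$, applied to the non-supercuspidal cuspidal representation $\Vv$. Remark~\ref{comtesseartoff} only goes from $\qlb$ to $\flb$: to use it you would need a $c$-distinguished \emph{cuspidal} $\qlb$-lift of $\Vv$, and the existence of such a lift is precisely what Lemma~\ref{poupin} characterizes---it requires $n/e$ odd, which is not known at this point (indeed it is part of what Theorem~\ref{THModd} and Proposition~\ref{poivronsimpairs} ultimately establish). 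Nor can you take the canonical lift of $\overline{\eta}$, since that character is not $\Gal(\kk_n/\kk)$-regular when $r>1$.

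The paper circumvents this by Lemma~\ref{le1}: rather than seeking a cuspidal distinguished lift of $\Vv$, it produces (via projective covers) a $c$-distinguished \emph{irreducible but not necessarily cuspidal} $\qlb$-representation $\tau$ whose reduction contains $\Vv$. It then runs a Mackey analysis on $(\P,\H)$-double cosets (Lemmas~\ref{le2} and~\ref{le1000}, using crucially that $r$ odd forces the number $t$ of cuspidal factors to be odd) to locate a $c$-distinguished cuspidal $\qlb$-factor $\rho_i$ in the cuspidal support of $\tau$. Proposition~\ref{chourineur} now applies legitimately---over $\qlb$, to the cuspidal $\rho_i$---and the parameter of $\rho_i$ reduces to $\vartheta\circ\Nm_{\kk_{n_i}/\kk_k}$ with $\vartheta$ a parameter of $\varrho$, after which the argument finishes as you describe. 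If you can supply an independent proof of the $\flb$-sign formula for $\Vv$ (say by a direct character computation), your route would be shorter; but as written the invocation of an ``$\flb$-version of Proposition~\ref{chourineur}'' hides the main difficulty.
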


Indeed, 
since $r$ is odd,
the identity \eqref{urberville} together with Proposition \ref{BastardBattle}
will give us $c=c_\rho(\w)$.~It
will then follow from Theorem~\ref{piranesi} 
that $\rho$ is $\GL_k(\F_0)$-dis\-tin\-guished.

\subsection{}

Let~$\pi$ be an irreducible~$\flb$-representation of~$G$.
The natural map 
\begin{equation*}
\Hom_{\flb H}(\pi,\flb)\otimes R \to \Hom_{R H}(\pi\otimes R,\flb\otimes R)
\end{equation*}
defined by 
$f\otimes r\mapsto r(f\otimes \mathrm{id})$ is an isomorphism of $R$-vector 
spaces. 
Moreover, these spaces~have dimension at most $1$, 
and it follows from this isomorphism that~$\pi$ is $c$-distinguished by $H$ if
and only if~$\pi\otimes R$ is $c$-distinguished by $H$.

Since $G$ is finite, 
any irreducible $R$-representation of $G$ is defined over $\flb$,
that is,
isomorphic~to $\pi_0\otimes\R$ for some irreducible $\flb$-representation
$\pi_0$ of $G$. 
In order to~prove~Pro\-po\-sition \ref{BastardBattle}, 
we thus may assume that $R$ is equal to $\flb$.  

\subsection{}
\label{antoinette48}

From now on, 
we assume that $R$ is equal to $\flb$.
The remaining part of the section will~be~de\-vo\-ted to the
proof of Proposition \ref{BastardBattle}. 

\begin{lemm}
\label{le1}
There exists a $c$-distinguished irreducible $\qlb$-representation of $\G$
whose reduction mod $\ell$ contains $\Vv$. 
\end{lemm}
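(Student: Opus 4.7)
The plan is to realize $\Vv$ as the reduction of an appropriate cuspidal $\qlb$-lift via the parametrization of Proposition~\ref{classifJames}. Write $\Vv\simeq\overline{\Vc}_\chi$ for an $\flb$-character $\chi$ of $\kk_n^\times$ admitting a $\Delta$-regular $\qlb$-lift, and seek a $\Delta$-regular $\qlb$-lift $\xi$ of $\chi$ satisfying $\xi|_{\kk_u^\times}=1$. Once such $\xi$ is exhibited, Proposition~\ref{chourineur}(1) makes $\Vc_\xi$ cuspidal and $H$-distinguished with one-dimensional space of $H$-invariant linear forms; Proposition~\ref{classifJames}(1) identifies its reduction mod $\ell$ with the irreducible cuspidal $\overline{\Vc}_\xi=\Vv$; and Proposition~\ref{chourineur}(2) gives that $\ss$ acts on $\Hom_H(\Vc_\xi,\qlb)$ by $-\xi(\a)\in\{\pm 1\}\subseteq\qlb^\times$, where $\a\in\kk_n^\times\setminus\kk_u^\times$ satisfies $\a^2\in\kk_u^\times$.

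To construct $\xi$, I consider the $\zlb[\G]$-permutation module $\Mm=\Ind_H^\G\mathbf{1}_{\zlb}$. Frobenius reciprocity applied to the $\flb$-line of $H$-invariant linear forms on $\Vv$ yields an injection $\Vv\hookrightarrow\Mm\otimes\flb$, so by Brauer--Nesbitt $\Vv$ is a Jordan--Hölder constituent of the reduction mod $\ell$ of $\Mm\otimes\qlb=\Ind_H^\G\mathbf{1}_{\qlb}$. By Proposition~\ref{chourineur}(1), the cuspidal $\qlb$-summands of $\Ind_H^\G\mathbf{1}_{\qlb}$ are exactly the $\Vc_{\xi'}$ with $\xi'|_{\kk_u^\times}=1$, each with multiplicity one, and Proposition~\ref{classifJames}(1) makes their reductions the irreducible cuspidal $\overline{\Vc}_{\xi'}$. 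Provided that the non-cuspidal $\qlb$-summands contribute only non-cuspidal Jordan--Hölder factors under reduction (a compatibility statement I discuss below), $\Vv$ must arise from a cuspidal summand, i.e.\ $\Vv=\overline{\Vc}_{\xi'}$ for some such $\xi'$; I then take $\xi$ to be a $\Delta$-conjugate of $\xi'$ lifting $\chi$ exactly, which preserves $\xi|_{\kk_u^\times}=1$ since $\Delta$ stabilizes~$\kk_u$.

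The sign is then routine: Remark~\ref{comtesseartoff} propagates $-\xi(\a)\in\{\pm 1\}$ to the action of $\ss$ on $\Hom_H(\Vv,\flb)$ via reduction mod $\ell$, and since $\ell\neq 2$ the inclusion $\{\pm 1\}\hookrightarrow\flb^\times$ is injective, so the $c$-distinction hypothesis on $\Vv$ forces the reduced sign to equal $c$; thus $\Vc_\xi$ is $c$-distinguished with reduction $\Vv$, as required.

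The main obstacle is the Harish-Chandra compatibility invoked above: cuspidal $\flb$-representations can generically appear in reductions mod $\ell$ of non-cuspidal $\qlb$-representations (for instance $\st_r(\varrho)$ is a constituent of the reduction of a parabolic induction $\widetilde\varrho\tdt\widetilde\varrho$ of $\qlb$-lifts), so identifying $\Vv$ specifically with a reduction of a cuspidal $\qlb$-summand of $\Ind_H^\G\mathbf{1}_{\qlb}$ requires either a finer Mackey-theoretic analysis of $\Mm$---tracking which non-cuspidal $\qlb$-summands actually occur and ruling out cuspidal $\flb$-factors in their reductions---or an appeal to the modular Harish-Chandra theory of $\GL_n(\kk)$ (Dipper--James), which controls how cuspidal $\flb$-series are sourced from $\qlb$-irreducibles.
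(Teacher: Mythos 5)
Your proposal aims to produce a \emph{cuspidal} $c$-distinguished $\qlb$-lift of $\Vv$, i.e.\ a $\Vc_\xi$ whose reduction mod $\ell$ \emph{equals} $\Vv$. That is strictly stronger than what Lemma~\ref{le1} claims (only that the reduction mod $\ell$ of some $c$-distinguished irreducible $\qlb$-representation \emph{contains} $\Vv$ as a constituent), and the stronger statement is in general \emph{false}: the paper itself shows this later. Indeed, Lemma~\ref{poupin} gives a necessary and sufficient condition for a non-supercuspidal selfdual cuspidal $\flb$-representation of $\GL_n(\kk)$ to admit a distinguished cuspidal $\qlb$-lift, and in the regime relevant to Section~\ref{Vsoddcase} (so $r>1$ odd, $n$ even) that condition can fail, for instance when $n/e$ is even. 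In such a case the $c$-distinguished $\qlb$-representation produced by Lemma~\ref{le1} is necessarily \emph{non-cuspidal}, and the rest of the argument of Section~\ref{antoinette48} (passage via Claim~\ref{MattKellner} through the cuspidal support $\rho_1\times\cdots\times\rho_t$ with $t$ possibly $>1$) is designed precisely to handle this.

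Consequently, the ``obstacle'' you flag at the end is not a technical compatibility to be filled in by a finer Mackey-theoretic analysis or by modular Harish--Chandra theory; it is an intrinsic obstruction showing your route cannot succeed. In the problematic cases $\Vv$ genuinely arises \emph{only} from non-cuspidal summands of $\Ind_K^\G(\widetilde\chi)\otimes\qlb$, so no argument can locate it in a cuspidal summand. The paper instead uses the selfdual representation $\Ind^\G_\K(\chi)$ (rather than $\Ind^\G_\H(\mathbf{1})$, so that the sign $c$ is tracked from the start), takes a projective cover $\Pi$ of $\Vv^\vee$ over $\flb$, its projective $\zlb$-lift $\widetilde\Pi$, and a common irreducible $\qlb$-constituent $X$ of $\widetilde\Pi\otimes\qlb$ and $\Ind^\G_\K(\widetilde\chi)\otimes\qlb$; by \cite{Serre}~15.4, the reduction mod $\ell$ of $X$ contains $\Vv^\vee$, and selfduality of $J$ passes to $X^\vee$. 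This $X^\vee$ is $c$-distinguished but need not be cuspidal. To repair your argument you would have to abandon the search for a cuspidal lift, which essentially forces you into a projective-cover (or Brauer character/decomposition matrix) argument of the paper's type.
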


\begin{proof}
Let $\chi$ denote the unique $\flb$-character of $\K$ trivial on $\H$ such that 
$\chi(\ss)=c$.
Since~$\Vv$~is $c$-distinguished,
it embeds in $\Ind^G_K(\chi)$.
Equivalently,
the representation $\Ind^G_K(\chi)$,
which is selfdual (as $\chi$ is equal to $\chi^{-1}$),
surjects onto the contragredient $\W$ of $\Vv$. 
Let $\Pi$ be a~projective~indecompo\-sable $\flb$-re\-pre\-sen\-tation of $\G$ 
whose unique irreducible quotient is isomorphic to~$\W$.
Let~$\widetilde{\Pi}$ be~the unique~pro\-jective $\zlb$-representation 
of $\G$~such that $\widetilde{\Pi}\otimes\flb$~is isomorphic to $\Pi$. 
Let~$\La$ be~a~sur\-jec\-ti\-ve homo\-morphism~from
$\Ind^\G_\K(\chi)$ to $\W$. 
By projectivity,
it defines a non-zero~homo\-mor\-phism~$\La'$ from
$\Pi$ to $\Ind^\G_\K(\chi)$,
then a~non-zero~ho\-mo\-morphism $\La''$ from
$\widetilde{\Pi}$ to $\Ind^\G_\K(\widetilde{\chi})$,
where $\widetilde{\chi}$ is the~cano\-ni\-cal $\zlb$-lift of $\chi$.

By inverting~$\ell$,
we deduce that there is an irreducible~$\qlb$-re\-presentation $X$
of $G$ occurring in each of the semi-sim\-ple representations
$J=\Ind^\G_\K(\widetilde{\chi})\otimes\qlb$~and
$\widetilde{\Pi}\otimes\qlb$.
It is thus $c$-distinguished and,
by \cite{Serre} 15.4, 
its reduction mod $\ell$ contains $\W$.

Now observe that,
since $\widetilde{\chi}$ is quadratic,
$J$ is selfdual.
The contragredient of $X$ is thus $c$-dis\-tin\-guished
and its reduction mod $\ell$ contains $\Vv$.
\end{proof}

\subsection{}

Let $\tau$ be a $c$-distinguished
irreducible $\qlb$-representation as in Lemma \ref{le1}.
Consider its~cus\-pi\-dal support:
there are~positi\-ve~integers $n_1,\dots,n_t$ such that
$n_1+\dots+n_t=n$ and,
for each~$i$ in~$\{1,\dots,t\}$,
a cuspidal~ir\-re\-ducible $\qlb$-representation $\rho_i$ of $\GL_{n_i}(\kk)$,
such that $\tau$ occurs as a~com\-ponent of the paraboli\-cally 
induced representation $\rho_1\times\dots\times\rho_t$,
denoted $\W$.
The representation $\W$ is thus $c$-distinguished. 
We claim the following.

\begin{enonce}{Claim}
\label{MattKellner}
There is an $i\in\{1,\dots,t\}$ such that $n_i$ is even and
$\rho_i$ is $c$-distinguished by $H_{n_i}$.
\end{enonce}

Before proving this claim in the next paragraph,
let us explain how it implies Proposition \ref{BastardBattle}. 

Propositions \ref{Coupurefinie} and \ref{classifJames} imply that,
for each $i\in\{1,\dots,t\}$,~the
reduction mod $\ell$ of $\rho_i$ is~ir\-re\-ducible and cuspidal, 
of the form $\st_{r_i}(\varrho_i)$ for a unique~posi\-tive integer $r_i$
and a unique supercuspi\-dal representation $\varrho_i$.
Since the reduction mod $\ell$ of $\tau$ contains $\Vv$, 
the representation $\Vv$ occurs~as an irreducible component of
the parabolically induced representation
$\rl(\rho_1)\times\dots\times\rl(\rho_t)$.
Uni\-queness of the supercuspidal support implies that
$\varrho_i\simeq\varrho$ for all $i$.
It follows that either $r_i=1$~or $r_i=e(\varrho)\ell^{v_i}$ for some $v_i\>0$.
Observe that,
as $r=e(\varrho)\ell^{v}$ for some $v\>0$ and $r$ is
odd,~the~in\-te\-ger $e(\varrho)$ is odd, thus $r_i$ is odd in any case,
for all $i$. 

Fix an integer $i$ as in Claim \ref{MattKellner},
and let $\xi_i$ be a parameter for $\rho_i$
in the sense of Definition~\ref{defparameter}. 
It is a $\Gal(\kk_{n_i}/\kk)$-regular $\zlb$-character of $\kk_{n_i}^\times$.
By~Propo\-si\-tion \ref{chourineur}, 
it is trivial on $\kk_{u_i}^\times$,
where $u_i$ is defined by $n_i=2u_i$,
and it takes the unique element of
$\kk_{n_i}^\times/\kk_{u_i}^\times$~of~or\-der~$2$ to $-c$.

Since the reduction mod $\ell$ of $\rho_i$ is $\st_{r_i}(\varrho)$,
the reduction mod $\ell$ of $\xi_i$ takes the form
$\vartheta\circ\Nm_{\kk_{n_i}/\kk_{k}}$
where $\vartheta$ is a~para\-meter for $\varrho$.

Since $n_i=r_ik$ and $r_i$ is odd,
$\vartheta$ is trivial on $\kk_{l}^\times$ (where $k=2l$)
and takes the element of
$\kk_{k}^\times/\kk_{l}^\times$ of order $2$ to $-c$.

By
Propo\-si\-tion \ref{chourineur}, 
the canonical $\zlb$-lift of $\vartheta$
is the parameter of a $c$-distinguished $\qlb$-lift of $\varrho$,
which implies that $\varrho$ is $c$-distin\-guished
(see Remark \ref{comtesseartoff}).
This proves Proposition \ref{BastardBattle}.  

\subsection{}

The remaining part of this section will~be~de\-vo\-ted to the
proof of Claim \ref{MattKellner}.
We follow~the argument of \cite{NadirCRELLE15} Section 3,
which simplifies in our situation since we deal with finite groups.
Let $\Aa$ denote the set of $t$-uples $\a = (\a_1,\dots,\a_t)$ where
\begin{enumerate}
\item 
for each $i$, the element $\a_i$ is a family of $t+1$ non-negative integers
of the form
\begin{equation*}
\a_i=(n^{\phantom{+}}_{i,1},\dots,n^{\phantom{+}}_{i,i-1},
n_{i,i}^+,n_{i,i}^-,n^{\phantom{+}}_{i,i+1},\dots,n^{\phantom{+}}_{i,t})
\end{equation*}
of sum $n_i$,
\item
one has $n_{1,1}^++\dots+n_{t,t}^+=n_{1,1}^-\dots+n_{t,t}^-$
and $n_{i,j}=n_{j,i}$ for all $i\neq j$.
\end{enumerate}
For an $\a\in\Aa$,
it will be convenient to set $n^{\phantom{+}}_{i,i}=n_{i,i}^++n_{i,i}^-$
for each integer $i\in\{1,\dots,t\}$.

As in \cite{NadirCRELLE15} 3.1,
the set $\Aa$ parametrizes the set of $(\P,\H)$-double cosets in $\G$,
where $\P$ in the~para\-bo\-lic subgroup of $\G$ generated by upper triangular
matrices and the standard Levi subgroup ~$\M$ isomorphic to 
$\G_{n_1}\times\dots\times\G_{n_t}$.
Let us explain how this parametrization works.
Associated with any $\a\in\Aa$, there are 
\begin{itemize} 
\item
a standard Levi subgroup
\begin{equation*}
\M_\a = (G_{n_{1,1}} \times G_{n_{1,2}} \times\dots\times G_{n_{1,t}}) 
\times\dots\times
(G_{n_{t,1}} \times G_{n_{t,2}} \times\dots\times G_{n_{t,t}}) 
\subseteq \M,
\end{equation*} 
\item
a diagonal element
\begin{equation*}
d_\a = \diag\left(
\begin{pmatrix} {\rm id}_{n_{1,1}^+} & \\ & -{\rm id}_{n_{1,1}^-}\end{pmatrix}
, {\rm id}_{n_{1,2}},\dots,{\rm id}_{n_{1,t}},\dots,
{\rm id}_{n_{t,1}}, {\rm id}_{n_{t,2}}, \dots,
\begin{pmatrix} {\rm id}_{n_{t,t}^+} & \\ & -{\rm id}_{n_{t,t}^-}\end{pmatrix}
\right) \in M_\a,
\end{equation*} 
\item
a permutation matrix $w_\a\in G$ defi\-ned~as~fol\-lows:
decompose $\{1,\dots,n\}$ as the disjoint union of in\-tervals
$J_{i,j}= \{a_{i,j},a_{i,j}+1,\dots,b_{i,j}\}$ of length $n_{i,j}$,
for each $i,j\in\{1,\dots,t\}$, 
where $a_{1,1}=1$, 
$a_{i,j+1}=b_{i,j}+1$ if $j\neq t$
and $a_{i+1,1}=b_{i,t}+1$ if $i\neq t$; 
then $w_\a$ is the involution which
\end{itemize}
\begin{itemize}
\item[$\bullet$]
restricts to the identity on $J_{i,i}$ for each $i$,
\item[$\bullet$]
exchanges the ~in\-ter\-vals $J_{i,j}$ and $J_{j,i}$ if $i\neq j$,
and sends the $k$th element of any of these~inter\-vals
to the $k$th ele\-ment of the other one, for all $k\in\{1,\dots,n_{i,j}\}$.
\end{itemize}
A system of representatives $(x_{\a})_{\a\in \Aa}$ of $(\P,\H)$-double cosets 
in $\G$ is then obtained by any~choi\-ce of $x_{\a}\in G$ such that
\begin{equation}
\label{condua}
x^{\phantom{1}}_{\a}
\begin{pmatrix}
{\rm id}_u & \\ & -{\rm id}_u
\end{pmatrix}
x_{\a}^{-1} = e_\a, 
\end{equation}
where $e_\a = d_\a w_\a$. 

\begin{defi}
An $\a\in\Aa$ is called \textit{admissible} if,
for any $i$,
there exists a unique $j$~such~that $n_{i,j}\neq0$.
This defines an involution $\s_\a:i\mapsto j$ on $\{1,\dots,t\}$.
\end{defi}

When this is the case,
let us write $H_\a$ for the subgroup of $\M$ made of the
${\rm diag}(g_1,\dots,g_t)\in\M$ such that 
\begin{equation*}
\begin{array}{rcll}
g_{\s_\a(i)} & = & g_i & \text{ for all $i\in\{1,\dots,t\}$}, \\ 
g_i &\in & \GL_{n_{i,i}^+}(\kk) \times \GL_{n_{i,i}^-}(\kk)
& \text{ for all $i$ fixed by $\s_\a$}.
\end{array}
\end{equation*} 
Moreover,
if $n_{i,i}^+=n_{i,i}^-$ for all $i\in\{1,\dots,t\}$,
we define a matrix $k_\a={\rm diag}(k_1,\dots,k_t)\in\M$ by
\begin{equation*} 
k_{i}={\rm id}_{n_i}=-k_{\s_\a(i)} \text{ if } i<\s_\a(i),
\quad
k_{i}=\ss_{n_i} \text{ if } i=\s_\a(i).
\end{equation*}
This matrix normalizes $H_\a$,
and we write $\K_\a$ for the group generated by $H_\a$ and $k_\a$.

We denote by $\t_\a$ the inner automorphism of the group ${\rm PGL}_n(\kk)$
induced by conjugacy by $e_\a$ 
(which normalizes $\M_\a$).
It is not~hard to check that:

\begin{lemm}
Let $Z$ denote the centre of $G$. 
\begin{enumerate}
\item 
An $\a\in\Aa$ is admissible if and only if 
$\M/Z$ is~$\t_\a$-sta\-ble in $G/Z={\rm PGL}_n(\kk)$. 
\item
Suppose that $\a\in\Aa$ is admissible. 
The preimage of $(\M/Z)^{\t_\a}$ in $\G$, denoted by $L_\a$, is
\begin{equation*}
\left\{ 
\begin{array}{ll}
\K_\a & \text{if $n_{ii}^+=n_{ii}^-$ for all $i$}, \\ 
H_\a & \text{otherwise}.
\end{array}\right.
\end{equation*} 
\end{enumerate}
\end{lemm}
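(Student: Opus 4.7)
The plan is to reduce both assertions to a direct matrix computation involving the block structure of $\M$ and the permutation $w_\a$. For (1), decompose $e_\a=d_\a w_\a$: since $d_\a\in\M_\a\subseteq\M$ and $Z\subseteq\M$, the condition that $\t_\a$ preserves $\M/Z$ in $\G/Z$ is equivalent to $w_\a\M w_\a^{-1}=\M$ in $\G$. The permutation $w_\a$ sends the interval $J_{i,j}$ bijectively and in an order-preserving way to $J_{j,i}$, and the $i$-th block of $\M$ is the union $J_{i,1}\cup\dots\cup J_{i,t}$. Hence conjugation by $w_\a$ moves the matrix entries of the $i$-th block to positions indexed by $J_{1,i}\cup\dots\cup J_{t,i}$, which lies in a single block of $\M$ if and only if there is exactly one $j$ with $n_{i,j}\neq 0$; requiring this for every $i$ (and using $n_{i,j}=n_{j,i}$ for $i\neq j$) yields admissibility.

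For (2), fix an admissible $\a$ with involution $\s_\a$, and write $d_\a=\diag(\e_1,\dots,\e_t)$ with $\e_i=\diag({\rm id}_{n_{i,i}^+},-{\rm id}_{n_{i,i}^-})$ if $\s_\a(i)=i$ and $\e_i={\rm id}_{n_i}$ otherwise (the simplification for $i\neq\s_\a(i)$ coming from $n_{i,i}^\pm=0$ and $n_{i,j}=0$ for $j\neq\s_\a(i)$ in the admissible case). Combine this with the description of $w_\a$ from (1) to obtain, for $m=\diag(g_1,\dots,g_t)\in\M$,
\begin{equation*}
e_\a m e_\a^{-1}=\diag\bigl(\e_1^{} g_{\s_\a(1)}^{}\e_1^{-1},\dots,\e_t^{} g_{\s_\a(t)}^{}\e_t^{-1}\bigr).
\end{equation*}
The element $m$ lies in $L_\a$ iff there exists $\l\in\kk^\times$ such that $\e_i g_{\s_\a(i)}\e_i^{-1}=\l g_i$ for every $i$. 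For a pair $\{i,j\}$ with $j=\s_\a(i)\neq i$, combining the equations for $i$ and $j$ gives $g_i=\l^2 g_i$, hence $\l^2=1$. For $i$ fixed by $\s_\a$, viewing $g_i$ in the block decomposition defined by $\e_i$, the case $\l=1$ forces $g_i$ to be block-diagonal in $\GL_{n_{i,i}^+}(\kk)\times\GL_{n_{i,i}^-}(\kk)$, while $\l=-1$ forces $g_i$ to be block-anti-diagonal in this decomposition, which is possible if and only if $n_{i,i}^+=n_{i,i}^-$.

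Assembling the contributions, the $\l=1$ solutions form exactly $H_\a$, so $H_\a\subseteq L_\a$ in all cases. The $\l=-1$ solutions exist if and only if $n_{i,i}^+=n_{i,i}^-$ for every $i$ fixed by $\s_\a$ (equivalently for every $i$, since $n_{i,i}^\pm=0$ when $\s_\a(i)\neq i$); when this holds, $k_\a$ is by construction such an element, and for any $m\in L_\a$ with parameter $\l=-1$ the product $mk_\a^{-1}$ has parameter $\l=1$ and hence lies in $H_\a$. Therefore $L_\a=H_\a\sqcup k_\a H_\a=\K_\a$ in this case and $L_\a=H_\a$ otherwise. The main difficulty is the bookkeeping needed to derive the displayed formula for $e_\a m e_\a^{-1}$, in which one must carefully track both how $w_\a$ permutes the blocks and how the signs in $d_\a$ interact with this permutation; once this formula is established, the remainder is a straightforward case analysis on $\l\in\{\pm1\}$.
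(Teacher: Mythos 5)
Your proof is correct, and since the paper asserts this lemma with the remark ``it is not hard to check that'' and supplies no argument, you are filling in exactly the verification the authors leave to the reader, using the obvious direct computation. A few small points worth tightening. In part (1), since each $w_\a(B_i)$ has cardinality $n_i$ and the images partition $\{1,\dots,n\}$, the condition you want is that each $\bigcup_j J_{j,i}$ \emph{equals} a block, not merely lies inside one; as the sizes work out the two formulations agree, but the argument should say why (either by the size count or by noting that $w_\a$ permuting blocks is forced once each image is contained in a block). In part (2), your derivation of $\lambda^2=1$ only invokes a pair $\{i,j\}$ with $\s_\a(i)=j\neq i$; if $\s_\a$ is the identity no such pair exists, but the conclusion still holds because $\e_i^2={\rm id}$ gives $g_i=\lambda^2 g_i$ from the equation $\e_i g_i\e_i^{-1}=\lambda g_i$ applied twice, so you should cite this instead (or in addition). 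Finally, the step where $\e_i g_i\e_i^{-1}=-g_i$ forces the diagonal blocks of $g_i$ to vanish uses that $2$ is invertible in $\kk$, which holds here since $p$ is odd; worth saying explicitly. With these small patches the argument is complete and is the natural one.
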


When $L_\a=K_\a$, we denote by $\chi_\a$ the character of $K_\a$
trivial on $H_\a$ and sending $k_\a$ to~$c$.~Other\-wise,
we set $\chi_\a$ to be the trivial character of $L_\a=H_\a$. 
We have the following lemma. 

\begin{lemm}
\label{le20}
Suppose that $\a$ is admissible and $L_\a=\K_\a$.
Then there is 
a system of~re\-pre\-sen\-tatives $(x_{\a})_{\a\in \Aa}$ of $(\P,\H)$-double cosets 
of $G$
satis\-fying both \eqref{condua} and
$x^{\phantom{1}}_{\a} \ss^{\phantom{1}}_n x_{\a}^{-1}=k_\a$. 
\end{lemm}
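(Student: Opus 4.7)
The plan is to start from any system $(x_\a^{(0)})_{\a \in \Aa}$ of representatives satisfying \eqref{condua}, and for each admissible $\a$ with $L_\a = \K_\a$ adjust $x_\a^{(0)}$ by an appropriate element of $\H$ on the right; this preserves both the double coset and condition \eqref{condua} (since $\H$ is the centralizer in $\G$ of $\mathrm{diag}(\mathrm{id}_u, -\mathrm{id}_u)$), and we will arrange that the second condition $x_\a^{\phantom{1}} \ss_n^{\phantom{1}} x_\a^{-1} = k_\a$ also holds.

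The key preliminary step is to check, by a direct block-by-block computation, that under the hypotheses of the lemma:
\begin{enumerate}
\item $k_\a^2 = 1$, and
\item $k_\a^{\phantom{1}} e_\a^{\phantom{1}} k_\a^{-1} = -e_\a$.
\end{enumerate}
For a fixed point $i$ of $\s_\a$, $k_\a$ restricts to $\ss_{n_i}$ and $e_\a$ to $\mathrm{diag}(\mathrm{id}_{n_{ii}^+}, -\mathrm{id}_{n_{ii}^-})$; the hypothesis $L_\a = \K_\a$ forces $n_{ii}^+ = n_{ii}^-$, so $\ss_{n_i}$ is an involution that anticommutes with this block. On a two-element orbit $\{i, \s_\a(i)\}$ with $i < \s_\a(i)$, $k_\a$ restricts to $\mathrm{diag}(\mathrm{id}_{n_i}, -\mathrm{id}_{n_i})$ and $e_\a$ to $\bigl(\begin{smallmatrix}0 & \mathrm{id}\\ \mathrm{id} & 0\end{smallmatrix}\bigr)$, and the same two relations hold. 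This is the most calculational part of the argument and the main potential obstacle, but it reduces to these two small cases.

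Fix an admissible $\a$ with $L_\a = \K_\a$, and set $y := (x_\a^{(0)})^{-1} k_\a^{\phantom{1}} x_\a^{(0)}$. Conjugating (1) and (2) by $(x_\a^{(0)})^{-1}$, we see that $y$ is an involution satisfying $y \cdot \mathrm{diag}(\mathrm{id}_u, -\mathrm{id}_u) \cdot y^{-1} = -\mathrm{diag}(\mathrm{id}_u, -\mathrm{id}_u)$. Since $\H = Z_\G(\mathrm{diag}(\mathrm{id}_u, -\mathrm{id}_u)) = Z_\G(-\mathrm{diag}(\mathrm{id}_u, -\mathrm{id}_u))$, the element $y$ normalizes $\H$; as it does not centralize $\mathrm{diag}(\mathrm{id}_u, -\mathrm{id}_u)$, it lies in $N_\G(\H) \smallsetminus \H = \ss_n^{\phantom{1}} \H$. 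Write $y = \ss_n^{\phantom{1}} (h_1, h_2)$ with $(h_1, h_2) \in \H = \GL_u(\kk) \times \GL_u(\kk)$. The relation $y^2 = 1$ expands to $(h_2 h_1, h_1 h_2) = (1,1)$, so $h_2 = h_1^{-1}$.

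A direct computation then shows that $h' := (1, h_1^{-1}) \in \H$ satisfies $h'\, y\, h'^{-1} = \ss_n^{\phantom{1}}$. Replacing $x_\a^{(0)}$ by $x_\a := x_\a^{(0)} h'$ gives a new representative of the same $(\P, \H)$-double coset, still satisfying \eqref{condua}, and now also satisfying $x_\a^{\phantom{1}} \ss_n^{\phantom{1}} x_\a^{-1} = k_\a^{\phantom{1}}$ by construction. Making this adjustment independently at each admissible $\a$ with $L_\a = \K_\a$ produces the desired system of representatives.
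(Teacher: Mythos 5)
Your argument is essentially correct and takes a genuinely different, more conceptual route than the paper. The paper constructs the representatives $x_\a$ explicitly: it writes $x_\a = v_\a y_\a$ for two concrete matrices ($v_\a$ built blockwise from $\pm{\rm id}$, $y_\a$ a carefully chosen permutation) and verifies both conjugation conditions by direct computation, relying on the $2\times2$ block identity \eqref{eq conj}. You instead start from any system satisfying \eqref{condua} and adjust it a posteriori: the relations $k_\a^2=1$ and $k_\a e_\a k_\a^{-1}=-e_\a$ (which you correctly reduce to the two orbit types of $\s_\a$) force the element $y:=(x_\a^{(0)})^{-1}k_\a x_\a^{(0)}$ to be an involution in the nontrivial coset $\ss_n\H$ of $N_\G(\H)/\H$, and then every such involution is $\H$-conjugate to $\ss_n$. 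This replaces the paper's explicit matrix bookkeeping by a short normalizer argument, at the modest cost of verifying the two identities for $k_\a$ and $e_\a$.

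There is one sign slip at the end: with $h'=(1,h_1^{-1})$ you correctly get $h'\,y\,h'^{-1}=\ss_n$, i.e.\ $y=h'^{-1}\ss_n h'$. Substituting back, $k_\a = x_\a^{(0)}\,y\,(x_\a^{(0)})^{-1} = \bigl(x_\a^{(0)}h'^{-1}\bigr)\ss_n\bigl(x_\a^{(0)}h'^{-1}\bigr)^{-1}$, so the new representative should be $x_\a:=x_\a^{(0)}h'^{-1}$, not $x_\a^{(0)}h'$. Equivalently, one may take $h'=(1,h_1)$, which satisfies $h'\ss_n h'^{-1}=y$ directly and then $x_\a:=x_\a^{(0)}h'$ does work. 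Either fix is immediate and the rest of the proof stands.
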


\begin{proof}
Let us set $m_i=n_{i,i}^+=n_{i,i}^-=n_{i,i}/2$ for any integer $i\in\{1,\dots,t\}$
such that $\s_\a(i)=i$.~For
each $\a\in\Aa$, we look for a matrix $x_\a\in G$ such that
\begin{equation*}
x^{\phantom{1}}_{\a}
\begin{pmatrix} {\rm id}_u & \\ & -{\rm id}_u \end{pmatrix}
x_{\a}^{-1} = e_\a
\quad \text{and} \quad 
x^{\phantom{1}}_{\a}
\begin{pmatrix} & {\rm id}_u \\ {\rm id}_u & \end{pmatrix}
x_{\a}^{-1} = k_\a.
\end{equation*}
To make an explicit~choice of $x_\a \in G$,
it will be convenient to introduce the matrix $v_\a\in G$
defined as follows:
for all integers $i,j\in\{1,\dots,t\}$,
the $(i,j)$-block of $v_\a$ in $\Mat_{n_i,n_j}(\kk)$ is
\begin{itemize}
\item 
the identity matrix ${\rm id}_{n_i}$ if $j=i$ or $j=\s_\a(i)<i$,
\item
its opposite $-{\rm id}_{n_i}$ if $j=\s_\a(i)>i$, 
\item
and $0$ otherwise.
\end{itemize} 
Then we choose $y_\a$ the permutation
matrix corresponding to the permutation of minimal length (with the usual
generators of the symmetric group) satisfying
\begin{equation*}
y_\a
\begin{pmatrix}
{\rm id}_u & \\ & -{\rm id}_u
\end{pmatrix}
y_{\a}^{-1}=l_\a 
\end{equation*}
where 
$l_\a={\rm diag}(l_1,\dots,l_t)\in\M$ is defined by
\begin{equation*} 
l_{i}={\rm id}_{n_i}=-l_{\s_\a(i)} \text{ if } i<\s_\a(i),
\quad
l_{i}=
\begin{pmatrix}
{\rm id}_{m_i} & \\ & -{\rm id}_{m_i}
\end{pmatrix}
\text{ if } i=\s_\a(i).
\end{equation*}
Finally we put  $x_\a=v_\a y_\a$, which has the
desired property thanks to the equality 
\begin{equation}\label{eq conj}
\begin{pmatrix} {\rm id}_k & -{\rm id}_k \\ {\rm id}_k & \phantom{-} {\rm id}_k \end{pmatrix} 
\begin{pmatrix}
{\rm id}_k & \\
& -{\rm id}_k 
\end{pmatrix}
\begin{pmatrix} {\rm id}_k & -{\rm id}_k \\ {\rm id}_k & \phantom{-} 
{\rm id}_k \end{pmatrix}^{-1}=\ss_{2k}\end{equation} 
valid for any $k\geq 1$. With this choice, the careful reader checks by a
computation relying again~on Equality (\ref{eq conj}),
that
$y^{\phantom{1}}_\a \ss^{\phantom{1}}_n y_\a^{-1}
= v_{\a}^{-1} k^{\phantom{1}}_\a v^{\phantom{1}}_\a$,
which is the desired equality. 
\end{proof}

Now we have the following lemma. 

\begin{lemm}
\label{le2}
There is an admissible $\a\in\Aa$ such that
$\Hom_{L_\a}(\rho_1\otimes\dots\otimes\rho_t,\chi_\a)$ is non-zero.
\end{lemm}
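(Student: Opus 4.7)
The approach is Mackey decomposition. Let $\chi$ denote the unique $\flb$-character of $\K=\K_n$ trivial on $\H$ with $\chi(\ss_n)=c$; since $\W=\rho_1\times\dots\times\rho_t=\Ind_\P^\G(\rho_1\otimes\dots\otimes\rho_t)$ is $c$-distinguished, we have $\Hom_\K(\W,\chi)\neq 0$. Applying Frobenius reciprocity together with the Mackey decomposition along the system of $(\P,\K)$-double coset representatives $(x_\a)_{\a\in\Aa}$ described above, this Hom space decomposes as
\begin{equation*}
\Hom_\K(\W,\chi)\ \simeq\ \bigoplus_{\a\in\Aa}\Hom_{\P\,\cap\, x_\a^{-1}\K x_\a}(\rho_1\otimes\dots\otimes\rho_t,\chi^{x_\a}),
\end{equation*}
so at least one $\a\in\Aa$ contributes a non-zero summand.

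Next I would show that such an $\a$ is necessarily admissible. Using the Levi decomposition $\P=\M\ltimes\U$, the Hom factors through the projection onto $\M$, and the intersection $\P\,\cap\, x_\a^{-1}\K x_\a$ maps via this projection onto a subgroup of $\M\simeq\G_{n_1}\times\dots\times\G_{n_t}$ whose $i$th factor is the standard Levi of $\G_{n_i}$ of block sizes $(n_{i,1},\dots,n_{i,i}^{+},n_{i,i}^{-},\dots,n_{i,t})$. Because each $\rho_i$ is cuspidal, any Hom from $\rho_i$ into a character of a proper parabolic of $\G_{n_i}$ vanishes; hence the contribution is zero unless, for every $i$, exactly one $n_{i,j}$ is non-zero. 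This is precisely admissibility, and in that case the image in $\M$ equals $L_\a$ by construction.

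Finally, for an admissible $\a$ contributing a non-zero summand, I must identify the restricted character $\chi^{x_\a}|_{L_\a}$ with $\chi_\a$. Triviality on $H_\a$ is immediate from the triviality of $\chi$ on $\H$ together with a block computation showing that the conjugate of $H_\a$ lands inside $\H$. When $L_\a=\K_\a$, it remains to check that the transported character takes the value $c$ at $k_\a$. This is where I would invoke Lemma \ref{le20}: it provides a choice of representative $x_\a$ for which conjugation by $x_\a$ intertwines $\ss_n$ and $k_\a$, so that the Mackey-twisted character automatically sends $k_\a$ to $\chi(\ss_n)=c$. The main obstacle is precisely this bookkeeping at the element $k_\a$, which explains the importance of the delicate combinatorial choice of $x_\a$ made in Lemma \ref{le20}; the cuspidality argument ruling out non-admissible cosets is routine by comparison.
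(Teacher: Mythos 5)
Your overall plan follows the same route as the paper's proof: Mackey decomposition of $\Hom_\K(\W,\chi)$, ruling out non-admissible cosets via cuspidality, and invoking Lemma \ref{le20} to pin down the value of the transported character at $k_\a$. However, the middle step contains a genuine gap.

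You assert that the projection of $\P\cap x_\a^{-1}\K x_\a$ to $\M$ has $i$-th factor equal to ``the standard Levi of $\G_{n_i}$ of block sizes $(n_{i,1},\dots,n_{i,i}^{+},n_{i,i}^{-},\dots,n_{i,t})$'' and that cuspidality kills ``any Hom from $\rho_i$ into a character of a proper parabolic.'' This is not how the argument works. Cuspidality is a statement about Jacquet modules: what vanishes is $\Hom_{U}(\rho_i,\mathbf{1})$ for $U$ the unipotent radical of a proper parabolic, not ``Hom into a character of a proper parabolic,'' which has no precise meaning. Moreover, if the image in the $i$-th block really were just a proper Levi subgroup, cuspidality would give you nothing at all. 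The actual input, which the paper cites as Proposition 3.5 of \cite{NadirCRELLE15}, is the non-trivial combinatorial fact that for \emph{non-admissible} $\a$ the group $\P\cap\H^{x_\a}$ contains a non-trivial unipotent radical $U_\a$ of a parabolic subgroup of $\M$; since the character $\chi^{x_\a}$ is trivial on $U_\a$ (being trivial on $\H$), a non-zero vector in $\Hom_{\P\cap\K^{x_\a}}(\rho,\chi^{x_\a})$ would produce a non-zero $U_\a$-invariant linear form, contradicting cuspidality. This combinatorial fact is not ``routine by comparison'' with the bookkeeping at $k_\a$; it is the step that requires a genuine structural lemma about the double cosets.

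A smaller imprecision: the $x_\a$ are a system of representatives for $(\P,\H)$-double cosets, not $(\P,\K)$-double cosets, so the direct sum decomposition you write indexed by all of $\Aa$ is not literally a Mackey decomposition of $\Hom_\K(\W,\chi)$ (some $x_\a$ lie in the same $(\P,\K)$-coset). The existence conclusion survives — the paper handles this by observing that since $\H\subseteq\K$, one may choose the Mackey representative $x$ to be one of the $x_\a$ — but as written the displayed isomorphism is not correct.
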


\begin{proof}
Given any subgroup $X$ of $G$,
we will write $\overline{X}$ for its image in
$G/Z={\rm PGL}_n(\kk)$.~In~par\-ticular, we have $\overline{G}=G/Z$.
Note that $\overline{\K}=\K/Z$ is the subgroup of $\overline{G}$ made
of all elements fixed by conjugacy by 
\begin{equation*} 
\begin{pmatrix}{\rm id}_u&0\\ 0 & -{\rm id}_u\end{pmatrix} \text{ mod } Z.
\end{equation*}
Let $\chi$ be the unique character of $\K$ trivial on $\H$ such that 
$\chi(\ss)=c$.
The character~that~it~indu\-ces on $\overline{\K}$ 
will still be denoted by $\chi$. 
Since $\W$ is $c$-distinguished,
Mackey's formula implies that there is an $x\in G$ 
such that $\rho$,
the representation of $P$
inflated from $\rho_1\otimes\dots\otimes\rho_t$,
is distinguished by~the character 
$\chi^{x}|_{{\P}\cap{\K}^{x}}$.
We derive from $\rho$ a representation $\overline{\rho}$ of $\overline{P}$
distinguished by
$\chi^{x}|_{\overline{\P}\cap\overline{\K}{}^{x}}$.

In fact, because $H$ is a subgroup of $K$,
we can chose $x$ to be some 
${x_{\a}}$ for $\a \in \Aa$.
Now we claim that for all non admissible $\a \in \Aa$, the space
\begin{equation}
\label{timoleon}
\Hom_{\overline{\P}\cap\overline{\K}{}^{{x_{\a}}}}
(\overline{\rho},{\chi}{}^{x_\a}) 
\end{equation}
is zero, 
so in particular $x$ can only be of the form $x_\a$ for admissible $\a$.
Indeed, it follows~from~\cite{NadirCRELLE15} Proposition 3.5 that,
for a non admissible $\a$, the group $\P\cap H^{x_{\a}}$ contains a non
trivial unipotent radical $U_\a$ of some parabolic subgroup of $\M$, but 
the character $\chi_\a$ is trivial on $U_\a$, so if
the space \eqref{timoleon} 
were not reduced to zero, we would deduce that $\Hom_{U_\a}({\rho},R)$ is
non-zero, contradicting~the cuspidality of $\rho$.
Hence we deduce $x=x_\a$ for an $\a$ which is admissible.
In this case,
$\overline{\M}\cap\overline{\K}{}^{{x_{\a}}}$~is equal to
$\overline{\M}{}^{{\t_\a}}$, 
so that the space
\begin{equation*}
\Hom_{\overline{\M}{}^{{\t_\a}}}(\overline{\rho},{\chi}^{{x_{\a}}})
=\Hom_{L_\a}(\rho,{\chi}^{{x_{\a}}})
\end{equation*}
is non-zero.
If $L_\a=K_\a$,
then ${\chi}^{x_{\a}}$ is equal to $\chi_\a$ thanks to Lemma \ref{le20}.
Otherwise,
${\chi}^{x_{\a}}$ and $\chi_\a$ are trivial, thus equal.
The statement now follows. 
\end{proof}

Recall that,
for any $i\in\{1,\dots,t\}$,
either $r_i=1$ or $r_i=e(\varrho)\ell^{v_i}$ for some $v_i\>0$.

\begin{lemm}
\label{le1000}
Let $\a\in\Aa$ be as in Lemma \ref{le2}.
Then the involution $\s_\a$ has a fixed point. 
\end{lemm}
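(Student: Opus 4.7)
The plan is to assume $\s_\a$ is fixed-point-free and derive a contradiction with the oddness of $r$.

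First I will record the key dimensional identity $r = \sum_{i=1}^t r_i$. Each $\rho_i$ is a cuspidal $\qlb$-representation of $\GL_{n_i}(\kk)$ whose reduction mod $\ell$ is the irreducible cuspidal representation $\st_{r_i}(\varrho)$ of $\GL_{n_i}(\kk)$, with $\varrho$ the common supercuspidal representation of $\GL_k(\kk)$ identified in the paragraph preceding the claim. In particular $n_i = r_i k$ for every $i$, and summing these dimensions against $\sum n_i = n = rk$ gives $\sum_{i=1}^t r_i = r$.

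Next I will suppose for contradiction that $\s_\a$ has no fixed point. Then $t = 2s$ is even and $\{1,\dots,t\}$ decomposes into $s$ disjoint pairs $\{i_a,j_a\}$ with $\s_\a(i_a)=j_a$. Admissibility of $\a$ forces $n_{i,j}=0$ whenever $j\neq\s_\a(i)$, so $n_i = n_{i,\s_\a(i)}$; combined with the symmetry $n_{i,j}=n_{j,i}$ from the definition of $\Aa$, this yields $n_i = n_{\s_\a(i),i} = n_{\s_\a(i)}$, and hence $r_i = r_{\s_\a(i)}$ since $n_i = r_i k$ with $k$ independent of $i$. Summing over pairs then gives $r = \sum_{i=1}^t r_i = 2\sum_{a=1}^s r_{i_a}$, which is even, contradicting the hypothesis that $r$ is odd.

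The argument is purely combinatorial, so I do not anticipate any genuine obstacle; the only nontrivial point is to unwind correctly the admissibility and $\Aa$-symmetry conditions to extract the block-size pairing $n_i = n_{\s_\a(i)}$ in the fixed-point-free case, after which the parity contradiction is immediate.
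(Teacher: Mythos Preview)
Your proof is correct, but it takes a different route from the paper's. The paper argues directly that $t$ is odd: since each $r_i$ is either $1$ or $e(\varrho)\ell^{v_i}$, and $e(\varrho)$, $\ell$ are odd (this is where the standing hypothesis ``$r$ odd'' enters, via $r=e(\varrho)\ell^v$), every $r_i$ is odd; then $r=\sum_i r_i\equiv t\pmod 2$ forces $t$ odd, and an involution on a set of odd cardinality has a fixed point. Your argument instead exploits the combinatorics of admissible $\a$: the symmetry $n_{i,j}=n_{j,i}$ gives $n_i=n_{\s_\a(i)}$ and hence $r_i=r_{\s_\a(i)}$, so a fixed-point-free $\s_\a$ would pair up the $r_i$'s and make $r$ even. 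Your approach is slightly more robust in that it does not need the individual oddness of each $r_i$, only the pairing; the paper's approach is a bit quicker since the oddness of $e(\varrho)$ and $\ell$ has already been recorded.
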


\begin{proof} 
Let $\I_1$ be the set of $i\in\{1,\dots,t\}$ such that $r_i>1$,
let $t_1$ be the cardinality of this set,
and define $t_0=t-t_1$.
The identity $r = r_1 + \dots + r_t$ implies
\begin{equation*}
r = t_0 + e(\varrho)\cdot\sum\limits_{i\in\I_1} \ell^{v_i}.
\end{equation*}
Since $r$, $e(\varrho)$ and $\ell$ are odd,
it follows that $t_0+t_1=t$ is odd.
Thus $\s_\a$ has a fixed point. 
\end{proof}

Claim \ref{MattKellner} now follows from Lemmas \ref{le2}, \ref{le1000}.  
Indeed,
by Lemma \ref{le2}, 
there is~an~ad\-mis\-sible $\a\in\Aa$ such that
$\Hom_{L_\a}(\rho_1\otimes\dots\otimes\rho_t,\chi_\a)$ is non-zero.
Since $L_\a$ contains $H_\a$,
the re\-presentation $\rho_i$ is, for all $i$ fixed by $\s_\a$,
distinguished~by the Levi subgroup
\begin{equation*}
\GL_{n_{i,i}^+}(\kk)\times \GL_{n_{i,i}^-}(\kk).
\end{equation*}
By \cite{VSANT19} Propo\-sition 2.14,
this implies that $n_{i,i}^+=n_{i,i}^-$
for all $i$ fixed by $\s_\a$, thus $L_\a=K_\a$.
By Lemma  \ref{le1000},
there in an in\-te\-ger $i\in\{1,\dots,t\}$ fixed by $\s_\a$.
The $i$th block of $L_\a=K_\a$ is $K_{n_i}$
and $\chi_\a(k_i)=c$.
Thus $\rho_i$ is $c$-distinguished.

\section{Distinguished lift theorems}
\label{seceven}

In this section,
$p$ is odd
and $\ell$~is a pri\-me number different from $p$. 
We look for a necessary~and sufficient condition
for an $\flb$-cuspidal representation of $\GL_n(\F)$ to have
a $\GL_n(\F_0)$-dis\-tingui\-shed lift to $\qlb$. 
Since the case of supercuspidal representations is treated by
Theorems \ref{KuMaTh} and \ref{rappelamical3},~we
will concentrate on non-supercuspidal cuspidal representations.

\subsection{}

We will prove the following two propositions.

\begin{prop}
\label{poivronsimpairs} 
Let $\pi$ be a $\s$-selfdual cuspidal 
$\flb$-representation of $\GL_n(\F)$ with~quadratic
ex\-tension $\T/\T_0$ and $m=m(\pi)$.
Assume that $r=r(\pi)>1$ is odd. 
Then $\pi$ has~a dis\-tin\-gui\-shed~lift to $\qlb$ if and only if
\begin{enumerate}
\item
the representation $\pi$ is~iso\-mor\-phic to $\St_r(\rho)$ for some 
$\GL_{n/r}(\F_0)$-distingui\-shed~su\-per\-cuspi\-dal representation~$\rho$
of $\GL_{n/r}(\F)$,
\item
if $e$,~$e_0$ are the orders of the car\-di\-na\-lities of the residue
fields $\ll$, $\ll_0$ of $\T$, $\T_0$ mod $\ell$, then
\begin{enumerate}
\item 
either $\T/\T_0$~is~un\-ra\-mi\-fied and $e_0$ is even,
\item
or $\T/\T_0$~is~ra\-mi\-fied and $m/e$ is odd.
\end{enumerate}
\end{enumerate}
\end{prop}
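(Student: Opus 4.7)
The strategy is to first reduce to a problem about level-zero representations via Theorem~\ref{potimarrong}, then translate it into a question about parameters of cuspidal representations of finite general linear groups via Propositions~\ref{classifGreen} and~\ref{classifJames}.

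By Theorem~\ref{potimarrong} and Proposition~\ref{compapitrho}, the process $\pi \mapsto \pit$ preserves $\sigma$-selfduality, distinction, and is compatible with the decomposition $\pi \simeq \St_r(\rho)$. Moreover, since the construction of $\pit$ goes through type theory and the underlying simple characters and Heisenberg representations make sense over both $\qlb$ and $\flb$ with compatible reduction~mod~$\ell$, if $\tau$ is a cuspidal $\qlb$-lift of $\pi$ then $\tau$ has the same endo-class $\TT$ as $\pi$ and $\tau_{\rm t}$ is a cuspidal $\qlb$-lift of $\pit$. The bijection $\pi \mapsto \pit$ therefore transports $\GL_n(\F_0)$-distinguished cuspidal lifts of $\pi$ bijectively onto $\GL_m(\T_0)$-distinguished cuspidal lifts of $\pit$. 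Hence we may, and do, assume that $\pi$ has level~$0$, so $\T=\F$, $\T_0=\F_0$, $m=n$, $\ll=\kk$ and $\ll_0=\kk_0$.

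For necessity, let $\tau$ be a distinguished lift of $\pi$. By Theorem~\ref{KuMaTh} the representation $\pi$ is distinguished, and by Theorem~\ref{THModd} we have $\pi \simeq \St_r(\rho)$ with $\rho$ $\sigma$-selfdual supercuspidal and $\GL_{n/r}(\F_0)$-distinguished, yielding~(1). The finite-group cuspidal $\Vv$ attached to $\pi$ is of the form $\st_r(\varrho)$ for some supercuspidal $\varrho$ of $\GL_{n/r}(\kk)$, whose parameter $\overline{\xi}$ factorizes as $\overline{\vartheta} \circ \Nm_{\kk_n/\kk_{n/r}}$ for a $\Gal(\kk_{n/r}/\kk)$-regular character $\overline{\vartheta}$ of $\kk_{n/r}^\times$ (Proposition~\ref{classifJames}(3)). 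The lift $\tau$, being supercuspidal (since it is cuspidal over $\qlb$), corresponds via Proposition~\ref{classifGreen} to a $\Delta$-regular lift $\tilde{\xi}$ of $\overline{\xi}$ in $\kk_n^\times$. The distinction criterion of Theorem~\ref{piranesi} applied to $\tau$ translates, in the unramified case, into a $\sigma$-antiselfduality relation on $\tilde{\xi}$ (up to $\Delta$-conjugacy), and in the ramified case, via Proposition~\ref{chourineur}, into triviality of $\tilde{\xi}$ on $\kk_{n/2}^\times$ together with a prescribed value at an element of order~$2$. Combining these constraints with the $\Delta$-regularity of $\tilde{\xi}$ and the factorization of $\overline{\xi}$ through $\kk_{n/r}^\times$ yields, by a parity analysis on the $\ell$-primary parts of the relevant cyclic character groups, the numerical condition $e_0$ even in the unramified case and $m/e$ odd in the ramified case.

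For sufficiency, assume (1) and (2). Theorem~\ref{rappelamical3} provides a $\GL_{n/r}(\F_0)$-distinguished supercuspidal lift $\mu$ of $\rho$, which in turn provides a $\Gal(\kk_{n/r}/\kk)$-regular lift $\tilde{\vartheta}$ of $\overline{\vartheta}$ with the required finite-field distinction property. We now seek a lift of $\pi$ whose associated parameter has the form $\tilde{\xi} = (\tilde{\vartheta}\circ\Nm_{\kk_n/\kk_{n/r}}) \cdot \eta$ for a character $\eta$ of $\kk_n^\times$ of $\ell$-power order, chosen so that $\tilde{\xi}$ is simultaneously $\Delta$-regular and satisfies the finite-field distinction condition for $\GL_n(\kk)$ given by Theorem~\ref{piranesi} and Proposition~\ref{chourineur}. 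Condition~(2a), respectively~(2b), is precisely what is needed for the relevant $\ell$-primary subgroup of characters of $\kk_n^\times/\kk_{n/r}^\times$ compatible with the distinction constraint to contain an element not fixed by the Galois element $\phi^{n/r}$ (where $\phi$ is the Frobenius of $\kk$); such an element serves as $\eta$. Together with a suitable lift of $c_\pi$ as central character, this $\tilde{\xi}$ defines the desired distinguished cuspidal $\qlb$-lift of $\pi$.

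\textbf{Main obstacle.} The principal difficulty is the explicit construction of the perturbing character~$\eta$, and in particular the verification that the conditions (2a)/(2b) are exactly equivalent to its existence. This requires delicate bookkeeping of the $\ell$-primary torsion in the character groups of $\kk_n^\times$, $\kk_{n/r}^\times$ and $\kk_{n/2}^\times$ under the relevant Galois actions (either $\Gal(\kk/\kk_0)$ in the unramified case or the swap of Proposition~\ref{chourineur} in the ramified case), and of the intersection of the ``$\Delta$-regularity'' and ``distinction'' constraints inside these torsion subgroups; it is here that the hypothesis $r$ odd plays a decisive role by controlling the parity count.
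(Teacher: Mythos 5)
Your overall architecture matches the paper's: reduce to level $0$ via the $\pi\mapsto\pit$ dictionary, then translate the existence of a distinguished $\qlb$-lift into a finite-group lifting problem over $\GL_n(\kk)$, handled by case analysis (unramified $\T/\T_0$ versus ramified) and parity bookkeeping of the $\ell$-primary torsion in the character groups of $\kk_n^\times$. This is essentially the route taken via Lemma~\ref{tortue} (reduction to level $0$), Lemma~\ref{piranesilift} (translation into a finite-group lifting statement with a central-character constraint), and the finite-field Lemmas~\ref{poulain} and~\ref{poupin}. For necessity you also correctly invoke Theorems~\ref{KuMaTh} and~\ref{THModd} for condition~(1).

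However, there is a genuine gap in the sufficiency direction. Lemma~\ref{piranesilift} is not merely ``$\Vv$ has a distinguished lift''; it imposes, in the ramified case, that the sign $\varepsilon$ by which the chosen distinguished lift $\widetilde{\Vv}$ of $\Vv$ is $\varepsilon$-distinguished (Definition~\ref{epsidist}) must reduce mod~$\ell$ to $\ic(\w)=c_\pi(\w)$. Your proposal treats the central character as adjustable (``together with a suitable lift of $c_\pi$ as central character, this $\tilde\xi$ defines the desired\dots''), but there is no freedom here: the lift's central character must reduce to $c_\pi$, and $c_\pi(\w)$ is already a fixed sign because $c_\pi$ is trivial on $\F_0^\times$ and $\w^2\in\F_0^\times$. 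So one has to \emph{prove} the compatibility $\ic(\w)=c$, which is not formal. The paper does this by a chain: $\ic(\w)=\ic_*(\w)^r=\ic_*(\w)$ (Proposition~\ref{step2}, with $\ic_*$ the central character of $\rho$), then $\ic_*(\w)$ equals the sign by which $\varrho$ is distinguished (Theorem~\ref{piranesi}), then by Proposition~\ref{BastardBattle} that sign equals the sign $\a$ by which $\Vv$ itself is distinguished, and finally $\a=c$ by Remark~\ref{comtesseartoff}. In particular, Proposition~\ref{BastardBattle} --- which is the technical heart of the proof of Theorem~\ref{THModd} in the ramified case --- is indispensable again here, and your proposal does not engage with it. (In the unramified case the analogous compatibility $\ic(\w)=1$ also needs checking, via $\ic=\ic_*^r$ and $\ic_*(\w)=1$; this is easier but still has to be said.) Separately, your ``parity analysis on the $\ell$-primary parts'' is the whole content of Lemmas~\ref{poulain} and~\ref{poupin}, and in particular the case distinction $\ell=2$ vs.\ $\ell\neq 2$ matters there; since $r>1$ is odd forces $\ell\neq 2$ in this proposition, that simplification is available, but one still needs to identify $n$ odd vs.\ $n$ even with the ramification of $\T/\T_0$ (using Proposition~\ref{fraise} and \cite{VSANT19}~Lemma~2.3, etc.) before the numerical conditions (2a)/(2b) fall out; your sketch does not make that connection explicit.
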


Note that the assumption ``$r>1$ is odd'' in Proposition \ref{poivronsimpairs}
implies that $\ell\neq2$. 

\begin{prop}
\label{poivronspairs} 
Let $\pi$ be a $\s$-selfdual cuspidal 
$\flb$-representation of $\GL_n(\F)$ with quadratic exten\-sion $\T/\T_0$
and $m=m(\pi)$.
Assume that $r=r(\pi)$ is even. 
Then $\pi$ has a~dis\-tin\-gui\-shed lift~to $\qlb$ if and only if
\begin{enumerate}
\item
the extension $\T/\T_0$~is~ra\-mi\-fied,
\item
one has $m=r$,
\item
if we denote by $\nu_0$ the normalized absolute value of $F_0$, then 
\begin{enumerate}
\item
either $\ell\neq2$ and $\pi$ is isomorphic~to $\St_m(\rho)$ for some 
supercus\-pidal representation $\rho$~of $\GL_{n/m}(\F)$
which is either $\x$-distinguished or~{$\nf^{-1}$}-distinguished, 
\item 
or $\ell=m=r=2$, the car\-di\-na\-lity of the residue field of
{$\T_0$} is congruent to $-1$~mod~$4$
and $\pi$ is~iso\-morphic~to $\St_2(\rho)$ where $\rho$ is a
$\GL_{n/2}(\F_0)$-distinguished supercuspidal~represen\-ta\-tion of
$\GL_{n/2}(\F)$.
\end{enumerate}
\end{enumerate}
\end{prop}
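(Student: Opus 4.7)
The plan is to reduce to the level-$0$ setting via Theorem \ref{potimarrong}. Since the bijection $\pi \mapsto \pit$ is compatible with supercuspidal support (Proposition \ref{compapitrho}), $\s$-selfduality, and distinction, and since reduction mod $\ell$ preserves endo-class, relative degree and tame parameter field extension $\T/\T_0$, a cuspidal $\qlb$-lift $\tilde\pi$ of $\pi$ corresponds to a $\qlb$-lift of $\pit$ on $\GL_m(\T)$ with compatible distinction. So I may assume $\pi$ has level $0$ on $\GL_n(\F)$, with $\T = \F$ and $m = n$. Any cuspidal $\qlb$-lift is then automatically supercuspidal, so the problem reduces to: when does a level-$0$ cuspidal $\flb$-representation $\pi$ with $r(\pi) = r$ even admit a distinguished supercuspidal $\qlb$-lift?

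Next I would encode level-$0$ cuspidal representations using the Green--Dipper--James parametrization from \S\ref{scaramuccia}: $\pi$ corresponds to a pair $(c_\pi,\bar\xi)$ where $c_\pi$ is its central character and $\bar\xi$ is an $\flb$-character of $\kk_n^\times$ (well-defined up to $\Delta$-conjugacy) admitting a $\Delta$-regular $\qlb$-lift. A supercuspidal lift $\tilde\pi$ corresponds to a $\Delta$-regular $\qlb$-character $\tilde\xi$ lifting $\bar\xi$, together with a tame $\qlb$-lift of $c_\pi$. By Proposition \ref{classifJames}(3), $r(\pi) = r$ means $\bar\xi$ factorizes through $\kk_{n/r}^\times$, while $\tilde\xi$ must remain $\Delta$-regular on $\kk_n^\times$.

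For necessity, I would apply Theorem \ref{piranesi} to $\tilde\pi$. In the $\F/\F_0$ unramified case, the distinction condition on $\tilde V_{\tilde\xi}$ forces a factorization of $\tilde\xi$ incompatible with $\bar\xi$ factorizing through $\kk_{n/r}^\times$ when $r$ is even together with $\Delta$-regularity of $\tilde\xi$, ruling this case out and forcing $\T/\T_0$ ramified. Then Proposition \ref{chourineur}(1) applied to $\tilde V_{\tilde\xi}$ forces $\tilde\xi$ to be trivial on $\kk_{n/2}^\times$; combined with $\Delta$-regularity of $\tilde\xi$ and factorization of $\bar\xi$ through $\kk_{n/r}^\times$, tracking the $\Gal(\kk_n/\kk)$-orbit structure forces $n/r = 1$, i.e., $m = r$. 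Writing $\pi = \St_r(\rho)$ per Proposition \ref{fission}, the identity \eqref{urberville} $c_\pi = c_\rho^r$ combined with $c_{\tilde\pi}|_{\F_0^\times} = 1$ and the sign condition from Proposition \ref{chourineur}(2) pins down the distinction type of $\rho$: the dichotomy ``$\x$-distinguished or $\nf^{-1}$-distinguished'' in~(3)(a) corresponds to the two possible $\s$-selfdual choices $\rho$ and $\rho\nu^{r/2}$ from Proposition \ref{fission}(2.b), via Theorem \ref{rappelamical1}.

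For sufficiency, starting from (1)--(3), I would explicitly construct a $\Delta$-regular $\qlb$-character $\tilde\xi$ of $\kk_n^\times$ with the prescribed reduction mod $\ell$ and trivial on $\kk_{n/2}^\times$, together with a compatible tame $\qlb$-central character, and verify via Theorems \ref{potimarrong}, \ref{piranesi}, \ref{KuMaTh} and Proposition \ref{chourineur} that the resulting supercuspidal $\qlb$-representation is a distinguished lift of $\pi$. The main obstacle will be the exceptional case~(3)(b) with $\ell = m = r = 2$: here $\x$ has trivial reduction mod $\ell$, collapsing the $\x$-- vs $\nf^{-1}$--dichotomy, and one must instead bifurcate according to whether $-1$ is a square in $\kk_{\T_0}^\times$, that is, whether $q_{\T_0} \equiv -1 \pmod 4$. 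The delicate step is tracking how this quadratic-residue condition controls whether the sign $c_\pi(\w)$ appearing in Theorem \ref{piranesi} matches the sign $-\tilde\xi(\alpha)$ from Proposition \ref{chourineur}(2) for a suitable $\qlb$-lift $\tilde\xi$, and verifying that this compatibility is consistent with $\rho$ being $\GL_{n/2}(\F_0)$-distinguished (automatic in characteristic $\ell=2$ by Theorem \ref{rappelamical1}(1)) while forcing the residue-field condition on $\T_0$.
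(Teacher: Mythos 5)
Your overall plan matches the paper's proof closely: reduce to level $0$ via Theorem~\ref{potimarrong} and Lemma~\ref{tortue}, pass to the finite group via a Lemma~\ref{piranesilift}-type statement, use the Green--Dipper--James parametrization and Proposition~\ref{chourineur} to pin down when the finite cuspidal $\Vv$ admits a distinguished lift (this is Lemma~\ref{poupin}), and then track signs via Theorem~\ref{piranesi} and Lemma~\ref{Milon}. The necessity argument (unramified impossible, $n/r = 1$ forced, $\ell=2$ giving $q \equiv -1 \bmod 4$) and the sufficiency construction are essentially what the paper does.

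There is, however, one genuine conceptual error. You write that the dichotomy ``$\x$-distinguished or $\nu_0^{-1}$-distinguished'' corresponds to the two choices $\rho$ and $\rho\nu^{r/2}$ from Proposition~\ref{fission}(2.b). That cannot be right: since $r = e(\rho)\ell^v$ and $q^{e(\rho)}\equiv 1 \bmod \ell$, one has $\nu^r = 1$, so $\nu^{r/2}(\w_0) = \nu^{r/2}(\w^2) = q^{-r} = 1$, i.e.\ $\nu^{r/2}$ restricts trivially to $F_0^\times$. Hence $\rho$ and $\rho\nu^{r/2}$ have the \emph{same} restriction to $F_0^\times$ and the same distinction type; the ambiguity in (2.b) is irrelevant to the $\x$/$\nu_0^{-1}$ split. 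The dichotomy actually comes from the two cases $i = 0$ and $i = 1$ in Proposition~\ref{fission}(2.a), i.e.\ from whether $\rho$ itself or $\rho\nu^{1/2}$ is $\s$-selfdual. Equivalently, after reduction to $\GL_n(\kk)$ via Lemma~\ref{Milon} and Lemma~\ref{patatedouce}, the sign $c_\pi(\w)$ determines whether the parameter $\varrho$ of $\Vv = \st_n(\varrho)$ is trivial (giving $i=1$, hence $\rho|_{F_0^\times} = \nu_0^{-1}$) or equal to $\x$ (giving $i=0$, hence $\rho$ is $\s$-selfdual with ramified restriction $\x$, hence $\x$-distinguished by Theorem~\ref{rappelamical1}). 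Your plan to feed the sign from Proposition~\ref{chourineur}(2) into \eqref{urberville} and Theorem~\ref{rappelamical1} is sound, but you would need to route it through this $i$-dichotomy rather than through Proposition~\ref{fission}(2.b).

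A second, smaller caution: in forcing $m = r$, the cleaner argument (and the one in Lemma~\ref{poupin}) is not really an orbit-counting argument but a divisibility one: with $f = n/r$ even, the prime-to-$\ell$ order $A$ of the parameter divides both $q^{f/2}+1$ and $q^{u}+1$, hence $1+(-1)^r$, forcing $r$ odd — a contradiction — unless $f = 1$. The case $f$ odd, $f > 1$ is ruled out because $\GL_f(\kk)$ has no selfdual supercuspidal in that range. Tracking $\Gal(\kk_n/\kk)$-orbits is not obviously adequate here and you should be explicit that the key input is this order computation.
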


We also formulate the following conjecture making
Proposition \ref{poivronspairs} more precise. 

\begin{conj}
If $\pi$ is a $\s$-selfdual cuspidal $\flb$-representation of $\GL_n(\F)$
such that the~in\-te\-ger $r(\pi)$ is~even,
the following assertions are equivalent:
\begin{enumerate}
\item 
the representation $\pi$ is ~dis\-tin\-gui\-shed,
\item 
the representation $\pi$ has a~dis\-tin\-gui\-shed lift to $\qlb$,
\item
the three conditions of Proposition \ref{poivronspairs} hold.
\end{enumerate}
\end{conj}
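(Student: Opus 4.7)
The plan is to reduce to the level $0$ setting via Theorem \ref{potimarrong} and then analyze the existence of distinguished cuspidal lifts in terms of parameters on residue fields.

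First, I would reduce to level~$0$. A distinguished cuspidal $\qlb$-lift $\widetilde\pi$ of $\pi$ is automatically supercuspidal and $\s$-selfdual, and shares the endo-class $\TT$ of $\pi$. Applying Theorem \ref{potimarrong} yields a level~$0$ distinguished $\s$-selfdual supercuspidal representation $\widetilde\pi_{\rm t}$ of $\GL_m(\T)$ whose reduction mod~$\ell$ is $\pit$; conversely, any such lift of $\pit$ descends to a distinguished $\qlb$-lift of $\pi$. By Proposition \ref{compapitrho}, $\pi \simeq \St_r(\rho)$ gives $\pit \simeq \St_r(\rhot)$, and by Corollary \ref{potimarroncor} distinction of $\rho$ by a tamely ramified character $\mu$ of $\F_0^\times$ is equivalent to distinction of $\rhot$ by $\mu\circ\Nm_{\T_0/\F_0}$. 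I may therefore assume $\pi$ has level~$0$: then $\T=\F$, $\T_0=\F_0$, $m=n$, $\rho$ is a tamely ramified character of $\F^\times$, and condition~(2) becomes $n=r$.

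Next, I would establish the necessity of (1) and (2). A supercuspidal $\qlb$-lift of $\pi=\St_r(\rho)$ in level~$0$ is parametrized by a $\Delta$-regular $\qlb$-character $\widetilde\xi$ of $\kk_n^\times$ together with a choice of central character extension to $\F^\times$, and reduces to $\pi$ precisely when $\widetilde\xi$ reduces mod $\ell$ to the parameter of $\Vv$, the cuspidal $\flb$-representation of $\GL_n(\kk)$ attached to $\pi$. Writing $\Vv\simeq\st_r(\varrho)$, Proposition \ref{classifJames}(3) identifies this parameter with $\overline\chi\circ\Nm_{\kk_n/\kk_{n/r}}$ for a parameter $\overline\chi$ of $\varrho$. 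By Theorem \ref{piranesi} and Proposition \ref{chourineur}, distinction of $\widetilde\pi$ forces $\F/\F_0$ ramified, $n$ even, and $\widetilde\xi|_{\kk_{n/2}^\times}=1$; hence also $(\overline\chi\circ\Nm_{\kk_n/\kk_{n/r}})|_{\kk_{n/2}^\times}=1$. Since $r$ is even, $\kk_{n/r}\subseteq\kk_{n/2}$ and $\Nm_{\kk_n/\kk_{n/r}}|_{\kk_{n/2}^\times}=\Nm_{\kk_{n/2}/\kk_{n/r}}(\cdot)^2$; surjectivity of the inner norm forces $\overline\chi{}^2=1$. Since $\varrho$ is supercuspidal, $\overline\chi$ must be $\Gal(\kk_{n/r}/\kk)$-regular; but $p$ is odd, so $q^i-1$ is even for every $i\ge 1$, making every character of order dividing~$2$ pointwise fixed by Frobenius. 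Regularity therefore forces $\kk_{n/r}=\kk$, i.e., $n=r$, establishing (1) and (2).

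With $n=r$, both $\rho$ and $\varrho$ become characters (of $\F^\times$ and $\kk^\times$ respectively), and the remaining task is to construct a $\Delta$-regular $\qlb$-character $\widetilde\xi$ of $\kk_n^\times$ trivial on $\kk_{n/2}^\times$ lifting $\overline\chi\circ\Nm_{\kk_n/\kk}$, with a compatible central character of $\F^\times$ whose reduction mod~$\ell$ matches that of $\pi$. Since $\widetilde\xi$ factors through the cyclic quotient $\kk_n^\times/\kk_{n/2}^\times$ of order $q^{n/2}+1$, existence is controlled by $v_\ell(q^{n/2}+1)$ and the order of $q$ modulo it. When $\ell\neq 2$, a lifting-the-exponent argument combined with the $\s$-selfduality condition $\rho^{\vee\s}\simeq\rho\nu^i$ ($i\in\{0,1\}$) from Proposition \ref{fission} yields lifts for both $\overline\chi=1$ and $\overline\chi$ quadratic; the sign $-\widetilde\xi(\a)\in\{\pm 1\}$ from Proposition \ref{chourineur}(2), paired with the reduction constraint on the central character, identifies $\rho|_{\F_0^\times}$ as one of $\{1,\x,\nf^{-1}\}$, giving~(3)(a). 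When $\ell=2$, $\overline\chi$ must be trivial and $\widetilde\xi$ must have $2$-power order; since $q$ is odd, $q^{n/2}+1\equiv 2\pmod 8$ whenever $n/2\geq 2$, combined with $r=\et(\rho)\cdot 2^v$ forcing $n=r$ to be a power of~$2$, collapses the problem to $n=r=2$ with $q\equiv -1\pmod 4$, and only $\rho$ distinguished produces a lift, yielding~(3)(b).

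The hardest part will be this last paragraph: balancing $\Delta$-regularity against the prescribed reduction mod~$\ell$ on the cyclic group $\kk_n^\times/\kk_{n/2}^\times$, particularly at $\ell=2$ where the $2$-adic combinatorics are delicate; and tracking how the sign condition of Proposition \ref{chourineur}(2) interacts with $\s$-selfduality to pin down the precise distinction type of $\rho$ among the three cases in~(3).
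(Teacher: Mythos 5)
The statement you are trying to prove is stated in the paper as a \emph{conjecture}, not a theorem, and the paper does not prove it. Immediately after the statement the authors explain what is and is not known: by Theorem \ref{KuMaTh}, assertion (2) implies (1); by Proposition \ref{poivronspairs}, assertions (2) and (3) are equivalent. The open content of the conjecture is precisely the implication (1) $\Rightarrow$ (3) (equivalently (1) $\Rightarrow$ (2)): that mere $\GL_n(\F_0)$-distinction of the $\flb$-representation $\pi$, with no assumption about the existence of a distinguished lift, already forces $\T/\T_0$ ramified, $m=r$, and the shape of $\rho|_{\F_0^\times}$.

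Your proposal never engages with that direction. In every paragraph you \emph{start} from a distinguished cuspidal $\qlb$-lift $\widetilde\pi$ (``A supercuspidal $\qlb$-lift of $\pi=\St_r(\rho)$\ldots''), and all of your deductions --- the norm argument forcing $\overline\chi{}^2=1$ and then $n=r$, the $2$-adic analysis at $\ell=2$, the sign computation via Proposition \ref{chourineur}(2) --- flow from properties of $\widetilde\pi$. What you have sketched is essentially a reasonable outline of the proof of Proposition \ref{poivronspairs}, i.e.\ the equivalence (2) $\Leftrightarrow$ (3), which the paper already establishes via Lemmas \ref{poupin}, \ref{Milon}, \ref{piranesilift}, Theorem \ref{piranesi} and Proposition \ref{chourineur}. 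That equivalence is not the conjecture.

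The concrete gap is the following: if you only know $\Hom_{\GL_n(\F_0)}(\pi,\flb)\ne 0$, then after reduction to level $0$ you know (by Theorem \ref{piranesi}) that $c_\pi|_{\F_0^\times}=1$, that $\F/\F_0$ is ramified, and that $\Vv$ is $c_\pi(\w)$-distinguished by $\GL_{n/2}(\kk)\times\GL_{n/2}(\kk)$. But $\Vv$ is an $\flb$-representation; Proposition \ref{chourineur} is a statement about $\qlb$-representations, and the transfer from $\flb$ to $\qlb$ in Remark \ref{comtesseartoff} goes in one direction only (from a distinguished $\qlb$-lift to a distinguished $\flb$-reduction, not conversely). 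The crux --- showing that a distinguished cuspidal non-supercuspidal $\flb$-representation $\Vv=\st_r(\varrho)$ of $\GL_n(\kk)$ with $r$ even must in fact satisfy $r=n$ and must admit a distinguished $\qlb$-lift --- is precisely what is missing, and nothing in your proposal replaces it. (Theorem \ref{THModd}, which does go from distinction of $\pi$ to distinction of $\rho$, is proved in the paper only for $r$ odd; its method does not obviously extend to $r$ even, which is why the paper leaves this as a conjecture and cites \cite{CLL} only for $n=r=2$.)
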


By Proposition \ref{poivronspairs}, Theorem \ref{KuMaTh}, 
we know that (2) implies (1) and is equivalent to (3). 
We~thus conjecturate that (1) implies (3).
See \cite{CLL} Theorem 4.6 for the case $n=r=2$.

\subsection{}
\label{carottedeniveaunonnul}

Let $\pi$ be a $\s$-selfdual cuspidal $\flb$-representation of $\G=\GL_n(\F)$.
Let $(\BJ,\bl)$ be~a~ge\-ne\-ric~$\s$-self\-dual type in $\pi$,~let
$\blw$ be the~re\-pre\-sentation of $\BJ$ given by Proposition \ref{potiron}
(see Paragraph~\ref{lwcan})
and $\bt$ be the representation of~$\BJ$ trivial~on $\BJ^1$ such that~$\bl$
is~iso\-mor\-phic to $\blw\otimes\bt$.
Associated with $\pi$ by \eqref{defpit},
there is also a $\s$-selfdual cuspidal $\flb$-representation $\pit$
of $\GL_m(\T)$.

\begin{lemm}
\label{tortue}
The following assertions are equivalent.
\begin{enumerate}
\item 
The representation $\pi$ has a $\GL_n(\F_0)$-dis\-tin\-guished lift to $\qlb$.
\item
The representation $\bl$ has a $\BJ\cap\GL_n(\F_0)$-dis\-tin\-guished lift to
$\qlb$.
\item
The representation $\bt$ has a $\BJ\cap\GL_n(\F_0)$-dis\-tin\-guished lift
to $\qlb$.
\item
The representation $\pit$ has a $\GL_m(\T_0)$-dis\-tin\-guished lift to $\qlb$. 
\end{enumerate}
\end{lemm}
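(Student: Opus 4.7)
The plan is to prove the chain $(1)\Leftrightarrow(2)\Leftrightarrow(3)\Leftrightarrow(4)$ using type theory, the canonical decomposition $\bl\simeq\blw\otimes\bt$ of Proposition \ref{etachi}(2), and the correspondence $\bt\mapsto\btt$ of Paragraph \ref{proofpotimarron1}. The whole argument rests on the fact that every construction in Section \ref{pasfaim} is valid in characteristic $0$ as well as in characteristic $\ell$, and that reduction mod $\ell$ intertwines these constructions.

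For $(1)\Leftrightarrow(2)$ I would argue as follows. By \cite{Vigbook}, any $\qlb$-lift of $\pi$ is cuspidal, hence compactly induced from a type; by Theorem \ref{VSANTTHM41}(3) a distinguished lift is $\s$-selfdual, so by the $\qlb$-version of Proposition \ref{pechessdpi} it contains a generic $\s$-selfdual type. Using the uniqueness of the group $\BJ$ up to $\G^\s$-conjugacy (Proposition \ref{poussin}) and the compatibility of the type-cuspidal bijection with reduction mod $\ell$, one arranges this type to have the form $(\BJ,\tilde{\bl})$ with $\tilde{\bl}$ an integral lift of $\bl$. Proposition \ref{pulledpork6heures}, applied over $\qlb$, then yields $\BJ\cap\G^\s$-distinction of $\tilde{\bl}$. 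Conversely, if $\tilde{\bl}$ is a distinguished lift of $\bl$, then $\ind_{\BJ}^\G(\tilde{\bl})$ is a distinguished lift of $\pi$.

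For $(2)\Leftrightarrow(3)$, Proposition \ref{potiron} holds over $\qlb$, giving a canonical extension $\widetilde{\blw}$ of the Heisenberg representation whose reduction mod $\ell$ is $\blw$ (by uniqueness, since the defining properties — being $\s$-selfdual, $\BJ\cap\G^\s$-distinguished, and having determinant of $p$-power order — are stable under reduction). Any lift $\tilde{\bl}$ of $\bl$ factors uniquely as $\widetilde{\blw}\otimes\tilde{\bt}$ where $\tilde{\bt}$ is an irreducible $\qlb$-representation of $\BJ$ trivial on $\BJ^1$ lifting $\bt$, and conversely; then Proposition \ref{etachi}(3) over $\qlb$ gives the distinction equivalence. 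For $(3)\Leftrightarrow(4)$, the $\s$-equivariant isomorphism $\NF$ of Lemma \ref{defNnlemma} identifies $\qlb$-representations of $\BJ/\BJ^1$ with $\qlb$-representations of $\BJT/\BJT^1$ bijectively and, by Lemma \ref{pinsondeux}(2) over $\qlb$, preserves distinction by the respective $\s$-fixed subgroups. Applying the level-$0$ case of $(1)\Leftrightarrow(2)$ already proved to the cuspidal representation $\pit=\ind_{\BJT}^{\GL_m(\T)}(\btt)$, which contains the generic $\s$-selfdual type $(\BJT,\btt)$ by Corollary \ref{pinsontrois}, completes the chain.

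The main obstacle will be verifying in the step $(1)\Rightarrow(2)$ that a distinguished $\qlb$-lift $\tilde{\pi}$ can be chosen to contain a type whose underlying representation is an integral lift of $\bl$ in the strong sense, not just an arbitrary $\qlb$-type on the same group $\BJ$. This rests on the canonical character of $\tbk=\blw$ in any characteristic together with the fact that $\bt$, being a representation of the finite quotient $\BJ/\BJ^1$, admits unambiguous $\qlb$-lifts by projective-cover theory for finite groups; tensoring any such lift with the canonical $\widetilde{\blw}$ then produces the required lift of $\bl$, and the uniqueness statements of Propositions \ref{potiron} and \ref{etachi} ensure compatibility with reduction mod $\ell$.
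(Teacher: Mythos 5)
Your proposal is correct and follows essentially the same route as the paper: it relies on the canonical representation $\tbk=\blw$ of Proposition \ref{potiron} (which exists over both $\qlb$ and $\flb$ and whose reduction mod $\ell$ is pinned down by the uniqueness clause), the tensor decomposition $\bl\simeq\blw\otimes\bt$ together with Proposition \ref{etachi}(3), and the $\s$-equivariant isomorphism $\NF$ combined with the level-$0$ case for $(3)\Leftrightarrow(4)$. The paper structures the implications a little differently — it proves $(1)\Rightarrow(2),(3)$ and $(3)\Rightarrow(1),(2)$ in one pass by decomposing a distinguished generic type of $\widetilde{\pi}$ as $\widetilde{\bl}_{\rm w}\otimes\widetilde{\bt}$ and then cancelling $\blw$ against the known type $\blw\otimes\bt$ of $\pi$ to identify the reduction of $\widetilde{\bt}$ with $\bt$ — while you prove the chain sequentially, but the two are logically equivalent and use the same ingredients.

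One small inaccuracy: in your final paragraph you assert that $\bt$, as a representation of $\BJ/\BJ^1$, ``admits unambiguous $\qlb$-lifts by projective-cover theory for finite groups.'' That is not true — a cuspidal non-supercuspidal $\flb$-representation of $\GL_m(\ll)$ generally has several non-isomorphic $\qlb$-lifts (parametrized by $\ell$-power-order twists of a regular character), and indeed the whole point of Section \ref{seceven} is that whether a distinguished one exists is delicate. Fortunately your main argument never uses uniqueness of the lift of $\bt$: what the argument needs (and what the paper's cancellation step supplies) is that \emph{the particular} $\widetilde{\bt}$ appearing in the decomposition $\widetilde{\bl}\simeq\widetilde{\bl}_{\rm w}\otimes\widetilde{\bt}$ has reduction isomorphic to $\bt$, and this follows from uniqueness of the type of $\pi$ on the fixed group $\BJ$, not from uniqueness of lifts. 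So the lemma is correctly proved; only the parenthetical justification in the last paragraph should be replaced by the cancellation argument.
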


\begin{proof}
Fix a $\s$-selfdual simple stratum $[\aa,\b]$~as well
as~iso\-morphisms \eqref{choixisoB} as in Proposition \ref{pechessd}.
Let $\t\in\Cc(\aa,\b)$ be the $\s$-selfdual maximal simple character
associated with $\bl$,
and $\widetilde{\t}$ be its unique $\qlb$-lift:
this is a $\s$-self\-dual maximal simple character
(with respect to the unique $\qlb$-lift~$\widetilde{\psi}$ of the
character $\psi$ given by \eqref{psiF}) having the same
$\G$-normalizer~$\BJ$ as $\t$.

Let $\widetilde{\bl}_{{\rm w}}$ be the $\qlb$-representation 
of $\BJ$ associated with $\widetilde{\t}$ 
by Proposition \ref{potiron}. 
It is $\BJ\cap\G^\s$-dis\-tinguished and $\s$-selfdual,
and its~de\-termi\-nant has order a power of $p$.
It is thus integral.
Let~us consider~its reduction mod $\ell$.
On the one hand, it is $\BJ\cap\G^\s$-distinguished, $\s$-selfdual,
and its~de\-termi\-nant has order a power of $p$. 
On the other hand,
\cite{MSt} Proposition 2.37 implies~that~it~is~an irreducible representation
extending the Heisenberg representation associated with $\t$.
By~unique\-ness, 
we~de\-duce that $\widetilde{\bl}_{{\rm w}}$ is a $\qlb$-lift of
$\blw$.

Suppose that $\pi$ has a $\G^\s$-distinguished $\qlb$-lift $\widetilde{\pi}$.
Thus $\widetilde{\pi}$ is a $\s$-selfdual and~cus\-pi\-dal
representation of $\G$ containing the maximal simple character $\widetilde{\t}$.
By Proposition \ref{pulledpork6heures},
this representation $\widetilde{\pi}$
contains a distinguished generic $\s$-selfdual type,
which we may assume to be~of the form $(\BJ,\widetilde{\bl})$
with $\widetilde{\bl}=\widetilde{\bl}_{{\rm w}}\otimes\widetilde{\bt}$
and the representation $\widetilde{\bt}$ is
$\BJ\cap\G^\s$-distinguished.~Reducing mod $\ell$,
we deduce that $\pi$ con\-tains the type $\blw\otimes\bd$
where $\bd$ is the reduction mod $\ell$ of
$\widetilde{\bt}$.
But $\pi$ also contains the type $\blw\otimes\bt$,
thus $\bd$ is~iso\-morphic to $\bt$,
and the reduction mod $\ell$ of $\widetilde{\bl}$
is~iso\-morphic to $\bl$. 
Thus (1) implies both (2) and (3). 

Conversely,
suppose that $\bt$ has a distinguished $\qlb$-lift $\widetilde{\bt}$.
Then the pair
$(\BJ,\widetilde{\bl}_{{\rm w}}\otimes\widetilde{\bt})$
is a~dis\-tin\-guished type whose compact induction 
to $\G$ is a $\G^\s$-distinguished $\qlb$-lift of $\pi$,
and whose~re\-duction mod $\ell$ is iso\-morphic to
$\blw\otimes\bt\simeq\bl$. 
Thus (3) implies both (1) and (2).

Applying these results to the representation $\pit$, 
we get that $\pit$ has a~dis\-tin\-guished lift to $\qlb$~if
and only if $\bt_{\rm t}$ has a~dis\-tin\-guished lift to $\qlb$.
The fact that $\bt$ is isomorphic to $\bt_{\rm t}\circ\NF$
(by Lemma \ref{defNnlemma})
thus implies that (4) is equivalent to (3).
\end{proof}

It follows from Lemma \ref{tortue},
together with Corollary \ref{potimarroncor} and Proposition \ref{compapitrho}, 
that,
in order to prove Propositions \ref{poivronsimpairs} and 
\ref{poivronspairs},
it suffices to prove them for $\s$-selfdual cuspidal $\flb$-representations
of level $0$.
(For Proposition \ref{poivronspairs}(3.a),
it also follows from the fact that
$\x_{F/F_0}\circ {\rm N}_{T_0/F_0}=\x_{T/T_0}$
and $\nu_{F_0}\circ {\rm N}_{T_0/F_0}=\nu_{T_0}$.)

\subsection{}
\label{carottedeniveaunul}

We continue with the situation of Paragraph \ref{carottedeniveaunonnul},
assuming further that $\pi$ has level $0$.
Thus
$\pi$ is a $\s$-selfdual cuspidal $\flb$-representation of $\G$
of level $0$.
We will also assume that $\pi$ is non-su\-percuspidal,
that is, $r=r(\pi)>1$.
Let $(\BJ,\bl)$ be~a~ge\-ne\-ric~$\s$-self\-dual type in $\pi$.
Associated~with it in Paragraph \ref{groseille}, there are
\begin{itemize}
\item 
a $\s$-self\-dual tamely~rami\-fied character $\ic$ of $\F^\times$, 
which is the central character $c_\pi$ of $\pi$,
\item
and a $\s$-selfdual cus\-pidal representation
$\Vv$~of~$\GL_n(\kk)$ of the form $\st_r(\varrho)$
for some supercuspi\-dal representation $\varrho$ of
$\GL_{n/r}(\kk)$,
uniquely determined up to isomorphism
(thus $\Vv$ is non-super\-cus\-pidal).
\end{itemize}
Recall that the restriction of $\bl$ to $\BJ^0$ is the inflation of $\Vv$,
and that its restriction to $\F^\times$ is~a~mul\-tiple of $\ic$. 
Since
$\Vv$ is~$\s$-self\-dual,
Proposition \ref{Coupurefinie} implies that $\varrho$ is
$\s$-selfdual.

The action of $\s$ on $\GL_n(\kk)$ is described in Proposition \ref{pechessd}:
this is the action of the~non-trivial automorphism of $\kk/\kk_0$
if $F/F_0$ is unramified,
and the adjoint action of \eqref{banane}
with $i=\lfloor m/2 \rfloor$~other\-wise.

Let us fix a uniformizer $\w$ of $F$ such that
$\w\in\F_0$ if $F/F_0$ is unramified, 
and $\w^2\in\F_0$ if $F/F_0$~is ramified.
(One thus has $\s(\w)=-\w$ in the ramified case.)

\begin{lemm}
\label{piranesilift}
The representation $\pi$ has a $\GL_n(\F_0)$-dis\-tin\-guished lift
to $\qlb$ if and only if $\Vv$~has a $\GL_n(\kk)^\s$-distinguished lift
$\widetilde{\Vv}$ to $\qlb$ such that
\begin{enumerate}
\item 
if $F/F_0$ is unramified, then $\ic(\w)=1$,
\item
if $F/F_0$ is~ra\-mi\-fied,
then $n$ is even and \eqref{pauillac2015}
acts on the~spa\-ce of $\GL_{n/2}(\kk)\times\GL_{n/2}(\kk)$-in- va\-riant
linear forms~on $\widetilde{\Vv}$ by a sign whose reduction mod $\ell$ 
is equal to $\ic(\w)$.
\end{enumerate} 
\end{lemm}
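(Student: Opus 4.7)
The plan is to combine Lemma~\ref{tortue} with Theorem~\ref{piranesi} applied to a hypothetical $\qlb$-lift of $\pi$. Since $\pi$ has level zero, the simple character underlying $(\BJ,\bl)$ is trivial and its Heisenberg representation $\n$ is the trivial character of $\BJ^1$. The canonical extension $\blw$ given by Proposition~\ref{potiron} is then forced to be trivial: any extension factors through $\BJ/\BJ^1$, restricts to a character $\a\circ\det$ on $\BJ^0/\BJ^1\simeq\GL_n(\kk)$, and $p$-power order forces $\a=1$ since $\kk^\times$ has order prime to $p$; the remaining value at $\w$ is then $\pm 1$ by $\s$-selfduality and hence $1$ by the $p$-power condition. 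Consequently $\bl=\bt$, and $\bt$ is entirely determined by the pair $(\Vv,\ic(\w))$.

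By Lemma~\ref{tortue}, $\pi$ admits a $\GL_n(\F_0)$-distinguished $\qlb$-lift if and only if $\bt$ does. The $\qlb$-lifts of $\bt$ are in bijection with pairs $(\widetilde{\Vv},z)$, where $\widetilde{\Vv}$ is a cuspidal $\qlb$-lift of $\Vv$ (which exists by Proposition~\ref{classifJames}) and $z\in\qlb^\times$ lifts $\ic(\w)$, subject to the compatibility that the central character of $\widetilde{\Vv}$ lifts that of $\Vv$. Any such pair yields a cuspidal level-zero $\qlb$-representation $\widetilde{\pi}=\ind^\G_\BJ(\widetilde{\bt})$ with central character $\widetilde{\ic}$ satisfying $\widetilde{\ic}(\w)=z$ and associated cuspidal representation of $\GL_n(\kk)$ equal to $\widetilde{\Vv}$.

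I then apply Theorem~\ref{piranesi} to $\widetilde{\pi}$: it is $\GL_n(\F_0)$-distinguished if and only if $\widetilde{\ic}|_{\F_0^\times}=1$ together with the stated condition on $\widetilde{\Vv}$. In both the unramified and ramified cases the restriction $\widetilde{\ic}|_{\oo_{\F_0}^\times}=1$ is automatic, since distinction of $\widetilde{\Vv}$ by $\GL_n(\kk)^\s$ forces the central character of $\widetilde{\Vv}$ to be trivial on the $\s$-fixed points of $\kk^\times$. In the unramified case $\w\in\F_0$ reduces the remaining condition to $z=1$, which is achievable precisely when $\ic(\w)=1$. In the ramified case $\w^2\in\F_0$ forces $z\in\{\pm 1\}$, and Theorem~\ref{piranesi} further demands that $\ss$ act on the one-dimensional space of $\GL_{n/2}(\kk)\times\GL_{n/2}(\kk)$-invariant linear forms on $\widetilde{\Vv}$ by this sign; combined with Remark~\ref{comtesseartoff}, which asserts that the sign reduces mod $\ell$ to the sign by which $\ss$ acts on the corresponding space for $\Vv$, we recover the stated condition.

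The main subtlety lies in establishing the triviality of $\blw$ at level zero and in carefully tracking the central character compatibility between $\widetilde{\Vv}$ and the chosen value $z$; once these are handled, the criterion is a direct translation of Theorem~\ref{piranesi} and Remark~\ref{comtesseartoff} through the parametrization of lifts.
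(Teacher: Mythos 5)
Your proof follows the same route as the paper's: reduce via Lemma~\ref{tortue} to lifting $\bl$ (equivalently $\bt$), then translate the distinction criterion for the induced $\qlb$-representation through Theorem~\ref{piranesi}. Your explicit verification that $\blw$ is trivial at level zero (so that $\bl=\bt$ and the data reduce to the pair $(\Vv,\ic)$) makes precise something the paper uses implicitly. Two small remarks: first, your invocation of Remark~\ref{comtesseartoff} in the ramified case is redundant — the lemma's condition is already stated directly in terms of $\ic(\w)$, which you obtain from $z$ lifting $\ic(\w)$ and Theorem~\ref{piranesi} forcing the sign to equal $z=\widetilde{\ic}(\w)$. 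Second, to apply Theorem~\ref{piranesi} in the converse direction you need $\widetilde{\pi}$ to be $\s$-selfdual; this does follow (distinction of $\widetilde{\Vv}$ by $\GL_n(\kk)^\s$ forces $\widetilde{\Vv}$ to be $\s$-selfdual, and the conditions on $\widetilde{\ic}$ then make $\widetilde{\bt}$, hence $\widetilde{\pi}$, $\s$-selfdual), but it is worth stating; the paper sidesteps this by arguing directly at the level of the type via Proposition~\ref{resB} and Lemma~\ref{YumAubergineindicei} rather than quoting Theorem~\ref{piranesi} wholesale.
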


\begin{proof} 
By Lemma \ref{tortue}, 
the representation $\pi$ has a $\GL_n(\F_0)$-dis\-tin\-guished lift 
if and~only if~the type~$\bl$ has a $\BJ\cap\GL_n(F_0)$-dis\-tin\-guished
lift to $\qlb$.  
Suppose $\bl$ has a dis\-tin\-guished lift $\widetilde{\bl}$. 
Then the pair
$(\BJ,\widetilde{\bl})$ is~the generic type
of~a~distin\-guished cuspidal $\qlb$-representation $\widetilde{\pi}$,
compactly~in\-duced from $\widetilde{\bl}$.
Associa\-ted with it, there are
\begin{itemize}
\item 
a cus\-pidal $\qlb$-re\-pre\-sen\-ta\-tion~$\widetilde{\Vv}$ 
of~$\GL_n(\kk)$ lifting $\Vv$,
\item 
a tamely~rami\-fied $\qlb$-character
$\widetilde{\ic}$ of $\F^\times$ lifting $\ic$.
\end{itemize}
By Theorem~\ref{piranesi}, 
the character $\widetilde{\ic}$ is trivial on $F_0^\times$
and $\widetilde{\Vv}$ is distinguished by $\GL_n(\kk)^\s$.
If $F/F_0$ is unramified, then $\widetilde{\ic}(\w)=1$, thus $\ic(\w)=1$. 
If $F/F_0$ is~ra\-mi\-fied,
then $n=2u$ for some $u\>1$ and $\widetilde{\Vv}$ is 
$\widetilde{\ic}(\w)$-distinguished
(in the sense of Definition \ref{epsidist}),
and the reduction mod $\ell$ of $\widetilde{\ic}(\w)$ is equal to $\ic(\w)$.

Conversely, 
suppose that $\Vv$ has a $\GL_n(\kk)^\s$-distinguished lift $\widetilde{\Vv}$
satisfying the conditions of~the lemma.
Let $\widetilde{\ic}$ be a $\qlb$-lift of $\ic$ coinciding on the units of $\F$ with 
the inflation of the central~cha\-rac\-ter of $\widetilde{\Vv}$, 
and
\begin{enumerate}
\item 
if $F/F_0$ is unramified, then $\widetilde{\ic}(\w)=1$,
\item
if $F/F_0$ is~ra\-mi\-fied,
then $\widetilde{\ic}(\w)\in\{-1,1\}$
and the representation $\widetilde{\Vv}$ is $\widetilde{\ic}(\w)$-distinguished.
\end{enumerate} 
Inflate $\widetilde{\Vv}$ to $\BJ^0$,
and extend~it to a representation $\widetilde{\bl}$ of $\BJ$
by demanding that the restriction of $\widetilde{\bl}$ to $F^\times$
is a multiple of $\widetilde{\ic}$. 
The representation
$\widetilde{\bl}$ is then a $\BJ\cap\GL_n(F_0)$-dis\-tin\-guished lift of 
$\bl$.  
\end{proof}

\subsection{}
\label{par113}

In this paragraph, we assume that $\F/\F_0$ is~unra\-mi\-fied.

\begin{lemm}
\label{poulain}
Let $\Vc$ be a $\s$-selfdual cuspidal $\flb$-repre\-senta\-tion
of $\GL_n(\kk)$. 
It has a $\GL_n(\kk_0)$-distinguished lift to $\qlb$ if and only 
if $n$ is odd and
\begin{enumerate}
\item 
either $\Vc$ is supercuspidal, 
\item
or $\Vc$ is non-supercuspidal and the order of the cardinality of $\kk_0$
mod $\ell$ is even (thus $\ell\neq2$).
\end{enumerate}
\end{lemm}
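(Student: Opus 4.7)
The plan is to translate the problem via the parameter bijection of Propositions~\ref{classifGreen} and~\ref{classifJames}: associate to $\Vc$ an $\flb$-character $\xi$ of $\kk_n^\times$ whose $\Delta_n$-conjugacy class admits a $\Delta_n$-regular $\qlb$-lift (where $\Delta_n=\Gal(\kk_n/\kk)$), and let $d$ be the order of $\xi$, automatically prime to $\ell$. Identifying $\Gal(\kk_n/\kk_0)$ with the cyclic group of order $2n$ generated by $\phi:x\mapsto x^{q_0}$, so that $\Delta_n=\langle\phi^2\rangle$ and any lift $\tilde\sigma$ of $\sigma$ is an odd power of $\phi$, the $\s$-selfdual condition on $\Vc$ translates to $\xi^{\tilde\sigma}\sim\xi^{-1}$ modulo $\Delta_n$, equivalently $d\mid q_0^{2j+1}+1$ for some $j$. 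A bookkeeping on the order of $q_0$ modulo $d$, combined with the $\Delta_n$-regularity of some $\qlb$-lift (requiring the order of $q_0^2$ modulo the lift's order to equal $n$), forces the order of $q_0$ modulo $d$ to be exactly $2n$; in particular $n$ must be odd, establishing the necessity of that parity.

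Next, I would invoke the classical Gow-type distinction criterion (cf.\ Proposition~\ref{chourineur} for the ramified Levi analogue): for a $\Delta_n$-regular $\qlb$-character $\tilde\xi$ of order $\tilde d$, the cuspidal representation $\widetilde\Vc_{\tilde\xi}$ of $\GL_n(\kk)$ is $\GL_n(\kk_0)$-distinguished if and only if $\tilde\xi$ is trivial on the unique subfield $\kk_{n,0}\subset\kk_n$ of degree $n$ over $\kk_0$, equivalently $\tilde d\mid q_0^n+1$. In the supercuspidal case of the lemma, $\xi$ is itself $\Delta_n$-regular; after conjugating within its $\Delta_n$-orbit one may take $j=0$ in the $\s$-selfdual relation, so $d\mid q_0^n+1$. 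The Teichm\"uller lift $\tilde\xi_0$ is then $\Delta_n$-regular of the same order $d$ and yields the desired distinguished cuspidal $\qlb$-lift of $\Vc$.

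In the non-supercuspidal case $r=r(\Vc)>1$, Proposition~\ref{classifJames}(3) gives $\xi=\vartheta\circ\Nm_{\kk_n/\kk_{n/r}}$ with $\vartheta$ the $\Delta_{n/r}$-regular parameter of the supercuspidal $\varrho$ satisfying $\Vc\simeq\st_r(\varrho)$; the order of $q_0^2$ modulo $d$ is then $n/r$, so the Teichm\"uller lift is not $\Delta_n$-regular. Any $\qlb$-lift of $\xi$ has the form $\tilde\xi=\tilde\xi_0\eta$ for an $\ell$-power character $\eta$ of order $\ell^s$, whence $\tilde d=d\ell^s$; such $\tilde\xi$ is $\Delta_n$-regular---and thus produces a cuspidal $\qlb$-representation $\widetilde\Vc_{\tilde\xi}$---if and only if $\mathrm{lcm}(n/r,\mathrm{ord}_{\ell^s}(q_0^2))=n$, and it is distinguished if and only if $\ell^s\mid q_0^n+1$. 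The main obstacle is the arithmetic reconciliation of these two constraints: using the cuspidality relation $r=e(\varrho)\ell^v$ with $e(\varrho)=e_0/\gcd(e_0,2n/r)$ together with $r$ odd (forced by $r\mid n$ and $n$ odd), I would show that the $2$-part of $e_0$ cannot exceed $2$ and that $e_0/\gcd(e_0,2)$ automatically divides $n$; thus $\ell\mid q_0^n+1$ becomes equivalent to $e_0$ being even, in which case a suitable $s\geq 1$ realises both conditions at once. Conversely, when $e_0$ is odd, $e_0\mid n$ forces $q_0^n\equiv 1\pmod\ell$, so no nontrivial $\ell$-power twist can produce a distinguished cuspidal $\qlb$-lift.
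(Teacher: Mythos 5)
Your proof takes a genuinely different route from the paper. The paper handles this lemma by a short chain of citations: Gow's theorem to convert $\GL_n(\kk_0)$-distinction into $\s$-selfduality, \cite{VSANT19}~Lemma~2.3 for the parity of~$n$, then \cite{KuMaAsai}~Proposition~4.6 for the whole content when $\ell\neq 2$, and a one-line argument ($r$ a power of~$2$ dividing the odd~$n$, forcing $r=1$) for $\ell=2$. You instead rebuild the argument from scratch by passing to Green/Dipper--James parameters, translating both $\s$-selfduality and distinction into divisibility conditions on the order~$d$ of the parameter (the distinction criterion ``$\tilde\xi$ trivial on $\kk_{n,0}^\times$'' is indeed equivalent, via Gow plus $\Delta_n$-regularity, to $\tilde d\mid q_0^n+1$), and then doing the $\ell$-adic bookkeeping directly. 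This buys self-containedness and makes the role of the arithmetic constraints explicit, at the price of re-deriving the content of the cited reference. Your central arithmetic claims check out: the $2$-part of $e_0$ is at most~$2$; $e_0/\gcd(e_0,2)$ divides~$n$ (from $e(\varrho)\mid r$ and $\gcd(e_0/\gcd(e_0,2),n/r)\mid n/r$); and, with $s_0=v_\ell(q_0^{2m}-1)$ where $m=e_0/\gcd(e_0,2)$, the value $s=s_0+v+v_\ell(n/r)$ realizes both the regularity and divisibility constraints when $e_0$ is even.

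Two small points to tighten. First, in the supercuspidal case the phrase ``after conjugating within its $\Delta_n$-orbit one may take $j=0$'' is not quite right: for a $\Delta_n$-regular $\xi$ of order $d>2$ the index $j$ in $\xi^{-q_0}=\xi^{q_0^{2j}}$ is uniquely $(n+1)/2$, not $0$; the correct (and intended) conclusion $d\mid q_0^n+1$ follows instead from $d\mid q_0^{2j-1}+1$ together with $\operatorname{ord}_d(q_0^2)=n$. Second, your concluding sentence ``when $e_0$ is odd \ldots no nontrivial $\ell$-power twist can produce a distinguished cuspidal $\qlb$-lift'' literally covers only $\ell\neq 2$: for $\ell=2$ one always has $2\mid q_0^n+1$, so the obstruction is not the divisibility condition but regularity (the $\ell$-part of $\operatorname{ord}_{\ell^s}(q_0^2)$ is a $2$-power, and $\operatorname{lcm}(n/r,2^{\text{power}})=n$ with $n$ odd forces $r=1$). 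In fact your own cuspidality relation already disposes of this: for $\ell=2$ one has $e(\varrho)=1$, so $r=2^v$ and $r$ odd gives $r=1$, making the non-supercuspidal case vacuous; it is worth saying this explicitly rather than leaving it implicit.
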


\begin{proof}
By \cite{Gow} Theorem 3.6,
an irreducible $\qlb$-representation of $\GL_n(\kk)$ is 
$\GL_n(\kk_0)$-dis\-tin\-gui\-shed if and only if it is $\s$-selfdual. 

First, the condition on the parity of $n$ is necessary:
see \cite{VSANT19} Lemma 2.3 for instance. 
Now~as\-su\-me that $n$ is odd. 
If $\ell\neq2$, the result is given by \cite{KuMaAsai} Proposition 4.6. 
If $\ell=2$,
then $\Vc$ has the form $\st_r(\varrho)$,
where $\varrho$ is a supercuspidal representation of $\GL_{n/r}(\kk)$
and $r=2^v$ for some $v\>0$.
Since $n$ is odd, $\Vc$ must be supercuspidal,
and the result is given by \cite{VSANT19} Remark 2.7. 
\end{proof}

\begin{rema}
Let~$q$ be~the cardinality of $\kk$ and $q_0$ be that of $\kk_0$.
Let $e$ and $e_0$ be the orders of $q$ and $q_0$ mod $\ell$,
respectively. 
Note that $r=e(\varrho)\ell^{v}$ for some $v\>0$,
where~$e(\varrho)$ is the order of $q^{f}$ mod $\ell$ with $f=n/r$.
If $n$ is odd,
then $f$ and $r$ are odd,
thus $e(\varrho)$ is odd.
But $e(\varrho)=e/(e,f)$.
It follows that $e=e_0/(e_0,2)$ is odd.
Thus $e_0$ is not divisible by $4$.
\end{rema}

\begin{exem}
Let $\Vc$ be the $\s$-selfdual cuspidal
$\flb$-repre\-senta\-tion $\st_e(1)$ of $\GL_e(\kk)$.
We have $e=e_0/(e_0,2)$,
which is odd if and only if $e_0$ is not divisible by $4$.
Thus $\Vc$ has a $\GL_e(\kk_0)$-dis\-tin\-guished lift to
$\qlb$ if and only if $e_0$ is divisible by $2$ but not by $4$.
\end{exem}

Suppose first that $\pi$ has a dis\-tin\-guished lift to $\qlb$.
On the one hand, 
the generic type of such a lift defines a $\s$-selfdual cuspidal
$\qlb$-representation of $\GL_n(\kk)$,
and \cite{VSANT19} Lemma 2.3 implies that $n$ is odd,
thus $r$ is odd. 
On the other hand, Theorem \ref{KuMaTh} implies that $\pi$ is
distinguished.~It~thus follows from 
Theorem \ref{THModd} that $\pi$ is isomorphic to $\St_r(\rho)$ for
some dis\-tinguished~su\-per\-cuspi\-dal~re\-presentation $\rho$ of
$\GL_{n/r}(\F)$. 
Finally, 
Lemma \ref{piranesilift} says that
$\Vv$ has a dis\-tin\-guished lift.
It follows from Lemma \ref{poulain} that the order $e_0$
of~the~car\-dinality of $\kk_0$
mod $\ell$ is even. 

We thus proved that,
when $\F/\F_0$ is unramified,
if $\pi$ has a dis\-tin\-guished lift,
then $r$ is odd and Conditions (1), (2.a) of Proposition \ref{poivronsimpairs}
are satisfied. 

Conversely,
suppose that the conditions (1), (2.a) of Proposition \ref{poivronsimpairs}
are satisfied.
Then~$\Vv$~has~a dis\-tinguished lift $\widetilde{\Vv}$.
By Lemma \ref{piranesilift}, the representation
$\pi$ has a $\GL_n(F_0)$-distinguished lift to $\qlb$ if and only 
if~$\ic(\w)=1$.  
By Paragraph \ref{compatypes},
the central character $\ic_*$ of $\rho$ satisfies $\ic_*^r=\ic$.
Since $\rho$ is distin\-guished,
we have $\ic_*(\w)=1$,
thus $\ic(\w)=\ic_*(\w)^r=1$.

We pro\-ved Proposition \ref{poivronsimpairs}
in the case when $\F/\F_0$ is unramified. 

\subsection{}
\label{poupinpar}

In this paragraph, we assume that $\F/\F_0$ is ra\-mi\-fied.
Let~$q$ denote~the cardinality of $\kk$,
and let $e$ denote the order of $q$ mod $\ell$.

\begin{lemm}
\label{poupin}
Let $\Vc$ be a selfdual cuspidal $\flb$-representation~of $\GL_n(\kk)$,
isomorphic to $\st_r(\varrho)$ for some selfdual supercuspidal representation 
of $\GL_{n/r}(\kk)$.
Write $u=\lfloor n/2\rfloor$.
Then $\Vc$ has~a~lift to $\qlb$ which is distinguished by 
$\GL_{u}(\kk)\times\GL_{n-u}(\kk)$ if and only if
\begin{enumerate}
\item 
either $\Vc$ is supercuspidal, 
\item
or $\Vc$ is non-supercuspidal, $n$ is even and
\begin{enumerate}
\item
either $\ell\neq2$ and $r$, $n/e$ are odd, 
\item
or $\ell\neq2$ and $r=n$, 
\item
or $\ell=n=r=2$ and $q$ is congruent to $-1$ mod $4$,
and $\varrho$ is trivial.  
\end{enumerate}
\end{enumerate}
\end{lemm}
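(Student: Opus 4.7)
The plan is to translate the existence of a distinguished cuspidal $\qlb$-lift of $\Vc$ into a parameter-theoretic condition via the Green--James classification (Propositions~\ref{classifGreen} and~\ref{classifJames}) together with the distinction criterion (Proposition~\ref{chourineur}): such a lift exists if and only if there is a $\Delta$-regular $\qlb$-character $\widetilde{\xi}$ of $\kk_n^\times$ whose reduction mod~$\ell$ is $\Delta$-conjugate to the parameter $\vartheta\circ\Nm_{\kk_n/\kk_{n/r}}$ of $\Vc$ (with $\vartheta$ a parameter of $\varrho$) and which is trivial on $\kk_u^\times$. The necessity of $n$ being even then follows from \cite{VSANT19} Proposition~2.14, which forces $u=n-u$ whenever such a distinguished cuspidal lift exists; we therefore work throughout with $n=2u$.

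For the supercuspidal case~(1), the plan is first to show that a selfdual $\Delta$-regular $\flb$-character $\vartheta$ of $\kk_n^\times$ with $n=2u\ge 2$ automatically satisfies $\vartheta|_{\kk_u^\times}=1$. Indeed, selfduality of $\Vc=\varrho$ gives $\vartheta^{-1}=\vartheta^{q^j}$ for a unique $j\in\{0,\ldots,n-1\}$; applying this relation twice and using $\Delta$-regularity forces $2j\equiv 0\pmod{n}$, while the alternative $j=0$ (equivalently $\vartheta^2=1$) is incompatible with $\Delta$-regularity for $n\ge 2$, leaving $j=u$. The Teichm\"uller lift $\widetilde{\vartheta}$ of $\vartheta$ is then a $\Delta$-regular $\qlb$-character of the same order as $\vartheta$, hence trivial on $\kk_u^\times$, and Proposition~\ref{chourineur} produces the desired distinguished $\qlb$-lift.

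For the non-supercuspidal case~(2), write $\psi=\widetilde{\vartheta}\circ\Nm_{\kk_n/\kk_{n/r}}$ for the Teichm\"uller-type lift of the parameter; every $\qlb$-lift of the $\flb$-parameter of $\Vc$ has the form $\psi\chi$ for some character $\chi$ of $\kk_n^\times$ of $\ell$-power order. The stabilizer of $\psi$ in $\Delta$ is $\langle\sigma^{n/r}\rangle$ (of order $r$), so $\Delta$-regularity of $\psi\chi$ requires $\chi$ to have non-trivial image modulo the subgroup of characters fixed by $\sigma^{n/r}$, while triviality on $\kk_u^\times$ requires $\psi\chi$ to factor through $\kk_n^\times/\kk_u^\times$. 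Analyzing these constraints via the $\ell$-adic valuations of $q^n-1$, $q^{n/r}-1$ and $q^u+1$ --- using $r=e(\varrho)\ell^v$ with $e(\varrho)=e/(e,n/r)$ and the lifting-the-exponent formula, and separating $\ell$ odd from $\ell=2$ --- yields the three subcases: in~(2a) the parity conditions on $r$ and $n/e$ produce enough $\ell$-power characters of $\kk_n^\times/\kk_u^\times$ outside the fixator of $\sigma^{n/r}$; in~(2b), where $r=n$, $\vartheta$ is a character of $\kk^\times$ and one takes $\chi$ to be an $\ell$-power character of $\kk_n^\times/\kk_u^\times$ of maximal order; in~(2c), with $\ell=2$, the reduction mod~$2$ trivialises the odd-order part of the parameter, forcing $\varrho=1$, and one then requires a $\qlb$-character of $\kk_n^\times/\kk_u^\times\cong\ZZ/(q+1)\ZZ$ of exact order~$4$, which exists if and only if $4\mid q+1$.

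The principal difficulty is case~(2): simultaneous control of $\Delta$-regularity and of triviality on $\kk_u^\times$ is delicate because the two constraints can compete. The exhaustiveness of~(2a)--(2c) requires ruling out all other configurations --- for instance, if $e\nmid n$ then the $\ell$-power character group of $\kk_n^\times$ is trivial, so $\chi=1$ and $\psi\chi=\psi$ has $\Delta$-orbit of size $n/r<n$, contradicting $\Delta$-regularity --- and the $\ell=2$ case must be handled separately because both the lifting-the-exponent formula and the structure of the $2$-part of cyclic groups behave differently from the odd case.
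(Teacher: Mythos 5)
Your overall strategy coincides with the paper's: translate the existence of a distinguished cuspidal $\qlb$-lift into a condition on parameters via Propositions \ref{classifGreen}, \ref{classifJames} and \ref{chourineur} (a $\Delta$-regular $\qlb$-lift of the $\flb$-parameter, trivial on $\kk_u^\times$), then reduce to $\ell$-adic divisibility conditions on the orders of these characters. Your treatment of the supercuspidal case~(1) is a small variation of the paper's (which simply cites \cite{VSANT19} Remark~2.21): the direct argument via $\vartheta^{-1}=\vartheta^{q^u}$ and the Teichm\"uller lift is correct and self-contained.

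The problem is case~(2), which is exactly where the content of the lemma lies, and which you do not actually prove: you state that ``analyzing these constraints via the $\ell$-adic valuations \dots yields the three subcases'' without carrying out the analysis. The paper's argument at this point is a concrete arithmetic computation that cannot be waved away. For $\ell\neq 2$ and $f=n/r$ even, one must show that if the order $A$ of the parameter $\alpha$ of $\varrho$ divides both $q^{f/2}+1$ and $q^u+1$, then $r$ is odd; the paper does this via the binomial identity
\[
q^{u}+1=1+(-1)^{r}+\sum_{i=1}^{r}\binom{r}{i}(-1)^{r-i}\bigl(q^{f/2}+1\bigr)^{i},
\]
which forces $A\mid 1+(-1)^r$ and hence $r$ odd because $A>2$. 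One must also show that $\ell^s\mid q^u+1$ (equivalently $n/e$ odd) is exactly the remaining condition for a distinguished lift of $\ell^s$-torsion; neither is in your write-up. Your description of subcase~(2c) also presupposes its conclusion: you write $\kk_n^\times/\kk_u^\times\cong\ZZ/(q+1)\ZZ$, which is only correct when $u=1$, but $u=1$ is what has to be derived. The paper gets it from the $2$-adic valuation bookkeeping $a=\val_2(q^u+1)$, $b=\val_2(q^u-1)$, $\min(a,b)=1$, $b<s\leqslant a+b$, combined with the observation that $u$ is a power of $2$ and $4\mid q^u+1$ forces $u=1$. Until those computations are carried out, the ``if and only if'' of the lemma is not established; as written, your proposal correctly identifies the framework and the shape of the difficulty but does not prove the statement.
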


\begin{proof}
First note that $n$ must be either even or equal to $1$:
see \cite{VSANT19} Lemma 2.17 for instance.
Also, the supercuspidal case is given by \cite{VSANT19} Remark 2.21. 
Let us assume that $\Vc$ is non-super\-cuspidal
(thus $n$ is even, and we will write $n=2u$).
We use the notation of Paragraph \ref{scaramuccia}.

Set $f=n/r$
and let~$\a$~be a $\Gal(\kk_{f}/\kk)$-regular
$\flb$-character of $\kk_{f}^\times$ of order $\A$ which
is a~para\-me\-ter of $\varrho$ in the sense of Definition \ref{defparameter}.
Let $\widetilde{\v}$ be~the canonical $\qlb$-lift of 
\begin{equation*}
\v=\a\circ\Nm_{\kk_n/\kk_f},
\end{equation*}
that is,
its unique lift of order~$\A$.
Let $\widetilde{\Vc}$ be~a~cuspi\-dal~lift of $\Vc$.
It~is parametrized~by~a~$\Gal(\kk_n/\kk)$-re\-gu\-lar
character of $\kk_n^\times$ lifting $\v$,
that is, of the form $\widetilde{\v}\phi$,
where $\phi$ is a $\qlb$-character of $\kk_n^\times$
of order $\ell^s$ for some $s\>0$. 
Since $\Vc$ is not supercuspidal,
one has $s\>1$.
The character $\widetilde{\v}\phi$ has order $\A\ell^s$.

By Proposition \ref{chourineur}, 
the representation $\widetilde{\Vc}$ is distinguished
by $\GL_{u}(\kk)\times\GL_{u}(\kk)$
if and~on\-ly~if it is selfdual,
which is also equivalent 
(see for instance \cite{VSANT19} (2.7))
to $\A\ell^s$ dividing $q^{u}+1$.
Similarly,
the fact that $\varrho$ is selfdual is equivalent to
\begin{itemize}
\item 
either $f=1$ and $\varrho$ is a quadratic character
(thus $\A$ is equal to $1$ or $2$), 
\item
or $f$ is even and $\A$ divides $q^{f/2}+1$
(thus $\A>2$ since $q$ has order $f\>2$ mod $\A$).
\end{itemize} 

Suppose that $\ell\neq2$ and $f$ is even.
If $\widetilde{\Vc}$ is distinguished,
then $\A$ divides $q^{f/2}+1$ and $q^{u}+1$.
Since $u=rf/2$, we have
\begin{equation*}
q^{u} + 1 = 1 + (-1)^{r} +
\sum\limits_{i=1}^{r} \binom {r} {i} (-1)^{r-i} (q^{f/2}+1)^{i}
\end{equation*}
thus $\A$ divides $1+(-1)^r$.
Since $\A>2$, 
it follows that $r$ is odd.
Also, $\ell$ divides $q^u+1$,
that is, the order of $q^u$ mod $\ell$ is
$e/(e,u)=2$, which implies that $n/e$ is odd. 
Conversely,~sup\-po\-se that $r$ and $n/e$ are odd.
The fact that $\A$ divides $q^{f/2}+1$ and $r$ is odd
implies that $\A$ divides $q^u+1$.
Now $\ell^s$ divides $q^n-1=(q^u+1)(q^u-1)$.
If $\ell$ divides $q^u-1$,
then $e$ divides $u=n/2$, thus $n/e$ is even: contradiction.
Thus $\ell^s$ divides $q^u+1$,
thus $\widetilde{\Vc}$ is distinguished.

Suppose that $\ell\neq2$ and $f=1$.
Then $\varrho$ is a character of $\kk^\times$,
thus $r=e\ell^{v}$ for some $v\>0$.~This
gives $n/e=\ell^{v}$, which~is odd.
The same argument as above implies that $q^u+1$
is a multiple of $\ell^s$.
It is also a multiple of $\A\in\{1,2\}$
since it is even.
Thus $\widetilde{\Vc}$~is~dis\-tinguished.

Now suppose that $\ell=2$.
If $\widetilde{\Vc}$ is distinguished and $f$ is even,
then,
as in the case where~$\ell\neq2$,
the integer $\A>2$ divides $q^{f/2}+1$ and $q^{rf/2}+1$,
thus $r$ is odd. 
But the fact that $\Vc$ is cuspidal implies that $r$ is a power of $2$.
It follows that $r=1$: contradiction.
Thus $f=1$, that is $\Vc$ is the representation
$\st_n(1)$ with $n=2^t$ for some $t\>1$. 
Moreover, $q$ has order $m$ mod $2^s$,
that is, $2^s$ divides $q^{n}-1$ but not $q^{u}-1$.
Set
\begin{equation*}
a=\v_2(q^u+1), \quad b=\v_2(q^u-1).
\end{equation*}
We have $b<s\<a+b$ and $\min(a,b)=1$.
The fact that $\widetilde{\Vc}$ is distinguished implies $s\<a$,
which gives $b=1<a$,
that is $4$ divides $q^u+1$.
Since $u$ is a power of $2$,
we deduce that $4$ divides $q+1$ and $u=1$.

Conversely,
suppose that $\ell=n=r=2$ and $4$ divides $q+1$
(hence $b=1<a$).
Then~any $\overline{\mathbb{Q}}_{2}$-character of $\kk_{2}^\times$ of order $2^a$
parametrizes a distinguished cuspidal 
$\overline{\mathbb{Q}}_{2}$-representation of
$\GL_{2}(\kk)$~lif\-ting $\Vc=\st_2(1)$.
\end{proof}

\begin{exem}
The fact that $\GL_f(\kk)$ has a selfdual supercuspidal 
$\flb$-repre\-senta\-tion is equi\-valent to the fact that there is
an $\kk$-regular $\flb$-character of $\kk_{f}^\times$ which is trivial on
$\kk_{f/2}^\times$, that~is,
there exists an integer $\A$ with the following properties:
\begin{enumerate}
\item $\A$ is prime to $\ell$ and the order of $q$ mod $\A$ is equal to $f$, 
\item $\A$ divides $q^{f/2}+1$.
\end{enumerate}
Now suppose that $\ell>2$ and $f=2$. 
Thus $\GL_2(\kk)$ has a selfdual super\-cuspidal $\flb$-repre\-sentation
if~and only if there exists an integer $\A$ prime to $\ell$
dividing $q+1$ but not $q-1$, that is,~if~and
only~if~$q+1$ has a prime divisor different from $2$ and $\ell$.
Assume this is the case,
and let $\varrho$ be a selfdual supercuspidal
$\flb$-repre\-sen\-ta\-tion of $\GL_2(\kk)$.
Let $\Vc$ be the selfdual cuspidal
$\flb$-repre\-senta\-tion $\st_r(\varrho)$ of $\GL_n(\kk)$ with $r=e/(e,2)$ 
and $m=2r$.
Then $r$ is odd if and only if $e$ is not divisible by $4$,
and $n/e=2/(e,2)$ is odd if and only if $e$ is even. 
If we take $q=9$ and $\ell=7$,
we get $r=3$ and $n/e=2$.
If we take $q=5$,
we get $q-1=4$ and $q^6-1=1953\times8$.
Thus if $\ell$ is a prime divisor of $1953$,
we get $r=3$ and $n/e=6$.
\end{exem}

In addition, we have the following result.
We assume that $n=2u$ for some $u\>1$.

\begin{lemm}
\label{Milon}
Let $\W$ be a selfdual cuspidal $\flb$-representation of $\GL_n(\kk)$
of the form $\st_n(\varrho)$~for some quadratic character $\varrho$
of $\kk^\times$.
Assume $\W$ is distinguished by $\GL_{u}(\kk)\times\GL_{u}(\kk)$.
Then \eqref{pauillac2015}
acts on the space of $\GL_{u}(\kk)\times\GL_{u}(\kk)$-invariant linear 
forms on $\W$ by 
\begin{equation*}
\left\{ 
\begin{array}{ll}
-1 & \text{if $\varrho$ is trivial}, \\ 
(-1)^{u(q-1)/2} & \text{if $\varrho$ is non-trivial}.
\end{array}\right.
\end{equation*}
\end{lemm}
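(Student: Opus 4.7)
The plan is to lift $\W$ to a distinguished cuspidal $\qlb$-representation, use Remark \ref{comtesseartoff} together with Proposition \ref{chourineur}(2) to express the desired sign as a character value on $\kk_n^\times$, and then compute that value explicitly by tracking the quadratic character of $\kk^\times$ through the relevant norm maps.

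First, Lemma \ref{poupin} produces a $\GL_u(\kk) \times \GL_u(\kk)$-distinguished cuspidal $\qlb$-lift $\widetilde{\W}$ of $\W$. Inspecting the proof of Lemma \ref{poupin} in the case $f=1$, the parameter of $\widetilde{\W}$ (in the sense of Definition \ref{defparameter}) has the form $\xi = \widetilde{\v}\phi$, where $\widetilde{\v} = \widetilde{\varrho} \circ \Nm_{\kk_n/\kk}$ is the canonical $\qlb$-lift of $\varrho \circ \Nm_{\kk_n/\kk}$ and $\phi$ is a $\qlb$-character of $\kk_n^\times$ of $\ell$-power order, both trivial on $\kk_u^\times$. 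By Remark \ref{comtesseartoff}, the sign by which $\ss$ acts on the $\flb$-line of $\GL_u(\kk) \times \GL_u(\kk)$-invariant forms on $\W$ equals the reduction modulo $\ell$ of the sign $-\xi(\alpha_0)$ supplied by Proposition \ref{chourineur}(2), for any $\alpha_0 \in \kk_n^\times$ with $\alpha_0 \notin \kk_u^\times$ and $\alpha_0^2 \in \kk_u^\times$.

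Setting $a = \alpha_0^2 \in \kk_u^\times$ and applying the non-trivial element of $\Gal(\kk_n/\kk_u)$ to the relation $\alpha_0^2 = a$ yields $\alpha_0^{q^u} = \pm \alpha_0$; the $+$ alternative would place $\alpha_0$ in $\kk_u^\times$, so $\alpha_0^{q^u} = -\alpha_0$. This forces $a$ to be a non-square in $\kk_u^\times$ and gives $\Nm_{\kk_n/\kk_u}(\alpha_0) = -a$, hence
\begin{equation*}
\Nm_{\kk_n/\kk}(\alpha_0) = \Nm_{\kk_u/\kk}(-a) = (-1)^u \Nm_{\kk_u/\kk}(a).
\end{equation*}
Using that $\widetilde{\varrho} \circ \Nm_{\kk_u/\kk}$ coincides with the unique quadratic character of $\kk_u^\times$ (which sends the non-square $a$ to $-1$), together with $\widetilde{\varrho}(-1) = (-1)^{(q-1)/2}$, one obtains $\widetilde{\v}(\alpha_0) = -(-1)^{u(q-1)/2}$ when $\varrho$ is non-trivial, and $\widetilde{\v}(\alpha_0) = 1$ when $\varrho$ is trivial.

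Finally, since $\alpha_0^2 \in \kk_u^\times \subseteq \ker \phi$, the element $\phi(\alpha_0)$ is $2$-torsion while $\phi$ has $\ell$-power order, so $\phi(\alpha_0) = 1$ whenever $\ell$ is odd. When $\varrho$ is non-trivial, Lemma \ref{poupin} forces $\ell \neq 2$, so $-\xi(\alpha_0) = (-1)^{u(q-1)/2}$. When $\varrho$ is trivial and $\ell$ is odd, $-\xi(\alpha_0) = -1$; when $\varrho$ is trivial and $\ell = 2$, the ambiguity $\phi(\alpha_0) \in \{\pm 1\}$ is harmless because the two values coincide in $\flb$, so the claim still holds. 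The delicate point is the identification $\alpha_0^{q^u} = -\alpha_0$, which is what produces the extra $-1$ factor responsible for the sign $(-1)^{u(q-1)/2}$ in the non-trivial case.
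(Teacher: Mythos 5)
Your proof follows essentially the same route as the paper: lift via Lemma \ref{poupin}, apply Proposition \ref{chourineur}(2) and Remark \ref{comtesseartoff} to reduce the sign computation to evaluating $-\xi(\alpha_0)$, observe that the $\ell$-power part $\phi$ vanishes at $\alpha_0$, and then compute $\widetilde{\v}(\alpha_0)$ by hand. The intermediate algebra is organized slightly differently (you track $\Nm_{\kk_n/\kk_u}(\alpha_0)=-\alpha_0^2$ and split off $(-1)^u$, whereas the paper writes $-(-1)^{(q^u+1)/2}$ and identifies this with $(-1)^{u(q-1)/2}$), but these are equivalent manipulations and both are correct.

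The one point where your logic is genuinely off is the case $\ell=2$. You open the proof by invoking Lemma \ref{poupin} to produce a distinguished cuspidal $\qlb$-lift of $\W$, and then later use the conditions in that lemma to rule out $\ell=2$ when $\varrho$ is non-trivial, and to excuse the ambiguity of $\phi(\alpha_0)$ when $\varrho$ is trivial. But Lemma \ref{poupin} is an if-and-only-if statement: a distinguished lift of $\W$ need not exist at all when $\ell=2$ (it requires $n=r=2$, $q\equiv -1\bmod 4$ and $\varrho$ trivial), so you cannot start by invoking it and then deduce constraints on $\ell$ afterwards; that is circular. The hypotheses of the present lemma only give that $\W$ itself is distinguished, not that it admits a distinguished lift. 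The correct and much simpler way to dispose of $\ell=2$ is to notice that in residual characteristic $2$ every ``sign'' in $\flb^\times$ equals $1$, so the stated identity holds vacuously --- this is exactly what the paper does, settling $\ell=2$ before touching Lemma \ref{poupin}. Once that is done, your argument for $\ell\neq 2$ is complete and correct: the hypothesis $r=n$ guarantees condition (2.b) of Lemma \ref{poupin} is met, so the lift exists, $\phi$ has odd order, and the computation of $\widetilde{\v}(\alpha_0)$ closes the proof.
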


\begin{proof}
Let $c$ be the sign such that $\W$ is $c$-distinguished.
If $\ell=2$, the result is immediate since the only sign is $1$.
Assume that $\ell\neq2$. 
By~Lemma \ref{poupin},
the representation $\W$ has a distinguished cuspidal $\qlb$-lift.
Let $\widetilde{\W}$ be such a $\qlb$-lift
and~$\xi$~be~a parameter for $\widetilde{\W}$.
Let~$\a$ be an element of $\kk_{n}$ such that $\a\notin\kk_{u}$
and $\a^2\in\kk_{u}$.
By Proposition \ref{chourineur}, the representation
$\widetilde{\W}$~is $-\xi(\a)$-dis\-tingui\-shed by
$\GL_{u}(\kk)\times\GL_{u}(\kk)$. 
Since $\widetilde{\W}$ lifts $\W$,
we have
\begin{itemize}
\item
the reduction mod $\ell$ of the parameter $\xi$ is equal to
$(\varrho\circ\Nm_{\kk_n/\kk})\phi$ where $\phi$ is a character
whose order is a power of $\ell$ (see Proposition \ref{classifJames}),
\item
the reduction mod $\ell$ of $-\xi(\a)$ is equal to $c$
(see Remark \ref{comtesseartoff}). 
\end{itemize}
On the one hand,
the character $\xi$ is trivial on $\kk_{u}^\times$
since $\widetilde{\W}$ is selfdual (see Proposition \ref{chourineur}).
On the other hand, $\varrho\circ\Nm_{\kk_n/\kk}$ 
is trivial on $\kk_{u}^\times$ since $\varrho$ is quadratic and
the index of $\kk_{u}^\times$ in $\kk_{n}^\times$ is even.
We deduce that $\phi$ is trivial on~$\kk_{u}^\times$, 
thus $\phi(\a)$ is a sign. 
Since it has order a power of $\ell\neq2$, it is trivial.
It follows that 
\begin{equation*}
c = -\varrho(\Nm_{\kk_n/\kk}(\a)).
\end{equation*}
If $\varrho$ is trivial, this gives $c=-1$, as expected. 
Assume now that $\varrho$ is non-trivial.
It thus coincides with $\x$ on $\kk^\times$. 
Since $\a^2$ is not~a square in $\kk_{u}^\times$,
its $\kk_u/\kk$-norm is not~a square in $\kk^\times$.
Thus 
\begin{equation*}
c = -\x(\Nm_{\kk_{u}/\kk}(\a^{q^u+1}))
= -(-1)^{(q^u+1)/2}
\end{equation*} 
and one verifies that this is equal to $\x(-1)^{u}=(-1)^{u(q-1)/2}$ as expected. 
\end{proof}

\subsection{}

Let us prove Proposition \ref{poivronsimpairs}
when $\F/\F_0$ is ramified.
Assume that $r$ is odd,
and suppose that $\pi$ has a dis\-tin\-guished lift to $\qlb$. 
By Theorem \ref{KuMaTh}, it is distinguished.~Thus
Theorem \ref{THModd} implies that $n/r$ is even and $\pi$ is
isomorphic to $\St_r(\rho)$ for
some dis\-tinguished~su\-per\-cuspi\-dal~representation $\rho$ of
$\GL_{n/r}(\F)$.
Lemma \ref{piranesilift} says that $\Vv$ has~a dis\-tin\-guished lift.
It follows from~Lem\-ma \ref{poupin} that $n/e$ is odd.

Conversely, 
assume that $r$ is odd, 
$\pi$ is isomorphic to $\St_r(\rho)$ for some dis\-tinguished
supercuspi\-dal~re\-pre\-sen\-tation $\rho$ of $\GL_{n/r}(\F)$
of level $0$,
$n$ is even and $n/e$ is odd. 
It follows from~Lemma~\ref{poupin} that 
$\Vv$ has a dis\-tin\-guished lift $\widetilde{\Vv}$.
Let $\varepsilon\in\{-1,1\}$ be the unique sign such that $\widetilde{\Vv}$
is $\varepsilon$-distingui\-shed by $\GL_{u}(\kk)\times\GL_{u}(\kk)$
in the sense of Definition \ref{epsidist},
with $n=2u$.
By Lemma \ref{piranesilift},~the~re\-pre\-sen\-ta\-tion
$\pi$ has a distinguished lift if and only if
$\ic(\w)$ is equal to the image of $\varepsilon$ 
in $\overline{\mathbb{F}}{}_\ell^\times$, denoted $c$. 
We are going to prove that this is the case.
Let $\ic_*$ be the central character of $\rho$.
By Theo\-rem \ref{piranesi}, we have
\begin{itemize}
\item 
the representation $\varrho$ is $\ic_*(\w)$-distinguished by 
$\GL_{k/2}(\kk)\times\GL_{k/2}(\kk)$. 
\end{itemize}
(Note $k$ is even since $n$ is even and $r$ is odd.)
By Proposition \ref{step2}, we have
\begin{itemize}
\item 
the sign $\ic(\w)$ is equal to $\ic_*(\w)^r=\ic_*(\w)$.
\end{itemize}
Let $\a$ be the unique sign such that $\Vv$
is $\a$-distinguished by $\GL_{u}(\kk)\times\GL_{u}(\kk)$.
By Remark \ref{comtesseartoff}, 
we have $\a=c$.
On the~other hand, we have $\a=\ic_*(\w)$
by~Propo\-sition \ref{BastardBattle}.
Putting~these~facts together,
we get $\ic(\w)=\ic_*(\w)=\a=c$ as expected. 
This proves~Propo\-sition \ref{poivronsimpairs}
if~$\F/\F_0$~is~ra\-mified.
Together with Paragraph \ref{par113},
this finishes the proof of~Pro\-po\-sition \ref{poivronsimpairs}.

\subsection{}

In this paragraph and the next one, 
we prove Proposition \ref{poivronspairs}.
Assume that $r$ is even,
and let $q$ be the~cardi\-na\-lity~of $\kk$.
Since $r$ divides $n$,
we have $n=2u$ for some $u\>1$.

Suppose that $\pi$ has a distin\-gui\-shed lift. 
By Paragraph \ref{par113},
this implies that $F/F_0$ is ramified. 
By Lemma \ref{piranesilift},
the representation $\Vv$ has a dis\-tin\-guished $\qlb$-lift.
By Lemma \ref{poupin},
one has $r=n$,
thus $\Vv$ is isomorphic to $\st_n(\varrho)$ for a character $\varrho$ of
$\kk^\times$ of order at most $2$.
Besides,
if $\ell=2$,~then
$n=2$ and $q$ is congruent to $-1$ mod $4$. 
Since $r=n$,
the representation $\pi$ is isomorphic to $\St_n(\rho)$ for a
tamely ramified character $\rho$ of $\F^\times$ whose restriction to the
units of $F$ is the inflation of $\varrho$.

Suppose first that $\ell=2$. 
Since $\pi$ is distinguished (by Theorem \ref{KuMaTh}),
it is $\s$-selfdual (by~Theo\-rem \ref{VSANTTHM41}).
It follows from Proposition \ref{fission} that
the representation $\rho$ is $\s$-selfdual
and from~Theo\-rem~\ref{rappelamical1} that it is $F_0^\times$-distinguished,
as expected.

Suppose now that $\ell\neq2$. 
By Proposition \ref{fission},
we may choose $\rho$ so that $\rho^{-1}\circ\s=\rho\nu^i$
for some $i\in\{0,1\}$, that is, $\rho\circ\Nm_{F/F_0}=\nu^{-i}$.~It
remains to prove that the restriction of $\rho$ to $F_0^\times$ is
either $\x$ or $\nu_0^{-1}$.

Let $c$ be the sign by which the element \eqref{pauillac2015}
acts on the space of $\GL_{u}(\kk)\times\GL_{u}(\kk)$-invariant linear 
forms on $\Vv$, which is given by Lemma \ref{Milon}. 
Remind that we have fixed a uniformizer~$\w$~of $F$ such that $\s(\w)=-\w$, 
thus $\w_0=\w^2$ is a uniformizer of $F_0$.
The representation $\pi$ is dis\-tin\-guished (by Theorem \ref{KuMaTh})
and it follows from Theorem \ref{piranesi} that $c=c_\pi(\w)$.
We have
\begin{equation}
\label{moonfleet}
c_\pi(\w) = \rho(\w)^n = \rho(\w_0)^{u}. 
\end{equation}
On the other hand,
the identity $\rho\circ\Nm_{F/F_0}=\nu^{-i}$ implies that $\rho(-\w_0)=q^i$.

\begin{lemm}
\label{patatedouce}
We have $q^{u}\equiv-1$ mod $\ell$.
\end{lemm}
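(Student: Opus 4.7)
The plan is to extract the divisibility $n = e\ell^v$ from the cuspidality of $\Vv$ and then do a short order-of-an-element computation modulo $\ell$. Concretely, the preceding discussion established (under the standing hypothesis $\ell \neq 2$) that $\Vv$ is isomorphic to $\st_n(\varrho)$ for some character $\varrho$ of $\kk^\times$, so $\varrho$ is a supercuspidal representation of $\GL_1(\kk)$. Since $\Vv$ is cuspidal and $r = n$ is even (so $n \ge 2$), Proposition \ref{Coupurefinie}(1.b) applies to give a non-negative integer $v$ with $n = e(\varrho)\ell^v$. Because $\varrho$ lives on $\kk^\times$, the invariant $e(\varrho)$ is just $e$, the order of $q$ modulo $\ell$. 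Thus $n = e\ell^v$.

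Next, I would use parity. Since $n = 2u$ is even while $\ell$ is odd, $\ell^v$ is odd, hence $e$ must be even; write $e = 2e'$ so that $u = e'\ell^v$.

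Then a direct computation of the order of $q^u$ modulo $\ell$ finishes the proof. Namely, that order equals $e/\gcd(e,u)$, and
\begin{equation*}
\gcd(e,u) = \gcd(2e', e'\ell^v) = e' \cdot \gcd(2, \ell^v) = e',
\end{equation*}
since $\ell$ is odd. Therefore the order of $q^u$ modulo $\ell$ is exactly $2$, which forces $q^u \equiv -1 \pmod{\ell}$, as required.

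There is no real obstacle here: the only content is matching the cuspidality of $\Vv$ with the numerical constraint from Proposition \ref{Coupurefinie}(1.b) and observing that the parity of $n$ together with $\ell \neq 2$ forces $e$ to be even; the rest is a one-line gcd calculation.
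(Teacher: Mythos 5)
Your proof is correct and follows essentially the same route as the paper: both extract the numerical constraint $n = e\ell^v$ from the cuspidality of the Steinberg-type representation (you via $\Vv = \st_n(\varrho)$ and Proposition \ref{Coupurefinie}, the paper via $\pi = \St_r(\rho)$ and Proposition \ref{Coupure} together with \eqref{omegat}, which coincide here since $r=n$ and $k=1$), and then conclude $q^u\equiv -1$ by an order computation modulo the odd prime $\ell$. Your explicit $\gcd$ calculation merely spells out the last step that the paper leaves terse.
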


\begin{proof} 
Since $r\>2$ and $\pi$ is cuspidal,~$r$
has the form $\et(\rho)\ell^v$ for some $v\>0$,
where $\et(\rho)$ is the order of $q^k$ mod $\ell$ by \eqref{omegat}.
In particular, $(q^k)^r=q^n$ is congruent to $1$ mod $\ell$.
Moreover, since~$\ell$ is odd,
one has $q^{u}\equiv-1\neq1$ mod $\ell$.
\end{proof}

It follows from Lemma \ref{patatedouce} and \eqref{moonfleet} that 
\begin{equation*}
c =
\left\{ 
\begin{array}{ll}
(-1)^i & \text{if $\varrho$ is trivial}, \\ 
(-1)^i \cdot \x(-1)^{u} & \text{otherwise (that is, if $\varrho=\x$)}.
\end{array}\right.
\end{equation*}
Comparing with Lemma \ref{Milon},
we get the following corollary.

\begin{coro}
We have $i=1$ if $\varrho$ is trivial,
and $i=0$ if $\varrho$ is non-trivial.
\end{coro}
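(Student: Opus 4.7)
The plan is to argue by direct comparison of the two formulas for the sign $c$ that have already been assembled just above the corollary. The two-case expression for $c$ displayed immediately before the corollary was obtained from $c_\pi(\w)=\rho(\w_0)^u$ together with the identity $\rho(-\w_0)=q^i$ coming from $\rho\circ\Nm_{F/F_0}=\nu^{-i}$ and the congruence $q^u\equiv -1\pmod{\ell}$ of Lemma~\ref{patatedouce}. On the other hand, Lemma~\ref{Milon} computes the same sign $c$ intrinsically in terms of $\varrho$, since by Theorem~\ref{piranesi} the sign by which \eqref{pauillac2015} acts on the space of $\GL_u(\kk)\times\GL_u(\kk)$-invariant linear forms on $\Vv$ equals $c_\pi(\w)=c$. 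The whole argument reduces to matching these two expressions case by case.

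First, when $\varrho$ is trivial, Lemma~\ref{Milon} gives $c=-1$, while the displayed formula gives $c=(-1)^i$; equating these forces $i=1$.

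Next, when $\varrho$ is non-trivial, so that $\varrho$ coincides with $\x$ on $\kk^\times$, Lemma~\ref{Milon} gives $c=(-1)^{u(q-1)/2}$, while the displayed formula gives $c=(-1)^i\cdot\x(-1)^u$. Using $\x(-1)=(-1)^{(q-1)/2}$, the two factors involving $u(q-1)/2$ cancel, leaving $(-1)^i=1$, hence $i=0$.

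There is no real obstacle here, since all the ingredients — the formula for $c$ in terms of $\rho$ and $i$, the computation of $c$ from Lemma~\ref{Milon}, and the congruence from Lemma~\ref{patatedouce} — have already been assembled in the preceding lines; the corollary is essentially bookkeeping to read off the value of $i$ from the two expressions of~$c$.
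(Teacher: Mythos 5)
Your argument is exactly the paper's: the text immediately before the corollary already assembles $c=c_\pi(\w)$ via Theorems~\ref{KuMaTh}, \ref{piranesi} and the congruence of Lemma~\ref{patatedouce}, and the corollary is obtained by comparing the resulting displayed formula for $c$ with the sign computed in Lemma~\ref{Milon}. Your case-by-case matching, including the cancellation of $\x(-1)^u=(-1)^{u(q-1)/2}$ in the non-trivial case, reproduces the intended (one-line) derivation correctly.
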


If $i=0$,
then $\rho$ is selfdual.
By Theorem \ref{rappelamical1},
it is either distinguished or $\x$-distinguished.
Since its restriction to the units of $F$ is the inflation of $\varrho=\x$,
we deduce that $\rho$ is $\x$-distinguished.

If $i=1$,
then $\rho\nu^{1/2}$ is unramified and selfdual. 
By Theorem \ref{rappelamical1},
it is distinguished.
Thus the restriction of $\rho$ to $\F_0^\times$ is equal to 
$\nu^{-1/2}|_{\F_0^\times}=\nu_0^{-1}$.

\subsection{}

Let us finish the proof of Proposition \ref{poivronspairs}. 
Assume that $n=r=2u$ for some $u\>1$,~the
ex\-ten\-sion $F/F_0$ is ramified and $\pi$ is isomorphic to $\St_n(\rho)$ 
for some tamely ramified character $\rho$~of $\F^\times$.
We also assume that
\begin{itemize}
\item 
either $\ell\neq2$ and the restriction of $\rho$ to $\F_0^\times$
is either $\x$ or $\nu_0^{-1}$,
\item
or $\ell=n=r=2$,
$q$ is congruent to $-1$ mod $4$ and $\rho$ is trivial on $F_0^\times$.
\end{itemize}
It follows from~Lem\-ma \ref{poupin} that 
$\Vv$ has a dis\-tin\-guished $\qlb$-lift $\widetilde{\Vv}$,
which is $\varepsilon$-distinguished for some sign $\varepsilon\in\{-1,1\}$.
By Lemma \ref{piranesilift}, the representation
$\pi$ has a~dis\-tin\-guished lift to $\qlb$ if and only~if
the reduction of $\varepsilon$ mod $\ell$, denoted $c$,
is equal to $\ic(\w)$.
Let us prove that this is the case. 
On the one hand,
we have $\ic(\w)=\rho(\w)^n=\rho(\w_0)^{u}$.
If $\ell=2$, we have $\ic(\w)=1$.
Otherwise, we have 
\begin{equation*}
\ic(\w) = \left\{ 
\begin{array}{ll}
q^{u} & \text{if the restriction of $\rho$ to $\F_0^\times$ is $\nu_0^{-1}$}, \\ 
\x(-1)^{u} & \text{if the restriction of $\rho$ to $\F_0^\times$ is $\x$}.
\end{array}\right.
\end{equation*} 
On the other hand,
$\Vv$ is distinguished,
and it is isomorphic to $\st_n(\varrho)$ where $\varrho$ is the character of 
$\kk^\times$ defined by the restriction of $\rho$ to the units of $F$.
One thus may apply Lemma \ref{Milon},
which says that $\Vv$ is $\a$-distinguished, with 
\begin{equation*}
\a = \left\{ 
\begin{array}{ll}
-1 & \text{if $\varrho$ is trivial}, \\ 
(-1)^{u(q-1)/2} & \text{if $\varrho$ is non-trivial}.
\end{array}\right.
\end{equation*}
Note that $\a=c$ by Remark \ref{comtesseartoff},
and that $\varrho$ is trivial if and only if the restriction of $\rho$ to 
$\F_0^\times$ is equal to $\nu_0^{-1}$. 
Together with Lemma \ref{patatedouce}, this gives $\ic(\w)=c$ as expected. 

\bibliographystyle{plain}
\bibliography{nrv.bib}

\begin{thebibliography}{10}

\bibitem{AnandIMRN08}
U.~K. Anandavardhanan.
\newblock Root numbers of {A}sai {$L$}-functions.
\newblock {\em Int. Math. Res. Not. IMRN}, pages Art. ID rnn125, 25, 2008.

\bibitem{AnandKableTandon}
U.~K. Anandavardhanan, Anthony~C. Kable, and R.~Tandon.
\newblock Distinguished representations and poles of twisted tensor
  {$L$}-functions.
\newblock {\em Proc. Amer. Math. Soc.}, 132(10):2875--2883, 2004.

\bibitem{AKMSS}
U.~K. Anandavardhanan, R.~Kurinczuk, N.~Matringe, V.~S\'{e}cherre, and
  S.~Stevens.
\newblock Galois self-dual cuspidal types and {A}sai local factors.
\newblock {\em J. Eur. Math. Soc. (JEMS)}, 23(9):3129--3191, 2021.

\bibitem{AnandRajan}
U.~K. Anandavardhanan and C.~S. Rajan.
\newblock Distinguished representations, base change, and reducibility for
  unitary groups.
\newblock {\em Int. Math. Res. Not.}, (14):841--854, 2005.

\bibitem{BHLTL1}
Colin~J. Bushnell and Guy Henniart.
\newblock Local tame lifting for {${\rm GL}(N)$}. {I}. {S}imple characters.
\newblock {\em Inst. Hautes \'{E}tudes Sci. Publ. Math.}, (83):105--233, 1996.

\bibitem{BHIMRN}
Colin~J. Bushnell and Guy Henniart.
\newblock Intertwining of simple characters in {${\rm GL}(n)$}.
\newblock {\em Int. Math. Res. Not. IMRN}, (17):3977--3987, 2013.

\bibitem{BHEffective}
Colin~J. Bushnell and Guy Henniart.
\newblock To an effective local {L}anglands correspondence.
\newblock {\em Mem. Amer. Math. Soc.}, 231(1087):v+88, 2014.

\bibitem{BK}
Colin~J. Bushnell and Philip~C. Kutzko.
\newblock {\em The admissible dual of {${\rm GL}(N)$} via compact open
  subgroups}, volume 129 of {\em Annals of Mathematics Studies}.
\newblock Princeton University Press, Princeton, NJ, 1993.

\bibitem{Cabanes}
Marc Cabanes.
\newblock Modular representations of finite reductive groups.
\newblock In {\em Modular representation theory of finite and {$p$}-adic
  groups}, volume~30 of {\em Lect. Notes Ser. Inst. Math. Sci. Natl. Univ.
  Singap.}, pages 1--46. World Sci. Publ., Hackensack, NJ, 2015.

\bibitem{Coniglio}
Charl\`ene Coniglio-Guilloton.
\newblock Correspondance de {J}acquet-{L}anglands et distinction: cas des
  repr\'{e}sentations cuspidales de niveau 0.
\newblock {\em Bull. Soc. Math. France}, 144(2):163--216, 2016.

\bibitem{CLL}
Peiyi Cui, Thomas Lanard, and Hengfei Lu.
\newblock Modulo~$\ell$ distinction problems, ar{X}iv:2203.14788.
\newblock 2023.

\bibitem{Dipper}
Richard Dipper.
\newblock On the decomposition numbers of the finite general linear groups.
  {II}.
\newblock {\em Trans. Amer. Math. Soc.}, 292(1):123--133, 1985.

\bibitem{DipperJames}
Richard Dipper and Gordon James.
\newblock Identification of the irreducible modular representations of {${\rm
  GL}_n(q)$}.
\newblock {\em J. Algebra}, 104(2):266--288, 1986.

\bibitem{Flicker}
Yuval~Z. Flicker.
\newblock On distinguished representations.
\newblock {\em J. Reine Angew. Math.}, 418:139--172, 1991.

\bibitem{GanRag}
Wee~Teck Gan and A.~Raghuram.
\newblock Arithmeticity for periods of automorphic forms.
\newblock In {\em Automorphic representations and {$L$}-functions}, volume~22
  of {\em Tata Inst. Fundam. Res. Stud. Math.}, pages 187--229. Tata Inst.
  Fund. Res., Mumbai, 2013.

\bibitem{Gow}
Roderick Gow.
\newblock Two multiplicity-free permutation representations of the general
  linear group {${\rm GL}(n,q^2)$}.
\newblock {\em Math. Z.}, 188(1):45--54, 1984.

\bibitem{Green}
J.~A. Green.
\newblock The characters of the finite general linear groups.
\newblock {\em Trans. Amer. Math. Soc.}, 80:402--447, 1955.

\bibitem{HakimMurnaghan}
Jeffrey Hakim and Fiona Murnaghan.
\newblock Two types of distinguished supercuspidal representations.
\newblock {\em Int. Math. Res. Not.}, (35):1857--1889, 2002.

\bibitem{James}
Gordon James.
\newblock The irreducible representations of the finite general linear groups.
\newblock {\em Proc. London Math. Soc. (3)}, 52(2):236--268, 1986.

\bibitem{Jo23}
Yeongseong Jo.
\newblock Local exterior square and {A}sai {$L$}-functions for {${\rm GL}(n)$}
  in odd characteristic.
\newblock {\em Pacific J. Math.}, 322(2):301--340, 2023.

\bibitem{Kable}
Anthony~C. Kable.
\newblock Asai {$L$}-functions and {J}acquet's conjecture.
\newblock {\em Amer. J. Math.}, 126(4):789--820, 2004.

\bibitem{KuMaAsai}
Robert Kurinczuk and Nadir Matringe.
\newblock Characterisation of the poles of the {$\ell$}-modular {A}sai {${\rm
  L}$}-factor.
\newblock {\em Bull. Soc. Math. France}, 148(3):481--514, 2020.

\bibitem{MatringeIMRN09}
Nadir Matringe.
\newblock Conjectures about distinction and local {A}sai {$L$}-functions.
\newblock {\em Int. Math. Res. Not. IMRN}, (9):1699--1741, 2009.

\bibitem{Matringe11}
Nadir Matringe.
\newblock Distinguished generic representations of {${\rm GL}(n)$} over
  {$p$}-adic fields.
\newblock {\em Int. Math. Res. Not. IMRN}, (1):74--95, 2011.

\bibitem{NadirCRELLE15}
Nadir Matringe.
\newblock On the local {B}ump-{F}riedberg {$L$}-function.
\newblock {\em J. Reine Angew. Math.}, 709:119--170, 2015.

\bibitem{MSc}
Alberto M\'{\i}nguez and Vincent S\'{e}cherre.
\newblock Repr\'{e}sentations lisses modulo {$\ell$} de {$\roman{GL}_m({D})$}.
\newblock {\em Duke Math. J.}, 163(4):795--887, 2014.

\bibitem{MSt}
Alberto M\'{\i}nguez and Vincent S\'{e}cherre.
\newblock Types modulo {$\ell$} pour les formes int\'{e}rieures de {${\rm
  GL}_n$} sur un corps local non archim\'{e}dien.
\newblock {\em Proc. Lond. Math. Soc. (3)}, 109(4):823--891, 2014.
\newblock With an appendix by Vincent S\'{e}cherre et Shaun Stevens.

\bibitem{MSjl}
Alberto M\'{\i}nguez and Vincent S\'{e}cherre.
\newblock Correspondance de {J}acquet-{L}anglands locale et congruences modulo
  {$\ell$}.
\newblock {\em Invent. Math.}, 208(2):553--631, 2017.

\bibitem{Prasad90}
Dipendra Prasad.
\newblock Trilinear forms for representations of {${\rm GL}(2)$} and local
  {$\epsilon$}-factors.
\newblock {\em Compositio Math.}, 75(1):1--46, 1990.

\bibitem{Prasad01}
Dipendra Prasad.
\newblock On a conjecture of {J}acquet about distinguished representations of
  {${\rm GL}(n)$}.
\newblock {\em Duke Math. J.}, 109(1):67--78, 2001.

\bibitem{VS2}
Vincent S\'{e}cherre.
\newblock Repr\'{e}sentations lisses de {${\rm GL}(m,D)$}. {II}.
  {$\beta$}-extensions.
\newblock {\em Compos. Math.}, 141(6):1531--1550, 2005.

\bibitem{VSANT19}
Vincent S\'{e}cherre.
\newblock Supercuspidal representations of {${\rm GL}_n(\rm F)$} distinguished
  by a {G}alois involution.
\newblock {\em Algebra Number Theory}, 13(7):1677--1733, 2019.

\bibitem{VSPTB}
Vincent S\'{e}cherre.
\newblock Repr\'esentations cuspidales de $\mathrm{GL}_r({D})$ distingu\'ees
  par une involution int\'erieure, ar{X}iv:2005.05615.
\newblock 2022.

\bibitem{SS6}
Vincent S\'{e}cherre and Shaun Stevens.
\newblock Smooth representations of {$GL_m(D)$} {VI}: semisimple types.
\newblock {\em Int. Math. Res. Not. IMRN}, (13):2994--3039, 2012.

\bibitem{SSblocks}
Vincent S\'{e}cherre and Shaun Stevens.
\newblock Block decomposition of the category of {$\ell$}-modular smooth
  representations of {${\rm GL}_n(\rm F)$} and its inner forms.
\newblock {\em Ann. Sci. \'{E}c. Norm. Sup\'{e}r. (4)}, 49(3):669--709, 2016.

\bibitem{Serre}
Jean-Pierre Serre.
\newblock {\em Repr\'{e}sentations lin\'{e}aires des groupes finis}.
\newblock Hermann, Paris, revised edition, 1978.

\bibitem{Vigbook}
Marie-France Vign\'{e}ras.
\newblock {\em Repr\'{e}sentations {$l$}-modulaires d'un groupe r\'{e}ductif
  {$p$}-adique avec {$l\neq p$}}, volume 137 of {\em Progress in Mathematics}.
\newblock Birkh\"{a}user Boston, Inc., Boston, MA, 1996.

\bibitem{VigIntegral}
Marie-France Vign\'{e}ras.
\newblock On highest {W}hittaker models and integral structures.
\newblock In {\em Contributions to automorphic forms, geometry, and number
  theory}, pages 773--801. Johns Hopkins Univ. Press, Baltimore, MD, 2004.

\end{thebibliography}

\end{document}